\newcommand{\proofof}[1]{} 
\newcommand{\Anglearrow}[1]{\rotatebox{#1}{$\Rightarrow$}}
\subjclass[2010]{Primary: 19D23; Secondary: 18D10, 18D05, 55N15, 55P42}
\title{$K$-theory for 2-categories}
\date{05 December, 2017}
\begin{document}

\begin{abstract}
  We establish an equivalence of homotopy theories between symmetric
monoidal bicategories and connective spectra.  For this, we develop
the theory of $\Gamma$-objects in 2-categories.  In the course of the proof we
establish strictification results of independent interest for symmetric
monoidal bicategories and for diagrams of 2-categories.


\end{abstract}

\maketitle
\tableofcontents

\section{Introduction}

The classifying space functor gives a way of constructing topological
spaces from categories. Extra structure on the categorical side often
results in extra structure on the corresponding spaces, and categorical coherence
theorems give rise to homotopical coherence. This is particularly the
case of symmetric monoidal categories.  Up to group completion, the
classifying space of a symmetric monoidal category is an infinite loop
space, as proven independently by Segal \cite{Seg74Categories} (using $\Ga$-spaces) and May \cite{may72geo} (using operads). The
corresponding spectrum is called the $K$-theory of the symmetric
monoidal category.  The name comes from the following key
example: when the input is the category of finitely generated
projective modules over a ring $R$, the corresponding spectrum is the
algebraic $K$-theory spectrum of the ring whose homotopy groups are then the $K$-groups of $R$.

Classically, $K$-theory developed as a strategy for converting
algebraic data to homotopical data.  Thomason \cite{Tho95Symmetric}
proved that $K$-theory establishes an equivalence of homotopy
categories between connective spectra and symmetric monoidal
categories.  Mandell gives an alternate approach in
\cite{Man10Inverse} combining an equivalence of homotopy categories
between $\Ga$-spaces and permutative categories with the Quillen
equivalence between connective spectra and $\Ga$-spaces of Bousfield
and Friedlander \cite{BF1978}.

In this work we prove that the $K$-theory functor defined in
\cite{Oso10Spectra} induces an equivalence of homotopy theories
between symmetric monoidal bicategories and connective spectra.  
One motivation for this level of generality is that
a number of interesting symmetric monoidal structures are naturally
2-categorical.  Among these are the 2-category of finite categories,
the cobordism bicategory, and bicategories arising from rings,
algebras, and bimodules.  A $K$-theory for 2-categories allows us to
work directly with this 2-dimensional algebra.

One important general example is that of the bicategory
$\mathpzc{Mod}_{R}$ of finitely generated modules over a bimonoidal
category $R$, defined in \cite{BDR04Two}. By results of
\cite{BDRR11Stable} and \cite{Oso10Spectra}, the bicategorical
$K$-theory of $\mathpzc{Mod}_R$ is equivalent to the
$K$-theory of the ring spectrum $KR$. In the case when
$R$ is the topological category of finite dimensional
complex vector spaces, we get that the $K$-theory spectrum of
connective complex topological $K$-theory, $K(ku)$, is equivalent to
$K(\mathpzc{2Vect}_{\bC})$, where $\mathpzc{2Vect}_{\bC}$ is the
bicategory of 2-vector spaces. Thus, $K(ku)$ is the classifying
spectrum for 2-vector bundles (see \cite{BDR04Two}).

We are also interested in the converse application, namely using
$K$-theory to provide algebraic models for stable homotopical
phenomena.  This gives another motivation to develop $K$-theory for
2-categories, namely that 2-dimensional homotopical data is most
naturally reflected in 2-dimensional algebra.  The first Postnikov
invariant of a connective spectrum is readily discernable in the
algebra of permutative (1-)categories (see \cite{JO12Modeling}), but
the most natural algebraic models for $n$th Postnikov invariants are
$n$-categorical.

In particular, we are interested in the homotopy theory of stable
homotopy 2-types and a characterization in terms of the algebra of
grouplike symmetric monoidal 2-groupoids.  We develop this in a
sequel, and explore applications to the Brauer theory of commutative
rings.

The primary goal of this paper is to prove the following theorem.
\begin{thm}
  \label{thm:main-equiv-ho-thy}
  There are equivalences of homotopy theories
  \[
  (\GaIICat,\mathpzc{S}) \simeq
  (\PIICat,\mathpzc{S}) \simeq
  (\PGM,\mathpzc{S})
  \]
  and therefore induced equivalences of stable homotopy categories.
\end{thm}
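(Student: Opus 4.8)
The plan is to reduce the whole statement to a single core fact --- that the $K$-theory functor of \cite{Oso10Spectra}, with target $\GaIICat$, is an equivalence of homotopy theories $(\PIICat,\mathpzc{S})\simeq(\GaIICat,\mathpzc{S})$ with an explicit recognition functor $J$ as homotopy inverse --- and then to extract the remaining comparison formally. By definition the class $\mathpzc{S}$ on $\PIICat$ and on $\PGM$ consists of the maps inverted by $K$ (equivalently, by the passage to spectra, which factors through $\GaIICat$), so $K$ is defined on all three categories compatibly with the inclusion $\iota$ relating $\PIICat$ and $\PGM$, and the same recognition construction produces objects of $\PGM$ as well, so $K$ is an equivalence of homotopy theories on $\PGM$ too. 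Since $K$ on one of $\PIICat,\PGM$ is $K$ on the other precomposed with $\iota$, two-out-of-three for equivalences of homotopy theories shows that $\iota$ is an equivalence, whence $(\PIICat,\mathpzc{S})\simeq(\PGM,\mathpzc{S})$. Composing the two equivalences gives the displayed chain, and localizing each relative category at $\mathpzc{S}$ (equivalently, passing to homotopy categories) gives the asserted equivalences of the associated stable homotopy categories.

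For the core fact I would argue as in the $1$-categorical comparisons of Thomason \cite{Tho95Symmetric} and Mandell \cite{Man10Inverse}, carrying along the extra $2$-categorical data. The $K$-theory functor, presented via the Segal construction, sends a permutative $2$-category $\mathcal{A}$ to the $\Ga$-$2$-category $K\mathcal{A}$ with $K\mathcal{A}(n_+)$ the $2$-category of $n$-tuples in $\mathcal{A}$ equipped with a coherent choice of all partial sums, arranged so that the Segal maps $K\mathcal{A}(n_+)\to K\mathcal{A}(1_+)^{n}$ are biequivalences and $K\mathcal{A}(1_+)$ is biequivalent to $\mathcal{A}$. The recognition functor $J$ takes a $\Ga$-$2$-category $X$ and replaces it, through stable equivalences, by a $\Ga$-$2$-category $Y$ whose Segal maps $Y(n_+)\to Y(1_+)^{n}$ are \emph{isomorphisms} --- this combines a fibrant-replacement-type step making $X$ \emph{special} with the strictification theorem for diagrams of $2$-categories referred to in the abstract, which rigidifies the diagram so that the Segal comparisons become strictly invertible. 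Then $Y(1_+)$ carries the strictly associative, unital product $Y(1_+)\times Y(1_+)\cong Y(2_+)\to Y(1_+)$, the symmetry induced by the transposition of $2_+$, and the permutative-category axioms extracted from the $\Ga$-structure on $3_+$ --- that is, it is an honest permutative $2$-category, which we define to be $JX$.

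It then remains to check the two round-trips. That the comparison $\mathcal{A}\to JK\mathcal{A}$ is a stable equivalence of permutative $2$-categories follows directly from the Segal maps of $K\mathcal{A}$ being biequivalences: the special replacement and the strictification then change nothing up to stable equivalence, and the value at $1_+$ recovers $\mathcal{A}$. In the other direction one must produce a natural zig-zag of stable equivalences between $KJX$ and $X$; this is the recognition principle --- that $X$ is recovered up to stable equivalence from the permutative structure on its value at $1_+$ --- and it follows from the specialness of the intermediate replacement together with the group-completion and recognition theorem for $\Ga$-spaces of Segal and Bousfield--Friedlander \cite{Seg74Categories,BF1978}. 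Granting these, $K$ and $J$ are mutually inverse relative functors and the core fact is established.

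The step I expect to be the main obstacle is the strictification of $\Ga$-$2$-categories used to build $J$: showing that an arbitrary $\Ga$-$2$-category can be replaced, \emph{through stable equivalences} and compatibly with the $\Ga$-structure, by one whose Segal maps are strictly invertible. In the $1$-categorical setting the corresponding fact underlies Mandell's comparison; the $2$-categorical version requires genuinely new input --- control of a projective and a stable model structure on $\Ga$-diagrams of $2$-categories, a comparison with its strict analogue, and a careful verification that every replacement and rectification functor preserves the class $\mathpzc{S}$ --- and it is precisely this package that the abstract advertises as a strictification result for diagrams of $2$-categories. A secondary but non-trivial point is to fix a framework for equivalences of homotopy theories (relative categories, or complete Segal spaces) in which two-out-of-three is available and in which $J$, together with the zig-zags above, can be assembled into honest relative functors and natural transformations rather than mere comparisons of $\infty$-localizations.
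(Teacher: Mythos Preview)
Your proposal has a genuine gap in the construction of the inverse functor $J$, and it rests on a misreading of what the paper's strictification results actually accomplish.

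You propose to build $J$ by first making a $\Ga$-2-category special and then rigidifying so that the Segal maps become \emph{isomorphisms}, reading off a permutative structure on $Y(\underline{1}_+)$. But neither of the strictification results advertised in the abstract does this. The strictification for symmetric monoidal bicategories is Schommer-Pries's quasi-strict theorem (\cref{cohqs2cats,thm:cohqs2cats2}), used to replace the weak counit $\epzo$ by a zigzag of strict maps. The strictification for diagrams of 2-categories is the $E$-construction of \cref{sec:E-construction}, which replaces a single $\Ga$-\emph{lax} map by a span of strict ones; it does not rigidify Segal maps. Producing, functorially and through stable equivalences, a $\Ga$-2-category whose Segal maps are strict isomorphisms is essentially equivalent to the theorem you are trying to prove, and the paper does not attempt it.

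The paper's inverse is quite different: $P$ is defined on \emph{all} $\Ga$-2-categories (not just special ones) as a Grothendieck construction $PX = \sA \wre AX$ over an auxiliary permutative category $\sA$ (\cref{sec:permutative-2-cats-from-Gamma-2-categories}), following Mandell's 1-categorical template. The comparison $X \to KPX$ is not a strict $\Ga$-map but a $\Ga$-\emph{lax} map $\eta$ (\cref{sec:construction-of-eta}), and the bulk of the work lies in verifying two triangle identities $K\epz \circ \eta_{K\cC} = \id$ and $\epz \circ P\eta_X = \id$ (\cref{prop:Kepz-eta-is-id,prop:epz-Peta-is-id}) and then invoking the $E$-construction to convert the lax $\eta$ into an honest zigzag of strict stable equivalences (\cref{lem:eta-zig}). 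Your appeal to ``group-completion and recognition'' is not what is used here; no fibrant replacement or specialness hypothesis enters the construction of $P$ or the proof that $\eta$ is a stable equivalence.

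Your two-out-of-three argument for $(\PIICat,\cS)\simeq(\PGM,\cS)$ is reasonable in spirit, but note that the paper actually uses two different $K$-theory functors, $\Kt$ on $\PIICat$ and $\Ko$ on $\PGM$, related by a natural levelwise weak equivalence (\cref{prop:K-Ko-levelwise-weak-equiv}); the equivalence is then obtained directly via the counit $\epzo$ and Schommer-Pries strictification (\cref{prop:epzo-strictification,thm:equiv-HoPGM-and-HoP2Cat-part2}) rather than by deducing it from the $\GaIICat$ comparison.
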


On the far left, we have $\Ga$-objects in the category of 2-categories
and 2-functors, and it is a relatively simple exercise in model
category theory (see \cref{prop:quillen-equiv-sset-iicat}) to show
that this is yet another model for the homotopy theory of connective spectra.  On the far
right, we have permutative Gray-monoids (\cref{defn:pgm}) which model
symmetric monoidal bicategories categorically, i.e., up to symmetric monoidal biequivalence.
In the middle, we have permutative 2-categories
(\cref{defn:perm-2-cat}).  These model symmetric monoidal bicategories
homotopically but not categorically, and admit a $K$-theory functor
landing in $\Ga$-2-categories which is simpler and better behaved than
the $K$-theory functor defined on all permutative Gray-monoids.  In each case, we have a notion of
stable equivalence, denoted $\mathpzc{S}$, and the resulting homotopy
theories are constructed using relative categories and complete Segal
spaces.  Thus we see that \cref{thm:main-equiv-ho-thy} gives an
equivalence of homotopy theories, and not just homotopy categories,
between connective spectra and symmetric monoidal bicategories.

These same techniques can be applied in the 1-categorical case to
generalize \cite[Theorem 1.3]{Man10Inverse} to an equivalence of
homotopy theories between $\Ga$-categories and symmetric monoidal
categories with their respective stable equivalences.

\addtocontents{toc}{\SkipTocEntry}
\subsection*{Outline}

In this section we outline the contents of the paper and explain how
they combine to prove the main result.  

In \cref{sec:preliminaries} we review some preliminary homotopy
theory.  In \cref{sec:css-basics} we recall the basic theory
of complete Segal spaces necessary for our work.  In
\cref{sec:gamma-basics} we define $\Ga$-objects in the category of
2-categories and show that these model all connective spectra.

In \cref{sec:SM-2-cats} we describe a number of different symmetric
monoidal structures on 2-categories or bicategories.  We show that
every symmetric monoidal bicategory is (categorically) equivalent to a
permutative Gray-monoid (\cref{defn:pgm}) and every symmetric monoidal
pseudofunctor can be replaced, up to a zigzag of strict monoidal
equivalences, by a strict monoidal functor between permutative
Gray-monoids.  We also define permutative 2-categories
(\cref{defn:perm-2-cat}), a stricter notion of primarily homotopical
interest.  We will eventually show that every permutative Gray-monoid
is \emph{weakly} equivalent to a permutative 2-category---see 
\cref{prop:epzo-strictification,thm:main-css-2}.

\cref{sec:diagrams-of-2-cats} establishes the groundwork we need for
general diagrams of 2-categories.  The essential concepts are a notion
of lax maps between diagrams and a Grothendieck construction on
symmetric monoidal diagrams.  The latter is necessary to define the
inverse to $K$-theory, and the former is necessary to define the lax
unit for $K$-theory and its inverse.  We end with a construction which
takes as input lax maps and produces spans of strict maps.  This is
the key construction which allows us to replace the lax unit of \cref{sec:construction-of-eta} with a
zigzag of strict equivalences.

In \cref{sec:permutative-2-cats-from-Gamma-2-categories} we define the
inverse to our $K$-theory functor.  It might seem strange to
define an inverse to $K$-theory before defining $K$-theory itself, but
the majority of our methods in this section are entirely standard
techniques in enriched category theory or 2-category theory.  The results
follow from our work in \cref{sec:diagrams-of-2-cats}.

In \cref{sec:K-theory-constructions} we define the $K$-theory
functor, or more precisely two different $K$-theory functors, one for
permutative Gray-monoids and one for permutative 2-categories.  We
also study the composite of $K$-theory followed by its inverse, and
show this is naturally weakly equivalent to the identity functor on
permutative Gray-monoids.

\cref{sec:eta-and-triangle-ids} studies the composite of the inverse $K$-theory functor followed by
$K$-theory, and in particular a unit for a putative adjunction between
these two functors.  Such a unit exists only as a lax map of diagrams,
but we show that this lax unit is a stable equivalence and use this to
prove our main theorem in two parts.
\Cref{thm:equiv-HoPGM-and-HoP2Cat-part2} establishes the right-hand
equivalence, and \cref{thm:main-css-1} establishes the left-hand
equivalence. 

\cref{appendix} contains a diagram of all the main categories used in
the paper, together with the functors that relate them. It also
includes a list of the categories, functors and natural
transformations that are key components of the proof of
\cref{thm:main-equiv-ho-thy}, along with precise references to where
they are introduced. We see this list as an aid to readers as they
navigate what the referee called ``a densely interconnected network of
(relative) categories connected by a bird's nest of constructed
functors, natural transformations and weak equivalence notions.'' In
particular, we hope that this appendix will serve as a useful guide
which the reader can reference as necessary to help place the
structures in context.

\addtocontents{toc}{\SkipTocEntry}
\subsection*{Acknowledgements} 

The authors gratefully acknowledge the support and hospitality of the
Mathematical Sciences Research Institute in Berkeley, California: this
material is partially based on work completed while the first and
second authors visited and the third author was in residence at the
MSRI during the Spring 2014 semester.  This work was also partially
supported by a grant from the Simons Foundation (3599449, Ang\'elica
Osorno), and by the EPSRC (EP/K007343/1 Nick Gurski).  We also thank
the Ohio State University at Newark, Reed College, and the University
of Sheffield for hospitality hosting various combinations of the
authors over the past three years.  For helpful conversations, the
authors wish to particularly thank Peter May, Mike Mandell, and Chris
Schommer-Pries. We thank the referees carefully reading such a long
paper, and for suggesting we include the Appendix.

\section{Preliminaries}
\label{sec:preliminaries}

This section will introduce some of the background homotopy theory we
will use throughout the paper.  Our main result,
\cref{thm:main-equiv-ho-thy}, establishes equivalences between three
different homotopy theories, and for this we use the machinery of
complete Segal spaces \cite{Rez01Model}.  We give the relevant
definitions and some very basic results in \cref{sec:css-basics}.
Since our ultimate goal is to show that connective spectra can be
modeled categorically, we must choose how to represent these spectra
for the comparison.  Here we have chosen to use $\Ga$-spaces, together
with their class of stable equivalences, as our basic notion of the
homotopy theory of connective spectra.  Therefore the second goal of
this section is to show that we can use $2$-categories with their
Thomason model structure instead of spaces when thinking about
$\Ga$-objects, and we explain this in \cref{sec:gamma-basics}.
We also recall the category $\Ga$ itself, together with some related
and useful categories, and fix some notation.

\subsection{Complete Segal spaces}\label{sec:css-basics}

Complete Segal spaces are bisimplicial sets satisfying certain
homotopical properties, and the complete Segal space model structure
on bisimplicial sets is constructed as a localization of the Reedy
model structure.  We will assume some familiarity with these concepts;
the interested reader can consult \cite{Rez01Model,Hir03Model}.  It
should be noted that this theory can be easily taken as a black box:
\cref{cor:rel-cats-sufficient} suffices on its own for all of our
applications.

\begin{defn}\label{defn:css}
A bisimplicial set $X$ is a \emph{complete Segal space} \cite{Rez01Model} if
\begin{itemize}
\item it is fibrant in the Reedy model structure on bisimplicial sets,
\item for each $n$, the Segal map $X(n) \to X(1) \times_{X(0)} \cdots \times_{X(0)} X(1)$ is a weak equivalence of simplicial sets, and
\item the map $\textrm{Map}(E,X) \to \textrm{Map}(\Delta[1],X) \cong X(1)$, induced by the inclusion of the arrow category into the free living isomorphism (where we take nerves, and then treat the resulting simplicial sets as discrete bisimplicial sets), is a weak equivalence of simplicial sets.
\end{itemize}
\end{defn} 

One of the main results of \cite{Rez01Model} is the following.

\begin{thm}\label{thm:rezk}
There is a model structure on the category of bisimplicial sets which is a left Bousfield localization of the Reedy model structure such that the fibrant objects are precisely the complete Segal spaces.
\end{thm}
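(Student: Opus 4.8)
The plan is the standard one for such existence statements: realize the claimed model structure as a left Bousfield localization of the Reedy model structure and then identify its fibrant objects with the complete Segal spaces of \cref{defn:css} (this is a theorem of Rezk \cite{Rez01Model}; I describe how the argument goes). First I would record that the Reedy model structure on bisimplicial sets---which may be identified with the injective model structure on simplicial presheaves on $\Delta$---is left proper, since every object is Reedy cofibrant, and is cellular (indeed combinatorial), being a model structure on a presheaf category with the usual generating (trivial) cofibrations. It is moreover a simplicial model category, with mapping spaces $\mathrm{Map}(-,-)$ computed in the external simplicial direction. This puts us in position to apply the general existence theorem for left Bousfield localizations at a set of maps \cite{Hir03Model}.

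\textbf{Choosing the localizing set.} Following Rezk, localize at the set $S$ consisting of the spine inclusions $G(n) = \Delta[1]\cup_{\Delta[0]}\cdots\cup_{\Delta[0]}\Delta[1] \hookrightarrow \Delta[n]$ for $n\ge 2$, together with the inclusion $\Delta[1]\hookrightarrow E$ of the arrow category into the free-living isomorphism, all regarded as discrete bisimplicial sets as in \cref{defn:css}. The localization theorem then produces a model structure $L_S$ on bisimplicial sets, again left proper and simplicial, which is a left Bousfield localization of the Reedy structure, whose cofibrations are the Reedy cofibrations and whose fibrant objects are precisely the Reedy fibrant $X$ that are $S$-local, i.e.\ for which $\mathrm{Map}(B,X)\to\mathrm{Map}(A,X)$ is a weak equivalence of simplicial sets for every $(A\to B)$ in $S$.

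\textbf{Identifying the fibrant objects.} Since $\mathrm{Map}(-,X)$ carries colimits to limits and $G(n)$ is the displayed iterated pushout, $\mathrm{Map}(G(n),X)\cong X(1)\times_{X(0)}\cdots\times_{X(0)}X(1)$, while $\mathrm{Map}(\Delta[n],X)\cong X(n)$, and under these identifications the map induced by $G(n)\hookrightarrow\Delta[n]$ is the $n$-th Segal map; Reedy fibrancy of $X$ further makes the maps $X(1)\to X(0)$ Kan fibrations, so this strict pullback agrees with the homotopy pullback. Hence a Reedy fibrant $X$ is local with respect to the spine inclusions exactly when every Segal map is a weak equivalence, which is the second condition of \cref{defn:css}, and locality with respect to $\Delta[1]\hookrightarrow E$ is verbatim the third condition; combined with Reedy fibrancy itself, this shows the fibrant objects of $L_S$ are precisely the complete Segal spaces. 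The step requiring the most care is this last mapping-space bookkeeping---verifying that $\mathrm{Map}(\Delta[n],X)$, $\mathrm{Map}(G(n),X)$, and, if one wants the conceptual reformulation of completeness, $\mathrm{Map}(E,X)$, are correctly computed by the expected (homotopy) limits over the simplices of the source; this is where Reedy fibrancy of $X$ genuinely enters, and where Rezk's identification of $\mathrm{Map}(E,X)$ with the subspace of homotopy equivalences in $X(1)$ lives, though the latter is not needed for the bare existence statement. Checking left properness, cellularity, and that $S$ is genuinely a set is routine.
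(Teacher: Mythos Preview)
The paper does not prove this theorem at all: it is stated as a citation of one of the main results of \cite{Rez01Model}, with no argument given. Your sketch is a reasonable outline of Rezk's original proof (localize the Reedy structure at the spine inclusions and the map $\Delta[1]\hookrightarrow E$, then identify $S$-local Reedy fibrant objects with complete Segal spaces), so there is nothing in the paper to compare it against; you are supplying what the authors deliberately black-boxed.
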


In practice, one constructs complete Segal spaces from model
categories, or more generally relative categories.

\begin{defn}\label{defn:relcat}
A \emph{relative category} is a pair $(\cC,\cW)$ in which $\cC$ is a
category and 
$\cW$ is a subcategory of $\cC$ containing all of the objects.  A
\emph{relative functor} $F \cn (\cC,\cW) \to (\cC',\cW')$ is a functor
$F \cn \cC \to \cC'$ such that $F$ restricts to a functor
$\cW \to \cW'$.
\end{defn}

\begin{notn}\label{notn:rel-nerve}
Let $(\cC,\cW)$ be a relative category, and let $\cA$ be any category.  Then $(\cC,\cW)^{\cA}$ will denote the category whose objects are functors $\cA \to \cC$ and whose morphisms are those natural transformations with components in $\cW$.
\end{notn}

We let $N$ denote the usual nerve functor $N \cn \Cat \to \sSet$.

\begin{defn}\label{defn:classification-diag}
  Let $(\cC,\cW)$ be a relative category.  The \emph{classification
    diagram} of $(\cC,\cW)$ is the bisimplicial set $\cN (\cC,\cW)$ given by
  \[
  n \mapsto N\big( (\cC,\cW)^{\Delta[n]} \big),
  \]
  where as usual $\Delta[n]$ is the category of $n$ composable arrows.
\end{defn}

The classification diagram of a relative category is rarely a complete
Segal space, one must usually take a fibrant replacement.  This can
often be done using only a \emph{Reedy} fibrant replacement (see
\cite{Rez01Model,Hir03Model}), but for an arbitrary relative category
one sometimes needs to take a fibrant replacement in the complete Segal space model
structure.

\begin{defn}\label{defn:css-from-relcat}
  Let $(\cC,\cW)$ be a relative category.  The \emph{homotopy theory of
    $(\cC,\cW)$} is given by a fibrant replacement of the classification
  diagram of $(\cC,\cW)$ in the complete Segal space model structure.  We
  write such a fibrant replacement as $R\cN (\cC,\cW)$.  A relative
  functor $F\cn(\cC,\cW)\to (\cC',\cW')$ is an \emph{equivalence of homotopy
    theories} if the morphism $R\cN F$ is a weak equivalence in the
  complete Segal space model structure.
\end{defn}

Recall the hammock localization \cite{DK80Calculating} of a relative
category $(\cC,\cW)$ is a simplicially-enriched category $L^H(\cC,\cW)$.  One
key property of hammock localization is that the category of
components $\pi_0L^H(\cC,\cW)$ is equivalent to the categorical
localization $\cC[\cW^{-1}]$. A DK-equivalence of simplicially-enriched
categories is a simplicially-enriched functor which induces weak
equivalences on mapping simplicial sets and for which the induced
functor on categories of components is an equivalence of categories.

\begin{prop}[{\cite[1.8]{BK12Characterization}}]
  \label{prop:Rezk-equiv-is-DK-equiv}
  A relative functor is an equivalence of homotopy theories if and
  only if it induces a DK-equivalence on hammock localizations.
\end{prop}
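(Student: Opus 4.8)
The plan is to express each side of the biconditional purely in terms of homotopy categories and mapping spaces, and then to compare those two invariants of a relative category as computed by the classification diagram and by the hammock localization. First I would recall, following Rezk \cite{Rez01Model}, that a map between complete Segal spaces is a weak equivalence in the complete Segal space model structure if and only if it induces an equivalence on homotopy categories and a weak equivalence on each mapping space, the space of maps from $x$ to $x'$ in a Segal space $X$ being the homotopy fiber of $X(1)\to X(0)\times X(0)$ over $(x,x')$. Since $F$ is an equivalence of homotopy theories precisely when $R\cN F$ has this property, and since the canonical map from the classification diagram $\cN(\cC,\cW)$ to its complete Segal space fibrant replacement $R\cN(\cC,\cW)$ induces an equivalence on homotopy categories and weak equivalences on mapping spaces --- it is a levelwise equivalence followed by Rezk's completion map, and both of those have this property --- it suffices to compare, naturally in $(\cC,\cW)$, the homotopy category and mapping spaces of $\cN(\cC,\cW)$ with the category of components $\pi_0$ and the mapping simplicial sets of the hammock localization $L^H(\cC,\cW)$.

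For homotopy categories this is immediate from the two facts recorded above: the homotopy category of $R\cN(\cC,\cW)$ is the categorical localization $\cC[\cW^{-1}]$, and so is $\pi_0 L^H(\cC,\cW)$, both identifications being natural; hence $F$ is essentially surjective on the one exactly when it is on the other. The crux is then the natural weak equivalence
\[
\mathrm{map}_{\cN(\cC,\cW)}(x,x')\;\simeq\;L^H(\cC,\cW)(x,x')
\]
of mapping spaces, for all objects $x,x'$ of $\cC$. Granting it, the ``homotopically fully faithful'' clause of Rezk's characterization applied to $R\cN F$ becomes exactly the condition that $L^H F$ induce weak equivalences on all mapping simplicial sets; combining this with the comparison of homotopy categories, and recalling that a functor of simplicially enriched categories is a DK-equivalence precisely when it is essentially surjective on $\pi_0$ and a weak equivalence on all mapping simplicial sets, the biconditional drops out by matching clauses.

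The main obstacle is thus the mapping-space comparison displayed above. The route I would take is to pass through the standard zigzag of Quillen equivalences relating complete Segal spaces, Segal categories, and simplicially enriched categories with the Bergner model structure (Joyal--Tierney, Bergner), and to identify the derived image of $\cN(\cC,\cW)$ along this zigzag with the Dwyer--Kan simplicial localization $L^{DK}(\cC,\cW)$, which is joined to $L^H(\cC,\cW)$ by the natural DK-equivalences of \cite{DK80Calculating}; two-out-of-three for DK-equivalences then yields the comparison. A more structural alternative, bypassing any explicit zigzag, is to invoke the essential uniqueness of the homotopy theory of homotopy theories (To\"en; Barwick--Schommer-Pries): the constructions $F\mapsto R\cN F$ and $F\mapsto L^H F$ each exhibit the category of relative categories, equipped with the class of DK-equivalences, as a model for this homotopy theory, so they must detect the same class of weak equivalences. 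Since the statement is that of Barwick and Kan, we refer to \cite{BK12Characterization} for a complete argument along these lines.
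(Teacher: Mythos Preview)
The paper does not supply a proof of this proposition: it is stated with a citation to \cite[1.8]{BK12Characterization} and no argument is given in the text. There is therefore no ``paper's own proof'' to compare your proposal against. The sentence immediately following the proposition does gesture at the conceptual point you develop, remarking (with a reference to To\"en) that the classification diagram of a relative category is equivalent to its hammock localization; this is essentially your second, more structural alternative. Your sketch is a reasonable outline of the standard argument, but for the purposes of this paper the result is simply quoted from Barwick--Kan, and you would be correct to do the same.
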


One can actually say something stronger using \cite{Toen05Vers}, namely that the classification
diagram of a relative category is equivalent to the hammock
localization, so that both give the same notion of homotopy category.

The following lemma is an immediate consequence of the complete Segal space model structure being a localization of the Reedy model structure on bisimplicial sets.

\begin{lem}\label{lem:levelwise-eq-css}
Let $F\cn (\cC,\cW) \to (\cC',\cW')$ be a relative functor with the property that composition with $F$ induces a weak equivalence of categories
\[
N\big( (\cC,\cW)^{\Delta[n]} \big) \to N\big( (\cC',\cW')^{\Delta[n]} \big)
\]
for each $n$.  Then $F$ is an equivalence of homotopy theories.
\end{lem}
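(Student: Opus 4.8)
The plan is to unwind the definitions and reduce everything to the single fact that a left Bousfield localization only enlarges the class of weak equivalences. First I would recall from \cref{defn:classification-diag} that $\cN(\cC,\cW)$ is the bisimplicial set whose value at $[n]$ is the simplicial set $N\big((\cC,\cW)^{\Delta[n]}\big)$, and that $\cN F$ is the map of bisimplicial sets whose $n$-th level is the displayed map $N\big((\cC,\cW)^{\Delta[n]}\big) \to N\big((\cC',\cW')^{\Delta[n]}\big)$. The hypothesis says exactly that each of these level maps is a weak equivalence of simplicial sets; since the weak equivalences of the Reedy model structure on bisimplicial sets, regarded as $\mathrm{Fun}(\Delta^{\mathrm{op}},\sSet)$, are precisely the level-wise weak equivalences, $\cN F$ is a Reedy weak equivalence.

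Next I would invoke \cref{thm:rezk}: the complete Segal space model structure is a left Bousfield localization of the Reedy model structure. A left Bousfield localization retains the cofibrations and enlarges the class of weak equivalences, so every Reedy weak equivalence is in particular a weak equivalence of the complete Segal space model structure; hence $\cN F$ is such a weak equivalence. Finally, fixing a functorial fibrant replacement $R$ for the complete Segal space model structure, the naturality square for $R$ applied to $\cN F$, together with two-out-of-three, shows that $R\cN F$ is a weak equivalence between complete Segal spaces. By \cref{defn:css-from-relcat} this is exactly the assertion that $F$ is an equivalence of homotopy theories.

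I do not expect a genuine obstacle here; as the surrounding text already remarks, the statement is immediate once one has set up the classification diagram and Rezk's localization. The only point worth a moment's care is the bookkeeping of the two simplicial directions: one must check that the external direction indexing $\cN(\cC,\cW)$ in \cref{defn:classification-diag} (the variable $n$) is the direction along which Reedy weak equivalences of bisimplicial sets are tested, which is immediate from the construction.
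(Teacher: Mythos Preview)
Your argument is correct and is exactly what the paper intends: the one-line justification given there is that the complete Segal space model structure is a localization of the Reedy model structure, and you have simply unpacked this together with the definition of the classification diagram and a standard fibrant-replacement/two-out-of-three step. There is nothing to add.
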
 

\begin{cor}\label{cor:rel-cats-sufficient}
Let $F\cn (\cC,\cW) \to (\cC',\cW'), G\cn (\cC',\cW') \to (\cC,\cW)$ be relative functors and let $\al \cn FG \impl \id, \be \cn GF \impl \id$ be natural transformations.  If the components of $\al$ are all in $\cW'$ and the components of $\be$ are all in $\cW$, then $F,G$ induce weak equivalences between $R\cN (\cC,\cW)$ and  $R\cN (\cC',\cW')$ in the complete Segal space model structure.  This conclusion remains valid if $\al, \be$ are replaced with any finite zigzag of transformations between relative functors such that the components of the zigzag replacing $\al$ are all in $\cW'$ and the components of the zigzag replacing $\be$ are all in $\cW$.
\end{cor}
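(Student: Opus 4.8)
The plan is to deduce \cref{cor:rel-cats-sufficient} directly from \cref{lem:levelwise-eq-css} by checking, for each fixed $n$, that composition with $F$ induces a weak equivalence of categories $(\cC,\cW)^{\Delta[n]} \to (\cC',\cW')^{\Delta[n]}$. Since the nerve sends equivalences of categories to weak equivalences of simplicial sets, it suffices to produce an equivalence of categories at the level of these diagram categories. The natural candidate is the functor induced by postcomposition with $F$, with inverse induced by postcomposition with $G$.

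First I would observe that a relative functor $F\cn(\cC,\cW)\to(\cC',\cW')$ induces an honest functor $F^{\Delta[n]}\cn (\cC,\cW)^{\Delta[n]} \to (\cC',\cW')^{\Delta[n]}$ as in \cref{notn:rel-nerve}: it sends a functor $\Delta[n]\to\cC$ to its composite with $F$, and it sends a natural transformation with components in $\cW$ to one with components in $\cW'$, precisely because $F$ carries $\cW$ into $\cW'$. The same applies to $G$, giving $G^{\Delta[n]}$ in the other direction. Next I would check that the natural transformations $\al$ and $\be$ induce natural transformations $F^{\Delta[n]}G^{\Delta[n]}\impl\id$ and $G^{\Delta[n]}F^{\Delta[n]}\impl\id$ on these diagram categories: given a diagram $D\cn\Delta[n]\to\cC'$, the required component at $D$ is the natural transformation $FG\circ D \impl D$ whose value at an object $i$ of $\Delta[n]$ is $\al_{D(i)}$; this is a morphism in $(\cC,\cW)^{\Delta[n]}$ exactly because each $\al_{D(i)}$ lies in $\cW'$ by hypothesis, and naturality in $D$ is immediate from naturality of $\al$. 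The symmetric argument handles $\be$. Hence $F^{\Delta[n]}$ and $G^{\Delta[n]}$ are inverse equivalences of categories, so $N(F^{\Delta[n]})$ is a weak equivalence of simplicial sets for every $n$, and \cref{lem:levelwise-eq-css} applies to show $F$ (and symmetrically $G$) is an equivalence of homotopy theories. In particular $R\cN F$ and $R\cN G$ are weak equivalences in the complete Segal space model structure, which is the desired conclusion.

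For the final sentence of the statement, I would note that nothing in the argument uses that $\al$ and $\be$ are single transformations rather than finite zigzags. If instead we are given a finite zigzag of relative functors and transformations connecting $FG$ to $\id$ whose components all lie in $\cW'$, then applying $(-)^{\Delta[n]}$ objectwise produces a finite zigzag of functors and natural transformations connecting $F^{\Delta[n]}G^{\Delta[n]}$ to $\id$ on $(\cC,\cW)^{\Delta[n]}$, again because each transformation has components in $\cW'$; such a zigzag still exhibits $F^{\Delta[n]}$ as part of an equivalence of categories (a functor admitting a quasi-inverse up to a zigzag of natural isomorphisms---or more generally connected by a zigzag---is an equivalence), and the conclusion follows as before. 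The only point requiring a moment's care is that the transformations in a zigzag need not be natural isomorphisms a priori; but since their components lie in $\cW$ (resp.\ $\cW'$) and $\cW$, $\cW'$ contain all objects and hence all identities, passing to nerves still yields weak equivalences, which is all \cref{lem:levelwise-eq-css} requires---indeed one can bypass the categorical equivalence entirely and simply observe that $N$ of each functor in the zigzag, together with $N$ of each transformation, exhibits $N(F^{\Delta[n]})$ and $N(G^{\Delta[n]})$ as mutually inverse homotopy equivalences of simplicial sets.

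I do not expect a serious obstacle here; the statement is essentially a bookkeeping consequence of \cref{lem:levelwise-eq-css} together with the elementary fact that diagram categories inherit natural transformations pointwise. The one place to be slightly careful is the very last clause about zigzags, where one should resist the temptation to assume the intermediate transformations are invertible and instead argue directly at the level of nerves, using only that their components lie in the relevant subcategory of weak equivalences. Everything else is routine.
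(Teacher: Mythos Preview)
Your approach is the same as the paper's: pass to the diagram categories $(\cC,\cW)^{\Delta[n]}$, observe that $\al,\be$ restrict there because their components lie in $\cW',\cW$, and conclude via \cref{lem:levelwise-eq-css}. The paper's proof is two sentences to this effect.

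There is one genuine slip to fix. You write ``Hence $F^{\Delta[n]}$ and $G^{\Delta[n]}$ are inverse equivalences of categories.'' This is false in general: nothing in the hypotheses forces $\al$ or $\be$ to be natural \emph{isomorphisms}, only natural transformations whose components lie in $\cW',\cW$. So $F^{\Delta[n]},G^{\Delta[n]}$ need not be an equivalence of categories. What is true, and what both the paper and your final paragraph actually use, is that any natural transformation between functors induces a simplicial homotopy on nerves; hence $N(F^{\Delta[n]})$ and $N(G^{\Delta[n]})$ are mutually inverse \emph{homotopy} equivalences of simplicial sets. That is exactly the input \cref{lem:levelwise-eq-css} requires. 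You eventually say this correctly in the zigzag discussion (``one can bypass the categorical equivalence entirely\ldots''), but the main argument should be written this way from the start. The parenthetical ``a functor admitting a quasi-inverse up to a zigzag of natural isomorphisms---or more generally connected by a zigzag---is an equivalence'' is likewise incorrect and should be deleted; a zigzag of non-invertible natural transformations does not exhibit a categorical equivalence.
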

\begin{proof}\proofof{cor:rel-cats-sufficient}
The transformations $\al, \be$ induce weak homotopy equivalences between the nerves of $\cC,\cC'$, and thus of $\cC^{\Delta[n]},\cC'^{\Delta[n]}$ as well.  The hypotheses on the components of $\al,\be$ ensure that they restrict to natural transformations when the morphisms of $\cC^{\Delta[n]},\cC'^{\Delta[n]}$ are restricted to those maps which are componentwise in $\cW,\cW'$, respectively.
\end{proof}

\subsection{\texorpdfstring{$\Ga$}{Γ}-2-categories}\label{sec:gamma-basics}

For a natural number $m \ge 0$ we let $\ul{m}$ be the set with $m$
elements $\{1, \ldots m\}$ where $\ul{0} = \emptyset$.  We let be
$\ul{m}_+$ the pointed set $\{0, 1, \ldots, m\}$ with basepoint $0$.

\begin{itemize}
  \item $\sN$ denotes the category with objects $\ul{m}$ and maps of
    sets.
  \item $\sF$ denotes the category with objects $\ul{m}_+$ for $m \ge
    0$ and pointed maps.
\end{itemize}

\begin{notn}\label{notn:iicats}
Throughout, we let $\IICat$
denote the (1-)category of 2-categories and strict 2-functors.  We let
$\IICat_2$ denote the 2-category of 2-categories, 2-functors, and
2-natural transformations. 
\end{notn}

\begin{defn}\label{defn:gammaobjs}
  For any category $\cC$ with a terminal object $\ast$, a
  \emph{$\Ga$-object} $X$ in $\cC$ is a functor $X\cn \sF \to \cC$
  such that $X(\ul{0}_+)=\ast$.  The term \emph{reduced} is also used
  for this condition on $X(\ulp{0})$.  We will denote by $\Ga\mh \cC$
  the category of $\Ga$-objects and natural transformations.  We will
  denote by $\sF \mh \cC$ the category of all functors $\sF \to \cC$,
  and then $\Ga\mh\cC$ is the full subcategory of $\sF \mh \cC$
  consisting of reduced $\sF$-diagrams in $\cC$.
\end{defn}

We note that Segal's category $\Ga$ is the opposite of $\sF$ although he did not originally define it that way.  Our interest in $\Ga$-2-categories is that they will be the output of the various $K$-theory functors.

There are several possible definitions of nerve of a 2-category.
However there are natural weak equivalences connecting any pair of
such definitions (see \cite{CCG10Nerves} or \cite{CHR2015Bicategorical}) so it is not necessary to
choose a particular classifying space functor in order to make the
next definition.

\begin{defn}
  \label{defn:equivs-and-weak-equivs}
  Let $\cA, \cB$ be 2-categories, and $F:\cA \to \cB$ be a 2-functor
  between them.
  \begin{enumerate}
  \item $F$ is an \emph{equivalence} of 2-categories if each object $b
    \in \cB$ is equivalent to $Fa$ for some $a \in \cA$, and if each
    functor $\cA(a,a') \to \cB(Fa, Fa')$ is an equivalence of
    categories.
  \item $F$ is a \emph{weak equivalence} if the induced map on nerves
    $NF: N\cA \to N\cB$ is a weak equivalence of simplicial sets.
  \end{enumerate}
\end{defn}

\begin{rmk}
  For the definition of equivalence of 2-categories above, it is
  useful to recall that two objects $b,b'$ in a 2-category $\cB$ are
  equivalent, or internally equivalent, if there exist 1-cells $f:b
  \to b', g: b' \to b$ and isomorphism 2-cells $\al:fg \cong \id_{b'},
  \be: gf \cong \id_{b}$.  Similarly, a 1-cell $f:b \to b'$ is an
  internal equivalence in $\cB$ if $g, \al, \be$ exist as above.  One
  can then check that the definition of equivalence of 2-categories
  given above amounts to what is usually called a biequivalence.  It
  is not the case, though, that if a 2-functor $F:\cA \to \cB$ is a
  biequivalence then there is necessarily a 2-functor $G:\cB \to \cA$
  which is a biequivalence and a weak inverse for $F$.  In general,
  one can only produce such a $G$ as a pseudofunctor.
\end{rmk}

There are two model structures on the category of 2-categories and
2-functors.  The first has equivalences as its class of weak
equivalences, and was established by Lack
\cite{Lac02Quillen2,Lac04Quillenb}.  The second, called the Thomason
model structure, has the weak equivalences (as given in the definition
above) for its class of weak equivalences (in the model categorical
sense), and was established by \cite{WHPT07Model} with corrections by
\cite{AM13Towards}.
By standard arguments, every equivalence is a weak equivalence, so any
functor which inverts weak equivalences also inverts equivalences.
Further, any functor inverting weak equivalences must identify
2-functors with a 2-natural (or lax, or oplax) transformation between
them by \cite{CCG10Nerves}.  In \cref{sec:D-trans} we will express
this fact using a functorial path object so that we can apply the same
technique to diagrams of 2-categories.

\begin{notn}\label{notn:ho-F-2cat}
  Let $\ho \sF\mh \IICat$ denote the category obtained from
  $\sF\mh\IICat$ by inverting the maps which are levelwise weak
  equivalences, and $\ho \Ga\mh \IICat$ denote the category obtained
  from $\Ga\mh\IICat$ by inverting the maps which are levelwise weak
  equivalences.  
\end{notn}

\begin{rmk}
  Since the Thomason model structure on $\IICat$ is cofibrantly
  generated, we can produce the homotopy category $\ho \sF\mh \IICat$
  using the generalized Reedy model structure in which weak
  equivalences are levelwise.  As noted in the passing remark of
  \cite[p. 95]{BF1978}, this restricts to a generalized Reedy model
  structure on $\Ga\mh\IICat$ because for each $n \ge 1$ the category
  of $\Sigma_n$-objects in $\IICat$ has the projective model
  structure.
\end{rmk}

The Quillen equivalence between $\sSet$ and $\IICat$ of
\cite{WHPT07Model,AM13Towards} yields a Quillen equivalence between
the respective generalized Reedy model structures on $\sF\mh\sSet$ and
$\sF\mh\IICat$.  Since both functors of the adjunction between $\sSet$
and $\IICat$ preserve the terminal object, the corresponding
adjunction between $\sF\mh\sSet$ and $\sF\mh\IICat$ restricts to give
the following.
\begin{prop}
  \label{prop:quillen-equiv-sset-iicat}
  There is a Quillen equivalence between $\Ga\mh\sSet$ and
  $\Ga\mh\IICat$, both with the model structures described above.
\end{prop}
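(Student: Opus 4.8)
The plan is to obtain the stated Quillen equivalence by restricting, to the full subcategories of reduced diagrams, the Quillen equivalence between $\sF\mh\sSet$ and $\sF\mh\IICat$ noted just above. Write $L \cn \sSet \to \IICat$ for the left adjoint in the Quillen equivalence of \cite{WHPT07Model,AM13Towards} and $R$ for its right adjoint, and write $L$ and $R$ also for the induced levelwise functors on $\sF$-diagrams. Since $L$ and $R$ both preserve the terminal object --- $R$ automatically as a right adjoint, $L$ as noted above --- these levelwise functors carry reduced $\sF$-diagrams to reduced ones, so they restrict to an adjunction $L \dashv R$ between $\Ga\mh\sSet$ and $\Ga\mh\IICat$ that is still given on objects by applying $L$, resp.\ $R$, at each level. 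By the remark above, $\Ga\mh\sSet$ and $\Ga\mh\IICat$ carry the model structures obtained by restricting the respective generalized Reedy model structures, so it remains only to see that the restricted adjunction is a Quillen equivalence.

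First I would check that it is at least a Quillen adjunction. Because $R$ preserves all limits, the matching object $M_{\ulp{n}}(RY)$ is canonically identified with $R(M_{\ulp{n}}Y)$, and since $R$ also preserves pullbacks the matching map of $Rf$ at $\ulp{n}$ is $R$ applied to the matching map of $f$ at $\ulp{n}$. As $R$ preserves Thomason fibrations and is compatible with $\Sigma_n$-actions, and projective fibrations of $\Sigma_n$-objects are detected by the underlying map, it follows that $R$ carries generalized Reedy fibrations to generalized Reedy fibrations; together with the fact proved below that $R$ preserves all weak equivalences, $R$ carries trivial fibrations to trivial fibrations as well. Hence the restricted $L \dashv R$ is a Quillen adjunction.

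The crucial point is that the nerve-type right adjoint $R \cn \IICat \to \sSet$ preserves \emph{and reflects} all weak equivalences: by definition the Thomason weak equivalences of $\IICat$ are the $2$-functors carried to weak equivalences by a nerve, any two nerves are linked by natural weak equivalences \cite{CCG10Nerves}, and $\mathrm{Ex}^2$ --- or whichever auxiliary endofunctor of $\sSet$ enters the precise description of $R$ --- preserves and reflects weak equivalences. Consequently the restricted $R \cn \Ga\mh\IICat \to \Ga\mh\sSet$, which acts levelwise, preserves and reflects all levelwise weak equivalences. This gives two things. On the one hand, for every $X \in \Ga\mh\sSet$ the derived unit of $L \dashv R$ is a weak equivalence on $X$ exactly when the underived unit $\eta_X$ is, and $\eta_X$ is levelwise the unit $X(\ulp{n}) \to R L(X(\ulp{n}))$ of the original adjunction, which is a weak equivalence because every simplicial set is cofibrant and $L \dashv R$ is a Quillen equivalence whose right adjoint preserves weak equivalences; so the derived unit is a weak equivalence on every object, in particular on every cofibrant one. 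On the other hand, $R$ reflects weak equivalences between fibrant objects. By one of the standard criteria for promoting a Quillen adjunction to a Quillen equivalence --- the derived unit being a weak equivalence on all cofibrant objects together with the right adjoint reflecting weak equivalences between fibrant objects --- the restricted adjunction $L \dashv R$ between $\Ga\mh\sSet$ and $\Ga\mh\IICat$ is a Quillen equivalence.

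The step I expect to require the most care is this last passage from Quillen adjunction to Quillen equivalence. One cannot simply transport the comparison from $\sF$-diagrams along the inclusions of reduced diagrams, since a cofibrant replacement of a reduced diagram in the ambient generalized Reedy structure need not be reduced --- its value at $\ulp{0}$ is only contractible rather than a point --- so the inclusion $\Ga\mh\sSet \hookrightarrow \sF\mh\sSet$ is right Quillen but not left Quillen. The observation that circumvents this is that the right adjoint in play is homotopically rigid: it preserves and reflects \emph{all} weak equivalences, which identifies the derived unit with the underived one and reduces the whole verification to a levelwise computation using only that every simplicial set is cofibrant.
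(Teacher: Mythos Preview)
Your argument is correct and follows the same route as the paper: observe that both functors in the $\sSet$/$\IICat$ Quillen equivalence preserve the terminal object, so the induced adjunction on $\sF$-diagrams restricts to reduced diagrams. The paper records only this observation and asserts the proposition; you supply the verification it omits, correctly flagging that restriction of a Quillen equivalence to full subcategories is not automatic and handling it via the fact that $R=\mathrm{Ex}^2\circ N$ preserves and reflects all weak equivalences so the derived unit reduces to the levelwise underived one.
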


Thus we can transport definitions directly from the context of
$\Ga$-spaces (here seen as $\Ga$-simplicial sets) to
$\Ga$-2-categories. \Cref{prop:quillen-equiv-sset-iicat} implies, in
particular, that $\ho \Ga\mh \IICat$ is locally small.

\begin{defn}
\label{defn:special-very-special}
Let $X$ be a $\Ga$-2-category.
\begin{enumerate}
\item Let $i_{k}:\ul{n}_{+} \to \ul{1}_{+}$ denote the unique map
  sending only $k$ to the non-basepoint element in $\ul{1}_{+}$.  Then
  $X$ is \emph{special} if the map
  $X(\ul{n}_{+}) \to X(\ul{1}_{+})^{n}$ induced by all the $i_{k}$ is
  a weak equivalence of 2-categories.
\item A special $\Ga$-2-category $X$ is \emph{very special} when
  $\pi_{0}X(\ul{1}_{+})$ is a group under the operation
  \[
  \pi_{0}X(\ul{1}_{+}) \times \pi_{0}X(\ul{1}_{+}) \cong \pi_{0}X(\ul{2}_{+}) \to \pi_{0}X(\ul{1}_{+})
  \]
  where the isomorphism is induced by the weak equivalence
  $X(\ul{2}_{+}) \to X(\ul{1}_{+})^{2}$ and the final map is induced by the morphism $\ul{2}_{+} \to \ul{1}_{+}$ which sends both non-basepoints in the source to the non-basepoint in the target.
\end{enumerate}
\end{defn}

\begin{defn}
  \label{defn:stable-equiv}
  A map $f:X \to Y$ in $\Ga\mh\IICat$ is a \emph{stable equivalence}
  if the function
  \[
  f^{*}:\ho \Ga\mh \IICat(Y,Z) \to \ho \Ga\mh \IICat(X,Z)
  \]
  is an isomorphism for every very special $\Ga$-2-category $Z$.
\end{defn}

\begin{notn}\label{notn:Ho-Ga-2cat}
  Let $\Ho \GaIICat$ denote the category obtained from $\GaIICat$ by
  inverting the stable equivalences, and $\Ho \Ga\mh\sSet$ denote the
  category obtained from $\Ga\mh\sSet$ by inverting the stable
  equivalences.
\end{notn}

\begin{defn}\label{defn:W-Ga2cat}
Let $\cW$ denote the collection of weak equivalences in $\GaIICat$ and
$\cS$ the collection of stable equivalences.  Then $(\GaIICat,\cW)$
and $(\GaIICat,\cS)$ are the respective relative categories.  Similarly, $(\Ga\mh\sSet, \cW)$ is the relative category of $\Ga$-simplicial sets and levelwise weak equivalences, and $(\Ga\mh\sSet, \cS)$ is the relative category of $\Ga$-simplicial sets and stable equivalences.
\end{defn}

Note that $\Ho \Ga\mh\sSet$ is (equivalent to) the full subcategory of
the stable homotopy category whose objects are connective spectra \cite{BF1978}.
The remarks above prove the following theorem. 

\begin{thm}
  Applying the adjunction between $\IICat$ and $\sSet$ levelwise, we
  obtain an adjunction between $\GaIICat$ and $\Ga\mh\sSet$.  This
  induces an equivalence of homotopy theories
  \[
  (\Ga\mh\IICat, \cS) \fto{\hty} (\Ga\mh\sSet, \cS)
  \]
  and therefore an equivalence of stable homotopy categories
  \[
  \Ho \Ga\mh\IICat \fto{\hty} \Ho \Ga\mh\sSet.
  \]
\end{thm}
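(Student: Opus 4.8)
The plan is to obtain the stable statement by localizing the strict one of \cref{prop:quillen-equiv-sset-iicat}. The first claim is formal: an adjunction $L \dashv R$ between $\sSet$ and $\IICat$ induces an adjunction between the functor categories $\sF\mh\sSet$ and $\sF\mh\IICat$ by postcomposition, and since both $L$ and $R$ preserve terminal objects this restricts to the reduced subcategories $\Ga\mh\sSet$ and $\Ga\mh\IICat$, exactly as in the proof of \cref{prop:quillen-equiv-sset-iicat}. So the real content is to understand the induced functor once the levelwise weak equivalences $\cW$ are enlarged to the stable equivalences $\cS$.

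I would begin by recalling that, by Bousfield--Friedlander \cite{BF1978}, the stable equivalences of $\Ga$-simplicial sets are the weak equivalences of a model structure on $\Ga\mh\sSet$ that is the left Bousfield localization of the generalized Reedy (strict) structure at a set $S$ of stabilization maps; its fibrant objects are the strictly fibrant very special $\Ga$-spaces, and it is Quillen equivalent to connective spectra. Since $\Ga\mh\IICat$ with its generalized Reedy structure is a combinatorial model category, it admits a left Bousfield localization at a cofibrant approximation to the image of $S$ under $L$; call this the stable structure on $\Ga\mh\IICat$. By the standard transfer of left Bousfield localizations along a Quillen equivalence (see \cite{Hir03Model}), the levelwise adjunction is then a Quillen equivalence between these two stable model structures.

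The step I expect to be the main obstacle is the identification of the weak equivalences of the stable structure on $\Ga\mh\IICat$ with the class $\cS$ of \cref{defn:stable-equiv}. For this I would use that the right adjoint $R$ is weakly equivalent to one of the nerve functors for 2-categories, and hence preserves finite products and preserves and reflects weak equivalences; it follows that a $\Ga$-2-category $Z$ is special, respectively very special, in the sense of \cref{defn:special-very-special} if and only if the $\Ga$-space $RZ$ is, the group condition on $\pi_0$ being homotopy invariant. Consequently the fibrant objects of the transported localization are precisely the strictly fibrant very special $\Ga$-2-categories, and a map $f$ is a local equivalence exactly when it induces a bijection $\ho\Ga\mh\IICat(Y,Z) \to \ho\Ga\mh\IICat(X,Z)$ for every very special $Z$ --- the reduction from derived mapping spaces to their $\pi_0$ being legitimate because the very special $\Ga$-objects on either side are closed under loop objects, just as for $\Ga$-spaces in \cite{BF1978}. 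This is exactly the condition of \cref{defn:stable-equiv}, so the stable structure on $\Ga\mh\IICat$ has $\cS$ for its class of weak equivalences.

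Finally, a Quillen equivalence induces a DK-equivalence on the hammock localizations of the underlying relative categories; hence, by \cref{prop:Rezk-equiv-is-DK-equiv}, the Quillen equivalence between the stable structures is an equivalence of homotopy theories $(\Ga\mh\IICat,\cS) \to (\Ga\mh\sSet,\cS)$, realized by the right-derived functor of $R$ (which is homotopical because $R$ preserves all strict weak equivalences and, via the derived adjunction together with the fact that $R$ carries very special objects to very special objects, carries $\cS$ into $\cS$). Passing to homotopy categories gives $\Ho\Ga\mh\IICat \simeq \Ho\Ga\mh\sSet$, and the latter is the full subcategory of connective objects in the stable homotopy category by \cite{BF1978}. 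Alternatively, one can bypass the model structures: the strict Quillen equivalence already gives an equivalence of homotopy theories $(\Ga\mh\IICat,\cW) \simeq (\Ga\mh\sSet,\cW)$, and because $R$ induces an equivalence of homotopy categories carrying very special $\Ga$-2-categories to very special $\Ga$-spaces it matches the class $\cS$ with the class $\cS$; since localizing a homotopy theory at a larger, matched class of equivalences again yields an equivalence of homotopy theories, the claim follows.
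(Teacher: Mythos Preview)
Your proposal is correct, and in fact your closing ``alternative'' is essentially what the paper does.  The paper gives no separate proof at all: it simply says ``The remarks above prove the following theorem,'' meaning that the strict Quillen equivalence of \cref{prop:quillen-equiv-sset-iicat} identifies the levelwise homotopy categories, and that under this identification very special $\Ga$-2-categories correspond to very special $\Ga$-simplicial sets (both notions being defined levelwise).  Since stable equivalences on each side are \emph{defined} as the maps inducing bijections on hom-sets in the levelwise homotopy category into every very special object, the class $\cS$ is matched with $\cS$, and localizing the already-equivalent homotopy theories at matched classes yields the stable equivalence.

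Your main route---constructing a stable model structure on $\Ga\mh\IICat$ by left Bousfield localization and then transferring the Quillen equivalence---is a genuinely different and more structured argument.  It buys you an actual stable model category on the $2$-categorical side, which the paper never builds.  The cost is that you must verify the hypotheses for Bousfield localization (combinatoriality and left properness of the generalized Reedy structure on $\Ga\mh\IICat$, the latter not being addressed anywhere in the paper), and you must then argue that the local equivalences coincide with the class $\cS$ of \cref{defn:stable-equiv}.  Your sketch of that identification via closure of very special objects under loop objects is plausible but would need to be made precise.  The paper sidesteps all of this by never leaving the level of homotopy categories and relative categories, which is enough for the statement as given.
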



\section{Symmetric monoidal 2-categories}
\label{sec:SM-2-cats}

In this section we give basic results on symmetric monoidal structures
for 2-categories and bicategories.  We first outline the general theory of
symmetric monoidal bicategories in \cref{sec:SM-bicats} and then
describe the stricter notions of permutative Gray-monoid and
permutative 2-category in
\cref{sec:permutative-Gray-monoids,sec:perm-2-cat}, respectively.
Both descriptions make use of the Gray tensor product, which we recall
in \cref{sec:2-cats-and-Gray-tensor-product}.

\subsection{Symmetric monoidal bicategories}
\label{sec:SM-bicats}

While our main objects of study, permutative Gray-monoids and
permutative 2-categories, are 2-categories (see \cref{defn:pgm} and
\cref{defn:perm-2-cat}), it is important for us to use some of the
more general theory of symmetric monoidal bicategories.  Since we use
only some main results of this theory and not the particular details
of many definitions, we only sketch the notions of symmetric monoidal
bicategory together with functors and transformations between them.
Good references for these details include 
\cite{McCru00Balanced,Lac10Icons,Sch2011Classification,CG2014Iterated}.  In
this section, functor always means pseudofunctor, transformation
always means pseudonatural transformation (which we will only indicate
via components), and equivalence always means pseudonatural adjoint
equivalence.

\begin{defn}[Sketch, see {\cite[Defn. 2.3]{Sch2011Classification}} or \cite{CG2014Iterated}]
\label{defn:smb}
A \textit{symmetric monoidal bicategory} consists of
\begin{itemize}
\item a bicategory $\cB$,
\item a tensor product functor $\cB \times \cB \to \cB$,
\item a unit object $e \in \ob \cB$,
\item an associativity equivalence $(xy)z \simeq x(yz)$,
\item unit equivalences $xe \simeq x \simeq ex$
\item four invertible modifications between composites of the unit and
  associativity equivalences 
\item a braid equivalence $\beta:xy \simeq yx$,
\item two invertible modifications (denoted $R_{--|-}, R_{-|--}$)
  which correspond to two instances of the third Reidemeister move,
\item and an invertible modification (the syllepsis, $v$, which has
  components indexed by pairs of objects) between $\beta \circ \beta$
  and the identity
\end{itemize}
satisfying three axioms for the monoidal structure, four axioms for
the braided structure, two axioms for the sylleptic structure, and one
final axiom for the symmetric structure.
\end{defn}

\begin{rmk}
  In the theory of monoidal categories, there are notions of braided
  and symmetric structures, but a sylleptic structure is a new
  intermediate structure that only appears in the 2-categorical
  context.  The syllepsis axioms compare $v_{xy,z}$ with the composite
  of $v_{x,z}$ and $v_{y,z}$ (and similarly in the second variable),
  while the symmetry axiom ensures that there is a unique isomorphism
  between $\beta^{3}$ and $\beta$. Moreover, the coherence theorem for symmetric monoidal bicategories in \cite{GO12infinite} states that the axioms in this definition imply there is a unique structural isomorphism between any compositions of the associativity, unit, and braiding equivalences which represent the same permutation of objects.
\end{rmk}

\begin{defn}[Sketch, see {\cite[Defn. 2.5]{Sch2011Classification}}]\label{defn:symm-mon-psfun}
  A \textit{symmetric monoidal pseudofunctor} $F:\cB \to \cC$ consists
  of
  \begin{itemize}
  \item a functor $F:\cB \to \cC$,
  \item a unit equivalence $F(e_{\cB}) \simeq e_{\cC}$,
  \item an equivalence for the tensor product $Fx Fy \simeq F(xy)$,
  \item three invertible modifications between composites of the unit
    and tensor product equivalences, and
  \item an invertible modification $U$ comparing the braidings in
    $\cB$ and $\cC$
  \end{itemize}
  satisfying two axioms for the monoidal structure, two axioms for the
  braided structure, and one axiom for the symmetric (and hence
  subsuming the sylleptic) structure.
\end{defn}

\begin{defn}[Sketch, see {\cite[Defn. 2.7]{Sch2011Classification}}]\label{defn:sym-mon-transf}
  A \textit{symmetric monoidal transformation} $\al: F \rtarr G$
  consists of
  \begin{itemize}
  \item a transformation $\al: F \rtarr G$ and
  \item two invertible modifications concerning the interaction
    between $\al$ and the unit objects on the one hand and the tensor
    products on the other
  \end{itemize}
  satisfying two axioms for the monoidal structure and one axiom for
  the symmetric structure (and hence subsuming the braided and
  sylleptic structures).
\end{defn}

The following is verified in \cite{Sch2011Classification}.  Note that we
have not defined symmetric monoidal modifications as we will not have
any reason to use them in any of our constructions.

\begin{lem}\label{lem:smb3}
  There is a tricategory, $\SMBicat_3$, of symmetric monoidal bicategories, symmetric
  monoidal pseudofunctors, symmetric monoidal transformations, and
  symmetric monoidal modifications.
\end{lem}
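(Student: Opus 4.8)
This lemma is established in \cite{SP09Classification}, and the plan is to follow the direct construction there. The starting point is the tricategory $\mathbf{Bicat}$ of bicategories, pseudofunctors, pseudonatural transformations, and modifications, whose existence is standard (Gordon--Power--Street). The underlying data of a symmetric monoidal pseudofunctor, transformation, or modification is a cell of $\mathbf{Bicat}$ decorated with comparison modifications satisfying axioms, so there is a forgetful assignment to $\mathbf{Bicat}$ at every level; the plan is to define all composites and all tricategorical coherence cells of the putative tricategory of symmetric monoidal bicategories as lifts, along this forgetful assignment, of the corresponding structure of $\mathbf{Bicat}$. One could instead try to produce the tricategory formally, by exhibiting symmetric monoidal bicategories as the $E_\infty$-algebras in the symmetric monoidal tricategory $(\mathbf{Bicat},\times)$ and appealing to a general principle that algebras for a well-behaved operad assemble into a tricategory; but making this precise at the tricategorical level is comparably involved and would still need to be matched up with the explicit definitions above, so I would proceed directly.

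First I would define the composition functors. For symmetric monoidal pseudofunctors $F\colon\cB\to\cC$ and $G\colon\cC\to\cD$, the composite has underlying pseudofunctor $GF$; its tensor comparison equivalence at $(x,y)$ is the pasting of $G$ applied to $F$'s comparison with $G$'s comparison at $(Fx,Fy)$, and the unit comparison, the three structural modifications, and the braiding comparison $U$ are assembled the same way. One then checks that these data satisfy the two monoidal, two braided, and one symmetric axiom of \cref{defn:symm-mon-psfun}. Horizontal and vertical composites of symmetric monoidal transformations, and the various composites of symmetric monoidal modifications, are built analogously from the underlying composite in $\mathbf{Bicat}$ together with the evident pasted comparison data; identities carry identity comparison data.

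Next I would construct the remaining tricategorical data: the associativity and unit pseudonatural adjoint equivalences for $1$-cell composition, the pentagonator, the two unit modifications, and the higher cells organizing them. Each is a specified cell of $\mathbf{Bicat}$, and the task is to show it lifts to symmetric monoidal data; for example the associativity equivalence $(HG)F\simeq H(GF)$ is an identity on underlying pseudofunctors, so one need only verify that the identity comparison data assemble into a symmetric monoidal transformation. The three tricategory axioms then reduce to the axioms in $\mathbf{Bicat}$ plus bookkeeping that comparison data agree. Here I would lean on coherence: coherence for tricategories (triequivalence with a $\mathrm{Gray}$-category) lets the verification be carried out in a strictified setting, and the coherence theorem for symmetric monoidal bicategories \cite{GO12infinite}---any two structural $2$-cells built from associativity, unit, and braiding equivalences that realize the same permutation of objects are equal---collapses most of the remaining pasting identities to formalities.

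The hard part will be exactly these axiom verifications: showing the pasted comparison data of a composite symmetric monoidal pseudofunctor again satisfy all five axioms, and that the lifted associativity and unit data satisfy the three tricategory axioms, are large pasting-diagram computations. Stabilization lightens the load---because the symmetric axioms subsume the sylleptic ones and those subsume the braided ones, the number of independent identities is far below a naive count---but the diagrams remain unwieldy. This is why \cite{SP09Classification} devotes considerable space to the construction, and why here we are content to cite that reference rather than reproduce the computation.
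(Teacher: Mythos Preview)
Your proposal is correct and aligns with the paper's treatment: the paper does not prove this lemma at all but simply states that it ``is verified in \cite{SP09Classification}'' and moves on. You go further by sketching the shape of the construction in \cite{SP09Classification}, but you ultimately reach the same conclusion, namely that the appropriate course here is to cite that reference rather than reproduce the lengthy verification.
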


We will need to know when symmetric monoidal pseudofunctors or
transformations are invertible in the appropriate sense.

\begin{defn}\label{defn:sym-mon-biequiv}
  A \textit{symmetric monoidal biequivalence} $F: \cB \to \cC$ is a
  symmetric monoidal pseudofunctor such that the underlying functor
  $F$ is a biequivalence of bicategories.
\end{defn}

\begin{defn}
  \label{defn:symm-mon-equiv}
  A \textit{symmetric monoidal equivalence} $\al: F \rtarr G$ between
  symmetric monoidal pseudofunctors $F,G: \cB \to \cC$ is a symmetric monoidal
  transformation $\al: F \rtarr G$ such that the underlying
  transformation $\al$ is an equivalence.  This is logically
  equivalent to the condition that each component 1-cell
  $\al_{b}:Fb \to Gb$ is an equivalence 1-cell in $\cC$.
\end{defn}

The results of \cite{Gur2012Biequivalences} can be used to easily prove
the following lemma, although the first part is also verified by
elementary means in \cite{Sch2011Classification}.

\begin{lem}
  Let $F,G: \cB \to \cC$ be symmetric monoidal pseudofunctors, and
  $\al:F \rtarr G$ a symmetric monoidal transformation between them.
  \begin{itemize}
  \item $F$ is a symmetric monoidal biequivalence as above if
    and only if it is an internal biequivalence in the tricategory
    $\SMBicat_3$.
  \item $\al$ is a symmetric monoidal equivalence if and
    only if it is an internal equivalence in the bicategory
    $\SMBicat_3(\cB,\cC)$.
  \end{itemize}
\end{lem}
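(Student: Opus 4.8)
The plan is to reduce both statements to a single mechanism---transport of structure along an equivalence---with the ``only if'' directions being purely formal. For those formal directions I would use that any functor of tricategories preserves internal biequivalences and internal equivalences: the forgetful functor of tricategories from $\SMBicat$ to the tricategory of bicategories carries an internal biequivalence $F$ in $\SMBicat$ to a biequivalence of the underlying bicategories, which is exactly what it means for $F$ to be a symmetric monoidal biequivalence; and the forgetful functor on hom-bicategories carries an internal equivalence $\al$ in $\SMBicat(X,Y)$ to an equivalence pseudonatural transformation, hence a symmetric monoidal equivalence by \cref{defn:symm-mon-equiv}. The parenthetical assertion in that definition---that the underlying transformation is an equivalence precisely when each component $\al_b$ is an equivalence $1$-cell---is standard and I would handle it by assembling a pseudoinverse transformation componentwise from chosen equivalence data for the $\al_b$.

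For the substantive direction of the first part, I would begin with a symmetric monoidal biequivalence $F$, so that the underlying pseudofunctor $F\cn X \to Y$ is a biequivalence of bicategories, and choose an ordinary pseudofunctor $G\cn Y \to X$ together with pseudonatural adjoint equivalences $\eta\cn \id_X \simeq GF$ and $\epsilon\cn FG \simeq \id_Y$. I would then promote $G$ to a symmetric monoidal pseudofunctor by transport along $\eta$ and $\epsilon$: its unit and tensor-product constraints, its three structural modifications, and its braiding-comparison modification are all obtained by conjugating the corresponding data of $F$ with the equivalences $\eta$, $\epsilon$, and the symmetric monoidal pseudofunctor axioms for $G$ reduce by pasting to those for $F$ together with the triangle identities. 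The same conjugation makes $\eta$ and $\epsilon$ into symmetric monoidal transformations, so $G,\eta,\epsilon$ furnish weak-inverse data for $F$ \emph{inside} $\SMBicat$; by the results of \cite{Gur12Biequivalences} such data always assembles into the coherent inverse data of an internal biequivalence, which finishes this direction. (Alternatively one may simply quote the explicit construction of a symmetric monoidal pseudoinverse in \cite{SP09Classification}.)

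The second part runs along the same lines one level down. Given a symmetric monoidal transformation $\al\cn F \rtarr G$ whose underlying pseudonatural transformation is an equivalence, I would choose a pseudonatural inverse $\bar\al\cn G \rtarr F$ together with the invertible modifications exhibiting $\al$ as an equivalence in the bicategory of pseudofunctors, transformations, and modifications from $X$ to $Y$. Conjugating the two structural modifications of $\al$ with this data equips $\bar\al$ with the structure of a symmetric monoidal transformation, the axioms again reducing to those for $\al$; and one checks that an invertible modification between symmetric monoidal transformations automatically satisfies the conditions needed to be a $2$-cell of $\SMBicat(X,Y)$ (i.e.\ it is automatically compatible with the monoidal, braided, and sylleptic data), so the two inverting modifications live in $\SMBicat(X,Y)$ and exhibit $\al$ as an internal equivalence there.

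The only real work, and hence the main obstacle, is the transport-of-structure bookkeeping: verifying that the conjugated constraint $2$-cells and structural modifications satisfy the full list of symmetric monoidal pseudofunctor (respectively transformation) axioms, and in particular tracking the behaviour of the braiding-comparison data under conjugation. I would not carry this out in detail, since it is exactly the general machinery developed in \cite{Gur12Biequivalences}, and the coherence theorem for symmetric monoidal bicategories of \cite{GO12infinite} guarantees that the pasting diagrams of structural $2$-cells encountered along the way commute; for the first part this verification has in any case already been done by elementary means in \cite{SP09Classification}.
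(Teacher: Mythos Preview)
Your proposal is correct and aligns with what the paper does: the paper gives no proof at all, merely stating that ``the results of \cite{Gur12Biequivalences} can be used to easily prove the following lemma, although the first part is also verified by elementary means in \cite{SP09Classification}.'' Your outline is precisely the transport-of-structure argument that \cite{Gur12Biequivalences} formalizes, so you have in effect unpacked the citation the paper leaves implicit. One small caution: the claim that an invertible modification between symmetric monoidal transformations is \emph{automatically} a symmetric monoidal modification is not quite free---there are compatibility axioms to check---but this too falls under the general machinery you cite, so the outline stands.
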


We now come to the definition and results from
\cite{Sch2011Classification} that are most important for our construction
of $K$-theory later.

\begin{defn}[{\cite[Def 2.28]{Sch2011Classification}}]
  \label{defn:quasi-strict}
  A symmetric monoidal bicategory $\cB$ is a \textit{quasi-strict
    symmetric monoidal 2-category} if
  \begin{itemize}
  \item the underlying monoidal bicategory of $\cB$ is a Gray-monoid
    (see \cref{defn:gray-monoid}),
  \item the braided structure is strict in the sense of Crans
    \cite{Cra98Generalized}, and
  \item the following three additional axioms hold.
  \end{itemize}
  \begin{itemize}
  \item[QS1] The modifications $R_{--|-}, R_{-|--}, v$ are all identities.
  \item[QS2] The naturality 2-cells
    \[
    \xy
    (0,0)*+{ab}="00";
    (30,0)*+{a'b}="10";
    (0,-12)*+{ba}="01";
    (30,-12)*+{ba'}="11";
    {\ar^{f1} "00"; "10" };
    {\ar^{\beta} "10"; "11" };
    {\ar_{\beta} "00"; "01" };
    {\ar_{1f} "01"; "11" };
    (15,-6)*{\cong \, \beta_{f1}};
    (60,0)*+{ab}="22";
    (90,0)*+{ab'}="32";
    (60,-12)*+{ba}="23";
    (90,-12)*+{b'a}="33";
    {\ar^{1g} "22"; "32" };
    {\ar^{\beta} "32"; "33" };
    {\ar_{\beta} "22"; "23" };
    {\ar_{g1} "23"; "33" };
    (75,-6)*{\cong \, \beta_{1g}};
    \endxy
    \]
    for the pseudonatural transformation $\beta$ are identities.
  \item[QS3] The 2-cells $\Sigma_{\beta, g}, \Sigma_{f, \beta}$ (see
    \cref{defn:graytensor}) are the identity for any 1-cells $f,g$.
\end{itemize}
\end{defn}

\begin{rmk}\ 
  \begin{enumerate}
  \item The first axiom above implies that the tensor product is given
    by a cubical functor, and that this operation is strictly unital
    and associative.  It is the functoriality isomorphism for the
    tensor product as a cubical functor that gives rise to the 2-cells
    $\Sigma$ that appear in (QS3).  We will discuss cubical functors
    and their properties in the next section.
  \item Our version of (QS3) is not exactly as it is presented in
    \cite{Sch2011Classification}, but it is equivalent using the cubical
    functor axioms.  We find this version more amenable to our later
    work.
  \end{enumerate}
\end{rmk}

There is also a stricter notion of morphism that will be of interest
to us.

\begin{defn}\label{strict-functor}
  A \textit{strict functor} $F:\cB \to \cC$ between symmetric monoidal
  bicategories is a strict functor of the underlying bicategories that
  preserves all of the structure strictly, and for which all of the
  constraints are either the identity (if this makes sense) or unique
  coherence isomorphisms from $\cC$.
\end{defn}

\begin{rmk}
  There is a monad on the category of 2-globular sets whose algebras
  are symmetric monoidal bicategories.  Strict functors can then be
  identified with the morphisms in the Eilenberg-Moore category for
  this monad, and in particular symmetric monoidal bicategories with
  strict functors form a category.  This point of view is crucial to
  the methods employed in \cite{Sch2011Classification}.
\end{rmk}

\begin{notn}\label{notn:smb-qst}
  We let $\SMBicat$ denote the 1-category of symmetric monoidal
  bicategories and strict functors.  We let $\qsSMIICat$ denote the
  full subcategory of $\SMBicat$ consisting of quasi-strict symmetric
  monoidal bicategories.
\end{notn}

The following strictification theorem of \cite{Sch2011Classification} 
enables us to restrict attention to quasi-strict symmetric monoidal
2-categories.  Its proof relies heavily on the coherence theorem for
symmetric monoidal bicategories in \cite{GO12infinite}.
\begin{thm}[{\cite[Thm. 2.97]{Sch2011Classification}}]
  \label{cohqs2cats}
  Let $\cB$ be a symmetric monoidal bicategory.
  \begin{enumerate}
  \item There are two endofunctors, $\cB \mapsto \cB^{c}$ and
    $\cB \mapsto \cB^{qst}$, of the category of symmetric monoidal
    bicategories and strict functors between them.  Any symmetric
    monoidal bicategory of the form $\cB^{qst}$ is a quasi-strict
    symmetric monoidal 2-category.
  \item There are natural transformations
    $(-)^{c} \impl \id, (-)^{c} \impl (-)^{qst}$.  When evaluated at a
    symmetric monoidal bicategory $\cB$, these give natural strict
    biequivalences
    \[
    \cB \leftarrow \cB^{c} \to \cB^{qst}.
    \]
  \item For a symmetric monoidal pseudofunctor $F:\cB \to \cC$, there
    are strict functors $F^{c}:\cB^{c} \to \cC^{c}$,
    $F^{qst}:\cB^{qst} \to \cC^{qst}$ such that the right hand square
    below commutes and the left hand square commutes up to a symmetric
    monoidal equivalence.
    \[
    \xy
    (0,0)*+{\cB}="00";
    (30,0)*+{\cB^{c}}="10";
    (60,0)*+{\cB^{qst}}="20";
    (0,-12)*+{\cC}="01";
    (30,-12)*+{\cC^{c}}="11";
    (60,-12)*+{\cC^{qst}}="21";
    {\ar^{F^{c}} "10"; "11" };
    {\ar "10"; "20" };
    {\ar^{F^{qst}} "20"; "21" };
    {\ar "10"; "00" };
    {\ar_{F} "00"; "01" };
    {\ar "11"; "01" };
    {\ar "11"; "21" };
    (15,-6)*{\simeq}
    \endxy
    \]
  \end{enumerate}
\end{thm}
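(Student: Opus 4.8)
The plan is to build $\cB^{qst}$ from $\cB$ in two functorial stages, $\cB \leftarrow \cB^c \to \cB^{qst}$, each step functorial for strict functors, and then to extend the construction to symmetric monoidal pseudofunctors using a universal property of $\cB^c$. For the replacement $\cB^c$ the naive idea is the string construction --- keep the objects of $\cB$, let $1$-cells be finite composable strings of $1$-cells of $\cB$ under concatenation (which is strictly associative and unital), and let $2$-cells between strings be $2$-cells of $\cB$ between their composites --- which is a $2$-category biequivalent to $\cB$, but whose evaluation map to $\cB$ is only a pseudofunctor. To obtain a genuinely \emph{strict} biequivalence $q_\cB \cn \cB^c \to \cB$ one instead takes a flexible (cofibrant) replacement of $\cB$ as a strict algebra for the monad $T$ on $2$-globular sets of the remark above: $\cB^c$ is built functorially from the bar resolution $T^{\bullet+1}\cB$, exactly as in the strictification theory for $2$-monads (Blackwell--Kelly--Power, Lack), carrying the extra invertible modifications along as generators. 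It comes with a strict algebra map $q_\cB$, natural in strict functors, a pseudofunctor section $p_\cB$ with $q_\cB p_\cB \cong \id$, and the universal property that precomposition with $p_\cB$ identifies strict functors $\cB^c \to \cC$ with symmetric monoidal pseudofunctors $\cB \to \cC$. The content to check is that $q_\cB$ is a biequivalence: it is the identity on objects, and each functor on hom-categories is an equivalence because the resolution only inserts formal composites of (adjoint) equivalences and invertible $2$-cells of $\cB$, which $q_\cB$ then evaluates. Reconciling this purely algebraic replacement with the bicategorical notion of equivalence is the first technical hot spot.

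For the quasi-strictification $\cB^{qst}$ one starts from $\cB^c$, whose underlying bicategory is now a $2$-category, and strictifies the monoidal structure. Replace the tensor product $\cB^c \times \cB^c \to \cB^c$ by a cubical functor --- possible for any functor between $2$-categories, cf.\ the discussion of cubical functors that follows --- so that by the universal property of the Gray tensor product it factors through $\cB^c \otimes \cB^c \to \cB^c$; then rectify the associativity and unit equivalences to identities. This is legitimate because the coherence theorem for symmetric monoidal bicategories of \cite{GO12infinite} supplies a \emph{unique} structural isomorphism between any two composites of associativity, unit, and braiding equivalences that represent the same permutation, so transporting the structure along these canonical isomorphisms is unambiguous and produces a Gray-monoid with a Crans-strict braiding, i.e.\ the first two bullets of \cite[Defn.~2.28]{SP09Classification}. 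The same uniqueness clause forces the modifications $R_{--|-}$, $R_{-|--}$, $v$ and the $2$-cells $\beta_{f1}$, $\beta_{1g}$, $\Sigma_{\beta,g}$, $\Sigma_{f,\beta}$ to be identities, so that QS1--QS3 hold; this verification is the heart of the proof and the second technical hot spot. The map $\cB^c \to \cB^{qst}$ is the identity on the underlying $2$-category up to canonical isomorphism, hence a strict functor and a biequivalence; its naturality, and that of $q_\cB$, is inherited from the functoriality of the resolution and of the cubical/coherence rectification, giving the natural transformations $(-)^c \impl \id$ and $(-)^c \impl (-)^{qst}$.

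Finally, given a symmetric monoidal pseudofunctor $F \cn \cB \to \cC$, the composite $\cB \xrightarrow{F} \cC \xrightarrow{p_\cC} \cC^c$ is again a symmetric monoidal pseudofunctor, so the universal property of $\cB^c$ yields a strict functor $F^c \cn \cB^c \to \cC^c$ with $F^c p_\cB \cong p_\cC F$; since $p_\cB$ is a biequivalence this gives $q_\cC F^c \simeq F q_\cB$, a symmetric monoidal equivalence, so the left-hand square commutes up to such. Feeding $F^c$ through the second-stage rectification produces $F^{qst} \cn \cB^{qst} \to \cC^{qst}$, and the right-hand square commutes strictly by construction, since both its horizontal arrows are the rectification map. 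Beyond bookkeeping, the real work is (i) the biequivalence property of $q_\cB$ and (ii) the verification of QS1--QS3; both rest on the coherence theorem of \cite{GO12infinite}, which is why the argument leans so heavily on it.
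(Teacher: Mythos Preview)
The paper does not give its own proof of this theorem: it is quoted from \cite[Thm.~2.97]{SP09Classification}, with only the remark that the argument ``relies heavily on the coherence theorem for symmetric monoidal bicategories in \cite{GO12infinite}.'' There is therefore nothing in the paper to compare your proposal against beyond that one-line pointer.

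Your outline is in the right spirit, and your appeal to \cite{GO12infinite} matches the paper's remark. Two cautions, however. First, Schommer-Pries's construction of $\cB^{c}$ uses computads rather than a bar-resolution flexible replacement (the paper itself alludes to ``the general theory of computads developed in \cite{SP09Classification}'' in the proof of the very next theorem); the two are related, but it is the computad presentation that delivers the precise strict-functor universal property you invoke. Second, and more seriously, your claim that the map $\cB^{c} \to \cB^{qst}$ is ``the identity on the underlying $2$-category up to canonical isomorphism'' cannot be right: making the tensor product strictly associative and unital forces a change of objects (to formal tensor words, as in any Mac~Lane--type strictification), so the underlying $2$-category of $\cB^{qst}$ is genuinely different from that of $\cB^{c}$, and the biequivalence $\cB^{c} \to \cB^{qst}$ has real content. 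This does not break your overall strategy, but it means the second stage is not nearly as light as you suggest, and the verification that the resulting structure satisfies QS1--QS3 is where the coherence theorem of \cite{GO12infinite} actually does its work.
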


We will need a strengthening of the above result in the special case
that we begin with quasi-strict symmetric monoidal 2-categories
instead of the more general symmetric monoidal bicategories.

\begin{thm}\label{thm:cohqs2cats2}
  Let $\cB$ be a quasi-strict symmetric monoidal 2-category.
  \begin{enumerate}
  \item There is a strict functor $\nu \cn \cB^{qst} \to \cB$ such
    that
    \[
    \xy
    (0,0)*+{\cB^{c}}="0";
    (40,0)*+{\cB^{qst}}="1";
    (20,-12)*+{\cB}="2";
    {\ar "0"; "1"};
    {\ar^{\nu} "1"; "2"};
    {\ar "0"; "2"};
    \endxy
    \]
    commutes, where the unlabeled morphisms are those from
    \cref{cohqs2cats}.  In particular, $\nu$ is a strict symmetric
    monoidal biequivalence.
  \item For a symmetric monoidal pseudofunctor $F:\cB \to \cC$ with
    $\cB, \cC$ with both quasi-strict, the square below commutes up to
    a symmetric monoidal equivalence.
    \[
    \xy
    (0,0)*+{\cB^{qst}}="00";
    (40,0)*+{\cB}="10";
    (0,-12)*+{\cC^{qst}}="01";
    (40,-12)*+{\cC}="11";
    {\ar^{\nu} "00"; "10" };
    {\ar^{F} "10"; "11" };
    {\ar_{F^{qst}} "00"; "01" };
    {\ar_{\nu} "01"; "11" };
    (20,-6)*{\simeq}
    \endxy
    \]
  \end{enumerate}
\end{thm}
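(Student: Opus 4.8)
The plan is to exploit the fact that when $\cB$ is already quasi-strict, the two constructions $\cB^c$ and $\cB^{qst}$ of \cref{cohqs2cats} become especially transparent, and in particular the zigzag $\cB \leftarrow \cB^c \to \cB^{qst}$ can be completed to a genuine commuting triangle rather than merely a zigzag of biequivalences. Concretely, I would first recall the explicit description of the free-forgetful constructions $(-)^c$ and $(-)^{qst}$ from \cite{SP09Classification}: $\cB^c$ is built from the free symmetric monoidal bicategory monad applied to the underlying $2$-globular set of $\cB$, with a canonical evaluation strict functor $\cB^c \to \cB$ (counit of the adjunction, which is a strict biequivalence), and $\cB^{qst}$ is a quotient that forces the quasi-strictness axioms (QS1)--(QS3) to hold on the nose. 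Since $\cB$ is \emph{already} quasi-strict, the composite $\cB^c \to \cB$ factors through the quotient $\cB^c \to \cB^{qst}$: all the structure that $\cB^{qst}$ strictifies is already strict in $\cB$, so the evaluation map kills exactly the relations imposed in passing from $\cB^c$ to $\cB^{qst}$. This factorization defines $\nu\cn \cB^{qst}\to\cB$ and makes the triangle in part (1) commute by construction. That $\nu$ is a strict symmetric monoidal biequivalence then follows from the two-out-of-three property applied to the commuting triangle, since the other two legs are strict biequivalences by \cref{cohqs2cats}(2).

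For part (2), I would assemble the naturality square from the data already produced. Given a symmetric monoidal pseudofunctor $F\cn\cB\to\cC$ with both $\cB,\cC$ quasi-strict, \cref{cohqs2cats}(3) supplies $F^c\cn\cB^c\to\cC^c$ and $F^{qst}\cn\cB^{qst}\to\cC^{qst}$ together with a square in which the right-hand face (involving $\cB^c\to\cB^{qst}$) commutes strictly and the left-hand face (involving $\cB^c\to\cB$) commutes up to a symmetric monoidal equivalence. Pasting these two faces along $\cB^c\to\cB^{qst}$, and using the defining triangles $\cB^c\to\cB^{qst}\xrightarrow{\nu}\cB$ and likewise for $\cC$ from part (1), identifies the composite $\cB^c\to\cB^{qst}\xrightarrow{\nu}\cB\xrightarrow{F}\cC$ with $\cB^c\to\cB\xrightarrow{F}\cC$, which by \cref{cohqs2cats}(3) is equivalent to $\cB^c\to\cC\leftarrow\cC^{qst}$, i.e.\ to $\cB^c\to\cB^{qst}\xrightarrow{F^{qst}}\cC^{qst}\xrightarrow{\nu}\cC$. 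Since $\cB^c\to\cB^{qst}$ is itself a (strict) biequivalence, this equivalence of the two composites out of $\cB^c$ descends to the desired symmetric monoidal equivalence $\nu\circ F^{qst}\simeq F\circ\nu$ of functors out of $\cB^{qst}$; to make this last descent rigorous one notes that a strict biequivalence is an equivalence in the relevant bicategory of symmetric monoidal bicategories and can be inverted up to symmetric monoidal equivalence (using the results of \cite{Gur12Biequivalences} cited above).

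The main obstacle I anticipate is the first one: verifying cleanly that the counit $\cB^c\to\cB$ genuinely factors through the quotient $\cB^c\to\cB^{qst}$ when $\cB$ is quasi-strict. This requires unwinding precisely which relations are imposed in the passage $\cB^c\rightsquigarrow\cB^{qst}$ — the collapsing of $R_{--|-},R_{-|--},v$ and of the $\beta$-naturality and $\Sigma$ $2$-cells, as listed in (QS1)--(QS3) — and checking that each such relation holds in $\cB$ precisely because $\cB$ satisfies those same axioms. This is essentially a bookkeeping exercise through the (substantial) machinery of \cite{SP09Classification}, but it is where all the content lies; once the factorization $\nu$ exists, everything else is formal two-out-of-three and pasting of the squares already provided by \cref{cohqs2cats}. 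A secondary, more minor subtlety is ensuring all the equivalences produced are \emph{symmetric monoidal} equivalences and not merely underlying biequivalences, which is handled by appealing to \cite{Gur12Biequivalences} as in the lemma preceding \cref{defn:quasi-strict}.
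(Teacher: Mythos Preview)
Your overall strategy is sound and aligns with the paper's own proof, which is quite terse: for part (1) the paper simply invokes ``the general theory of computads developed in \cite{SP09Classification} and the calculus of mates \cite{KS74Review},'' and then deduces that $\nu$ is a biequivalence by two-out-of-three exactly as you do; for part (2) the paper says only that it ``follows directly from the third part of \cref{cohqs2cats},'' which is precisely the pasting-and-cancellation argument you spell out.

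The one place your framing differs is the description of $(-)^{qst}$ as a straightforward quotient of $(-)^c$. In \cite{SP09Classification} the passage from $\cB^c$ to $\cB^{qst}$ is not purely a quotient by relations: it involves replacing the equivalences in the symmetric monoidal structure by chosen adjoint equivalences and then transporting the coherence modifications via mates. This is why the paper explicitly cites the calculus of mates. Your factorization argument is the right shape, but the verification that the counit $\cB^c\to\cB$ respects the identifications made in forming $\cB^{qst}$ will, in practice, require tracking how those mate-transported cells behave when $\cB$ already satisfies (QS1)--(QS3), not merely checking that certain 2-cells are identities in $\cB$. This is not a gap in your logic so much as an underestimate of the bookkeeping: the ``substantial machinery'' you flag is exactly the computad/mates apparatus the paper is pointing at, and your quotient language should be read as shorthand for that.

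For part (2), your argument is correct and is in fact more explicit than the paper's: the paper relies on the reader to see that precomposing with the biequivalence $\cB^c\to\cB^{qst}$ can be cancelled, whereas you spell this out and correctly cite \cite{Gur12Biequivalences} for the symmetric monoidal enhancement.
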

\begin{proof}\proofof{thm:cohqs2cats2}
  The first statement follows from the general theory of computads
  developed in \cite{Sch2011Classification} and the calculus of mates
  \cite{KS74Review}.  We conclude that $\nu$ is a strict symmetric
  monoidal biequivalence since the other two maps in the triangle are,
  and biequivalences satisfy the 2-out-of-3 property.  The second
  claim follows directly from the third part of \cref{cohqs2cats}.
\end{proof}

In \cref{p2isoqs2} we repackage the definition of quasi-strict
symmetric monoidal 2-category using the Gray tensor product.  Before
doing so, we give a basic review of the Gray tensor product in the
next section.

\subsection{2-categories and the Gray tensor product}
\label{sec:2-cats-and-Gray-tensor-product}

Let $\IICat$ denote the category of strict 2-categories and strict
2-functors between them.  We will often be concerned with a monoidal
structure on $\IICat$ which is not the cartesian structure, but
instead uses the tensor product defined below.  For further reference,
see \cite{Gra74Formal,GPS95Coherence,Gurski13Coherence}.

\begin{defn}\label{defn:graytensor}
  Let $\cA, \cB$ be 2-categories.  The Gray tensor product of $\cA$
  and $\cB$, written $\cA \otimes \cB$ is the 2-category given by
  \begin{itemize}
  \item 0-cells consisting of pairs $a \otimes b$ with $a$ an object
    of $\cA$ and $b$ an object of $\cB$;
  \item 1-cells generated by basic 1-cells of the form
    $f \otimes 1: a \otimes b \to a' \otimes b$ for $f:a \to a'$ in
    $\cA$ and $1 \otimes g: a \otimes b \to a \otimes b'$ for
    $g:b \to b'$ in $\cB$, subject to the relations
    \begin{itemize}
    \item $(f \otimes 1) (f' \otimes 1) = (ff') \otimes 1$,
    \item $(1 \otimes g)(1 \otimes g') = 1 \otimes (gg')$
    \end{itemize}
    and with identity 1-cell $\id \otimes 1 = 1 \otimes \id$; and
  \item 2-cells generated by basic 2-cells of the form
    $\al \otimes 1$, $1 \otimes \de$, and
    $\Si_{f,g}: (f \otimes 1)(1 \otimes g) \cong (1 \otimes g)(f
    \otimes 1)$ subject to the relations
    \begin{itemize}
    \item
      $(\al \otimes 1) \cdot (\al' \otimes 1) = (\al \cdot \al')
      \otimes 1$,
    \item
      $(1 \otimes \de) \cdot (1 \otimes \de') = 1 \otimes (\de \cdot
      \de')$,
    \end{itemize}
    where $\cdot$ can be taken as either vertical or horizontal
    composition of 2-cells, together with the axioms below for 2-cells
    of the form $\Si_{f,g}$.
  \end{itemize}

  \[
  \def\objectstyle{\scriptstyle}
  \def\labelstyle{\scriptstyle}
  \xy0;/r.18pc/:
  (0,0)*+{\scriptstyle a_{1} \otimes a_{2}}="A"; 
  (38,0)*+{\scriptstyle a_{1} \otimes a_{2}'}="B";
  (0,-25)*+{\scriptstyle a_{1}' \otimes a_{2}}="C";
  (38,-25)*+{\scriptstyle a_{1}' \otimes a_{2}'}="D";
  (68,0)*+{\scriptstyle a_{1} \otimes a_{2}}="E";
  (106,0)*+{\scriptstyle a_{1} \otimes a_{2}'}="F";
  (68,-25)*+{\scriptstyle a_{1}' \otimes a_{2}}="G";
  (106,-25)*+{\scriptstyle a_{1}' \otimes a_{2}'}="H";
  {\ar@/^2pc/^{\scriptstyle 1 \otimes f_{2}} "A"; "B"};
  {\ar_{\scriptstyle 1 \otimes g_{2}} "A"; "B" };
  {\ar@/_2pc/_{\scriptscriptstyle g_{1}\otimes 1} "A"; "C" };
  {\ar@/^1pc/^{\scriptstyle f_{1}\otimes 1} "A"; "C" };
  {\ar^{\scriptscriptstyle f_{1}\otimes 1} "B"; "D" };
  {\ar_{\scriptstyle 1 \otimes g_{2} } "C"; "D" };
  {\ar_{\scriptscriptstyle g_{1} \otimes 1} "E"; "G" };
  {\ar@/^2pc/^{\scriptscriptstyle f_{1} \otimes 1} "F"; "H" };
  {\ar^{\scriptstyle 1 \otimes f_{2}} "E"; "F" };
  {\ar@/_2pc/_{\scriptstyle 1 \otimes g_{2} } "G"; "H" };
  {\ar@/_1pc/_{\scriptstyle g_{1} \otimes 1 } "F"; "H" };
  {\ar^{\scriptstyle 1 \otimes f_{2}} "G"; "H" };
  (55,-9)*{=};
  (19,5)*{\Downarrow 1 \otimes\alpha_{2}};
  (-2,-12)*{\stackrel{\Leftarrow}{\alpha_{1}\otimes 1}};
  (25,-12)*{\Downarrow \Sigma};
  (80,-12)*{\Downarrow \Sigma};
  (87,-29)*{\Downarrow 1\otimes\alpha_{2}};
  (108,-12)*{\stackrel{\Leftarrow}{\alpha_{1}\otimes 1}}
  \endxy
  \]
  
  \[
  \def\objectstyle{\scriptstyle}
  \def\labelstyle{\scriptstyle}
  \xy0;/r.20pc/:
  (0,0)*+{a_{1} \otimes a_{2}}="p00";
  (38,0)*+{a_{1} \otimes a_{2}'}="p10";
  (0,-25)*+{a_{1}' \otimes a_{2}}="p01";
  (38,-25)*+{a_{1}' \otimes a_{2}'}="p11";
  (0,-50)*+{a_{1}'' \otimes a_{2}}="p02";
  (38,-50)*+{a_{1}'' \otimes a_{2}'}="p12";
  {\ar^{1 \otimes f_{2}} "p00"; "p10" };
  {\ar^{f_{1} \otimes 1} "p10"; "p11" };
  {\ar^{h_{1} \otimes 1} "p11"; "p12" };
  {\ar_{f_{1} \otimes 1} "p00"; "p01" };
  {\ar_{h_{1} \otimes 1} "p01"; "p02" };
  {\ar_{1 \otimes f_{2}} "p02"; "p12" };
  {\ar^{1 \otimes f_{2}} "p01"; "p11" };
  (57,-25)*{=};
  (78,0)*+{a_{1} \otimes a_{2}}="q00";
  (116,0)*+{a_{1} \otimes a_{2}'}="q10";
  (78,-50)*+{a_{1}'' \otimes a_{2}}="q01";
  (116,-50)*+{a_{1}'' \otimes a_{2}'}="q11";
  {\ar^{1 \otimes f_{2}} "q00"; "q10" };
  {\ar_{1 \otimes f_{2}} "q01"; "q11" };
  {\ar_{(h_{1}f_{1}) \otimes 1} "q00"; "q01" };
  {\ar^{(h_{1}f_{1}) \otimes 1} "q10"; "q11" };
  (19,-11)*{\Downarrow \Sigma};
  (19,-36)*{\Downarrow \Sigma};
  (97,-25)*{\Downarrow \Sigma}
  \endxy
  \]
  \[
  \def\objectstyle{\scriptstyle}
  \def\labelstyle{\scriptstyle}
  \xy0;/r.20pc/:
  (0,0)*+{a_{1} \otimes a_{2}}="p00";
  (40,0)*+{a_{1} \otimes a_{2}'}="p10";
  (80,0)*+{a_{1} \otimes a_{2}''}="p20";
  (0,-25)*+{a_{1}' \otimes a_{2}}="p01";
  (40,-25)*+{a_{1}' \otimes a_{2}'}="p11";
  (80,-25)*+{a_{1}' \otimes a_{2}''}="p21";
  {\ar^{1 \otimes f_{2}} "p00"; "p10" };
  {\ar^{1 \otimes h_{2}} "p10"; "p20" };
  {\ar^{f_{1} \otimes 1} "p20"; "p21" };
  {\ar_{f_{1} \otimes 1} "p00"; "p01" };
  {\ar_{1 \otimes f_{2}} "p01"; "p11" };
  {\ar_{1 \otimes h_{2}} "p11"; "p21" };
  {\ar^{f_{1} \otimes 1} "p10"; "p11" };
  {\ar@{=} (40,-35)*{}; (40,-39)*{} };
  (0,-50)*+{a_{1} \otimes a_{2}}="q00";
  (80,-50)*+{a_{1} \otimes a_{2}''}="q10";
  (0,-75)*+{a_{1}' \otimes a_{2}}="q01";
  (80,-75)*+{a_{1}' \otimes a_{2}''}="q11";
  {\ar_{f_{1} \otimes 1} "q00"; "q01" };
  {\ar^{f_{1} \otimes 1} "q10"; "q11" };
  {\ar^{1 \otimes (h_{2}f_{2})} "q00"; "q10" };
  {\ar_{1 \otimes (h_{2}f_{2})} "q01"; "q11" };
  (20,-11)*{\Downarrow \Sigma};
  (60,-11)*{\Downarrow \Sigma};
  (40,-62)*{\Downarrow \Sigma}
  \endxy
  \]
\end{defn}

The basic properties of the Gray tensor product are summarized below.
\begin{enumerate}
\item The unit object for this monoidal structure is the terminal
  2-category, just as it is for the cartesian monoidal structure.
\item This monoidal structure is closed, with corresponding internal
  hom functor denoted $[\cB,\cC]$.  The 0-cells of $[\cB,\cC]$ are the
  2-functors $F:\cB \rightarrow \cC$, the 1-cells $\al:F \to G$ are
  the pseudonatural transformations from $F$ to $G$, and the 2-cells
  $\Ga:\al \Rightarrow \be$ are the modifications.  In particular,
  each functor $- \otimes \cB$ has a right adjoint $[\cB, -]$ and
  therefore preserves all colimits.
\item This monoidal structure is symmetric, with
  $\tau:\cA \otimes \cB \to \cB \otimes \cA$ sending $a \otimes b$ to
  $ b \otimes a$, $f \otimes 1$ to $1 \otimes f$, $1 \otimes g$ to
  $g \otimes 1$, similarly on 2-cells, and then extended on arbitrary
  1- and 2-cells in these generators by 2-functorialiy.  On the cell
  $\Si_{f,g}$, we then have
  \[
  \tau(\Si_{f,g}) = \Si_{g,f}^{-1}.
  \]
\item This monoidal structure also equips $\IICat$ with the structure
  of a monoidal model category using the canonical model structure
  developed in \cite{Lac02Quillen2}, although we will not make use of
  this structure here.
\end{enumerate}

For the reader new to the Gray tensor product, perhaps the most
helpful feature to remark upon is that the 1- and 2-cells are
generated by certain basic cells subject to relations.  In particular,
an arbitrary 1-cell of $\cA \otimes \cB$ has the form
\[
(f_{1} \otimes 1)(1 \otimes g_{1}) \cdots (f_{n} \otimes 1)(1 \otimes g_{n})
\]
for some natural number $n$ and some 1-cells $f_{i}$ of $\cA$ and
$g_{i}$ of $\cB$.

As with all closed, symmetric monoidal categories, we not only get an
isomorphism of hom-sets
\[
\IICat(\cA \otimes \cB, \cC) \cong \IICat(\cA, [\cB,\cC]),
\]
but an isomorphism of hom-objects, in this case 2-categories, of the
form
\[
[\cA \otimes \cB, \cC] \cong [\cA, [\cB,\cC]].
\]
In particular one can transport pseudonatural transformations and
modifications between the tensor product and hom-2-category side.

\begin{defn}
  \label{defn:gray-monoid}
  A \emph{Gray-monoid} is a monoid object in $\IICat$ with the Gray
  tensor product, $\otimes$.  This consists of a 2-category $\cC$, a
  2-functor
  \[
  \oplus\cn \cC \otimes \cC \to \cC,
  \]
  and an object $e$ of $\cC$ satisfying associativity and unit axioms.
\end{defn}

The Gray tensor product has another universal property relating it to
the notion of cubical functor.

\begin{defn}\label{defn:cubical}
  A normal pseudofunctor
  $F: \cA_{1} \times \cA_{2} \times \cdots \times \cA_{n} \rightarrow
  \cB$ is
  \textit{cubical} if the following condition holds:\\
  if $(f_{1}, f_{2}, \ldots, f_{n}), (g_{1}, g_{2}, \ldots, g_{n})$ is
  a composable pair of morphisms in the 2-category
  $\cA_{1} \times \cA_{2} \times \cdots \times \cA_{n}$ such that for
  all $i > j$, either $g_{i}$ or $f_{j}$ is an identity map, then the
  comparison 2-cell
  \[
  \phi: F(f_{1}, f_{2}, \ldots, f_{n})\circ F(g_{1}, g_{2}, \ldots, g_{n}) 
  \Rightarrow 
  F \Big( (f_{1}, f_{2}, \ldots, f_{n})\circ(g_{1}, g_{2}, \ldots, g_{n})
  \Big)
  \]
  is an identity. 
\end{defn}

When $n=1$, it is easy to check that a cubical functor is the same
thing as a 2-functor.  When $n=2$, we have the following
characterization, a proof of which appears in either
\cite{GPS95Coherence} or \cite{Gurski13Coherence}.

\begin{prop}\label{2var}
  A cubical functor $F: \cA_{1} \times \cA_{2} \rightarrow \cB$
  determines, and is
  uniquely determined by\\
  \begin{enumerate}
  \item For each object $a_{1} \in \textrm{ob}\cA_{1}$, a strict
    2-functor $F_{a_{1}}\cn \cA_2 \to \cB$, and for each object
    $a_{2} \in \textrm{ob}\cA_{2}$, a strict 2-functor
    $F_{a_{2}}\cn \cA_1 \to \cB$, such that for each pair of objects
    $a_{1}, a_{2}$ in $\cA_{1}, \cA_{2}$, respectively, the equation
    \[
    F_{a_{1}}(a_{2}) = F_{a_{2}}(a_{1}) := F(a_{1}, a_{2})
    \]
    holds; 
  \item For each pair of 1-cells $f_{1}: a_{1} \rightarrow a_{1}'$,
    $f_{2}: a_{2} \rightarrow a_{2}'$ in $\cA_{1}, \cA_{2}$,
    respectively, a 2-cell isomorphism
    \[
    \xy
    {\ar^{F_{a_{1}}(f_{2})} (0,0)*+{F(a_{1}, a_{2})}; (35,0)*+{F(a_{1}, a_{2}')}};
    {\ar^{F_{a_{2}'}(f_{1})} (35,0)*+{F(a_{1}, a_{2}')}; (35,-20)*+{F(a_{1}', a_{2}')} };
    {\ar_{F_{a_{2}}(f_{1})} (0,0)*+{F(a_{1}, a_{2})}; (0,-20)*+{F(a_{1}', a_{2})} };
    {\ar_{F_{a_{1}'}(f_{2})} (0,-20)*+{F(a_{1}', a_{2})}; (35,-20)*+{F(a_{1}', a_{2}')} };
    {\ar@{=>}_{\Sigma_{f_{1}, f_{2}}}^{\cong} (21,-7)*+{}="s"; "s"+(-5,-5) }
    \endxy
    \]
    which is an identity 2-cell if either $f_{1}$ or $f_{2}$ is an
    identity 1-cell;
  \end{enumerate}
  subject to the following 3 axioms for all diagrams of the form
  \[
  \xy
  {\ar@/^1pc/^{(f_{1}, f_{2})} (0,0)*+{(a_{1}, a_{2})}; (40,0)*+{(a_{1}', a_{2}')} };
  {\ar@/_1pc/_{(g_{1}, g_{2})} (0,0)*+{(a_{1}, a_{2})}; (40,0)*+{(a_{1}', a_{2}')} };
  {\ar@{=>}^{(\alpha_{1}, \alpha_{2})} (17,3)*+{}; (17,-3)*+{} };
  {\ar^{(h_{1}, h_{2})} (40,0)*+{(a_{1}', a_{2}')};
    (80,0)*+{(a_{1}'', a_{2}'')} }
  \endxy
  \]
  in $\cA_{1} \times \cA_{2}$.
  \[
  \def\objectstyle{\scriptstyle} \def\labelstyle{\scriptstyle}
  \xy0;/r.18pc/: {\ar@/^2pc/^{\scriptstyle F_{a_{1}}(f_{2})}
    (0,0)*+{\scriptstyle F(a_{1}, a_{2})}; (38,0)*+{\scriptstyle
      F(a_{1}, a_{2}')} }; {\ar_{\scriptstyle F_{a_{1}}(g_{2})}
    (0,0)*+{\scriptstyle F(a_{1}, a_{2})}; (38,0)*+{\scriptstyle
      F(a_{1}, a_{2}')} }; {\ar@/_2pc/_{\scriptscriptstyle
      F_{a_{2}}(g_{1})} (0,0)*+{\scriptstyle F(a_{1}, a_{2})};
    (0,-25)*+{\scriptstyle F(a_{1}', a_{2})} };
  {\ar@/^1pc/^{\scriptstyle F_{a_{2}}(f_{1})} (0,0)*+{\scriptstyle
      F(a_{1}, a_{2})}; (0,-25)*+{\scriptstyle F(a_{1}', a_{2})} };
  {\ar^{\scriptscriptstyle F_{a_{2}'}(f_{1})} (38,0)*+{\scriptstyle
      F(a_{1}, a_{2}')}; (38,-25)*+{\scriptstyle F(a_{1}', a_{2}')} };
  {\ar_{\scriptstyle F_{a_{1}'}(g_{2})} (0,-25)*+{\scriptstyle
      F(a_{1}', a_{2})}; (38,-25)*+{\scriptstyle F(a_{1}', a_{2}')} };
  {\ar_{\scriptscriptstyle F_{a_{2}}(g_{1})} (68,0)*+{\scriptstyle
      F(a_{1}, a_{2})}; (68,-25)*+{\scriptstyle F(a_{1}', a_{2})} };
  {\ar@/^2pc/^{\scriptscriptstyle F_{a_{2}'}(f_{1})}
    (106,0)*+{\scriptstyle F(a_{1}, a_{2}')}; (106,-25)*+{\scriptstyle
      F(a_{1}', a_{2}')} }; {\ar^{\scriptstyle F_{a_{1}}(f_{2})}
    (68,0)*+{\scriptstyle F(a_{1}, a_{2})}; (106,0)*+{\scriptstyle
      F(a_{1}, a_{2}')} }; {\ar@/_2pc/_{\scriptstyle
      F_{a_{1}'}(g_{2})} (68,-25)*+{\scriptstyle F(a_{1}', a_{2})};
    (106,-25)*+{\scriptstyle F(a_{1}', a_{2}')} };
  {\ar@/_1pc/_{\scriptstyle F_{a_{2}'}(g_{1})} (106,0)*+{\scriptstyle
      F(a_{1}, a_{2}')}; (106,-25)*+{\scriptstyle F(a_{1}', a_{2}')}
  }; {\ar^{\scriptstyle F_{a_{1}'}(f_{2})} (68,-25)*+{\scriptstyle
      F(a_{1}', a_{2})}; (106,-25)*+{\scriptstyle F(a_{1}', a_{2}')}
  }; (55,-9)*{=}; (19,3)*{\Downarrow F_{a_{1}}\alpha_{2}};
  (-2,-12)*{\stackrel{\Leftarrow}{F_{a_{2}}\alpha_{1}}};
  (25,-12)*{\Downarrow \Sigma}; (80,-12)*{\Downarrow \Sigma};
  (87,-29)*{\Downarrow F_{a_{1}'}\alpha_{2}};
  (108,-12)*{\stackrel{\Leftarrow}{F_{a_{2}'}\alpha_{1}}}
  \endxy
  \]
  \[
  \def\objectstyle{\scriptstyle}
  \def\labelstyle{\scriptstyle}
  \xy0;/r.20pc/:
  {\ar^{F_{a_{1}}(f_{2})} (0,0)*+{F(a_{1}, a_{2})}; (38,0)*+{F(a_{1}, a_{2}')} };
  {\ar^{F_{a_{2}'}(f_{1})} (38,0)*+{F(a_{1}, a_{2}')}; (38,-25)*+{F(a_{1}', a_{2}')} };
  {\ar^{F_{a_{2}'}(h_{1})} (38,-25)*+{F(a_{1}', a_{2}')}; (38,-50)*+{F(a_{1}'', a_{2}')} };
  {\ar_{F_{a_{2}}(f_{1})} (0,0)*+{F(a_{1}, a_{2})}; (0,-25)*+{F(a_{1}', a_{2})} };
  {\ar_{F_{a_{2}}(h_{1})} (0,-25)*+{F(a_{1}', a_{2})}; (0,-50)*+{F(a_{1}'', a_{2})} };
  {\ar_{F_{a_{1}''}(f_{2})} (0,-50)*+{F(a_{1}'', a_{2})}; (38,-50)*+{F(a_{1}'', a_{2}')} };
  {\ar^{F_{a_{1}'}(f_{2})} (0,-25)*+{F(a_{1}', a_{2})}; (38,-25)*+{F(a_{1}', a_{2}')} };
  (57,-25)*{=};
  {\ar^{F_{a_{1}}(f_{2})} (78,0)*+{F(a_{1}, a_{2})}; (116,0)*+{F(a_{1}, a_{2}')} };{\ar_{F_{a_{1}''}(f_{2})} (78,-50)*+{F(a_{1}'', a_{2})}; (116,-50)*+{F(a_{1}'', a_{2}')} };
  {\ar_{\scriptstyle F_{a_{2}}(h_{1}f_{1})} (78,0)*+{F(a_{1}, a_{2})}; (78,-50)*+{F(a_{1}'', a_{2})} };
  {\ar^{F_{a_{2}'}(h_{1}f_{1})} (116,0)*+{F(a_{1}, a_{2}')}; (116,-50)*+{F(a_{1}'', a_{2}')} };
  (19,-11)*{\Downarrow \Sigma};
  (19,-36)*{\Downarrow \Sigma};
  (97,-25)*{\Downarrow \Sigma}
  \endxy
  \]
  \[
  \def\objectstyle{\scriptstyle}
  \def\labelstyle{\scriptstyle}
  \xy0;/r.20pc/:
  {\ar^{F_{a_{1}}(f_{2})} (0,0)*+{F(a_{1}, a_{2})}; (40,0)*+{F(a_{1}, a_{2}')} };
  {\ar^{F_{a_{1}}(h_{2})} (40,0)*+{F(a_{1}, a_{2}')}; (80,0)*+{F(a_{1}, a_{2}'')} };
  {\ar^{F_{a_{2}''}(f_{1})} (80,0)*+{F(a_{1}, a_{2}'')}; (80,-25)*+{F(a_{1}', a_{2}'')} };
  {\ar_{F_{a_{2}}(f_{1})} (0,0)*+{F(a_{1}, a_{2})}; (0,-25)*+{F(a_{1}', a_{2})} };
  {\ar^{F_{a_{1}'}(f_{2})} (0,-25)*+{F(a_{1}', a_{2})}; (40,-25)*+{F(a_{1}', a_{2}')} };
  {\ar^{F_{a_{1}'}(h_{2})} (40,-25)*+{F(a_{1}', a_{2}')}; (80,-25)*+{F(a_{1}', a_{2}'')} };
  {\ar^{F_{a_{2}'}(f_{1})} (40,0)*+{F(a_{1}, a_{2}')}; (40,-25)*+{F(a_{1}', a_{2}')} };
  {\ar@{=} (40,-35)*{}; (40,-39)*{} };
  {\ar_{F_{a_{2}}(f_{1})} (0,-50)*+{F(a_{1}, a_{2})}; (0,-75)*+{F(a_{1}', a_{2})} };
  {\ar^{F_{a_{2}''}(f_{1})} (80,-50)*+{F(a_{1}, a_{2}'')}; (80,-75)*+{F(a_{1}', a_{2}'')} };
  {\ar^{F_{a_{1}}(h_{2}f_{2})} (0,-50)*+{F(a_{1}, a_{2})}; (80,-50)*+{F(a_{1}, a_{2}'')} };
  {\ar_{F_{a_{1}'}(h_{2}f_{2})} (0,-75)*+{F(a_{1}', a_{2})}; (80,-75)*+{F(a_{1}', a_{2}'')} };
  (20,-11)*{\Downarrow \Sigma};
  (60,-11)*{\Downarrow \Sigma};
  (40,-62)*{\Downarrow \Sigma}
  \endxy
  \]
\end{prop}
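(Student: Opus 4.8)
The plan is to prove this by unwinding the definition of pseudofunctor in the presence of the cubical identities, exhibiting an explicit bijection between cubical functors $\cA_1 \times \cA_2 \to \cB$ and the data of (1) together with (2) subject to the three axioms; the details are carried out in \cite{GPS95Coherence} or \cite{Gurski13Coherence}, so I only sketch the two passages. Going from a cubical functor $F$ to the data, I would set $F_{a_1} := F(\id_{a_1}, -) \cn \cA_2 \to \cB$ and $F_{a_2} := F(-, \id_{a_2}) \cn \cA_1 \to \cB$. Whenever a composable pair in $\cA_1 \times \cA_2$ has both morphisms with identity first component, or both with identity second component, the cubical hypothesis (for $n=2$ it reads ``$g_2$ or $f_1$ is an identity'') forces the relevant comparison $2$-cell of $F$ to be an identity, so $F_{a_1}$ and $F_{a_2}$ preserve composition strictly; since a cubical functor is in particular normal (the $n=1$ case, a strict $2$-functor, already makes this clear) they preserve units strictly as well, hence are strict $2$-functors, and they agree on objects by construction. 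For the structure isomorphism I would use the factorization $(f_1, f_2) = (f_1, \id) \circ (\id, f_2)$ in $\cA_1 \times \cA_2$ and set
\[
\Sigma_{f_1, f_2} := \phi_{(f_1,\id),\,(\id, f_2)} \cn F_{a_2'}(f_1) \circ F_{a_1}(f_2) \Rightarrow F_{a_1'}(f_2) \circ F_{a_2}(f_1),
\]
where the codomain equals $F(f_1,f_2)$ by the cubical identity applied to the other factorization $(f_1,f_2) = (\id, f_2)\circ(f_1,\id)$. This is an isomorphism, and it is an identity whenever $f_1$ or $f_2$ is, once more by the cubical condition. The three displayed axioms for $\Sigma$ are then exactly the instances of the pseudofunctor axioms for $F$ that remain after every comparison $2$-cell has been rewritten in terms of the $\Sigma$'s and the strict constraints of the $F_{a_i}$: the first expresses that $F$ respects horizontal composition of $2$-cells (equivalently, the naturality of $\Sigma$ in both variables), and the second and third express the coherence of the composition constraint along triple composites in the first and second variable respectively.

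Conversely, from data $(F_{a_1}, F_{a_2}, \Sigma)$ I would rebuild $F$ by: $F(a_1,a_2) := F_{a_1}(a_2)$ on objects; $F(f_1,f_2) := F_{a_1'}(f_2) \circ F_{a_2}(f_1)$ on $1$-cells, using the factorization above; $F(\alpha_1,\alpha_2)$ the horizontal composite of $F_{a_2}(\alpha_1)$ and $F_{a_1'}(\alpha_2)$ on $2$-cells; all unit constraints the identity; and the composition constraint of a composable pair the evident pasting of the strict constraints of the $F_{a_i}$ with a single whiskered copy of $\Sigma$. I would then check that $F$ is a pseudofunctor --- the unit axioms hold by normality, the associativity axiom unwinds, using the second and third $\Sigma$-axioms, to an identity of pastings of $\Sigma$'s and strict constraints, and the naturality of the composition constraint in $2$-cells uses the first $\Sigma$-axiom --- and that $F$ is cubical, which is immediate since the comparison $2$-cells the cubical condition requires to be identities are precisely those declared to be identities in the construction. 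Finally I would check that the two passages are mutually inverse: re-extracting from a rebuilt $F$ returns the given $F_{a_1}, F_{a_2}, \Sigma$ on the nose, and rebuilding from the data extracted from an arbitrary cubical $F$ recovers $F$, since its value on a $1$-cell is forced by the factorization together with the cubical identities and its value on a $2$-cell by the middle-four interchange law in $\cB$.

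I expect the one substantive point to be bookkeeping: verifying that the associativity pentagon for the reconstructed $F$ is equivalent to the second and third displayed $\Sigma$-axioms, and dually that those axioms are forced by the pseudofunctor axioms, requires manipulating several large pasting diagrams while carefully tracking the two factorizations --- ``$f_1$ first'' versus ``$f_2$ first'' --- of a $1$-cell of $\cA_1 \times \cA_2$, which the first $\Sigma$-axiom is precisely what reconciles. This is the computation carried out in \cite{GPS95Coherence, Gurski13Coherence}.
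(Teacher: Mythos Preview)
Your sketch is correct and follows the standard approach; in fact it gives considerably more detail than the paper itself, which simply states the proposition and defers the proof to \cite{GPS95Coherence} and \cite{Gurski13Coherence} without further argument. Your extraction of $F_{a_1}$, $F_{a_2}$, $\Sigma$ from a cubical $F$ and your reconstruction in the other direction are exactly the passages carried out in those references, so there is nothing to compare beyond noting that you have spelled out what the paper leaves to citation.
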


The following proposition is easy to check, and appears in
\cite{Gurski13Coherence}.

\begin{prop}
  There is a multicategory $\IICat_{c}$ whose objects are
  2-categories and for which the set
  \[
  \IICat_{c}(\cA_{1}, \cA_{2}, \ldots, \cA_{n}; \cB)
  \]
  consists of the cubical functors
  $\cA_{1} \times \cdots \times \cA_{n} \rightarrow \cB$.
\end{prop}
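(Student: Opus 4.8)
The plan is to identify $\IICat_{c}$ with the multicategory underlying the monoidal category $(\IICat, \otimes)$, so that the construction and all axioms become formal once one knows that $n$-ary cubical functors are \emph{represented} by the iterated Gray tensor product. Concretely, I would first establish a bijection
\[
\IICat_{c}(\cA_1, \dots, \cA_n; \cB) \;\cong\; \IICat(\cA_1 \otimes \cdots \otimes \cA_n, \cB),
\]
natural in $\cB$ by postcomposition and contravariantly natural in each $\cA_i$ by the functoriality of $\otimes$, with the conventions that the empty ($n=0$) tensor product is the terminal $2$-category (so that the nullary multimaps into $\cB$ are just the objects of $\cB$) and that for $n=1$ this is the already-noted identification of cubical functors with strict $2$-functors. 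This naturality is exactly what is needed to transport a composition law.

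For the bijection I would induct on $n$, currying out the last variable. Given a cubical functor $F \cn \cA_1 \times \cdots \times \cA_n \to \cB$, the cubical condition forces each $F_{a_1, \dots, a_{n-1}} \cn \cA_n \to \cB$ to be a strict $2$-functor, the $1$-cells of $\cA_i$ with $i < n$ to act by pseudonatural transformations, and the $2$-cells by modifications, so $F$ determines a pseudofunctor $\cA_1 \times \cdots \times \cA_{n-1} \to [\cA_n, \cB]$; this is again cubical because evaluating its comparison $2$-cell at an object of $\cA_n$ returns a comparison $2$-cell of $F$ in which the $\cA_n$-components are identities, so the cubical hypothesis on the first $n-1$ variables already supplies the full cubical hypothesis for $F$. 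Reversing this gives the inverse, hence
\[
\IICat_{c}(\cA_1, \dots, \cA_n; \cB) \cong \IICat_{c}(\cA_1, \dots, \cA_{n-1}; [\cA_n, \cB]),
\]
and then induction together with the closed structure $\IICat(\cD \otimes \cA_n, \cB) \cong \IICat(\cD, [\cA_n, \cB])$ and the associativity of $\otimes$ completes the claim. The cases $n = 1$ and $n = 2$ are the remark following the definition of cubical functor and a direct comparison of the data of \cref{2var} with the generators and relations of \cref{defn:graytensor}.

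Granting this, define $\IICat_{c}$ to have the sets above as its multimorphism sets; composition is then the standard one for the multicategory underlying a monoidal category, induced from the functoriality of $\otimes$ on morphisms together with the associativity constraints of $(\IICat, \otimes)$; the identities are the identity $2$-functors (cubical by the $n=1$ case); and the multicategory associativity and unit axioms follow formally from Mac Lane coherence for $(\IICat, \otimes)$. Since $\otimes$ is moreover symmetric, one obtains a symmetric multicategory at no extra cost.

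The only nontrivial content lies in the representability bijection for general $n$: one must check that the induction assembles an honest $n$-ary cubical functor rather than merely a $2$-ary one, which is a matter of tracking the cubical condition across a partition of the variables and is the substance of the treatment in \cite{GPS95Coherence, Gurski13Coherence}; everything downstream is formal. A direct alternative sidesteps the curry/uncurry: define the composite of cubical functors $G_i \cn \prod_j \cA_{i,j} \to \cB_i$ and $F \cn \prod_i \cB_i \to \cC$ to be the composite pseudofunctor $F \circ (G_1 \times \cdots \times G_n)$ and verify directly that it is cubical, the one subtle point being that for blocks $i > i'$ either every $g$ in block $i$ or every $f$ in block $i'$ is an identity, whence the relevant comparison $2$-cell of $F$ vanishes; the axioms then reduce to associativity and unitality of composition of pseudofunctors. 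I would use whichever route fits the surrounding exposition best.
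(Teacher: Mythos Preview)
Your proposal is correct. The paper itself gives no proof at all: it simply remarks that the statement ``is easy to check, and appears in \cite{Gurski13Coherence}.'' So there is nothing to compare against beyond that citation.

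Both of your routes are valid. The direct one (compose the underlying pseudofunctors and check cubicality) is almost certainly what the paper has in mind by ``easy to check,'' and your key observation---that for $i>i'$ the hypothesis forces either every $g$ in block $i$ or every $f$ in block $i'$ to be an identity, whence $F$'s comparison cell vanishes---is exactly the point that needs to be made. Your first route, identifying $\IICat_c$ with the multicategory underlying $(\IICat,\otimes)$ via an $n$-ary representability bijection, is more conceptual and buys you the symmetric structure and the composition law for free from Mac Lane coherence; note only that in the paper's logical order the $n=2$ base case is the content of the theorem stated immediately \emph{after} this proposition, so if you took this route in context you would want to reorder slightly or appeal directly to the generators-and-relations description of $\otimes$ as you indicate.
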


\begin{thm}\label{cubicalmulticat}
  Let $\cA$, $\cB$, and $\cC$ be 2-categories.  There is a cubical
  functor
  \[
  c:\cA \times \cB \rightarrow \cA \otimes \cB,
  \]
  natural in $\cA$ and $\cB$, such that composition with $c$ induces
  an isomorphism
  \[
  \IICat_{c}(\cA, \cB; \cC) \cong \IICat(\cA \otimes \cB, \cC).
  \]
\end{thm}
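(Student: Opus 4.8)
The plan is to construct the cubical functor $c\cn \cA \times \cB \to \cA \otimes \cB$ directly from the generators-and-relations presentation of $\cA \otimes \cB$ given in \cref{defn:graytensor}, and then to verify the universal property by exhibiting a mutually inverse pair of assignments between cubical functors out of $\cA \times \cB$ and strict $2$-functors out of $\cA \otimes \cB$. First I would define $c$ on cells: send an object $(a,b)$ to $a \otimes b$; send a morphism $(f,g)\cn (a,b) \to (a',b')$ in $\cA \times \cB$ to the composite $(f \otimes 1)(1 \otimes g)$; send a $2$-cell $(\al,\de)$ to $(\al \otimes 1) * (1 \otimes \de)$ (horizontal composite). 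For the constraint data of the pseudofunctor $c$: the unit constraint is the identity (since $(\id \otimes 1)(1 \otimes \id) = \id$), and the composition constraint on $(f',g')$ followed by $(f,g)$ is the $2$-cell built from $\Si_{f',g}$ together with the $2$-functoriality relations $(f \otimes 1)(f' \otimes 1) = (ff') \otimes 1$ and $(1 \otimes g)(1 \otimes g') = 1 \otimes (gg')$; explicitly it has the form $1_{f \otimes 1} * \Si_{f',g}^{\pm 1} * 1_{1 \otimes g'}$ after suitable whiskering. The pseudofunctor axioms for $c$ are then exactly the three $\Si$-axioms displayed in \cref{defn:graytensor}, and the cubical condition is immediate: if, in a composable pair, $g_i$ or $f_j$ is an identity whenever $i>j$—which for $n=2$ means $g_1 = \id$ or $f_2 = \id$—then the composition constraint involves $\Si_{f',g} = \Si_{f',\id}$ or $\Si_{\id,g}$, which is an identity by the normalization clause in \cref{defn:graytensor}.

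Next I would set up the bijection. Given a strict $2$-functor $H\cn \cA \otimes \cB \to \cC$, the composite $H \circ c$ is a cubical functor by functoriality of the construction (strict $2$-functors preserve composition constraints strictly, so the cubical condition is inherited). Conversely, given a cubical functor $K\cn \cA \times \cB \to \cC$, I would use \cref{2var} to repackage $K$ as the data $(K_a, K_b, \Si^K_{f,g})$ and define a strict $2$-functor $\widehat{K}\cn \cA \otimes \cB \to \cC$ on generators: $a \otimes b \mapsto K(a,b)$, $f \otimes 1 \mapsto K_b(f)$, $1 \otimes g \mapsto K_a(g)$, $\al \otimes 1 \mapsto K_b(\al)$, $1 \otimes \de \mapsto K_a(\de)$, and $\Si_{f,g} \mapsto \Si^K_{f,g}$. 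That this respects the defining relations of $\cA \otimes \cB$ is precisely the content that each $K_a$, $K_b$ is a $2$-functor (giving the first two families of relations) together with the three $\Si$-axioms of \cref{2var}, which match the three $\Si$-axioms of \cref{defn:graytensor} under the above dictionary. Hence $\widehat{K}$ is well defined, and one checks $\widehat{K} \circ c = K$ and $\widehat{H \circ c} = H$ on generators, which suffices since both sides are strict $2$-functors. Naturality of $c$ in $\cA$ and $\cB$ follows because all the assigned data is natural in the evident way.

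The main obstacle I anticipate is purely bookkeeping: matching, term by term, the three pasting-diagram axioms for $\Si_{f,g}$ in the Gray tensor product (\cref{defn:graytensor}) with the three $\Si$-axioms in the cubical-functor characterization (\cref{2var}), being careful about the orientation and the $\pm 1$ exponent on $\Si$ introduced when one turns the composition constraint of $c$ into an honest pasting. Once the dictionary $f \otimes 1 \leftrightarrow F_b(f)$, $1 \otimes g \leftrightarrow F_a(g)$ is fixed and the convention for the composition constraint of $c$ is pinned down, these axioms are literally the same diagrams, so the verification is routine but must be done with care. The only genuinely conceptual input is the $n=2$ case of \cref{2var} itself, which is cited from \cite{GPS95Coherence,Gurski13Coherence} and which I am free to assume.
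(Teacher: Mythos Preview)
Your proposal is correct and follows essentially the same approach as the paper's (sketch) proof: the paper defines $c$ via the data of \cref{2var} (the 2-functors $c_a$, $c_b$ together with the $\Sigma$'s of $\cA \otimes \cB$), which is equivalent to your direct pseudofunctor description, and then constructs the inverse $\overline{F}$ on generators exactly as your $\widehat{K}$. The only cosmetic difference is that the paper invokes \cref{2var} to package $c$ rather than writing out the pseudofunctor constraint explicitly.
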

\begin{proof}[Sketch proof, see \cite{Gurski13Coherence}]
  We define $c$ using \cref{2var}.  We define the 2-functor $c_{a}$
  by\\
  \[
  \begin{array}{c}
    c_{a}(b) = (a,b) \\
    c_{a}(f) = (1_{a}, f) \\
    c_{a}(\alpha) = (1_{1_{a}}, \alpha);
  \end{array}
  \]
  the 2-functor $c_{b}$ is defined similarly.  The 2-cell isomorphism
  $\Sigma_{f,g}$ is the same $\Sigma_{f,g}$ that is part of the data for
  $\cA \otimes \cB$.
  
  To prove that this cubical functor has the claimed universal
  property, assume that $F: \cA \times \cB \rightarrow \cC$ is a
  cubical functor.  We define a strict 2-functor
  $\overline{F}: \cA \otimes \cB \rightarrow \cC$ by the following
  formulas.\\
  \[
  \begin{array}{c}
    \overline{F}(a,b) = F(a,b) \\
    \overline{F}(f,1) = F_{b}(f) \\
    \overline{F}(1,g) = F_{a}(g) \\
    \overline{F}(\alpha, 1) = F_{b}(\alpha) \\
    \overline{F}(1, \beta) = F_{a}(\beta) \\
    \overline{F}(\Sigma^{\cA \otimes \cB}_{f,g}) = \Sigma^{F}_{f,g}
  \end{array}
  \]
\end{proof}

For our discussion of permutative Gray-monoids later, we will need the
notion of an opcubical functor.
\begin{defn}\label{defn:opcubical}
  A pseudofunctor
  $F: \cA_{1} \times \cA_{2} \times \cdots \times \cA_{n} \rightarrow
  \cB$ is
  \textit{opcubical} if the following condition holds:\\
  if $(f_{1}, f_{2}, \ldots, f_{n}), (g_{1}, g_{2}, \ldots, g_{n})$ is
  a composable pair of morphisms in the 2-category
  $\cA_{1} \times \cA_{2} \times \cdots \times \cA_{n}$ such that for
  all $i < j$, either $g_{i}$ or $f_{j}$ is an identity map, then the
  comparison 2-cell
  \[
  \phi: F(f_{1}, f_{2}, \ldots, f_{n})\circ F(g_{1}, g_{2}, \ldots,
  g_{n})\Rightarrow F \Big( (f_{1}, f_{2}, \ldots, f_{n})\circ(g_{1}, g_{2}, \ldots, g_{n})
  \Big)
  \]
  is an identity.
\end{defn}

As the difference between a cubical functor and an opcubical one is
merely a matter of ordering, we immediately get the following lemma.
\begin{lem}\label{cubtoopcub}
There is a bijection, natural in all variables, between the set of cubical functors $\cA \times \cB \to \cC$ and the set of opcubical functors $\cB \times \cA \to \cC$.  In particular, if the pseudofunctor $F:\cA \times \cB \to \cC$ is cubical, then
\[
\cB \times \cA \cong \cA \times \cB \stackrel{F}{\longrightarrow} \cC
\]
is opcubical.
\end{lem}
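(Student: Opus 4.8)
The plan is to realize the bijection concretely as precomposition with the symmetry isomorphism of the cartesian product. Let $\sigma\cn \cB \times \cA \to \cA \times \cB$ be the strict $2$-functor that interchanges the two factors; it is an isomorphism of $2$-categories, with inverse the analogous swap in the other direction. For any pseudofunctor $F\cn \cA \times \cB \to \cC$ the composite $F\sigma\cn \cB \times \cA \to \cC$ is again a pseudofunctor, and since $\sigma$ is strict its unit and composition comparison $2$-cells are literally those of $F$ reindexed along $\sigma$. So $F \mapsto F\sigma$ and its inverse $G \mapsto G\sigma^{-1}$ give mutually inverse bijections between pseudofunctors $\cA \times \cB \to \cC$ and pseudofunctors $\cB \times \cA \to \cC$, and it remains to see that they match cubical functors with opcubical functors.

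For this I would simply unwind both conditions in the case $n = 2$. A composable pair $((\bar f_1, \bar f_2),(\bar g_1,\bar g_2))$ in $\cB \times \cA$ (with the first coordinates in $\cB$, the second in $\cA$) is sent by $\sigma$ to $((\bar f_2,\bar f_1),(\bar g_2,\bar g_1))$ in $\cA \times \cB$. The cubical condition on $F$ applies to the latter pair precisely when $\bar g_1$ or $\bar f_2$ is an identity (the only relevant index pair being $i=2>1=j$), which is exactly the hypothesis under which the opcubical condition on $F\sigma$ requires the comparison $2$-cell to be an identity. Because $\sigma$ is strict, the comparison $2$-cell of $F\sigma$ at the original pair equals that of $F$ at the transported pair, so $F\sigma$ is opcubical if and only if $F$ is cubical. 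Applying this to a given cubical $F$ yields the ``in particular'' clause.

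Naturality in $\cA$, $\cB$, and $\cC$ is then immediate: the cubical and opcubical functor sets are contravariant in the source factors and covariant in the target via pre- and postcomposition, $\sigma$ is natural in $\cA$ and $\cB$, and postcomposition with a $2$-functor $\cC \to \cC'$ evidently commutes with precomposition by $\sigma$. I do not expect any real obstacle here; the whole argument is bookkeeping, and the only point requiring care is to keep the order of the composable pair $(f,g)$ consistent with the convention of \cref{2var} (where $g$ is composed first) so that the two index conditions $i>j$ and $i<j$ line up as described.
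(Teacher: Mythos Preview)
Your proof is correct and is exactly the fleshed-out version of the paper's one-line justification (``the difference between a cubical functor and an opcubical one is merely a matter of ordering''): both arguments amount to precomposing with the swap $\sigma\cn \cB\times\cA \to \cA\times\cB$ and observing that the index conditions $i>j$ and $i<j$ interchange. There is nothing to add.
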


If $F:\cA \times \cB \rightarrow \cC$ is cubical, we can produce
another opcubical functor $F^{*}$, this time with source
$\cA \times \cB$, by defining
\[
F^{*}(f,g) = F(f,1) F(1,g)
\]
and replacing the necessary structure 2-cells with their inverses.
This process was introduced in \cite{GPS95Coherence} and is called
nudging.  One can check that $F^{*} \cong F$ as pseudofunctors in the
2-category of 2-categories, pseudofunctors, and icons
\cite{Lac10Icons}, and therefore we obtain an isomorphism between the
set of cubical functors $\cA \times \cB \rightarrow \cC$ and the set
of opcubical functors $\cA \times \cB \rightarrow \cC$.  This
procedure, together with the lemma above, is one method for giving the
symmetry isomorphism for the Gray tensor product.  Moreover, one can
show that the map from the Gray tensor product to the cartesian one
$\cA \otimes \cB \to \cA \times \cB$ induced by the identity 2-functor
$\cA \times \cB \to \cA \times \cB$ viewed as a cubical functor is a
map of symmetric monoidal structures. It is not hard to check, though,
that the universal cubical functor
$c:\cA \times \cB \to \cA \otimes \cB$ is only symmetric up to an
invertible icon.

\begin{lem}\label{cubtoopcub2}
  Let $F: \cA \times \cB \to \cC$ be a cubical functor, and let
  $\overline{F}:\cA \otimes \cB \to \cC$ be the associated 2-functor.
  Then the cubical functor associated to the 2-functor
  \[
  \cB \otimes \cA \stackrel{\tau_{\otimes}}{\rtarr} \cA \otimes \cB \stackrel{\overline{F}}{\rtarr} \cC
  \]
  is $F^{*} \tau_{\times}$, where $F^{*}$ is the opcubical functor
  associated to $F$ as defined above.
\end{lem}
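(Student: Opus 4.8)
The plan is to exploit the universal property of the Gray tensor product. By \cref{cubicalmulticat}, a cubical functor $\cB \times \cA \to \cC$ is exactly the same data as a strict $2$-functor $\cB \otimes \cA \to \cC$, the correspondence being precomposition with the universal cubical functor $c \cn \cB \times \cA \to \cB \otimes \cA$, and such a $2$-functor is in turn determined by its values on the generating cells of $\cB \otimes \cA$: the objects $b \otimes a$, the basic $1$-cells $h \otimes 1$ and $1 \otimes k$, the basic $2$-cells $\alpha \otimes 1$ and $1 \otimes \delta$, and the $2$-cells $\Sigma_{h,k}$. So I would first observe that $F^{*}\tau_{\times}$ is indeed a cubical functor $\cB \times \cA \to \cC$ — since $F^{*}$ is opcubical on $\cA \times \cB$ and precomposing with $\tau_{\times}$ reverses the order of the two factors, this is \cref{cubtoopcub} with the roles of $\cA$ and $\cB$ interchanged — and then reduce the lemma to showing that the $2$-functor $\cB \otimes \cA \to \cC$ induced by $F^{*}\tau_{\times}$ agrees with $\overline{F}\,\tau_{\otimes}$. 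It then suffices to compare these two $2$-functors on the generating cells listed above.

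Next I would assemble the relevant explicit data. From the proof of \cref{cubicalmulticat}, $\overline{F}$ sends $f \otimes 1 \mapsto F(f,1)$, $1 \otimes g \mapsto F(1,g)$ (and likewise on $2$-cells of those two shapes), and $\Sigma^{\cA\otimes\cB}_{f,g} \mapsto \Sigma^{F}_{f,g}$, the cubical structure $2$-cell of $F$. The symmetry $\tau_{\otimes} \cn \cB \otimes \cA \to \cA \otimes \cB$ sends $h \otimes 1 \mapsto 1 \otimes h$ and $1 \otimes k \mapsto k \otimes 1$ (for $h$ a cell of $\cB$ and $k$ a cell of $\cA$), and $\Sigma^{\cB\otimes\cA}_{h,k} \mapsto \big(\Sigma^{\cA\otimes\cB}_{k,h}\big)^{-1}$. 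On the cartesian side $\tau_{\times}(h,k) = (k,h)$, and nudging produces the opcubical functor $F^{*}$ with $F^{*}(f,g) = F(f,1)F(1,g)$, its structure $2$-cells being the inverses of those of $F$ (see \cite{GPS95Coherence}).

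With this in hand the comparison is mostly bookkeeping. On objects both $2$-functors send $b \otimes a$ to $F(a,b)$. On a basic $1$-cell $h \otimes 1$ (with $h$ in $\cB$), $\overline{F}\,\tau_{\otimes}$ gives $\overline{F}(1 \otimes h) = F(1,h)$, while the $2$-functor induced by $F^{*}\tau_{\times}$ gives $(F^{*}\tau_{\times})(h,1) = F^{*}(1,h) = F(1,h)$; the case $1 \otimes k$ and the basic $2$-cells $\alpha \otimes 1$, $1 \otimes \delta$ go the same way. The only cells carrying genuine content are the $\Sigma$'s. Here $\overline{F}\,\tau_{\otimes}$ sends $\Sigma^{\cB\otimes\cA}_{h,k}$ to $\overline{F}\big((\Sigma^{\cA\otimes\cB}_{k,h})^{-1}\big) = (\Sigma^{F}_{k,h})^{-1}$, whereas the $2$-functor induced by $F^{*}\tau_{\times}$ sends it to the cubical structure $2$-cell of $F^{*}\tau_{\times}$ at $(h,k)$. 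Identifying the latter via \cref{2var} — it fills the evident naturality square for $F^{*}\tau_{\times}$ — and using that nudging inverts structure $2$-cells while $\tau_{\times}$ merely interchanges the two variables, I would check that it equals $(\Sigma^{F}_{k,h})^{-1}$. Agreement on all generators then forces the equality of $2$-functors, and hence the lemma.

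The hard part will be exactly this last verification: getting the orientations and inverses of the various $\Sigma$ $2$-cells to line up. Three order-reversing operations are in play — the nudging that builds $F^{*}$, the Gray-tensor symmetry $\tau_{\otimes}$ (contributing the explicit rule $\Sigma_{f,g} \mapsto \Sigma_{g,f}^{-1}$), and the cartesian swap $\tau_{\times}$ — and one must confirm that their net effect on structure $2$-cells is a single inversion with the variables swapped, not a double inversion and not a bare reindexing. This is a finite diagram chase controlled by the cubical functor axioms of \cref{2var} (which govern how the $\Sigma$'s compose and force $\Sigma$ to be an identity whenever either slot carries an identity $1$-cell) together with the explicit nudging construction of \cite{GPS95Coherence}; the rest of the argument is formal.
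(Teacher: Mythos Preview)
Your proposal is correct and is essentially the same argument as the paper's: both reduce to the equality $\overline{F^{*}\tau_{\times}} = \overline{F}\,\tau_{\otimes}$ and verify it on generating cells, with the only nontrivial case being the $\Sigma$-cells, where both compute $\overline{F}\,\tau_{\otimes}(\Sigma_{h,k}) = (\Sigma^{F}_{k,h})^{-1} = \overline{F^{*}\tau_{\times}}(\Sigma_{h,k})$. The paper's proof is terser and dispatches the $\Sigma$ check in a single four-line display, whereas you hedge about the orientation bookkeeping being the ``hard part''; in fact it is a one-line computation once the definitions are unpacked, so you can be less cautious.
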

\begin{proof}\proofof{cubtoopcub2}
  We must show that
  $\overline{F^{*} \tau_{\times}} = \overline{F} \tau_{\otimes}$ by
  checking that the two agree on generating cells, including cells of
  the form $\Si_{f,g}$.  On generating cells arising directly from one
  of the copies of $\cA$, this is obvious.  For the $\Si_{f,g}$'s, we
  have
  \[
  \begin{array}{rcl}
    \overline{F} \tau_{\otimes}(\Si_{f,g}) & = & \overline{F}((\Si_{g,f})^{-1}) \\
    & = & (\overline{F}(\Si_{g,f}))^{-1} \\
    & = & (\Si_{g,f}^{F})^{-1} \\
    & = & \overline{F^{*} \tau_{\times}}(\Si_{f,g})
  \end{array}
  \]
  by the definition of $\tau_{\otimes}$, 2-functors of the form
  $\overline{G}$ for a cubical functor $G$, and the opcubical functor
  $F^{*}$.
\end{proof}

\begin{notn}
  \label{notn:sum-of-one-cells-gray-monoid}
  Given 1-cells $f, g$ in a Gray-monoid, we let $f \oplus g$ denote
  $\ol{\oplus}(f,g)$, where $\ol{\oplus}$ is the cubical functor
  associated to $\oplus$.  Concretely,
  $f \oplus g = (\id \oplus g )\circ (f \oplus \id)$.  Similarly, for
  1-cells $f_1, \ldots, f_q$ we let
  $\oplus_i f_i = \ol{\oplus}( f_1, \ldots, f_q )$, where now
  $\ol{\oplus}$ is the cubical functor associated to the iterated sum
  $\oplus \cn \cC^{\otimes q} \to \cC$.
\end{notn}

The next two subsections will study two different strict notions of
what one might consider a symmetric monoidal
2-category.  The first is that of a permutative Gray-monoid which we
will show is equivalent to the notion of quasi-strict symmetric
monoidal 2-category introduced in \cite{Sch2011Classification}.  We
believe that this repackaging of the definition sheds conceptual
light, and helps to motivate the second definition, that of a
permutative 2-category.  While permutative 2-categories are not
equivalent to permutative Gray-monoids in the categorical (or
bicategorical, or even tricategorical) sense, we will later show that
they do have the same homotopy theory.

\subsection{Permutative Gray-monoids}
\label{sec:permutative-Gray-monoids}

We begin with a reminder of the definition of a permutative category.

\begin{defn}\label{defn:perm-cat}
  A \textit{permutative category} $C$ consists of a strict monoidal
  category $(C, \oplus, e)$ together with a natural isomorphism,
  \[
  \xy
  (0,0)*+{C \times C}="00";
  (25,0)*+{C \times C}="10";
  (12.5,-10)*+{C}="01";
  {\ar^{\tau} "00"; "10"};
  {\ar^{\oplus} "10"; "01"};
  {\ar_{\oplus} "00"; "01"};
  (12.5,-4)*{\Anglearrow{40} \beta}
  \endxy
  \]
  where $\tau \cn C \times C \to C \times C$ is the symmetry
  isomorphism in $\Cat$, such that the following axioms hold for all
  objects $x,y,z$ of $C$.
  \begin{itemize}
  \item $\beta_{y,x} \beta_{x,y} = \id_{x \oplus y}$
  \item $\beta_{e,x} = \id_{x} = \beta_{x,e}$
  \item $\beta_{x, y \oplus z} = (y \oplus \beta_{x,z}) \circ (\beta_{x,y} \oplus z)$
  \end{itemize}
\end{defn}

\begin{rmk}
  It is relatively easy to check that this definition is logically
  equivalent to the definition of a symmetric monoidal category with
  underlying monoidal structure strict.
\end{rmk}

Each of the axioms for a permutative category can be expressed in a
purely diagrammatic form and thus studied in other contexts.  We do so
now in the context of 2-categories using the Gray tensor product.

\begin{defn}\label{defn:pgm}
  A \textit{permutative Gray-monoid} $\cC$ consists of a Gray-monoid
  $(\cC, \oplus, e)$ together with a 2-natural isomorphism,
  \[
  \xy
  (0,0)*+{\cC \otimes \cC}="00";
  (25,0)*+{\cC \otimes \cC}="10";
  (12.5,-10)*+{\cC}="01";
  {\ar^{\tau} "00"; "10"};
  {\ar^{\oplus} "10"; "01"};
  {\ar_{\oplus} "00"; "01"};
  (12.5,-4)*{\Anglearrow{40}\beta}
  \endxy
  \]
  where $\tau \cn \cC \otimes \cC \to \cC \otimes \cC$ is the symmetry
  isomorphism in $\IICat$ for the Gray tensor product, such that the
  following axioms hold.
  \begin{itemize}
  \item The following pasting diagram is equal to the identity
    2-natural transformation for the 2-functor $\oplus$.
    \[
    \xy
    (0,0)*+{\cC \otimes \cC}="00";
    (25,0)*+{\cC \otimes \cC}="10";
    (50,0)*+{\cC \otimes \cC}="20";
    (25,-15)*+{\cC}="11";
    {\ar^{\tau} "00"; "10"};
    {\ar^{\tau} "10"; "20"};
    {\ar_{\oplus} "00"; "11"};
    {\ar_{\oplus} "10"; "11"};
    {\ar^{\oplus} "20"; "11"};
    {\ar@/^1.5pc/^1 "00"; "20"};
    (14.5,-4)*{\scriptstyle \Anglearrow{40} \beta};
    (35.5,-4)*{\scriptstyle \Anglearrow{0} \beta}
    \endxy
    \]
  \item The following pasting diagram is equal to the identity
    2-natural transformation for the canonical isomorphism
    $1 \otimes \cC \cong \cC$.
    \[ \xy
    (0,0)*+{1 \otimes \cC}="00";
    (30,0)*+{\cC \otimes \cC}="10";
    (60,0)*+{\cC \otimes \cC}="20";
    (30,-12)*+{\cC}="11";
    {\ar^{e \otimes \id} "00"; "10"};
    {\ar^{\tau} "10"; "20"};
    {\ar^{\oplus} "20"; "11"};
    {\ar_{\oplus} "10"; "11"};
    {\ar_{\cong} "00"; "11"};
    (17,-4)*{\scriptstyle =};     (40, -4)*{\scriptstyle \Anglearrow{0} \beta}
    \endxy
    \]
  \item The following equality of pasting diagrams holds where we have abbreviated the tensor product to concatenation when labeling 1- or 2-cells.
    \[ 
    \xy
    (0,0)*+{\cC^{\otimes 3}}="00";
    (18,0)*+{\cC^{\otimes 3}}="10";
    (36,0)*+{\cC^{\otimes 3}}="20";
    (54,0)*+{\cC^{\otimes 2}}="30";
    (27,-15)*+{\cC^{\otimes 2}}="11";
    (18,-30)*+{\cC^{\otimes 2}}="12";
    (54,-22)*+{\cC}="33";
    (69,0)*+{\cC^{\otimes 3}}="40";
    (87,0)*+{\cC^{\otimes 3}}="50";
    (105,0)*+{\cC^{\otimes 3}}="60";
    (123,0)*+{\cC^{\otimes 2}}="70";
    (87,-30)*+{\cC^{\otimes 2}}="52";
    (123,-22)*+{\cC}="73";
    (105,-17)*+{\cC^{\otimes 2}}="63";
    {\ar^{\scriptstyle \tau \id} "00"; "10"};
    {\ar^{\scriptstyle \tau \id} "40"; "50"};
    {\ar^{\scriptstyle \id \tau } "10"; "20"};
    {\ar^{\scriptstyle \id \tau } "50"; "60"};
    {\ar^{\scriptstyle \oplus \id} "20"; "30"};
    {\ar^{\scriptstyle \oplus \id} "60"; "70"};
    {\ar^{\scriptstyle \oplus} "30"; "33"};
    {\ar^{\scriptstyle \oplus} "70"; "73"};
    {\ar_{\scriptstyle \oplus \id} "00"; "12"};
    {\ar_{\scriptstyle \oplus \id} "40"; "52"};
    {\ar_{\scriptstyle \oplus} "12"; "33"};
    {\ar_{\scriptstyle \oplus} "52"; "73"};
    {\ar_{\scriptstyle \id \oplus} "00"; "11"};
    {\ar^{\scriptstyle \tau} "11"; "30"};
    {\ar_{\scriptstyle \oplus} "11"; "33"};
    {\ar^{\scriptstyle \oplus \id} "50"; "52"};
    {\ar_{\scriptstyle \id \oplus} "50"; "63"};
    {\ar^{\scriptstyle \id \oplus} "60"; "63"};
    {\ar^{\scriptstyle \oplus} "63"; "73"};
    (27,-7.5)*{=}; (18,-18)*{=}; (114,-7.5)*{=}; (96,-20)*{=}; (61.5,-15)*{=};
    (45,-13)*{\scriptstyle \Anglearrow{40} \beta}; (80,-8)*{\scriptstyle \Anglearrow{40} \beta \id}; (99,-5)*{\scriptstyle \Anglearrow{40} \id \beta};
    \endxy
    \]
  \end{itemize}
\end{defn}

\begin{rmk}\label{rmk:no2monadforpgm}
  Although the definition of permutative Gray-monoid is analogous to
  the definition of permutative category, there is an important
  difference.  Permutative categories can be described as the algebras
  for a 2-monad on $\Cat$, but no such description can be made for
  permutative Gray-monoids.  This is because a 2-monad for permutative
  Gray-monoids would have as its underlying 2-category that of
  2-categories, 2-functors, and 2-natural transformations.  But there
  is no way to extend the Gray tensor product as a functor of
  categories $\otimes: \IICat \times \IICat \to \IICat$ to a 2-functor
  of 2-categories with the same objects and 1-cells but 2-natural
  transformations as 2-cells: one can easily verify that there is no
  way to make a Gray tensor product of a pair of 2-natural
  transformations itself into a 2-natural transformation, it is only
  possible to produce a pseudonatural one.  Therefore such a 2-monad
  does not exist.  Nevertheless, we borrow heavily from the strategies
  in 2-dimensional algebra in dealing with permutative Gray-monoids,
  using such notions as strict and (op)lax functors.
\end{rmk}

\begin{rmk}
  The definition of permutative Gray-monoid makes no mention of
  permutations of more than 3 objects, so the reader might wonder
  about the existence and uniqueness of something deserving to be
  called $\beta \oplus \beta: x \oplus y \oplus z \oplus w \iso y
  \oplus x \oplus w \oplus z$.  \it{A priori}, there are two such 1-cells,
  $(\id \oplus \beta) \circ (\beta \oplus \id)$ and $(\beta \oplus \id)
  \circ (\id \oplus \beta)$.  However in \cref{qs3} we will see that
  $\Sigma_{\beta,\beta}$ is the identity, so there is a unique 1-cell
  isomorphism for any permutation of objects.
\end{rmk}

\begin{defn}
  \label{defn:strict-functor-gray-mon}
  A \textit{strict functor} $F:\cC \to \cD$ of permutative
  Gray-monoids is a 2-functor $F:\cC \to \cD$ of the underlying
  2-categories satisfying the following conditions.
  \begin{itemize}
  \item $F(e_\cC) = e_\cD$, so that $F$ strictly preserves the unit
    object.
  \item The diagram
    \[
    \xy
    (0,0)*+{\cC \otimes \cC}="00";
    (30,0)*+{\cD \otimes \cD}="10";
    (0,-15)*+{\cC}="01";
    (30,-15)*+{\cD}="11";
    {\ar^{F \otimes F} "00"; "10"};
    {\ar^{\oplus_{\cD}} "10"; "11"};
    {\ar_{\oplus_{\cC}} "00"; "01"};
    {\ar_{F} "01"; "11"};
    \endxy
    \]
    commutes, so that $F$ strictly preserves the sum.
  \item The equation
    \[
    \beta^{\cD} * (F \otimes F) = F * \beta^{\cC}
    \]
    holds, so that $F$ strictly preserves the symmetry.  This equation
    is equivalent to requiring that
    \[
    \beta^{\cD}_{Fx,Fy} = F(\beta^{\cC}_{x,y})
    \]
    as 1-cells from $Fx \oplus Fy = F(x \oplus y)$ to
    $Fy \oplus Fx = F(y \oplus x)$.
  \end{itemize}
\end{defn}

\begin{prop}
  \label{prop:defn-PGM}
  There is a category $\PGM$ of permutative Gray-monoids and strict
  functors between them. The underlying 2-category functor
  $\PGM \to \IICat$ is monadic in the usual, 1-categorical sense.
\end{prop}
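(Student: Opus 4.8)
The plan is to prove the first assertion by a routine check and the second by Beck's monadicity theorem in the form that only requires reflexive coequalizers. For the first assertion: if $F\cn\cC\to\cD$ and $G\cn\cD\to\cE$ are strict functors of permutative Gray-monoids (\cref{defn:strict-functor-gray-mon}), then $GF$ is a 2-functor with $GF(e_\cC)=G(e_\cD)=e_\cE$; its square for the sum commutes by pasting the squares for $F$ and for $G$, using that $(G\otimes G)(F\otimes F)=GF\otimes GF$ by functoriality of $\otimes$; and $\beta^\cE_{GFx,GFy}=G(\beta^\cD_{Fx,Fy})=GF(\beta^\cC_{x,y})$, so $GF$ strictly preserves the symmetry. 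Identity 2-functors are visibly strict, and the category axioms are inherited from $\IICat$, so $\PGM$ is a category and $U\cn\PGM\to\IICat$ is well-defined.

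For monadicity I would verify three hypotheses. First, $U$ reflects isomorphisms: if the underlying 2-functor of a strict functor $F$ is invertible with inverse 2-functor $G$, then $G$ automatically strictly preserves $e$, $\oplus$ (using that $F\otimes F$ is invertible with inverse $G\otimes G$, since $-\otimes-$ is a functor) and $\beta$, so $F$ is invertible in $\PGM$. Second, $U$ has a left adjoint: the free permutative Gray-monoid can be produced by the usual presentation-by-operations-and-equations construction, since $\IICat$ is cocomplete and — crucially — $-\otimes\cB$ and $-\times\cB$ preserve all colimits (the Gray tensor product is closed, and $\IICat$ is moreover cartesian closed), so the associated signature functor is accessible; alternatively one writes the free object down directly as the Gray analogue of the symmetric strict monoidal envelope of a category, built from $\coprod_{n\ge0}\cC^{\otimes n}$ equipped with its symmetric-group actions.

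The third and main point is that $\PGM$ has reflexive coequalizers and $U$ preserves them, and this is where the care is needed, because one must transport the structure — and in particular the 2-\emph{natural} isomorphism $\beta$, which is genuine $2$-categorical data, as emphasized in \cref{rmk:no2monadforpgm} — along an ordinary $1$-categorical colimit of $2$-categories. Write $\mathcal{J}$ for the walking arrow, i.e.\ the category $\{0\to1\}$ regarded as a locally discrete 2-category, so that a 2-functor $\cX\times\mathcal{J}\to\cY$ is exactly a 2-natural transformation between its two boundary restrictions $\cX\to\cY$; thus $\beta^\cD$ is encoded by a 2-functor $(\cD\otimes\cD)\times\mathcal{J}\to\cD$ whose restrictions to the two ends are $\oplus_\cD$ and $\oplus_\cD\circ\tau$. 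Given a reflexive pair $F,G\cn\cC\to\cD$ of strict functors, let $q\cn\cD\to\cE$ be its coequalizer in $\IICat$. By the standard lemma on reflexive coequalizers in a monoidal category whose tensor preserves them in each variable, $q\otimes q$ is the coequalizer of $F\otimes F$ and $G\otimes G$ (and likewise $q^{\otimes k}$ is a coequalizer of the corresponding higher powers), while $(q\otimes q)\times\mathcal{J}$ is a coequalizer because $-\times\mathcal{J}$ preserves colimits. Since $F$ and $G$ strictly preserve all the structure, $q\circ\oplus_\cD$, $q\circ e_\cD$, and $q$ composed with the 2-functor encoding $\beta^\cD$ each coequalize the relevant pair, hence factor uniquely through these coequalizers to define $\oplus_\cE$, $e_\cE$, and a 2-natural transformation $\beta^\cE$; the latter is invertible by transporting $(\beta^\cD)^{-1}$ in the same way and invoking uniqueness.

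Each Gray-monoid and permutativity axiom for $(\cE,\oplus_\cE,e_\cE,\beta^\cE)$ is an equation between 2-functors out of an iterated Gray power of $\cE$ (possibly $\times\mathcal{J}$); precomposing both sides with the corresponding power of $q$ — which is again a coequalizer, hence an epimorphism — reduces it to the already-known axiom for $\cD$ together with the preservation properties of $q$. A routine chase with the universal property then identifies $q\cn\cD\to\cE$ as the coequalizer in $\PGM$, so $U$ in fact \emph{creates} reflexive coequalizers; the same argument, applied to cones and to filtered diagrams, shows that $U$ creates all limits and all filtered colimits, which is consistent with the left adjoint in the previous step. With the three hypotheses in hand, Beck's theorem gives that $U\cn\PGM\to\IICat$ is monadic. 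I expect the only genuine obstacle to be bookkeeping: making precise that the 2-natural isomorphism $\beta$ and the three permutativity axioms really do transport along these $1$-categorical colimits, since everything else is either formal or an immediate consequence of the closedness of $\otimes$ and of $\IICat$.
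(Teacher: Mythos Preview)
Your argument is correct. The paper states this proposition without proof, treating it as a routine observation in the spirit of standard results about categories of algebraic structures; your write-up supplies exactly the details one would expect, via the crude (reflexive) form of Beck's monadicity theorem.

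A couple of minor remarks on presentation. Your justification for the left adjoint is the weakest link: the phrase ``accessible signature functor'' is gestural, and your alternative direct construction is only indicated. A cleaner route, entirely consistent with what you already prove, is to note that $\IICat$ is locally finitely presentable and that $U$ preserves limits and filtered colimits (both of which follow from the same creation argument you give for reflexive coequalizers, since the Gray tensor and the cartesian product each preserve these colimits in each variable); the adjoint functor theorem for locally presentable categories then yields the left adjoint immediately. Second, your encoding of $\beta$ as a $2$-functor $(\cD\otimes\cD)\times\mathcal{J}\to\cD$ is exactly right and is the key device that lets a $2$-natural isomorphism be transported along a $1$-categorical colimit; it is worth emphasising, as you do, that this is the point where \cref{rmk:no2monadforpgm} bites and where the argument genuinely uses the cartesian closedness of $\IICat$ in addition to the closedness of the Gray structure.
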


\begin{defn}\label{defn:weak-equiv-pgm}
  Let $\cC, \cD$ be a pair of permutative Gray-monoids.  A strict
  functor of permutative Gray-monoids $F \cn \cC \to \cD$ is a
  \emph{weak equivalence} if the underlying 2-functor is a weak
  equivalence of 2-categories.  We let $(\PGM, \cW)$ denote the
  relative category of permutative Gray-monoids with weak
  equivalences,
\end{defn}

We also have the notion of lax functor.

\begin{defn}\label{laxfundef}
  A \textit{lax functor} $F:\cC \to \cD$ of permutative Gray-monoids
  consists of
  \begin{itemize}
  \item a 2-functor $F:\cC \to \cD$ between the underlying
    2-categories,
  \item a 1-cell $\tha_{0}:e_{\cD} \to F(e_{\cC})$, and
  \item a 2-natural transformation
    \[
    \xy
    (0,0)*+{\cC \otimes \cC}="00";
    (30,0)*+{\cD \otimes \cD}="10";
    (0,-15)*+{\cC}="01";
    (30,-15)*+{\cD}="11";
    {\ar^{F \otimes F} "00"; "10"};
    {\ar^{\oplus_{\cD}} "10"; "11"};
    {\ar_{\oplus_{\cC}} "00"; "01"};
    {\ar_{F} "01"; "11"};
    (15,-6)*{\Downarrow \tha}
    \endxy
    \]
  \end{itemize}
  subject to the requirement that the following diagrams commute for
  all objects $x,y,z \in \cC$.
  \[
  \xy
  (0,0)*+{e \oplus Fx}="00";
  (30,0)*+{Fe \oplus Fx}="10";
  (30,-10)*+{F(e \oplus x)}="11";
  (30,-20)*+{Fx}="12";
  {\ar^{\tha_{0} \oplus 1} "00"; "10"};
  {\ar^{\tha} "10"; "11"};
  {\ar@{=} "11"; "12"};
  {\ar@{=} "00"; "12"};
  (50,0)*+{Fx \oplus e}="20";
  (80,0)*+{Fx \oplus Fe}="30";
  (80,-10)*+{F(x \oplus e)}="31";
  (80,-20)*+{Fx}="32";
  {\ar^{1 \oplus \tha_{0}} "20"; "30"};
  {\ar^{\tha} "30"; "31"};
  {\ar@{=} "31"; "32"};
  {\ar@{=} "20"; "32"};
  (20,-30)*+{Fx \oplus Fy \oplus Fz}="55";
  (60,-30)*+{F(x \oplus y) \oplus Fz}="65";
  (20,-45)*+{Fx \oplus F(y \oplus z)}="56";
  (60,-45)*+{F(x \oplus y \oplus z)}="66";
  {\ar^{\tha \oplus 1} "55"; "65"};
  {\ar^{\tha} "65"; "66"};
  {\ar_{1 \oplus \tha} "55"; "56"};
  {\ar_{\tha} "56"; "66"};
  (20,-60)*+{Fx \oplus Fy }="85";
  (60,-60)*+{F(x \oplus y) }="95";
  (20,-75)*+{Fy \oplus Fx}="86";
  (60,-75)*+{F(y \oplus x )}="96";
  {\ar^{\tha } "85"; "95"};
  {\ar^{F(\beta)} "95"; "96"};
  {\ar_{\be} "85"; "86"};
  {\ar_{\tha} "86"; "96"};
  \endxy
  \]
\end{defn}

Just as one can compose lax monoidal functors between monoidal
categories, it is possible to compose lax functors between permutative
Gray-monoids.  If $(F, \tha_0, \tha)$ is a lax functor $\cC \to \cD$
and $(G, \psi_0, \psi)$ is a lax functor $\cD \to \cA$, then the
composite $GF$ is given the structure of a lax functor with 1-cell
\[
e \stackrel{\psi_0}{\rtarr} Ge \stackrel{G\tha_0}{\rtarr} GFe
\]
and 2-natural transformation with components
\[
GFx \oplus GFy  \stackrel{\psi}{\rtarr} G(Fx \oplus Fy) \stackrel{G\tha}{\rtarr} GF(x \oplus y).
\]

There are two additional variants one might consider.
\begin{defn}\label{defn:ps-oplax-functor}
  A \textit{pseudofunctor} $F:\cC \to \cD$ between permutative
  Gray-monoids is a lax functor in which both the structure 1-cell for
  the unit object and the 2-natural transformation are isomorphisms.
  An \textit{oplax functor} $F:\cC \to \cD$ consists of
  \begin{itemize}
  \item a 2-functor $F:\cC \to \cD$ between the underlying
    2-categories,
  \item a 1-cell $\tha_{0}:F(e_{\cC}) \to e_{\cD}$, and
  \item a 2-natural transformation $\tha$ with components
    $F(x \oplus y) \to Fx \oplus Fy$
  \end{itemize}
  subject to axioms such as those in \cref{laxfundef} with all arrows
  reversed.
\end{defn}

It is clear that pseudofunctors are closed under composition, and one
can define the composition of oplax functors in much the same way as
it is defined for lax ones.

\begin{rmk}
  A pseudofunctor in this sense is not the weakest possible notion, as
  it has an underlying 2-functor.
\end{rmk}

\begin{defn}\label{defn:normal-functor}
  A \textit{normal functor} $F:\cC \to \cD$ of permutative
  Gray-monoids is a functor (lax, oplax, or pseudo) for which
  $\tha_{0}$ is the identity.
\end{defn}

Since the underlying morphism of a functor (of any kind) between permutative Gray-monoids is itself a 2-functor, the composite of two normal functors of the
same kind will be another normal functor.  Note also that every strict
functor is already normal.

\begin{prop}\label{prop:PGM-variants}
  There are categories of permutative Gray-monoids with lax functors,
  oplax functors, and pseudofunctors; we denote these with the
  subscripts \emph{l, op,} and \emph{ps} respectively.  We also have
  categories whose maps are the normal variants of each, denoted with
  respective subscripts \emph{nl, nop,} and \emph{nps}.  We have
  canonical inclusions
\[
\xy
(0,0)*+{\PGM}="0";
(30,0)*+{\PGMnps}="1";
(60,0)*+{\PGMps}="2";
(50,10)*+{\PGMnl}="3";
(50,-10)*+{\PGMnop}="4";
(80,10)*+{\PGMl}="5";
(80,-10)*+{\PGMop}="6";
{\ar "0"; "1" };
{\ar "1"; "2" };
{\ar "1"; "4" };
{\ar "1"; "3" };
{\ar "2"; "5" };
{\ar "2"; "6" };
{\ar "3"; "5" };
{\ar "4"; "6" };
\endxy
\]
which commute with the forgetful functors to $\IICat$.
\end{prop}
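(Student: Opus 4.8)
The statement is largely structural bookkeeping, and the plan is to treat it in three stages: closure of each class of functors under composition, verification of the category axioms, and construction of the eight inclusions.

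First, recall that the composite of lax functors $(F,\tha_0,\tha)\cn\cC\to\cD$ and $(G,\psi_0,\psi)\cn\cD\to\cA$ has already been written down, with unit $1$-cell $e\stackrel{\psi_0}{\rtarr}Ge\stackrel{G\tha_0}{\rtarr}GFe$ and structure $2$-natural transformation whose component at $(x,y)$ is the composite $1$-cell $GFx\oplus GFy\stackrel{\psi}{\rtarr}G(Fx\oplus Fy)\stackrel{G\tha}{\rtarr}GF(x\oplus y)$; the oplax composite is defined dually, and the structure cells of a composite of pseudofunctors are again isomorphisms because a $2$-functor carries isomorphism $2$-cells to isomorphism $2$-cells. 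The substantive point is that these composite data satisfy the three coherence diagrams of \cref{laxfundef} (respectively their oplax analogues); each such diagram for $GF$ is obtained by concatenating the image under $G$ of the corresponding diagram for $F$ with the corresponding diagram for $G$, and then invoking $2$-naturality of $\psi$ and $\tha$ together with $2$-functoriality of $G$. The only care required is in handling iterated sums such as $f\oplus g=(\id\oplus g)(f\oplus\id)$ and the attendant Gray structure, and I expect this verification to be the most laborious part of the argument, although it uses no idea beyond those already needed to \emph{define} the composites.

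Second, for the category axioms: the identity $2$-functor equipped with identity constraints is a strict functor, hence a normal lax, oplax, and pseudofunctor, and it is a two-sided unit by inspection of the formulas. Associativity follows because, for composable $F,G,H$, both bracketings of the composite produce the functor whose unit $1$-cell is the evident composite $e\to He\to HGe\to HGFe$ and whose structure transformation has component obtained by pasting $H(G\tha)$, $H\psi$, and the structure cell of $H$; the two pastings agree by $2$-functoriality of $H$ and the interchange law. Since the unit $1$-cell of a composite of \emph{normal} functors is a composite of identity $1$-cells, hence an identity, normality is preserved under composition, so $\PGMnl$, $\PGMnop$, and $\PGMnps$ are subcategories of $\PGMl$, $\PGMop$, and $\PGMps$, and $\PGM$ is a subcategory of $\PGMnps$.

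Finally, the inclusions are all the identity on objects. Six of them---$\PGM\to\PGMnps$, $\PGMnps\to\PGMps$, $\PGMnps\to\PGMnl$, $\PGMps\to\PGMl$, $\PGMnl\to\PGMl$, and $\PGMnop\to\PGMop$---merely forget part of the structure or a property of it: a strict functor has identity, hence invertible, constraints, and the data of a pseudofunctor is by definition already the data of a (normal, when applicable) lax functor. The remaining two, $\PGMnps\to\PGMnop$ and $\PGMps\to\PGMop$, send $(F,\tha_0,\tha)$ to $(F,\tha_0^{-1},\tha^{-1})$; one checks the oplax axioms by observing that each commuting triangle or square of invertible $1$-cells in \cref{laxfundef} remains commutative with every arrow reversed, and functoriality reduces to the fact that the inverse of the composite $1$-cell $G(\tha_{x,y})\circ\psi_{Fx,Fy}$ is $\psi_{Fx,Fy}^{-1}\circ G(\tha_{x,y}^{-1})$, which is precisely the component of the dual composite of the inverted structures. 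Commutativity of the two squares in the displayed diagram, and compatibility of every inclusion with the forgetful functors to $\IICat$, are then immediate, since each of these maps is the identity on underlying $2$-functors and the forgetful functors simply read off that underlying $2$-functor.
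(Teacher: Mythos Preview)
Your proposal is correct. The paper states this proposition without proof, treating it as a routine verification; your outline supplies exactly the details the paper leaves implicit, including the one non-tautological point that the inclusions into the oplax categories are obtained by inverting the structure cells and that this operation is compatible with composition.
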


We return now to a discussion of plain 2-categories without any
monoidal structure in order to prove the equivalence between
quasi-strict symmetric monoidal 2-categories and permutative
Gray-monoids.

\begin{defn}\label{defn:strict-transformation}
  Let $\cA$ and $\cB$ be 2-categories and let $F,G\cn \cA \to \cB$ be
  pseudofunctors. A pseudonatural transformation $\al \cn F \impl G$
  is a \textit{strict transformation} if for each 1-cell $f \cn a \to
  b$, the 2-cell isomorphism
  \[
  \al_{f}: Gf * \al_{a}  \cong \al_{b} * Ff
  \]
  is the identity 2-cell.
\end{defn}

\begin{rmk}
  Note that when $F,G$ in the definition above are strict 2-functors rather
  than pseudofunctors, strict naturality is what is usually
  called 2-naturality in the 2-categorical literature
  \cite{KS74Review}.  It will become clear below why we have
  introduced this additional layer of terminology.
\end{rmk}

\begin{lem}\label{stricttranslem}
  Let $\cA$, $\cB$, and $\cC$ be 2-categories and let $F, G \cn \cA
  \times \cB \to \cC$ be a pair of pseudofunctors.
  \begin{enumerate}
  \item Assume that $F, G$ are cubical, and
    $\overline{F}, \overline{G} \cn \cA \otimes \cB \to \cC$ are the
    associated 2-functors.  Then there is an isomorphism between the
    set of pseudonatural transformations $\al \cn F \impl G$ and the
    set of pseudonatural transformations
    $\overline{\al} \cn \overline{F} \impl \overline{G}$.
    Furthermore, the following conditions are equivalent:
    \begin{itemize}
    \item $\overline{\al}$ is a 2-natural transformation,
    \item $\al$ is a strict transformation, and
    \item the components $\al_{f,1}, \al_{1,g}$ are the identity for
      all 1-cells $f$ in $\cA$ and $g$ in $\cB$.
    \end{itemize}
  \item Assume that $F, G$ are opcubical, and
    $\overline{F^{*}}, \overline{G^{*}} \cn \cA \otimes \cB \to \cC$
    are the associated 2-functors.  Then there is an isomorphism
    between the set of pseudonatural transformations
    $\al \cn F \impl G$ and the set of pseudonatural transformations
    $\overline{\al} \cn \overline{F^{*}} \impl \overline{G^{*}}$.
    Furthermore, the following conditions are equivalent:
    \begin{itemize}
    \item $\overline{\al}$ is a 2-natural transformation,
    \item $\al$ is a strict transformation, and
    \item the components $\al_{f,1}, \al_{1,g}$ are the identity for
      all 1-cells $f$ in $\cA$ and $g$ in $\cB$.
    \end{itemize}
  \item Assume that $F$ is cubical and $G$ is opcubical, and
    $\overline{F}, \overline{G^{*}} \cn \cA \otimes \cB \to \cC$ are
    the associated 2-functors.  Then there is an isomorphism between
    the set of pseudonatural transformation $\al \cn F \impl G$ and
    the set of pseudonatural transformations
    $\overline{\al} \cn \overline{F} \impl \overline{G^{*}}$.
    Furthermore, the following conditions are equivalent:
    \begin{itemize}
    \item $\overline{\al}$ is a 2-natural transformation and
    \item the components $\al_{f,1}, \al_{1,g}$ are the identity for
      all 1-cells $f$ in $\cA$ and $g$ in $\cB$.
    \end{itemize}
  \end{enumerate}
\end{lem}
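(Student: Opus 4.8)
The plan is to reduce everything to the universal property of the Gray tensor product established in \cref{cubicalmulticat} together with the description of $[\cA\otimes\cB,\cC]\cong[\cA,[\cB,\cC]]$ as 2-categories, and then to translate the stated conditions into one another by direct inspection of components. For the first part, the isomorphism on $0$-cells $\{F,G\text{ cubical}\}\leftrightarrow\{\overline F,\overline G\text{ 2-functors }\cA\otimes\cB\to\cC\}$ is exactly \cref{cubicalmulticat}. To extend this to an isomorphism on pseudonatural transformations, I would exhibit the transformation $\al\cn F\impl G$ as a cubical $2$-cell in the multicategory $\IICat_c$---more precisely, package the data of $\al$ (component $1$-cells $\al_{a,b}$ and component $2$-cells $\al_{f_1,f_2}$ for composable pairs, subject to the usual pseudonaturality axioms) and observe that, since $F$ and $G$ are cubical, the pseudonaturality axiom governing $\al_{(f_1,f_2)\circ(g_1,g_2)}$ only constrains $\al$ through the basic cells $\al_{f_1,1}$ and $\al_{1,g_2}$. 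One then checks that the data of $\al$ is equivalent to the data of a pseudonatural transformation $\overline\al\cn\overline F\impl\overline G$ between the associated $2$-functors: the component $1$-cells are the same (indexed by objects of $\cA\otimes\cB$, i.e.\ pairs), and the component $2$-cells of $\overline\al$ at an arbitrary generating $1$-cell $f\otimes 1$ or $1\otimes g$ of $\cA\otimes\cB$ are precisely $\al_{f,1}$ and $\al_{1,g}$, with the value on a composite $1$-cell $(f_1\otimes 1)(1\otimes g_1)\cdots$ forced by the pseudonaturality-with-respect-to-composition axiom together with how $\overline F,\overline G$ act on these composites. The axioms for $\overline\al$ (respect for composition, respect for $2$-cells) then correspond bijectively to the axioms for $\al$ via the defining relations of $\cA\otimes\cB$, including those for $\Si_{f,g}$: the $\Si$-axiom for $\overline\al$ unwinds to the interchange between $\al_{f,1}$ and $\al_{1,g}$, which is one of the pseudonaturality coherences for $\al$ applied to the commuting square built from $f$ and $g$.

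For the equivalence of the three conditions in part (1): that $\overline\al$ is $2$-natural means exactly that \emph{all} its naturality $2$-cells are identities; by the description just given, the naturality $2$-cell of $\overline\al$ at an arbitrary $1$-cell of $\cA\otimes\cB$ is a pasting of copies of $\al_{f_i,1}$, $\al_{1,g_i}$ and the structure cells $\Si$, so it is the identity for every $1$-cell iff it is the identity on the generators, i.e.\ iff $\al_{f,1}=\id$ and $\al_{1,g}=\id$ for all $f,g$. This gives the equivalence of the first and third bullets. For the middle bullet, ``$\al$ is a strict transformation'' means $\al_h=\id$ for every $1$-cell $h$ of $\cA\times\cB$; taking $h=(f,1)$ and $h=(1,g)$ shows strictness implies the third condition, and conversely, since $F,G$ are cubical and an arbitrary $h=(f,g)$ factors as $(f,1)(1,g)$ (up to the cubical identity $2$-cell), the pseudonaturality coherence for $\al$ expresses $\al_{(f,g)}$ as a paste of $\al_{f,1}$, $\al_{1,g}$, which is the identity under the third condition. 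So all three are equivalent.

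Part (2) is formally the same argument run through the nudging construction: since $F,G$ are opcubical, one first passes to the cubical functors $F^{*},G^{*}$ (via the bijection of \cref{cubtoopcub} together with the nudging isomorphism $F^{*}\cong F$ recalled before \cref{cubtoopcub2}), whose associated $2$-functors are $\overline{F^{*}},\overline{G^{*}}$, and then applies part (1) verbatim; the nudging isomorphism is an invertible icon, so it carries pseudonatural transformations $F\impl G$ bijectively to pseudonatural transformations $F^{*}\impl F^{*}$-style, preserving the components $\al_{f,1},\al_{1,g}$ up to the structure cells which are identities on those generators. Part (3) is the mixed case: here $F$ is cubical and $G$ is opcubical, so $\overline F$ is the plain associated $2$-functor while $\overline{G^{*}}$ is the associated $2$-functor of the nudged $G^{*}$; the same bookkeeping produces the bijection on transformations and the equivalence of the two listed conditions. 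Note that in (3) there is \emph{no} ``strict transformation'' bullet---this is the expected asymmetry, since $\al\cn F\impl G$ with $F$ cubical and $G$ opcubical does not in general have an underlying transformation between the same kind of normalized data, so ``strictness'' of $\al$ is not even the right phrasing; the content is only that $2$-naturality of $\overline\al$ matches the vanishing of the generating naturality $2$-cells.

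The main obstacle I anticipate is purely bookkeeping: carefully verifying that the naturality $2$-cell of $\overline\al$ at a long composite $1$-cell $(f_1\otimes 1)(1\otimes g_1)\cdots(f_n\otimes 1)(1\otimes g_n)$ decomposes, via the relations in $\cA\otimes\cB$ and the pseudofunctor axioms for $\overline F,\overline G$, into a pasting whose only non-structural inputs are the $\al_{f_i,1}$ and $\al_{1,g_i}$---and dually after nudging. This is where one must be most careful about the interaction of the comparison $2$-cells $\Si$ with $\al$, and it is essentially the same computation that appears in the proof of \cref{cubicalmulticat}; everything else is a routine translation along the hom-$2$-category isomorphism $[\cA\otimes\cB,\cC]\cong[\cA,[\cB,\cC]]$.
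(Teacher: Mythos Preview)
Your approach is essentially the paper's: define $\overline{\al}$ on objects and on generating 1-cells by the corresponding components of $\al$, let the values on general 1-cells be forced by the pseudonaturality axioms, and then read off the equivalence of conditions from the fact that the naturality 2-cell of $\overline{\al}$ at any composite $(f_1\otimes 1)(1\otimes g_1)\cdots$ is a pasting of the $\al_{f_i,1}$, $\al_{1,g_i}$ together with structural $\Sigma$'s. The paper's proof is exactly this, stated more tersely and without the detour through the internal-hom isomorphism $[\cA\otimes\cB,\cC]\cong[\cA,[\cB,\cC]]$; that isomorphism is not needed and does not really simplify anything here.

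There is one substantive point where your explanation diverges from the paper and is not quite right: your account of why part (3) has no ``strict transformation'' bullet. You say that strictness of $\al$ is ``not even the right phrasing'' in the mixed case, but the notion of strict transformation between pseudofunctors is perfectly well-defined (the paper defines it just before this lemma). The real phenomenon is more concrete. When $F$ is cubical and $G$ is opcubical, the pseudonaturality axiom for $\al$ at a general 1-cell $(f,g)$ expresses $\al_{f,g}$ as a pasting of $\al_{f,1}$, $\al_{1,g}$, the (identity) comparison cell of $F$, and the \emph{non-identity} comparison cell of $G$. Hence even when $\al_{f,1}$ and $\al_{1,g}$ are identities, $\al_{f,g}$ is forced to equal that comparison isomorphism of $G$, which is generally nontrivial. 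The paper makes exactly this point with an explicit pasting diagram. So in part (3) the two listed conditions are equivalent, but they do \emph{not} imply strictness of $\al$; this is the content of the asymmetry, and your argument for part (1) (``$\al_{(f,g)}$ is a paste of $\al_{f,1}$, $\al_{1,g}$'') silently uses that both comparison cells are identities, which fails here.

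A minor imprecision: for cubical $F$ the identity comparison occurs for the factorisation $(f,g)=(1,g)(f,1)$, not $(f,1)(1,g)$; this does not affect your argument but is worth getting straight when you actually write out the pastings.
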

\begin{proof}\proofof{stricttranslem}
  For the first two variants above, define
  $\overline{\al}_{a \otimes b} = \al_{a,b}$ for the 0-cell components
  and
  $\overline{\al}_{f \otimes \id} = \al_{f,\id}, \overline{\al}_{\id
    \otimes g} = \al_{\id,g}$
  for the 1-cell components on generators.  The 1-cell components for
  a general 1-cell are forced by the axioms, and it is easy to check
  the pseudonaturality axioms as well as the equivalent conditions
  stated above.  The same definitions work for the third variant, only
  note that $\al_{f,g}$ is not forced to be the identity even if both
  $\al_{f,1}, \al_{1,g}$ are since it must satisfy the axiom shown
  below where the single unlabeled isomorphism is from the
  pseudofunctoriality of $G$ and is not necessarily the identity as
  $G$ is only opcubical and not cubical.
  \[
  \xy
  (0,0)*+{F(a,b)}="00";
  (30,0)*+{G(a,b)}="10";
  (15,-14)*+{F(a',b)}="01";
  (45,-14)*+{G(a',b)}="11";
  (0,-28)*+{F(a',b')}="02";
  (30,-28)*+{G(a',b')}="12";
  (70,0)*+{F(a,b)}="60";
  (100,0)*+{G(a,b)}="70";
  (115,-14)*+{G(a',b)}="71";
  (70,-28)*+{F(a',b')}="62";
  (100,-28)*+{G(a',b')}="72";
  {\ar^{\al_{a,b}} "00"; "10"};
  {\ar^{G(f,\id)} "10"; "11"};
  {\ar^{G(\id,g)} "11"; "12"};
  {\ar_{F(f,g)} "00"; "02"};
  {\ar_{\al_{a',b'}} "02"; "12"};
  {\ar^{F(f,\id)} "00"; "01"};
  {\ar^{F(\id,g)} "01"; "02"};
  {\ar^{\al_{a',b}} "01"; "11"};
  {\ar^{\al_{a,b}} "60"; "70"};
  {\ar^{G(f,\id)} "70"; "71"};
  {\ar^{G(\id,g)} "71"; "72"};
  {\ar_{F(f,g)} "60"; "62"};
  {\ar_{\al_{a',b'}} "62"; "72"};
  {\ar_{G(f,g)} "70"; "72"};
  (20,-7)*+{=}; (20,-21)*+{=}; (5,-14)*+{=}; (58,-14)*+{=};
  (83,-14)*{\Downarrow \al_{f,g}}; (105,-14)*{\cong}
  \endxy
  \]
\end{proof}

\begin{prop}\label{qs3}
  Axioms (QS1) and (QS2) imply (QS3) in the presence of the others.
\end{prop}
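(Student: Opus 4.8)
\medskip
\noindent\textbf{Proof proposal.}
The plan is to derive (QS3) from (QS1) and (QS2) by exploiting the $2$-cell naturality of the braiding, viewed as a pseudonatural transformation $\beta\cn\oplus\impl\oplus\tau\cn\cC\otimes\cC\to\cC$ --- the same sort of computation carried out in the proof of \cref{stricttranslem}. First I would record the immediate consequence of (QS2): since $\oplus$ and $\oplus\tau$ are \emph{strict} $2$-functors, pseudofunctoriality of $\beta$ over composites of $1$-cells, together with the vanishing of the naturality $2$-cells $\beta_{f\otimes\id}$ and $\beta_{\id\otimes g}$ at the generating $1$-cells of $\cC\otimes\cC$, forces $\beta_h$ to be an identity for \emph{every} $1$-cell $h$ of $\cC\otimes\cC$. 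In other words, (QS2) is exactly the assertion that $\beta$ is $2$-natural.

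Next I would apply the $2$-cell naturality axiom of $\beta$ to the structure $2$-cell $\Sigma_{f,g}$ of $\cC\otimes\cC$, for arbitrary $1$-cells $f,g$ of $\cC$. Because $\oplus$ carries $\Sigma_{f,g}$ to the cell $\Sigma_{f,g}$ of $\cC$ coming from the cubical functor $\ol{\oplus}$, and, by the symmetry of the Gray tensor product (recall $\tau(\Sigma_{f,g})=\Sigma_{g,f}^{-1}$ from \cref{defn:graytensor}), $\oplus\tau$ carries it to $\Sigma_{g,f}^{-1}$, and because every $\beta_h$ is an identity by the previous step, this axiom collapses to a single equation relating $\Sigma_{f,g}$ and $\Sigma_{g,f}$ after whiskering by components of $\beta$. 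Specializing the first variable to a component $\beta_{x,y}$ of the braiding (and, symmetrically, the second variable) then expresses $\Sigma_{\beta_{x,y},g}$ and $\Sigma_{f,\beta_{x,y}}$ in terms of one another and of the components $\beta_{x\oplus y,z}$, $\beta_{z,x\oplus y}$, and their companions; each such component is an equivalence $1$-cell, so whiskering by it reflects identity $2$-cells.

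It is at this last point that (QS1) enters. With $R_{--|-}=R_{-|--}=v$ all identities, the braided, sylleptic, and symmetric axioms of the symmetric monoidal bicategory degenerate to the \emph{strict} equations $\beta_{x\oplus y,z}=(\beta_{x,z}\oplus\id_y)\circ(\id_x\oplus\beta_{y,z})$, $\beta_{x,y\oplus z}=(\id_y\oplus\beta_{x,z})\circ(\beta_{x,y}\oplus\id_z)$, and $\beta_{y,x}\circ\beta_{x,y}=\id_{x\oplus y}$, each with identity comparison $2$-cells. Substituting these decompositions into the equations produced in the previous step, and simplifying with the cubical-functor axioms for $\Sigma$ --- functoriality in each variable and triviality whenever an index is an identity $1$-cell, both available since $(\cC,\oplus)$ is a Gray-monoid --- collapses the remaining $2$-cells to identities; as whiskering by the equivalences $\beta_{\,\cdot\,,\,\cdot\,}$ is faithful on $2$-cells, this yields $\Sigma_{\beta_{x,y},g}=\id$ and $\Sigma_{f,\beta_{x,y}}=\id$, which is (QS3). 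The step I expect to be the main obstacle is the bookkeeping: in a Gray-monoid the $1$-cells appearing in these pastings share source and target objects but are genuinely distinct $1$-cells, so one must track the precise boundary $1$-cell of every $2$-cell and keep all orientation conventions --- including the sign in $\tau(\Sigma_{f,g})=\Sigma_{g,f}^{-1}$ --- consistent throughout; a secondary point is that $\Sigma_{\beta,g}$ and $\Sigma_{f,\beta}$ may each require their own instance of the argument, or be deduced from one another by applying $\tau$.
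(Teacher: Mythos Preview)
Your first two steps are sound: (QS2) does force $\beta$ to be $2$-natural (this is essentially \cref{stricttranslem}), and applying that $2$-naturality to the generating $2$-cell $\Sigma_{f,g}$ of $\cC\otimes\cC$ yields
\[
(\Sigma_{g,f})^{-1}*1_{\beta_{x,y}}=1_{\beta_{x',y'}}*\Sigma_{f,g}.
\]
The gap is in the third step. Specialising $f=\beta_{p,q}$ gives only a \emph{symmetric} relation between $\Sigma_{\beta_{p,q},g}$ and $\Sigma_{g,\beta_{p,q}}$, each whiskered by an invertible $\beta$-component, and substituting the $1$-cell decompositions of $\beta_{p\oplus q,c}$ into those whiskerings does nothing to break that symmetry. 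Even adjoining the cubical-functor identity $\Sigma_{\beta_{q,p}\beta_{p,q},g}=\id$, the resulting system still admits nontrivial solutions: it is consistent with $\Sigma_{\beta_{p,q},g}$ and $\Sigma_{\beta_{q,p},g}$ being mutual pasting-inverses without either being the identity. The ``functoriality in each variable and triviality on identities'' axioms for $\Sigma$ do not resolve this.

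What the paper uses, and what your outline never invokes, is the \emph{modification} axiom for $R_{-|--}$, i.e.\ its naturality in the $1$-cell variable. That $R$ is a modification with identity components (QS1) forces the two pseudonatural transformations $\beta_{a,b\oplus(-)}$ and $(\id_b\oplus\beta_{a,-})\circ(\beta_{a,b}\oplus\id)$ to agree on their naturality $2$-cells at every $g\cn c\to c'$, not merely on object components. By (QS2) the naturality $2$-cell of the first transformation is an identity; computing that of the composite produces exactly $\Sigma_{\beta_{a,b},g}$ whiskered by the isomorphism $\id_b\oplus\beta_{a,c'}$, and cancelling the whisker yields $\Sigma_{\beta,g}=\id$. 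The $2$-naturality argument of your step~2 plays no role in this, and the $1$-cell equalities you extract from (QS1) are by themselves not enough.
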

\begin{proof}\proofof{qs3}
  We must show that $\Sigma_{f, \beta}$ and $\Sigma_{\beta,g}$ are
  identities.  We do this for $\Sigma_{\beta,g}$ below; the other case
  is similar.  By the modification axiom for $R$, we have the
  indicated equality of pasting diagrams.
  \[
  \def\objectstyle{\scriptstyle}
  \def\labelstyle{\scriptstyle}
  \xy
  (0,0)*+{a \oplus b \oplus c}="00";
  (30,0)*+{a \oplus b \oplus c'}="10";
  (0,-15)*+{b \oplus a \oplus c}="01";
  (0,-30)*+{b \oplus c \oplus a}="02";
  (30,-30)*+{b \oplus c' \oplus a}="12";
  (30,-15)*+{b \oplus a \oplus c'}="11";
  (70,0)*+{a \oplus b \oplus c}="60";
  (100,0)*+{a \oplus b \oplus c'}="70";
  (70,-15)*+{b \oplus a \oplus c}="61";
  (70,-30)*+{b \oplus c \oplus a}="62";
  (100,-30)*+{b \oplus c' \oplus a}="72";
  {\ar^{\id \oplus g} "00"; "10"};
  {\ar_{\be \oplus \id} "10"; "11"};
  {\ar_{\id \oplus \be} "11"; "12"};
  {\ar_{\be \oplus \id} "00"; "01"};
  {\ar_{\id \oplus \be} "01"; "02"};
  {\ar_{\id \oplus g \oplus \id} "02"; "12"};
  {\ar^{\id \oplus g} "01"; "11"};
  {\ar@/^2.5pc/^{\be} "10"; "12"};
  {\ar^{\id \oplus g} "60"; "70"};
  {\ar_{\be \oplus \id} "60"; "61"};
  {\ar_{\id \oplus \be} "61"; "62"};
  {\ar_{\id \oplus g \oplus \id} "62"; "72"};
  {\ar@/^2.5pc/^{\be} "60"; "62"};
  {\ar@/^2.5pc/^{\be} "70"; "72"};
  (15,-7.5)*{\cong}; 
  (15,-22.5)*{=}; 
  (37,-15)*{=};
  (54,-15)*+{=};
  (77,-15)*+{=}; (93,-15)*+{=}
  \endxy
  \]
  The two triangular regions marked with equal signs are identities by
  (QS1), and the two squares marked with equal signs are identities by
  (QS2).  Thus the invertible 2-cell in the remaining square is an
  identity after whiskering by $\id \oplus \be$.  But since $\id
  \oplus \be$ is itself an isomorphism 1-cell, this statement is true
  before whiskering, thus verifying (QS3).
\end{proof}

\begin{thm}\label{p2isoqs2}
There is an isomorphism of categories $\PGM \cong \qsSMIICat$.
\end{thm}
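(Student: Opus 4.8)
The plan is to show that, on a fixed 2-category $\cC$, the structure of a permutative Gray-monoid (\cref{defn:pgm}) and that of a quasi-strict symmetric monoidal 2-category (\cref{defn:quasi-strict}) are literally the same data subject to the same axioms, so that the claimed isomorphism of categories is realized by the identity on underlying 2-categories and strict 2-functors. The translation between the two packagings is entirely controlled by the Gray tensor product machinery recalled above: \cref{cubicalmulticat} turns the 2-functor $\oplus\cn\cC\otimes\cC\to\cC$ into the cubical tensor product $\cC\times\cC\to\cC$, and the third part of \cref{stricttranslem} (together with \cref{cubtoopcub2}) turns a 2-natural transformation between 2-functors $\cC\otimes\cC\to\cC$ into a pseudonatural transformation between the associated cubical/opcubical functors whose unary components $\al_{f,1},\al_{1,g}$ vanish.

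First I would match the data. The Gray-monoid demanded in \cref{defn:quasi-strict} is exactly the Gray-monoid $(\cC,\oplus,e)$ of \cref{defn:pgm}. For the braiding, $\oplus$ and $\oplus\circ\tau$ correspond under \cref{cubicalmulticat,cubtoopcub2} to a cubical functor $F$ and an opcubical functor $G$ on $\cC\times\cC$ with $\overline F=\oplus$ and $\overline{G^{*}}=\oplus\circ\tau$, so by the third part of \cref{stricttranslem} a 2-natural isomorphism $\beta\cn\oplus\impl\oplus\circ\tau$ is precisely a pseudonatural transformation $\beta\cn F\impl G$ whose components $\beta_{f,1},\beta_{1,g}$ are all identities. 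That condition is exactly (QS2). Axiom (QS1) --- that $R_{--|-},R_{-|--},v$ are identities --- merely records that a permutative Gray-monoid carries no such modifications, so they are forced to be the identity. And (QS3) need not be matched by hand: on the quasi-strict side it follows from (QS1) and (QS2) by \cref{qs3}, and on the permutative Gray-monoid side the cells $\Sigma_{\beta,g},\Sigma_{f,\beta}$ are identities for exactly the same reason once the remaining axioms are in place.

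Next I would match the axioms. Once the Gray-monoid structure and (QS1)--(QS3) are in force, the three \emph{monoidal} axioms of a symmetric monoidal bicategory are vacuous, and --- using the coherence theorem for symmetric monoidal bicategories \cite{GO12infinite} together with the explicit axiom list of \cite{SP09Classification} --- the four braided, two sylleptic, and one symmetric axioms collapse to just three surviving equations: $\beta\circ\beta=\id$, the unit--braiding compatibility, and the hexagon $\beta_{x,y\oplus z}=(y\oplus\beta_{x,z})\circ(\beta_{x,y}\oplus z)$. (As for permutative categories, the mirror hexagon for $\beta_{x\oplus y,z}$ then follows, so only one is needed.) These three equations are exactly the diagrammatic forms of the three axioms displayed in \cref{defn:pgm}, and conversely those three axioms transported back through the data correspondence reproduce the surviving quasi-strict axioms. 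Carrying out this bookkeeping --- pinning down which of the ten symmetric monoidal bicategory axioms survive quasi-strictification and checking that they coincide with the three permutative Gray-monoid axioms --- is the only step that is not formal, and is where I expect the real work to be.

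Finally, the morphisms match on the nose: a strict functor of permutative Gray-monoids (\cref{defn:strict-functor-gray-mon}) is a strict 2-functor strictly preserving $\oplus$, $e$, and $\beta$, while a strict functor of quasi-strict symmetric monoidal 2-categories is a strict 2-functor preserving all the structure strictly, and under the data correspondence $\beta^{\cD}_{Fx,Fy}=F(\beta^{\cC}_{x,y})$ is precisely strict preservation of the braiding (the vanishing modifications being preserved automatically). Since neither direction alters the underlying 2-category or 2-functor, and since the translations of \cref{cubicalmulticat} and \cref{stricttranslem} are bijections natural in $\cC$, the two assignments are mutually inverse, giving $\PGM\cong\qsSMIICat$. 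One technical point to treat with care is the icon-level discrepancy (``nudging'') between $\oplus\circ\tau$ and $\overline{G^{*}}$ when invoking \cref{cubtoopcub2} and the third part of \cref{stricttranslem}; this is exactly what those results are built to absorb.
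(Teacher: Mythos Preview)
Your proposal is correct and follows essentially the same approach as the paper: both arguments reduce the isomorphism to a bijection of structures on a fixed underlying 2-category, use \cref{cubicalmulticat}, \cref{cubtoopcub2}, and the third variant of \cref{stricttranslem} to identify the 2-natural $\beta$ with the braiding subject to (QS2), invoke \cref{qs3} to discard (QS3), and then match the three permutative Gray-monoid axioms with the surviving quasi-strict conditions (with the mirror hexagon $R_{--|-}=1$ derived rather than imposed). The paper is slightly more explicit about which quasi-strict datum corresponds to which PGM axiom---in particular, that the Crans strict-braided structure collapses (in the presence of the rest) to the single unit condition $b_{e,x}=\id$, that $v=\id$ gives $\beta^2=\id$, and that $R_{-|--}=\id$ gives the hexagon---whereas you appeal more globally to the axiom list in \cite{SP09Classification}; but this is a difference of presentation, not of strategy.
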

\begin{proof}\proofof{p2isoqs2}
  Each of the categories above consists of 2-categories equipped with
  additional structure, together with 2-functors preserving all of
  that structure.  Thus to construct the desired isomorphism, we
  merely have to show that to give the data for a permutative
  Gray-monoid structure is the same as to give the data for a
  quasi-strict structure, and similarly that such data satisfies the
  axioms for one structure if and only if it satisfies the axioms for
  the other.  This will produce a bijection on objects, and moreover
  immediately imply a bijection on morphisms.
	
  Recall from \cref{defn:quasi-strict} that a quasi-strict symmetric
  monoidal 2-category consists of
  \begin{itemize}
  \item a Gray-monoid $\cC$, with \emph{cubical} multiplication $+\cn\cC \times \cC \to \cC$ and unit object $e$, with
  \item a strict braided structure in the sense of Crans
    \cite{Cra98Generalized}, the braiding of which we shall call $b$,
    such that
  \item the modifications $R_{-|--}, R_{--|-}, v$ are identities and
  \item the naturality cells $b_{f,\id}, b_{\id, g}$ are identities. 
  \end{itemize}
  By \cref{qs3}, we have omitted axiom (QS3) as it is redundant, and we have given the multiplication as a cubical functor as that is how it appears in the source and target of $b$.  A
  permutative Gray-monoid, on the other hand, consists of
  \begin{itemize}
  \item a Gray-monoid $\cC$, with multiplication given by a 2-functor $\oplus\cn\cC \otimes \cC \to \cC$, with
  \item a 2-natural transformation $\be$ such that
  \item $\be^{2}=1$,
  \item $\be$ is the identity when either object is the unit object,
    and
  \item $\be_{x,yz}$ is given as a composite of $\be_{x,y}$ and $\be_{x,z}$.
  \end{itemize}

  In the presence of the rest of the quasi-strict structure, the
  strict braided structure reduces to the single axiom that
  $b_{e,x}$  is the identity for any object $x$.  Now the braiding
  $b$ is a pseudonatural transformation from an opcubical functor (by
  \cref{cubtoopcub}) to a cubical one, but it satisfies the
  conditions in the third variant listed in \cref{stricttranslem} so it
  induces a 2-natural transformation 
    \[
  \overline{b}: \overline{(+ \circ \tau_{\times})^{*}} \rtarr \overline{+}
  \]
  where we have written the cubical functor giving the multiplication
  as $+$ and the associated 2-functor as $\overline{+}$.  Since
  $\tau_{\times}$ is strict, it is easy to check that
  \[
  (+ \circ \tau_{\times})^{*} = +^{*} \circ \tau_{\times},
  \]
  and by \cref{cubtoopcub2} the associated 2-functor is then
  $\overline{+} \circ \tau_{\otimes}$.  Thus we see that the
  pseudonatural transformation $b$ in the definition of a quasi-strict
  symmetric monoidal 2-category, subject to the conditions in (QS2),
  corresponds to the 2-natural transformation $\be$ in the definition
  of a permutative Gray-monoid.  The only remaining difference between
  the two definitions is that the equation $R_{--|-} = 1$ is not
  explicitly required in the definition of a permutative Gray-monoid,
  but requiring both $v = 1$ and $R_{-|--}=1$ (the third and fifth
  axioms) shows that $\be_{xy,z}$ is the appropriate composite of
  $\be_{x,z}$ and $\be_{y,z}$ by noting that the composite
  \[
  (\be_{x,z}\otimes z) \circ (x \otimes \be_{y,z}) \circ \be_{z,xy}
  \]
  is the identity and using the invertibility of $\be$.
\end{proof}

\begin{rmk}
  The note \cite{Bar14Quasistrict} gives a slightly different
  repackaging of quasi-strict symmetric monoidal 2-categories,
  together with a graphical calculus for 2-cells.  Bartlett's focus is
  on constructing symmetric monoidal bicategories using generators and
  relations, so such a calculus is crucial for his application, while
  our focus is more theoretical.
\end{rmk}

\subsection{Permutative 2-categories}
\label{sec:perm-2-cat}

We now come to the second strict notion of symmetric monoidal
bicategory that we will introduce.  This notion is not equivalent to
those studied thus far in the categorical sense, but it does have many
nice properties such as being described by an operad (see
\cref{prop:perm-2-cats-P-algs}).

\begin{defn}\label{defn:perm-2-cat}
  A \textit{permutative 2-category} $\cC$ consists of a monoid
  $(\cC, \oplus, e)$ in $(\IICat, \times)$, together with a 2-natural
  isomorphism,
  \[
  \xy
  (0,0)*+{\cC \times \cC}="00";
  (25,0)*+{\cC \times \cC}="10";
  (12.5,-10)*+{\cC}="01";
  {\ar^{\tau} "00"; "10"};
  {\ar^{\oplus} "10"; "01"};
  {\ar_{\oplus} "00"; "01"};
  (12.5,-4)*{\Anglearrow{40} \beta}
  \endxy
  \]
  where $\tau \cn \cC \times \cC \to \cC \times \cC$ is the symmetry
  isomorphism in $\IICat$ for the cartesian product, such that the
  same axioms hold as for permutative Gray-monoids once all the
  instances of $\otimes$ are replaced with $\times$.
\end{defn}

From a purely categorical point of view, permutative 2-categories are
very special creatures.  It is not true that every symmetric monoidal
bicategory is symmetric monoidal biequivalent to a permutative
2-category (see \cite[Example 2.30]{Sch2011Classification}). On the other
hand, we will see that this is a natural structure to consider
homotopically: every symmetric monoidal bicategory is weakly
equivalent (i.e., homotopy equivalent after passing to nerves) to a
permutative 2-category.  We prove this in \cref{prop:epzo-weak-equiv}
by proving that every permutative Gray-monoid is weakly equivalent to
a permutative 2-category.  In \cref{thm:main-css-2} we
prove that this establishes an equivalence of homotopy theories.

\begin{prop}
  \label{prop:perm-2-cat-criterion-for-perm-gray-mon}
  Let $(\cC,\oplus,\beta)$ be a permutative Gray-monoid.  Then the
  composite
  \[
  \cC \times \cC \fto{c} \cC \otimes \cC \fto{\oplus} \cC
  \]
  of the universal cubical functor $c$ (see \cref{cubicalmulticat})
  with $\oplus$, together with $\beta * 1_c$, give $\cC$ the structure
  of a permutative 2-category if and only if $\oplus \circ c$ is a
  2-functor.
\end{prop}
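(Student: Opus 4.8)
The ``only if'' direction is immediate: the multiplication of any permutative 2-category is by definition a strict 2-functor, and in the structure of the statement that multiplication is precisely $\oplus\circ c$.

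For the converse, write $\mu$ for $\oplus\circ c$ and suppose $\mu$ is a 2-functor. The first thing I would record is that this hypothesis is really a \emph{strictness} condition. Let $q\cn\cC\otimes\cC\to\cC\times\cC$ be the canonical 2-functor induced by viewing $\id_{\cC\times\cC}$ as a cubical functor, so that $q\circ c=\id$. Then $\mu\circ q$ and $\oplus$ are 2-functors $\cC\otimes\cC\to\cC$ with the same associated cubical functor, namely $\mu$; by the bijection of \cref{cubicalmulticat} this forces $\oplus=\mu\circ q$. Thus $\oplus\circ c$ is a 2-functor exactly when $\oplus$ factors through the ``quotient'' $q$, and the whole permutative Gray-monoid structure is then pulled back from the strict cartesian-monoidal structure $\mu$ along $q$. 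Given this, the plan is to verify, transporting each datum for the permutative Gray-monoid along $c$: the monoid axioms for $(\cC,\mu,e)$ in $(\IICat,\times)$, that $\beta*1_c$ is a 2-natural isomorphism $\mu\circ\tau_{\times}\impl\mu$, and the three axioms of \cref{defn:perm-2-cat}.

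For the monoid structure, $\mu$ is now admissible as a multiplication in $(\IICat,\times)$; the unit axioms reduce to the Gray-monoid unit axioms via the identity $c(-,e)=(-\otimes e)$ on generating cells, and associativity follows by rewriting $\mu\circ(\mu\times\id)$ and $\mu\circ(\id\times\mu)$ using the naturality of $c$ (\cref{cubicalmulticat}) and the associativity coherence of the Gray tensor product as $\oplus\circ(\oplus\otimes\id)$ and $\oplus\circ(\id\otimes\oplus)$ precomposed with the iterated universal cubical functor $\cC^{\times 3}\to\cC^{\otimes 3}$, whereupon Gray-monoid associativity applies. For the braiding, \cref{cubtoopcub2} identifies the cubical functor associated to the 2-functor $\oplus\circ\tau_{\otimes}$ with $\mu^{*}\circ\tau_{\times}$, where $\mu^{*}$ is the opcubical nudge of $\mu$; since $\mu$ is a 2-functor the nudge is trivial and $\mu^{*}=\mu$, so $(\oplus\circ\tau_{\otimes})\circ c=\mu\circ\tau_{\times}$ and $\beta*1_c$ is the whiskering of the 2-natural isomorphism $\beta$ by the pseudofunctor $c$, hence a strict natural transformation between the 2-functors $\mu\circ\tau_{\times}$ and $\mu$, that is (by the relevant case of \cref{stricttranslem}, or simply because a strict transformation between 2-functors is 2-natural) a 2-natural isomorphism. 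Finally, each of the three axioms of \cref{defn:perm-2-cat} is obtained by whiskering with $c$ (with the ternary universal cubical functor for the last) the corresponding axiom of \cref{defn:pgm}; using $\oplus=\mu\circ q$, $q\circ c=\id$, $q\circ\tau_{\otimes}=\tau_{\times}\circ q$, and the compatibility of $q$ with associators, each whiskered pasting diagram is identified on the nose with the cartesian diagram built from $\beta*1_c$, which therefore commutes because the Gray-monoid one does.

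The main obstacle, and the source of essentially all the bookkeeping, is that the universal cubical functor $c$ intertwines $\tau_{\times}$ with $\tau_{\otimes}$ only up to an invertible icon and becomes opcubical rather than cubical after composition with $\tau$, so that a priori $\beta*1_c$ and the whiskered axioms hold only up to that icon. The hypothesis that $\oplus\circ c$ is a 2-functor is exactly what collapses this discrepancy — through $\mu^{*}=\mu$ for the braiding and through the factorization $\oplus=\mu\circ q$ for the axioms — and confirming that the resulting identifications are literal equalities of 2-functors and 2-natural transformations, not merely coherent isomorphisms, is the crux of the argument.
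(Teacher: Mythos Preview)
Your proof is correct and takes essentially the same approach as the paper: both reduce the problem to showing $\beta * 1_c$ is 2-natural, and both do this by invoking \cref{cubtoopcub2} to identify $(\oplus\tau_\otimes)c$ with $\mu^{*}\tau_{\times}$ and then observing that $\mu^{*}=\mu$ once $\mu$ is a 2-functor, so that a strict transformation between 2-functors is 2-natural. The paper is much terser---it dispatches the monoid axioms and the three permutative axioms in a single clause (``since the axioms are the same''), relying on the fact that componentwise these are literally the same equations of 1-cells in $\cC$; your factorization $\oplus=\mu\circ q$ via $qc=\id$ is a clean organizational device not made explicit in the paper, but it leads to the same verification.
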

\begin{proof}\proofof{prop:perm-2-cat-criterion-for-perm-gray-mon}
  Since the axioms are the same, all that remains is to show that
  $\be*1_{c}$ is 2-natural.  By \cref{stricttranslem}, $\be$ is
  2-natural so $\be*1_{c}$ is a strict transformation from $\oplus c$
  to $\oplus \tau_{\otimes} c$.  Let $+ \cn \cC \times \cC \to \cC$ be
  the cubical functor associated to $\oplus$, so that $\oplus c = +$.
  We know that the cubical functor associated to
  $\oplus \tau_{\otimes}$ is $+^{*} \circ \tau_{\times}$, so
  \[
  \be *1_{c} \cn + \rtarr +^{*}\circ \tau_{\times}
  \]
  is a strict transformation.  But by assumption, $+ = \oplus c$ is a
  2-functor, so $+^{*} = +$ is a 2-functor and strict naturality of
  $\be * 1_{c}$ is then 2-naturality.
\end{proof}

We now turn to the 2-monadic aspects of the theory of permutative
2-categories.  Let $E\Sigma_n$ be the translation category of $\Sigma_n$, viewed as a
discrete 2-category.  The $E\Sigma_n$ give a symmetric operad in
$(\IICat, \times)$ and thus a monad $S$ on $\IICat$.  It is
straightforward to check that this is actually a 2-monad on the
2-category $\IICat_2$.  This is merely the $\Cat$-enrichment of the
operadic approach to permutative categories, and we therefore leave
the proof of the next proposition to the reader.  Note the contrast
with \cref{rmk:no2monadforpgm}.

\begin{prop}
  \label{prop:perm-2-cats-P-algs}
  Permutative 2-categories are precisely the $S$-algebras in $\IICat_2$.
\end{prop}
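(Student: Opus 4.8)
The plan is to recognize this as the $\Cat$-enriched counterpart of the classical fact that permutative categories are exactly the strict algebras for the categorical Barratt--Eccles operad $\{E\Sigma_n\}$ in $(\Cat,\times)$, and to run that argument while keeping track of 2-cells. The key simplification is that each $E\Sigma_n$ is a \emph{discrete} 2-category, i.e.\ a $1$-category with only identity 2-cells, so every action 2-functor $E\Sigma_n\times\cC^{\times n}\to\cC$ has no nonidentity structure 2-cells available. Consequently all the data and axioms that are "weak" in a general symmetric monoidal bicategory are forced to be strict here, which is precisely why the strict $S$-algebras turn out to be \emph{permutative} 2-categories and not merely symmetric monoidal bicategories.

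First I would unpack a strict $S$-algebra structure on a 2-category $\cC$. Writing $S\cC=\coprod_{n\ge 0}\big(E\Sigma_n\times\cC^{\times n}\big)/\Sigma_n$, a 2-functor $S\cC\to\cC$ is the same thing as a $\Sigma_n$-equivariant family of 2-functors $a_n\cn E\Sigma_n\times\cC^{\times n}\to\cC$. The $n=0$ term selects an object $e$; the $n=1$ term is forced to be the identity by the algebra unit axiom; and the $n=2$ term, restricted to one object of $E\Sigma_2$, is a 2-functor $\oplus\cn\cC\times\cC\to\cC$, its restriction to the other object of $E\Sigma_2$ being $\oplus\circ\tau$ by equivariance. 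The unique nonidentity isomorphism of $E\Sigma_2$ is sent by $a_2$ to a 2-natural transformation $\beta\cn\oplus\impl\oplus\circ\tau$ (2-naturality is exactly 2-functoriality of $a_2$ across the relevant commuting squares), which is invertible since it is the image of an isomorphism, and for which $\beta\circ\beta=\id$ follows from equivariance together with the fact that $E\Sigma_2$ is a groupoid with a unique morphism between any two objects. So far the data of a strict $S$-algebra is precisely the data of a permutative 2-category.

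Next I would match the remaining axioms via the operad composition $\gamma$. On objects $\gamma$ for $\{E\Sigma_n\}$ is block permutation, for which $\gamma(\id;\id,\ldots,\id)=\id$; feeding the $S$-algebra associativity and unit squares with these identity operations yields strict associativity and strict unitality of $(\cC,\oplus,e)$, while feeding the same squares with the nonidentity morphism of $E\Sigma_2$ placed in the appropriate slot reproduces exactly the identities $\beta_{e,x}=\id=\beta_{x,e}$ and $\beta_{x,\,y\oplus z}=(y\oplus\beta_{x,z})\circ(\beta_{x,y}\oplus z)$ of \cref{defn:perm-2-cat}. Conversely, from a permutative 2-category one defines $a_n$ by the iterated $\oplus$ on objects and morphisms of $\cC^{\times n}$, and on each morphism of $E\Sigma_n$ (i.e.\ each permutation) by the corresponding composite of instances of $\beta$ and identities; the three permutative axioms are precisely what is needed for this to be independent of the chosen factorization of the permutation into adjacent transpositions, so $a_n$ is a well-defined 2-functor, the family is equivariant, and it satisfies the operad-action axioms. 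These two assignments are mutually inverse, and the same bookkeeping shows that a 2-functor is a strict morphism of $S$-algebras if and only if it is a strict functor of permutative 2-categories.

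The step I expect to be the main obstacle is purely organizational: checking that the \emph{full} list of $S$-algebra axioms --- one associativity constraint for every instance of $\gamma$, including those with $n\ge 4$ inputs --- collapses to exactly the three axioms of \cref{defn:perm-2-cat} and imposes nothing further. This is the content of the underlying $1$-categorical theorem, whose proof rests on the standard presentation of the symmetric groups by adjacent transpositions subject to the braid and involution relations; since every ingredient involved (cartesian products, coproducts, quotients, and the presentation argument) is $\Cat$-enriched, it transports verbatim to $\IICat_2$, which is why the verification can safely be left to the reader. Throughout I would take as given, as asserted in the excerpt, that $S$ is a genuine 2-monad on $\IICat_2$.
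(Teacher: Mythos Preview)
Your proposal is correct and is precisely the argument the paper has in mind: the authors explicitly state that this is ``merely the $\Cat$-enrichment of the operadic approach to permutative categories'' and leave the proof to the reader, and your sketch carries out exactly that enrichment. Your observation that the discreteness of each $E\Sigma_n$ forces all potential weakening 2-cells to be identities is the reason the enriched argument goes through verbatim, just as the paper implicitly assumes.
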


Using the 2-monad structure on $S$, we can make the following
definitions. We refer the reader to \cite{BKP1989Two} or \cite{Lac02Codescent} for the general definitions. 
\begin{defn}\label{defn:str-fun}
  A \emph{strict functor} between permutative 2-categories is a strict
  $S$-algebra morphism.  A \emph{pseudo, lax, oplax, or normal
    functor} is, respectively, a pseudo, lax, oplax, or normal
  $S$-algebra morphism.
\end{defn}
\begin{thm}\label{thm:p2cat-subcat-pgm}
  Permutative 2-categories, with any choice of morphism above, form a
  full subcategory of permutative Gray-monoids with the corresponding
  morphism type.
\end{thm}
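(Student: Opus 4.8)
The plan is to leverage the isomorphism of categories $\PGM \cong \qsSMIICat$ from \cref{p2isoqs2} together with the monadicity statements already in hand. Since a permutative 2-category is, by \cref{prop:perm-2-cats-P-algs}, an $S$-algebra in $\IICat_2$, and since a permutative Gray-monoid carries the structure of a monoid in $(\IICat,\otimes)$ with a 2-natural braiding, the content of the theorem is that every permutative 2-category can be regarded as a permutative Gray-monoid in a way compatible with each of the four morphism types (strict, pseudo, lax, oplax), and that this inclusion is full. The key observation is that the cartesian product $\cC \times \cD$ maps to the Gray tensor product $\cC \otimes \cD$ via the universal cubical functor of \cref{cubicalmulticat}; more precisely, the identity 2-functor $\cA \times \cB \to \cA \times \cB$, viewed as a cubical functor, induces a strict 2-functor $q \cn \cA \otimes \cB \to \cA \times \cB$, and this $q$ is a map of symmetric monoidal structures on $\IICat$ (as noted in the discussion following \cref{cubtoopcub}).

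First I would construct the object-level assignment. Given a permutative 2-category $\cC$ with cartesian multiplication $\oplus \cn \cC \times \cC \to \cC$, precompose with $q \cn \cC \otimes \cC \to \cC \times \cC$ to obtain $\oplus' = \oplus \circ q \cn \cC \otimes \cC \to \cC$; since $q$ is monoidal, $\oplus'$ is associative and unital, so $(\cC,\oplus',e)$ is a Gray-monoid. The braiding $\beta$ of the permutative 2-category is a 2-natural isomorphism $\oplus \tau_\times \Rightarrow \oplus$; whiskering by $q$ and using that $q$ intertwines $\tau_\times$ and $\tau_\otimes$ gives a 2-natural isomorphism $\oplus' \tau_\otimes \Rightarrow \oplus'$. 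The permutative Gray-monoid axioms for $(\cC,\oplus',\beta q)$ are exactly the images under $q$ of the permutative 2-category axioms for $(\cC,\oplus,\beta)$, which hold by hypothesis; since $q$ is a strict monoidal 2-functor the pasting diagrams transport verbatim. This produces a functor from permutative 2-categories to $\PGM$ on the strict level, and one checks directly that a strict functor of permutative 2-categories (an $S$-algebra strict morphism) yields a strict functor of the associated permutative Gray-monoids, since strict preservation of $\oplus$ plus naturality of $q$ gives strict preservation of $\oplus'$, and strict preservation of $\beta$ is preserved by whiskering.

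Next I would handle the weaker morphism types uniformly. A lax (resp.\ oplax, pseudo, normal) $S$-algebra morphism $F \cn \cC \to \cD$ carries a structure 2-cell $F\oplus_\cC \Rightarrow \oplus_\cD(F\times F)$ (with appropriate variance) plus the unit 1-cell $\theta_0$; whiskering the structure 2-cell by $q$ and using naturality of $q$ to identify $q \circ (F\times F) = (F\otimes F)\circ q$ produces precisely the structure 2-cell required for a lax (resp.\ oplax, pseudo, normal) functor of permutative Gray-monoids as in \cref{laxfundef}, and the coherence axioms again transport because $q$ is a strict monoidal 2-functor. For fullness, one argues the reverse: given a functor of the designated type between the permutative Gray-monoids underlying two permutative 2-categories, its structure 2-cell has source and target built from $\oplus'=\oplus q$, and since $q$ is an \emph{epimorphism} in the relevant sense on the generating cells of $\cC\otimes\cC$ (every 1-cell of $\cC\otimes\cC$ maps to a 1-cell of $\cC\times\cC$, and $q$ is bijective on objects and full), the 2-cell datum is uniquely detected by its restriction along $q$, hence comes from a unique $S$-algebra morphism structure of that type on the underlying 2-functor. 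Since the underlying 2-functor of any $\PGM$-morphism between these objects is already a 2-functor between the underlying 2-categories of the permutative 2-categories, there is nothing further to check on the level of underlying data.

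The main obstacle I anticipate is the fullness claim, specifically verifying that the lax/oplax structure 2-cell on the Gray-monoid side is genuinely \emph{forced} to factor through $q$. The subtlety is that a 2-natural transformation out of $\cC\otimes\cC$ is determined by its components on the generating 1-cells $f\otimes\id$ and $\id\otimes g$ together with compatibility with the $\Sigma_{f,g}$ cells, whereas a 2-natural transformation out of $\cC\times\cC$ is determined by components on \emph{all} 1-cells with genuine naturality squares; one must check that the $\Sigma_{f,g}$-compatibility on the Gray side, pulled back from a $\PGM$-morphism between objects of the form $\oplus q$, exactly matches the interchange/naturality data on the cartesian side, so that no information is lost or spuriously added. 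This amounts to the compatibility of \cref{2var} with \cref{stricttranslem}, and once that bookkeeping is in place the fullness — and hence the theorem — follows.
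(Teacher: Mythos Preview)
Your proposal is correct in spirit and identifies the right ingredients, but it takes a route dual to the paper's and the framing of the fullness argument is less direct than what the paper does.

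You work throughout with the comparison 2-functor $q\cn \cC\otimes\cC\to\cC\times\cC$: the inclusion on objects and morphisms is ``precompose/whisker with $q$'', and fullness is ``factor through $q$ using an epimorphism property''. The paper instead works with the universal cubical functor $c\cn\cC\times\cC\to\cC\otimes\cC$ (the section of $q$). It first unpacks, via the operadic description of $S$, that a lax $S$-algebra morphism is exactly the data of the \emph{cartesian} analogue of \cref{laxfundef} (a 2-functor $h$, a unit 1-cell, and a 2-natural $\nu$ out of $\cC\times\cC$ satisfying the four displayed squares). It then shows fullness by taking a lax PGM functor $(h,\theta,\theta_0)$ between permutative 2-categories and whiskering $\theta$ with $c$ to obtain $\theta*1_c$; the only thing to check is that $\theta*1_c$ is 2-natural, and this follows exactly as in the proof of \cref{prop:perm-2-cat-criterion-for-perm-gray-mon} because both source and target of $\theta*1_c$ are honest 2-functors (not merely cubical). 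No epimorphism argument is needed: the section $c$ does the work.

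Your approach does go through once the ``$q$ is an epimorphism in the relevant sense'' claim is made precise, and you correctly identify \cref{stricttranslem} as the tool. Concretely, since $q$ is bijective on objects, a 2-natural $\theta\cn Aq\Rightarrow Bq$ has forced object-components $\nu_{(a,b)}=\theta_{a\otimes b}$; 2-naturality of the resulting $\nu$ at an arbitrary 1-cell $(f,g)$ follows by lifting $(f,g)$ to $(f\otimes 1)(1\otimes g)$ and applying 2-naturality of $\theta$ there. The $\Sigma$-compatibility you worry about is vacuous precisely because $Aq$ and $Bq$ send $\Sigma$ to an identity. So your obstacle dissolves, but the phrasing ``restriction along $q$'' is misleading (you are lifting against $q$, not restricting), and the paper's route via $c$ avoids having to articulate this lifting property at all.

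One minor point: your displayed direction $F\oplus_\cC\Rightarrow\oplus_\cD(F\times F)$ is the \emph{oplax} direction; for lax the arrow goes the other way (cf.\ \cref{laxfundef}). You hedge with ``appropriate variance'', but it is worth getting this straight since the paper treats the lax case explicitly.
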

\begin{proof}\proofof{thm:p2cat-subcat-pgm}
  We describe the case of lax morphisms; the other cases are similar.
  A lax $S$-algebra morphism $h\cn \cC \to \cD$ consists of \cite{BKP1989Two} 
  \begin{itemize}
  \item a 2-functor $h:\cC \to \cD$ between the underlying 2-categories and
  \item a 2-natural transformation
    \[
    \xy
    (0,0)*+{S\cC}="00";
    (30,0)*+{S\cD}="10";
    (0,-15)*+{\cC}="01";
    (30,-15)*+{\cD}="11";
    {\ar^{Sh} "00"; "10"};
    {\ar "10"; "11"};
    {\ar "00"; "01"};
    {\ar_{h} "01"; "11"};
    (15,-6)*{\Downarrow \nu}
    \endxy
    \]
    where the two vertical arrows are the $S$-algebra structures on $\cC, \cD$,
  \end{itemize}
  satisfying two axioms.  Using the structure of the operad $S$ and
  following the calculations in \cite{CG14Operads}, we see that this
  amounts to the following data and axioms:
  \begin{itemize}
  \item a 2-functor $h:\cC \to \cD$ between the underlying 2-categories,
  \item a 1-cell $\nu_{0}:e_{\cD} \to h(e_{\cC})$, and
  \item a 2-natural transformation
    \[
    \xy
    (0,0)*+{\cC \times \cC}="00";
    (30,0)*+{\cD \times \cD}="10";
    (0,-15)*+{\cC}="01";
    (30,-15)*+{\cD}="11";
    {\ar^{h \times h} "00"; "10"};
    {\ar^{\oplus_{\cD}} "10"; "11"};
    {\ar_{\oplus_{\cC}} "00"; "01"};
    {\ar_{h} "01"; "11"};
    (15,-6)*{\Downarrow \nu}
    \endxy
    \]
  \end{itemize}
  subject to the requirement that the following diagrams commute for
  all objects $x,y,z \in \cC$.
  \[
  \xy
  (0,0)*+{e \oplus h(x)}="00";
  (30,0)*+{h(e) \oplus h(x)}="10";
  (30,-10)*+{h(e \oplus x)}="11";
  (30,-20)*+{h(x)}="12";
  {\ar^{\nu_{0} \oplus 1} "00"; "10"};
  {\ar^{\nu} "10"; "11"};
  {\ar@{=} "11"; "12"};
  {\ar@{=} "00"; "12"};
  (50,0)*+{h(x) \oplus e}="20";
  (80,0)*+{h(x) \oplus h(e)}="30";
  (80,-10)*+{h(x \oplus e)}="31";
  (80,-20)*+{h(x)}="32";
  {\ar^{1 \oplus \nu_{0}} "20"; "30"};
  {\ar^{\nu} "30"; "31"};
  {\ar@{=} "31"; "32"};
  {\ar@{=} "20"; "32"};
  (20,-30)*+{h(x) \oplus h(y) \oplus h(z)}="55";
  (60,-30)*+{h(x \oplus y) \oplus h(z)}="65";
  (20,-45)*+{h(x) \oplus h(y \oplus z)}="56";
  (60,-45)*+{h(x \oplus y \oplus z)}="66";
  {\ar^{\nu \oplus 1} "55"; "65"};
  {\ar^{\nu} "65"; "66"};
  {\ar_{1 \oplus \nu} "55"; "56"};
  {\ar_{\nu} "56"; "66"};
  (20,-60)*+{h(x) \oplus h(y) }="85";
  (60,-60)*+{h(x \oplus y) }="95";
  (20,-75)*+{h(y) \oplus h(x)}="86";
  (60,-75)*+{h(y \oplus x )}="96";
  {\ar^{\nu } "85"; "95"};
  {\ar^{h(\beta)} "95"; "96"};
  {\ar_{\be} "85"; "86"};
  {\ar_{\nu} "86"; "96"};
  \endxy
  \]
  Thus we see that there is a bijection between lax $S$-algebra
  morphisms $\cC \to \cD$ and lax functors $\cC \to \cD$ in the sense
  of the cartesian analogue of \cref{laxfundef} in which all instances
  of $\otimes$ are replaced with instances of $\times$.

  To complete the proof, we must show that if $(h,\theta,\theta_0)$ is
  a lax functor between permutative 2-categories viewed as permutative
  Gray-monoids via \cref{prop:perm-2-cat-criterion-for-perm-gray-mon},
  then $(h,\theta * 1_c,\theta_0)$ is a lax functor using this
  cartesian analogue of \cref{laxfundef}.  Just as in
  \cref{prop:perm-2-cat-criterion-for-perm-gray-mon}, the axioms are
  the same so this reduces to showing that $\theta * 1_c$ is
  2-natural.  This fact is easily verified using the same methods as
  those which show that $\beta * 1_c$ is 2-natural in the proof of
  \cref{prop:perm-2-cat-criterion-for-perm-gray-mon}.
\end{proof}

\begin{defn}\label{defn:PIICat}
  We let $\PIICat$ denote the category of permutative 2-categories and
  strict functors.
\end{defn}
We have a relative category of permutative
2-categories and weak equivalences defined as follows.
\begin{defn}
  \label{defn:weak-equiv-piicats}
  Let $\cC, \cD$ be a pair of permutative 2-categories.  A strict
  functor of permutative 2-categories $F \cn \cC \to \cD$ is a
  \emph{weak equivalence} if the underlying 2-functor is a weak
  equivalence of 2-categories.  We let $(\PIICat, \cW)$ denote the
  relative category of permutative 2-categories with weak
  equivalences.
\end{defn}


\section{Diagrams of 2-categories}
\label{sec:diagrams-of-2-cats}

In this section we give the basic theory we will need for diagrams of
2-categories.  In addition to diagrams indexed on $\sF$, we will use
diagrams in $\IICat$ indexed on the category $\sA$ of
\cref{defn:diagram-A}.  Therefore we present the theory for a general
diagram category $\sD$.

\begin{defn}\label{defn:D-2cat}
  Given a category $\sD$, let $\sD\mh\IICat$ denote the category of
  functors and natural transformations from $\sD$ to $\IICat$.
\end{defn}

Recall that $\Ga$-objects in $\IICat$ are defined (via
\cref{defn:gammaobjs}) to be those functors $X\cn \sF \to \IICat$ with
$X(\ul{0}_+) = *$.

We will need to consider a category whose objects are
$\sD$-2-categories but whose morphisms are lax in a particular
sense---we describe this in \cref{sec:D-lax-maps} and describe
transformations between them in \cref{sec:D-trans}.  In
\cref{sec:Grothendieck-constr} we describe a Grothendieck construction
for $\sD$-2-categories and lax maps between them.  In
\cref{sec:SM-diagrams} we extend this theory to symmetric monoidal
diagrams for later use in \cref{sec:definition-of-P}.  Finally, \cref{sec:E-construction} gives a general construction which
allows us to replace lax maps with spans of strict ones.

\subsection{Lax maps of diagrams}
\label{sec:D-lax-maps}

Recall that (small) 2-categories, 2-functors, and 2-natural
transformations naturally organize themselves into a 2-category we
denote $\IICat_{2}$.

\begin{defn}
  \label{defn:D-lax-map}
  Let $X,Y$ be $\sD$-2-categories, viewed as 2-functors $X,Y\cn
  \sD \to \IICat_{2}$ with $\sD$ seen as a locally discrete
  2-category, i.e., one with only identity 2-cells.  Then a
  \emph{$\sD$-lax map} is a lax transformation $h:X \to Y$.
\end{defn}

\begin{note}
  When clear from context, we write $\phi_* = X(\phi)$ for a map
  $\phi \in \sD$.
\end{note}

We spell out the details of this definition.  A lax transformation
$h:X \to Y$ consists of
\begin{itemize}
\item for each object $m \in \sD$, a 2-functor $h_{m}:
  X({m}) \to Y({m})$ and
\item for each $\phi:{m} \to {n}$ in $\sD$, a 2-natural
  transformation $h_\phi$ as below.
  \[
  \xy
  (0,0)*+{X({m})}="0";
  (30,0)*+{Y({m})}="1";
  (0,-12)*+{X({n})}="2";
  (30,-12)*+{Y({n})}="3";
  {\ar^{h_{{m}}} "0"; "1" };
  {\ar^{\phi_*} "1"; "3" };
  {\ar_{\phi_*} "0"; "2" };
  {\ar_{h_{{n}}} "2"; "3" };
  {\ar@{=>}^-{h_{\phi}} (17,-4)="a"; "a"+(-5,-3)};
  \endxy
  \]
\end{itemize}
subject to the conditions that $h_{\id_{{m}}} = 1_{h_{{m}}}$
and that $h_{\psi \phi}$ is equal to
the pasting below.  
\[
\xy
  (0,0)*+{X({m})}="0";
  (30,0)*+{Y({m})}="1";
  (0,-12)*+{X({n})}="2";
  (30,-12)*+{Y({n})}="3";
  (0,-24)*+{X({p})}="4";
  (30,-24)*+{Y({p})}="5";
  {\ar^{h_{{m}}} "0"; "1" };
  {\ar^{\phi_*} "1"; "3" };
  {\ar_{\phi_*} "0"; "2" };
  {\ar_{h_{{n}}} "2"; "3" };
  {\ar^{\psi_*} "3"; "5" };
  {\ar_{\psi_*} "2"; "4" };
  {\ar_{h_{{p}}} "4"; "5" };
  {\ar@{=>}^-{h_{\phi}} (17,-4)="a"; "a"+(-5,-3)};
  {\ar@{=>}^-{h_{\psi}} (17,-17)="a"; "a"+(-5,-3)};
\endxy
\]

One can compose lax transformations in the obvious way: given
$h \cn X \to Y, j \cn Y \to Z$, we define $jh$ by
$(jh)_{m} = j_{m} \circ h_{m}$ and $(jh)_{\phi}$ as the pasting below.
\[
\xy
  (0,0)*+{X({m})}="0";
  (30,0)*+{Y({m})}="1";
  (0,-12)*+{X({n})}="2";
  (30,-12)*+{Y({n})}="3";
	(60,0)*+{Z(m)}="4";
	(60,-12)*+{Z(n)}="5";
  {\ar^{h_{{m}}} "0"; "1" };
  {\ar^{\phi_*} "1"; "3" };
  {\ar_{\phi_*} "0"; "2" };
  {\ar_{h_{{n}}} "2"; "3" };
  {\ar@{=>}^-{h_{\phi}} (17,-4)="a"; "a"+(-5,-3)};
	{\ar^{j_{{m}}} "1"; "4" };
  {\ar^{\phi_*} "4"; "5" };
  {\ar_{j_{{n}}} "3"; "5" };
  {\ar@{=>}^-{j_{\phi}} (47,-4)="a"; "a"+(-5,-3)};
\endxy
\]
It is easy to see that this composition is associative and unital.

\begin{defn}
  \label{defn:D2Cat-lax-category}
  The category $\sD\mh\IICat_{l}$ is defined to be the category of
  $\sD$-2-categories and $\sD$-lax maps between them.
\end{defn}

\begin{rmk}
  \label{rmk:D2Cat-lax-is-a-category}
  $\sD\mh\IICat_{l}$ is actually the underlying category of the
  2-category $\mathbf{Lax}(\sD, \IICat_{2})$, the
  2-category of 2-functors, lax transformations, and modifications (see \cref{defn:D-trans})
   from $\sD$ to $\IICat_{2}$.  As every 2-natural
  transformation is lax, there is a canonical inclusion 
  \[
  \sD\mh\IICat \hookrightarrow \sD\mh\IICat_{l}.
  \]
\end{rmk}

\subsection{Transformations of lax maps}
\label{sec:D-trans}

\begin{defn}
  \label{defn:D-trans}
  Let $X,Y$ be $\sD$-2-categories, and let $h,k:X \to Y$ be $\sD$-lax
  maps between them.  A \emph{$\sD$-transformation}, $\la$,     
  \[
  \begin{xy}
    (0,0)*+{X}="A";
    (30,0)*+{Y}="B";
    {\ar@/^1pc/^{h} "A"; "B"};
    {\ar@/_1pc/_{k} "A"; "B"};
    {\ar@{=>}^{\la} (15,2)="a"; "a"+(0,-4) };
  \end{xy}
  \]
  is a modification between the lax transformations $h$ and $k$. More
  precisely, $\la$ consists of a 2-natural transformation
  $\la_{m}:h_{{m}} \Rightarrow k_{{m}}$ for each object ${m} \in \sD$,
  subject to the condition that for each $\phi:m \to n$ in $\sD$ and
  $x \in X(m)$ the square below commutes.
  \[
  \xy
  (0,0)*+{\phi_{*}(h_{m}(x))}="0";
  (40,0)*+{\phi_{*}(k_{m}(x))}="1";
  (0,-15)*+{h_{n}(\phi_{*}(x))}="2";
  (40,-15)*+{k_{n}(\phi_{*}(x))}="3";
  {\ar^{\phi_{*}(\la_m)} "0"; "1"};
  {\ar^{k_{\phi}} "1"; "3"};
  {\ar_{h_{\phi}} "0"; "2"};
  {\ar_{\la_n} "2"; "3"};
  \endxy
  \]
\end{defn}

As indicated by \cref{rmk:D2Cat-lax-is-a-category},
$\sD$-transformations are actually the 2-cells of a 2-category.

\begin{notn}\label{notn:D2Cat-lax-is-a-2-category}
  We will let $(\sD\mh\IICat)_{2}$ denote the 2-category of
  $\sD$-2-categories, maps between them, and $\sD$-transformations
  between those.  We will let $(\sD\mh\IICat_{l})_{2}$ denote the
  2-category of $\sD$-2-categories, $\sD$-lax maps between them, and
  $\sD$-transformations between those.
\end{notn}

Extending \cref{rmk:D2Cat-lax-is-a-category}, we then have an obvious
inclusion of 2-categories
\[
\textrm{inc} \cn (\sD\mh\IICat)_{2} \hookrightarrow (\sD\mh\IICat_{l})_{2}.
\]
We give a characterization of $\sD$-transformations in
\cref{lem:D-trans-characterization} using a notion of path objects
adjoint to the functor $(- \times \Delta[1])$.

In the case $\sD = \sF$, we use the terms $\Ga$-lax map and
$\Ga$-transformation, respectively, for $\sF$-lax maps of reduced
diagrams and $\sF$-transformations of such. We denote by
$\Ga\mh\IICat_l$ the full subcategory of $\sF\mh\IICat_l$ whose
objects are reduced diagrams.

\begin{defn}
  \label{defn:path-2-category}
  Let $\cA$ be a 2-category.  Then $\cA^{\Delta [1]}$ is defined to be the
  2-category where
  \begin{itemize}
  \item objects are the arrows of $\cA$,
  \item a 1-cell $f \to g$ is a pair $(r,s)$ of arrows in $\cA$ such that $gr = sf$, and
  \item a 2-cell $(r,s) \impl (r',s')$ is a pair $(\al, \be)$ of 2-cells in $\cA$ with $\al: r \impl r', \be: s \impl s'$ such that $g * \al = \be * f$.
  \end{itemize}
  Composition and units are given componentwise in $\cA$.
\end{defn}

\begin{prop}\label{delta1}
  For any 2-category $\cA$, there is a weak equivalence
  $i:\cA \to \cA^{\Delta [1]}$, natural in 2-functors, and a pair of
  2-functors $e_{j}: \cA^{\Delta [1]} \rightarrow \cA, j=0,1$, making the composite
  \[
  \cA \stackrel{i}{\longrightarrow} \cA^{\Delta [1]} \fto{e_{0} \times e_{1}} \cA \times \cA
  \]
  equal to the diagonal 2-functor $\cA \to \cA \times \cA$.  For
  2-functors $F,G:\cA \to \cB$ and a 2-natural transformation
  $\al:F \impl G$, there is a 2-functor
  $\tilde{\al}:\cA \rtarr \cB^{\Delta [1]}$ such that 
    \[
  \begin{xy}
    (0,0)*+{\cA}="0";
    (30,0)*+{\cB^{\Delta [1]}}="1";
    (50,12)*+{\cB}="2";
    (50,-12)*+{\cB}="3";
    {\ar^{\tilde{\al}} "0"; "1"};
    {\ar^{e_{0}} "1"; "2"};
    {\ar_{e_{1}} "1"; "3"};
    {\ar@/^1pc/^{F} "0"; "2"};
    {\ar@/_1pc/_{G} "0"; "3"};
  \end{xy}
  \]
  commutes.  This gives a bijection, natural in $\cA, \cB$ between 2-functors $\tilde{\al}\cn\cA\to\cB^{\Delta [1]}$ and triples $(F,G,\al)$ where $F,G\cn\cA\to\cB$ and $\al\cn F \impl G$ is a 2-natural transformation. 
\end{prop}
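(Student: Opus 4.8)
The key realization is that $\cA^{\Delta [1]}$, as spelled out in \cref{defn:path-2-category}, is nothing but the functor 2-category $[\Delta[1],\cA]$ whose objects are 2-functors $\Delta[1]\to\cA$ (equivalently, arrows of $\cA$), whose 1-cells are 2-natural transformations, and whose 2-cells are modifications: since $\Delta[1]$ is locally discrete, naturality of a transformation $(r,s)\cn f\impl g$ collapses to the single equation $gr=sf$, and the modification axiom for $(\al,\be)$ collapses to $g*\al=\be*f$, which is exactly the data listed. Under this identification, $e_{0}$ and $e_{1}$ are evaluation at the two objects $0,1$ of $\Delta[1]$, hence 2-functors; moreover $(-)^{\Delta[1]}$ is a functor on $\IICat$ (post-composition of diagrams) and $e_{0},e_{1}$ are natural transformations of such. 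I would then define $i\cn\cA\to\cA^{\Delta [1]}$ by $a\mapsto\id_{a}$, $r\mapsto(r,r)$, $\gamma\mapsto(\gamma,\gamma)$; these trivially land in $\cA^{\Delta[1]}$, functoriality and naturality in 2-functors are immediate, and $e_{0}i=e_{1}i=\id_{\cA}$, so that $(e_{0}\times e_{1})i$ is the diagonal.

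To prove $i$ is a weak equivalence I would exhibit a 2-natural transformation $\kappa\cn i\circ e_{0}\impl\id_{\cA^{\Delta [1]}}$ whose component at an object $f\cn a\to b$ is the 1-cell $(\id_{a},f)\cn\id_{a}\to f$ of $\cA^{\Delta [1]}$. Checking that $\kappa$ is 1-natural at a 1-cell $(u,v)\cn f\to g$ uses exactly the relation $gu=vf$, and checking 2-naturality at a 2-cell $(\al,\be)\cn (u,v)\impl(u',v')$ uses exactly the relation $g*\al=\be*f$ that makes $(\al,\be)$ a 2-cell of $\cA^{\Delta[1]}$. Since $e_{0}i=\id_{\cA}$ and $i\circ e_{0}$ is joined to the identity by a 2-natural transformation, applying the nerve---and using that any of the standard 2-category nerves carries 2-natural transformations to simplicial homotopies, cf.\ \cite{CCG10Nerves}---shows $Ni$ and $Ne_{0}$ are mutually homotopy-inverse, so $i$ is a weak equivalence. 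It is worth remarking that $i$ is \emph{not} in general a biequivalence: an arrow $f$ is internally equivalent to an identity in $\cA^{\Delta[1]}$ only when $f$ is an equivalence of $\cA$, which is precisely why this step must go through nerves rather than 2-categorical equivalence.

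For the remaining two claims, given $(F,G,\al)$ with $\al\cn F\impl G$ a 2-natural transformation I would set $\tilde\al(a)=\al_{a}$, $\tilde\al(r)=(Fr,Gr)$, $\tilde\al(\gamma)=(F\gamma,G\gamma)$; the conditions $\al_{a'}\circ Fr=Gr\circ\al_{a}$ and $\al_{a'}*F\gamma=G\gamma*\al_{a}$ needed for these pairs to lie in $\cB^{\Delta [1]}$ are precisely the 1- and 2-dimensional naturality of $\al$, and functoriality of $\tilde\al$ follows from that of $F$ and $G$; by construction $e_{0}\tilde\al=F$ and $e_{1}\tilde\al=G$, giving the asserted commuting triangle. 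For the bijection, any 2-functor $H\cn\cA\to\cB^{\Delta [1]}$ determines $F=e_{0}H$, $G=e_{1}H$ and a family $\al_{a}:=H(a)\cn Fa\to Ga$; applying $e_{0}$ and $e_{1}$ to $H(r)$ and $H(\gamma)$ forces $H(r)=(Fr,Gr)$ and $H(\gamma)=(F\gamma,G\gamma)$, and the defining equations of $\cB^{\Delta[1]}$ then say exactly that $\al$ is 2-natural, so $H=\widetilde{\al}$. Thus $(F,G,\al)\mapsto\tilde\al$ is a bijection with inverse $H\mapsto(e_{0}H,\,e_{1}H,\,\al)$, and naturality in $\cA$ (precompose with a 2-functor; whisker $\al$ on the right) and in $\cB$ (postcompose with the induced map on $(-)^{\Delta[1]}$; whisker $\al$ on the left) is immediate from the formulas. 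The only step requiring more than unwinding definitions is the weak-equivalence assertion, and even there the content is confined to the standard fact that nerves send 2-natural transformations to homotopies; everything else is definition-chasing.
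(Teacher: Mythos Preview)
Your proof is correct and follows essentially the same approach as the paper: the same definitions of $i$, $e_0$, $e_1$, and $\tilde{\al}$, and the same 2-natural transformation $ie_0\Rightarrow\id_{\cA^{\Delta[1]}}$ to establish the weak equivalence. You supply more detail than the paper (explicitly verifying 2-naturality of $\kappa$, spelling out the inverse of the bijection, and the nice aside that $i$ is not generally a biequivalence), but the underlying argument is identical.
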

\begin{proof}\proofof{delta1}
  The 2-functor $i$ sends an object $x$ to $\id_{x}$, a 1-cell $f$ to
  $(f,f)$, and a 2-cell $\al$ to $(\al, \al)$.  The 2-functors $e_0$
  and $e_1$ send an object in $\cA^{\Delta[1]}$ to the source and
  target of the arrow respectively, and project correspondingly for 1-
  and 2-cells.  Both $e_{0} i$ and $e_{1} i$ are the identity, and there is a
  2-natural transformation $ie_{0} \rtarr \id_{\cA^{\Delta [1]}}$, so
  the 2-functors $i, e_{0}, e_{1}$ are all weak equivalences of
  2-categories.

  For the second claim, $\tilde{\al}$ sends $x$ to $\al_{x}$, $f$ to
  $(Ff, Gf)$, and $\ga$ to $(F\ga, G\ga)$.  The 2-naturality of $\al$
  ensures that these are valid cells of $ \cB^{\Delta [1]}$, and the
  commutativity of the above diagram is then clear from the definition
  of the $e_{j}$.
\end{proof}

\begin{rmk}
  The category $\IICat$ is closed symmetric monoidal with respect to
  the cartesian product and the hom-2-category $\cA^{\cB}$ consisting
  of 2-functors, 2-natural transformations, and modifications from
  $\cB$ to $\cA$.  The path 2-category $\cA^{\Delta [1]}$ above is
  simply a more explicit description for the special case
  $\cB = \Delta [1]$, the category $\bullet \to \bullet$ treated as a discrete 2-category.  2-functors $\cC \to \cA^{\Delta [1]}$ are
  in bijection with 2-functors $\cC \times \Delta [1] \to \cA$, and
  similarly for 2-natural transformations and modifications.
\end{rmk}

\begin{defn}
  \label{defn:interval-for-diagram}
  Let $X$ be a $\sD$-2-category.  We define the \emph{path
    $\sD$-2-category} $X^{\Delta [1]}$ by
  \[
  X^{\Delta [1]}(i) = \big( X(i) \big)^{\Delta [1]}.
  \]
  We have $e_0, e_1\cn X^{\Delta[1]} \to X$ given by applying the
  corresponding 2-functors of \cref{delta1} levelwise.
\end{defn}
\begin{note}
  If $\sD = \sF$ and $X$ is a reduced diagram, then $X^{\Delta[1]}$ is
  also a reduced diagram.
\end{note}

\begin{lem}
  \label{lem:D-trans-characterization}
  Let $X,Y$ be $\sD$-2-categories.  There are bijections of sets,
  natural in both variables, between
  \begin{enumerate}
  \item the set whose elements are triples $(h,k,\la)$, where $h,k\cn X
    \to Y$ are $\sD$-lax maps and $\la:h \rtarr k$ is an
    $\sD$-transformation between them; and
  \item the set of $\sD$-lax maps $X \to Y^{\Delta [1]}$.
\end{enumerate}
\end{lem}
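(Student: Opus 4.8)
The plan is to construct the bijection explicitly, unwinding the notion of a $\sD$-lax map into $Y^{\Delta[1]}$ and applying \cref{delta1} one level at a time, together with the elementary fact that a $1$-cell of $\cB^{\Delta[1]}$ is exactly a commuting square of $1$-cells in $\cB$ (\cref{defn:path-2-category}). Throughout I write $\phi_{*}$ for both $X(\phi)$ and $Y(\phi)$ and I use that $e_{0},e_{1}$ are natural in $2$-functors, so that $e_{j}\circ Y^{\Delta[1]}(\phi)=\phi_{*}\circ e_{j}$; this is already implicit in \cref{defn:interval-for-diagram}.

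First I would start from a $\sD$-lax map $H\cn X\to Y^{\Delta[1]}$. Its component $2$-functors $H_{m}\cn X(m)\to Y(m)^{\Delta[1]}$ correspond, by \cref{delta1} applied to the $2$-category $Y(m)$, to triples $(h_{m},k_{m},\la_{m})$ with $h_{m}=e_{0}H_{m}$, $k_{m}=e_{1}H_{m}$, and $\la_{m}\cn h_{m}\impl k_{m}$ a $2$-natural transformation. The structure cells $H_{\phi}\cn Y^{\Delta[1]}(\phi)\circ H_{m}\impl H_{n}\circ\phi_{*}$ give, by whiskering with $e_{0}$ and $e_{1}$, $2$-natural transformations $h_{\phi}:=e_{0}*H_{\phi}$ and $k_{\phi}:=e_{1}*H_{\phi}$; and since the component of $H_{\phi}$ at $x\in X(m)$ is a $1$-cell of $Y(n)^{\Delta[1]}$ from the arrow $\phi_{*}((\la_{m})_{x})$ to the arrow $(\la_{n})_{\phi_{*}(x)}$ of $Y(n)$, the description of $1$-cells in $Y(n)^{\Delta[1]}$ shows that $H_{\phi}$ amounts precisely to the pair $(h_{\phi},k_{\phi})$ subject to the condition that the square of \cref{defn:D-trans} commutes for every $x$ --- that is, subject to the $\sD$-transformation axiom relating $\la_{m}$, $\la_{n}$, $h_{\phi}$, $k_{\phi}$.

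Next I would check that the lax-transformation axioms for $H$ (namely $H_{\id_{m}}=1_{H_{m}}$ and the composition axiom presenting $H_{\psi\phi}$ as a pasting of $H_{\phi}$ and $H_{\psi}$) are equivalent to the conjunction of those axioms for $h$ and for $k$. Both are equations between $2$-natural transformations whose components are $1$-cells of a $2$-category of the form $Y(i)^{\Delta[1]}$, and $e_{0}\times e_{1}\cn \cB^{\Delta[1]}\to\cB\times\cB$ is a $2$-functor that is faithful on $1$- and $2$-cells and strictly preserves all composites and pastings; hence such an equation holds if and only if it holds after whiskering with $e_{0}$ and after whiskering with $e_{1}$, and by the naturality of the $e_{j}$ those two reductions are exactly the corresponding axioms for $h$ and for $k$. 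No additional condition appears. Thus the data of a $\sD$-lax map $H\cn X\to Y^{\Delta[1]}$ is exactly the data of two $\sD$-lax maps $h,k\cn X\to Y$ together with the family $\la=(\la_{m})_{m}$ satisfying the squares above, i.e., a $\sD$-transformation $\la\cn h\impl k$. This gives the bijection, whose inverse sends $(h,k,\la)$ to the $\sD$-lax map with component $2$-functors the $\widetilde{\la_{m}}$ of \cref{delta1} and structure cells whose components are the commuting squares $((h_{\phi})_{x},(k_{\phi})_{x})$.

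Finally, naturality in $X$ and $Y$ is routine: precomposition with a strict map $X'\to X$ and postcomposition with a strict map $Y\to Y'$ act on each side of the bijection in matching ways, using that $(-)^{\Delta[1]}$ is functorial and compatible with $e_{0},e_{1}$. One could alternatively phrase the argument by exhibiting $Y^{\Delta[1]}$ as the cotensor of $Y$ by the arrow category in the $2$-category $\mathbf{Lax}(\sD,\IICat_{2})$ and invoking the resulting adjunction, but the level-by-level construction avoids setting up that machinery. I expect the only mildly delicate point to be the bookkeeping in the previous paragraph --- confirming that the single composition axiom for $H$ decomposes into exactly the two composition axioms for $h$ and $k$ with nothing left over --- and this is settled by the faithfulness of $e_{0}\times e_{1}$ and its preservation of composites.
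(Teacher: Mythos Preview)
Your proof is correct and takes essentially the same approach as the paper's: extract $h=e_{0}H$, $k=e_{1}H$, and $\la_{m}$ from $H_{m}$, then observe that the $1$-cells of $Y(n)^{\Delta[1]}$ being commuting squares yields the $\sD$-transformation axiom. Your treatment is considerably more detailed than the paper's---in particular your explicit use of the faithfulness of $e_{0}\times e_{1}$ to reduce the lax-transformation axioms for $H$ to those for $h$ and $k$ separately, and your description of the inverse---whereas the paper simply asserts the bijection is easy to check.
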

\begin{proof}\proofof{lem:D-trans-characterization}
  To establish the bijection, note that given a $\sD$-lax map
  $\widetilde{\la}:X \to Y^{\Delta [1]}$, we get $\sD$-lax maps
  $h = e_{0}\widetilde{\la}$ and $k = e_{1}\widetilde{\la}$.
  There is also a 2-natural transformation $\la_m:h_{m} \Rightarrow k_{m}$
  for each object $m \in \sD$ given by $\widetilde{\la}_m$.  The single axiom
  required for these $\la_m$ to give a $\sD$-transformation is a
  consequence of the 2-naturality of the $\widetilde{\la}_{\phi}$ and the definition
  of 1-cells in $Y^{\Delta [1]}(n)$ being commutative squares.  It is
  easy to check that this function from $\sD$-lax maps
  $\widetilde{\la}:X \to Y^{\Delta [1]}$ to triples $(h,k,\la)$ is a bijection.
\end{proof}
\begin{note}
  If $\sD = \sF$, the statement of \cref{lem:D-trans-characterization}
  also holds for $\Ga$-transformations and $\Ga$-lax maps into the
  path object.
\end{note}

\begin{defn}\label{defn:hoD2Catlax}
  The relative category $(\sD\mh \IICat_{l}, \cW)$ is the category of
  $\sD$-2-categories with $\sD$-lax maps and weak equivalences the
  levelwise weak equivalences of 2-categories.  The category
  $\ho \sD\mh \IICat_{l}$ is the category obtained by formally
  inverting these weak equivalences.
\end{defn} 

\begin{defn}
  \label{defn:hoGa2Catlax}
  We let $(\Ga\mh\IICat_l,\cW)$ denote the full subcategory of reduced
  diagrams in $(\sF\mh\IICat_l,\cW)$ with the same weak equivalences.
  The category $\ho \Ga\mh\IICat_{l}$ is the category obtained by formally
  inverting these weak equivalences.
\end{defn}

\begin{rmk}
  It is important to note that for the moment we do not know that $\ho
  \sD\mh \IICat_{l}$ is locally small as the localization process
  could have produced a proper class of maps between some pair of
  objects.  The isomorphism of homotopy categories in
  \cref{thm:hos-iso-holax} shows that $\ho \sD\mh \IICat_{l}$ is
  indeed locally small.
\end{rmk}

We point out that the laxity in a $\sD$-lax map occurs at the
level of functoriality with respect to the maps in $\sD$, not at the
level of the maps between the individual 2-categories: those are still
strict 2-functors.  Thus the $\sD$-lax structure does not play a role
in deciding whether or not a map is a levelwise weak equivalence.  

\begin{cor}
  \label{cor:gtrans}
  If $\la\cn h \rtarr k$ is a $\sD$-transformation, then $[h] = [k]$ in
  $\ho \sD\mh \IICat_{l}$.
\end{cor}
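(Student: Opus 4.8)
The plan is to reduce the statement to the fact that the two endpoint evaluations on the path $\sD$-2-category are levelwise weak equivalences, and hence become mutually inverse to a common section once we localize.

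First I would invoke \cref{lem:D-trans-characterization}: the $\sD$-transformation $\la\cn h \rtarr k$ is precisely the data of a single $\sD$-lax map $\widetilde{\la}\cn X \to Y^{\Delta[1]}$ with $e_{0}\widetilde{\la} = h$ and $e_{1}\widetilde{\la} = k$, where $e_{0}, e_{1}\cn Y^{\Delta[1]} \to Y$ are the levelwise evaluation maps of \cref{defn:interval-for-diagram}. Here $Y^{\Delta[1]}$ is again an object of $\sD\mh\IICat_{l}$ (and reduced when $Y$ is, by the note following that definition), and $e_{0}, e_{1}$ are honest $\sD$-maps---in fact levelwise 2-natural---because the 2-functors $e_{j}$ of \cref{delta1} are natural in 2-functors; in particular they are morphisms of the relative category $(\sD\mh\IICat_{l},\cW)$, so they descend to $\ho\sD\mh\IICat_{l}$.

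Next I would show that $e_{0}$ and $e_{1}$ become isomorphisms in $\ho\sD\mh\IICat_{l}$. Applying \cref{delta1} levelwise produces a $\sD$-map $i\cn Y \to Y^{\Delta[1]}$ which is a levelwise weak equivalence of 2-categories and satisfies $e_{0}i = e_{1}i = \id_{Y}$. Since $i$ is inverted in $\ho\sD\mh\IICat_{l}$ and $e_{0}i = \id = e_{1}i$ there, 2-out-of-3 forces $[e_{0}]$ and $[e_{1}]$ to be isomorphisms as well, with $[e_{0}] = [i]^{-1} = [e_{1}]$. Composing on the right with $[\widetilde{\la}]$ then gives
\[
[h] = [e_{0}]\,[\widetilde{\la}] = [e_{1}]\,[\widetilde{\la}] = [k]
\]
in $\ho\sD\mh\IICat_{l}$, as claimed.

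There is no genuinely hard step: the only points requiring care are bookkeeping ones---that $e_{j}\widetilde{\la}$ recovers $h$ and $k$ on the nose (this is part of the bijection of \cref{lem:D-trans-characterization}), and that $e_{j}$ and $i$ live in the subcategory of $\sD$-maps rather than merely $\sD$-lax maps, so that the identities $e_{j}i = \id_{Y}$ hold strictly before localizing. The same argument applies verbatim to reduced diagrams, yielding the corresponding statement for $\Ga$-transformations in $\ho\Ga\mh\IICat_{l}$.
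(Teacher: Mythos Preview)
Your proposal is correct and follows essentially the same path-object argument as the paper: use \cref{lem:D-trans-characterization} to encode $\la$ as a single $\sD$-lax map $\widetilde{\la}$ into $Y^{\Delta[1]}$, then use the levelwise weak equivalence $i$ with $e_{0}i = e_{1}i = \id$ from \cref{delta1} to conclude $[e_{0}] = [e_{1}]$ in the homotopy category, and compose with $[\widetilde{\la}]$. Your write-up is more careful than the paper's about the bookkeeping (that $e_{j}$ and $i$ are strict $\sD$-maps, and that $e_{j}\widetilde{\la}$ recovers $h$ and $k$ on the nose), but the argument is the same.
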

\begin{proof}\proofof{cor:gtrans}
  This is a consequence of \cref{delta1} and a standard path object
  argument (\cite[5.11]{Hir03Model}).  Since $i:X \to X^{\Delta[1]}$ is a
  levelwise weak equivalence and $e_{j} i = \id$ for $j=0,1$, then
  $[i]$ is an isomorphism in $\ho \sD\mh \IICat_{l}$ so $[e_{0} i] =
  [e_{1} i]$ implies $[e_{0}] = [e_{1}]$.  Therefore
  \[
    [h] = [e_{0} \tilde{\al}] = [e_{0} ][\tilde{\al}] = [e_{1}][\tilde{\al}] = [e_{1}\tilde{\al}] = [k].
  \]
\end{proof}
\begin{note}
  If $\sD = \sF$, then \cref{cor:gtrans} shows that a
  $\Ga$-transformation induces an equality in $\ho\Ga\mh\IICat_l$.
\end{note}

We now return to the subject of primary interest, namely
$\Ga$-2-categories and their homotopy theory.

\begin{defn}\label{defn:Galax-stable-eq}
A $\Ga$-lax map $h\cn X \to Y$ is a \emph{stable equivalence} if the function
\[
  h^{*}:\ho \Ga\mh \IICat_{l}(Y,Z) \to \ho \Ga\mh \IICat_{l}(X,Z)
  \]
  is an isomorphism for every very special $\Ga$-2-category $Z$.
\end{defn}

\begin{defn}\label{defn:HoGa2Catlax}
  The relative category $(\Ga\mh \IICat_{l}, \cS)$ is the category of
  $\Ga$-2-categories with $\Ga$-lax maps and weak equivalences the
  stable equivalences of $\Ga$-2-categories.  The category
  $\Ho \Ga\mh \IICat_{l}$ is the category obtained by formally
  inverting these weak equivalences.
\end{defn}

\subsection{Grothendieck constructions}
\label{sec:Grothendieck-constr}

In this section we describe the Grothendieck construction for diagrams
of 2-categories
\[
X:\sD \to \IICat.
\]
This can be regarded as an enrichment of the standard construction for
diagrams of categories, or as a specialization of the construction
given in \cite{CCG11Classifying} for lax diagrams of bicategories.  We
use the notation of \cite{CCG11Classifying}.

\begin{defn}\label{defn:Grothendieck-constr}
  The (covariant) 2-categorical Grothendieck construction on $X$ is a
  2-category
  \[
  \sD \wre X
  \]
  which has objects, 1-cells, and 2-cells given by pairs as below,
  where we use square brackets for tuples in the Grothendieck
  construction. 
  \[
  \begin{xy}
    (0,0)*+{[m, x]}="A";
    (40,0)*+{[n, y]}="B";
    {\ar@/^1pc/^{[\phi, f]} "A"; "B"};
  {\ar@/_1pc/_{[\phi, g]} "A"; "B"};
  {\ar@{=>}^{[\phi, \alpha]} (17,3)*+{}; (17,-3)*+{} };
  \end{xy}
  \]
  Here $\phi\cn m \to n$
  is a morphism in $\sD$, $x \in X(m)$, $y \in X(n)$, and
  $f, g, \alpha$ are 1- and 2-cells in $X(n)$ as below.
  \[
  \begin{xy}
    (0,0)*+{\phi_*x}="A";
    (40,0)*+{y}="B";
    {\ar@/^1pc/^{f} "A"; "B"};
    {\ar@/_1pc/_{g} "A"; "B"};
    {\ar@{=>}^{\alpha} (17,3)*+{}; (17,-3)*+{} };
  \end{xy}  
  \]
  Composition of 1-cells in $\sD \wre X$
  \[
  [m,x] \fto{[\phi, f]} [n,y] \fto{[\psi, g]} [p,z]
  \]
  is given by $[\psi\phi, g\circ \psi_*f]$, which corresponds to the
  following composite in $X(p)$
  \[
  \psi_*\phi_*x \fto{\psi_*f} \psi_*y \fto{g} z.
  \]
  Horizontal composition of 2-cells is given similarly. Vertical composition of
  2-cells is given by composing vertically the second component in
  $X(n)$. Both compositions are described explicitly in
  \cite{CCG11Classifying}.
\end{defn}

\begin{rmk}
  \label{rmk:laxness-for-us-vs-CCG}
  The laxness direction for what we have called a $\sD$-lax map $X$ to
  $Y$ is what would be called in \cite{CCG11Classifying} a lax map
  from $X^\op$ to $Y^\op$ -- the diagram categories where the
  direction of 1-cells is reversed in each $X(m)$ and $Y(m)$.  Our
  definitions of the Grothendieck construction and rectification
  therefore also differ from those of \cite{CCG11Classifying} in the
  direction of 1-cells.  These are identified by interchanging $X$
  with $X^\op$.
\end{rmk}

To see that the Grothendieck construction defines a functor, one can
specialize the work in \cite{CCG11Classifying} or take a $\Cat$-enrichment of
\cite{Str72Two} to obtain the following.
\begin{prop}
  \label{prop:Grothendieck-constr-defines-functor}
  The Grothendieck construction defines a functor
  \[
  (\sD \wre -) \cn \sD\mh\IICat_l \to \IICat.
  \]
\end{prop}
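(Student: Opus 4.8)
The plan is to spell out the action of $\sD \wre -$ on morphisms and then verify the functor axioms directly, since every $2$-functor in sight is strict; the object assignment is the construction $\sD\wre X$ already in hand, so what remains is the assignment on $\sD$-lax maps together with functoriality. Given a $\sD$-lax map $h\cn X \to Y$ with components the $2$-functors $h_m\cn X(m)\to Y(m)$ and the $2$-natural transformations $h_\phi$, whose components are $1$-cells $(h_\phi)_x\cn \phi_* h_m x \to h_n \phi_* x$, I would set
\[
  [m,x] \mapsto [m, h_m x], \qquad
  [\phi, f] \mapsto [\phi,\, h_n(f)\circ (h_\phi)_x], \qquad
  [\phi, \alpha] \mapsto [\phi,\, h_n(\alpha) * (h_\phi)_x],
\]
where $f$, $\alpha$ are cells of $X(n)$ with domain $\phi_* x$ and $*$ is whiskering by the $1$-cell $(h_\phi)_x$. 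By \cref{rmk:laxness-for-us-vs-CCG} this is the instance of the assignment of \cite{CCG11Classifying} obtained after replacing $X,Y$ by $X^\op,Y^\op$, so the verifications below could alternatively be imported from there, or from a $\Cat$-enrichment of \cite{Str72Two}; I would record that identification and then give the hands-on argument.

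First I would check that $\sD\wre h$ is a strict $2$-functor. Preservation of identity $1$-cells uses the unit axiom $h_{\id_m}=1_{h_m}$ together with $2$-functoriality of $h_m$, so that $h_m(\id_x)\circ(h_{\id_m})_x=\id_{h_m x}$. The essential point is preservation of composites of $1$-cells: for $[m,x]\xrightarrow{[\phi,f]}[n,y]\xrightarrow{[\psi,g]}[p,z]$ one must show $h_p(g\circ \psi_*f)\circ (h_{\psi\phi})_x$ equals $(h_p(g)\circ (h_\psi)_y)\circ \psi_*\!\big(h_n(f)\circ(h_\phi)_x\big)$. I would expand the right side using $2$-functoriality of $\psi_*$ and $h_p$, apply $2$-naturality of $h_\psi$ in the form $(h_\psi)_y\circ \psi_* h_n(f)=h_p(\psi_* f)\circ (h_\psi)_{\phi_* x}$, and then identify $(h_\psi)_{\phi_* x}\circ \psi_*(h_\phi)_x$ with the component at $x$ of the lax-transformation composition axiom $h_{\psi\phi}=(h_\psi\phi_*)\circ(\psi_* h_\phi)$, using $\psi_*\phi_* = (\psi\phi)_*$ from functoriality of $X$. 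Preservation of identity $2$-cells and of both vertical and horizontal composition of $2$-cells comes from the same ingredients---interchange together with $2$-naturality of $h_\phi$---and I would dispatch it in a sentence.

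Then I would verify functoriality of $\sD\wre -$ itself. For the identity $\sD$-lax map on $X$ all components are identities and the displayed formulas reduce to the identity $2$-functor on $\sD\wre X$. For a composite $X\xrightarrow{h}Y\xrightarrow{j}Z$ I would unwind the composite lax transformation $jh$ of \cref{sec:D-lax-maps}---in particular $((jh)_\phi)_x = j_n((h_\phi)_x)\circ(j_\phi)_{h_m x}$---and compare $\sD\wre(jh)$ with $(\sD\wre j)(\sD\wre h)$ on cells: on a $1$-cell $[\phi,f]$ both give $[\phi,\, j_n h_n(f)\circ j_n((h_\phi)_x)\circ (j_\phi)_{h_m x}]$ after using $2$-functoriality of $j_n$, and objects and $2$-cells are analogous.

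The one real obstacle is the $1$-cell composition identity: it is the unique place where the $2$-naturality of the $h_\phi$ and the composition axiom for a lax transformation are both genuinely needed, and where the identification $\psi_*\phi_* = (\psi\phi)_*$ has to be carried carefully through the pasting. Once that computation is arranged, everything else is formal, and as noted it can be avoided entirely by quoting the corresponding result of \cite{CCG11Classifying} through the $\op$-translation of \cref{rmk:laxness-for-us-vs-CCG}.
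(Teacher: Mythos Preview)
Your proposal is correct and matches the paper's approach: the paper simply cites \cite{CCG11Classifying} and \cite{Str72Two} rather than giving a detailed proof, while recording exactly your formulas for $\sD\wre h$ on cells in \cref{rmk:Grothendieck-description}. Your explicit verification of $2$-functoriality and of functoriality of the assignment is the direct unwinding of those citations, so there is nothing to add.
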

\begin{rmk}\label{rmk:Grothendieck-description}
It will be useful for us to have a description of $\sD \wre h$ for a
$\sD$-lax map $h \cn X \to Y$.  The functor
$\sD \wre h\cn \sD \wre X \to \sD \wre Y$ is
given explicitly as follows:\\
On 0-cells
\[
(\sD \wre h)\bsb{{m}}{{x}} = \bsb{{m}}{h_{{m}}({x})}.
\]
On 1-cells
\[
(\sD \wre h)\bsb{{\phi}}{{f}} = \bsb{{\phi}}{h_{{n}}({f}) \circ h_{{\phi}}}.
\]
On 2-cells
\[
(\sD \wre h)\bsb{{\phi}}{{\al}} = \bsb{{\phi}}{h_{{n}}({\al}) * 1_{h_{{\phi}}}}.
\]
These are given by the following 0-, 1-, and 2-cells in $Y(n)$.
\[
\begin{xy}
  (-30,0)*+{\phi_*h_m(x)}="Z";
  (0,0)*+{h_n(\phi_* x)}="A";
  (40,0)*+{h_n(y)}="B";
  {\ar^-{h_\phi} "Z"; "A"};
  {\ar@/^1pc/^{h_n (f)} "A"; "B"};
  {\ar@/_1pc/_{h_n (g)} "A"; "B"};
  {\ar@{=>}^{h_n (\alpha)} (17,3)*+{}; (17,-3)*+{} };
\end{xy}  
\]
\end{rmk}

\subsection{Symmetric monoidal diagrams}
\label{sec:SM-diagrams}

In this section we give a basic theory of symmetric monoidal diagrams
and monoidal lax maps in $\IICat$.  The main result here is that
Grothendieck constructions of symmetric monoidal diagrams and monoidal
lax maps are, respectively, permutative 2-categories and strict maps
of such.  In \cref{sec:definition-of-P} we apply this theory to produce
permutative 2-categories from $\Ga$-2-categories.

\begin{defn}
  \label{defn:sm-D-2-cat}
  Let $(\sD,\oplus, e, \be)$ be a permutative category. A
  \emph{symmetric monoidal $\sD$-2-category} $(X,\nu)$ is a symmetric
  monoidal functor
  \[
  (X,\nu)\cn (\sD,\oplus) \to (\IICat, \times).
  \]
\end{defn}

In particular, this means that there is a collection of 2-functors 
\[\nu_{m,p} \cn X(m)\times X(p) \to X(m\oplus p)\]
that are natural with respect to maps in $\sD\times \sD$. There is
also a 2-functor $\nu_e \cn \ast \to X(e)$, where $\ast$ denotes the
terminal 2-category.

\begin{defn}
  \label{defn:sm-D-lax-map}
  Let $(X,\nu)$ and $(Y,\mu)$ be symmetric monoidal
  $\sD$-2-categories. A $\sD$-lax map $h \cn X \to Y$ is a
  \emph{monoidal $\sD$-lax map} if the following conditions hold.
  \begin{enumerate}
    \item The diagram below commutes.
  \begin{equation}\label{monoidal-lax0}
    \begin{xy}
      (0,-8)*+{\ast}="A";
      (30,0)*+{X(e)}="B";
      (30,-16)*+{Y(e)}="D";
      {\ar^-{\nu_{e}} "A"; "B"};
      {\ar_-{\mu_e} "A"; "D"};
      {\ar^-{h_{e}} "B"; "D"};
    \end{xy}
  \end{equation}
  \item For all $m, p \in \sD$ the square below commutes.
  \begin{equation}\label{monoidal-lax1}
    \begin{xy}
    (0,0)*+{X(m) \times X(p)}="A";
    (30,0)*+{X(m \oplus p)}="B";
    (0,-15)*+{Y(m) \times Y(p)}="C";
    (30,-15)*+{Y(m \oplus p)}="D";
    {\ar^-{\nu_{m,p}} "A"; "B"};
    {\ar_-{\mu_{m,p}} "C"; "D"};
    {\ar_-{h_m \times h_p} "A"; "C"};
    {\ar^-{h_{m \oplus p}} "B"; "D"};
    \end{xy}
  \end{equation}
  \item For all $\phi \cn m \to n$ and $\psi\cn p \to q$ in
  $\sD$ we have the following equality of pasting diagrams.
  \begin{equation}\label{monoidal-lax2}
  \begin{array}{c}
    \begin{xy}
      (0,0)*+{Y(m) \times Y(p)}="yy1";
      (20,20)*+{X(m) \times X(p)}="xx1";
      (20,-20)*+{Y(n) \times Y(q)}="yy2";
      (50,20)*+{X(m \oplus p)}="x1";
      (50,-20)*+{Y(n \oplus q)}="y2";
      (70,0)*+{X(n \oplus q)}="x2";
      (30,0)*+{Y(m \oplus p)}="y1";  
      {\ar^-{\nu_{m,p}} "xx1"; "x1"};
      {\ar^-{\mu_{m,p}} "yy1"; "y1"};
      {\ar_-{\mu_{n,q}} "yy2"; "y2"};
      {\ar_-{h_m \times h_p} "xx1"; "yy1"};
      {\ar_-{h_{m \oplus p}} "x1"; "y1"};
      {\ar^-{h_{n \oplus q}} "x2"; "y2"};
      {\ar_-{\phi_* \times \psi_*} "yy1"; "yy2"};
      {\ar^-{(\phi \oplus \psi)_*} "x1"; "x2"};
      {\ar_-{(\phi \oplus \psi)_*} "y1"; "y2"};
      {\ar@{=>}^-{h_{\phi \oplus \psi}} (47.5,0)="a"; "a"+(5,0) };
      {\ar@{=} "xx1"+(0,-12)="b"; "b"+(2.2,0) };
      {\ar@{=} "yy2"+(0,12)="c"; "c"+(2.2,0) };
    \end{xy}
    \\
    \begin{xy}
      {\ar@{=}^{} (0,2.2)*+{}; (0,-2.2)*+{};};
    \end{xy}
    \\
    \begin{xy}
      (0,0)*+{Y(m) \times Y(p)}="yy1";
      (20,20)*+{X(m) \times X(p)}="xx1";
      (20,-20)*+{Y(n) \times Y(q)}="yy2";
      (50,20)*+{X(m \oplus p)}="x1";
      (50,-20)*+{Y(n \oplus q)}="y2";
      (70,0)*+{X(n \oplus q)}="x2";
      (40,0)*+{X(n) \times X(q)}="xx2";  
      {\ar^-{\nu_{m,p}} "xx1"; "x1"};
      {\ar^-{\nu_{n,q}} "xx2"; "x2"};
      {\ar_-{\mu_{n,q}} "yy2"; "y2"};
      {\ar_-{h_m \times h_p} "xx1"; "yy1"};
      {\ar^-{h_n \times h_q} "xx2"; "yy2"};
      {\ar^-{h_{n \oplus q}} "x2"; "y2"};
      {\ar^-{\phi_* \times \psi_*} "xx1"; "xx2"};
      {\ar_-{\phi_* \times \psi_*} "yy1"; "yy2"};
      {\ar^-{(\phi \oplus \psi)_*} "x1"; "x2"};
      {\ar@{=>}^-{h_{\phi} \times h_{\psi}} (17.5,0)="a"; "a"+(5,0) };
      {\ar@{=} "x1"+(0,-12)="b"; "b"+(-2.2,0) };
      {\ar@{=} "y2"+(0,12)="c"; "c"+(-2.2,0) };
    \end{xy}
  \end{array}
  \end{equation}
  \end{enumerate}
\end{defn}

\begin{rmk}
  \label{rmk:symm-mon-diagrams-and-mon-D-lax-maps-form-subcategory}
  Because the cartesian product of 2-categories is strictly
  functorial, the composite of monoidal $\sD$-lax maps is again
  monoidal.  The collection of symmetric monoidal $\sD$-2-categories
  and monoidal $\sD$-lax maps therefore forms a category
  with a faithful forgetful functor to $\sD\mh\IICat_l$.
\end{rmk}

\begin{defn}
  \label{defn:cat-of-sm-diagrams}
  Given a permutative category $\sD = (\sD, \oplus, e, \be)$, let
  $(\sD,\oplus)\mh\IICat_l$ denote the category of symmetric monoidal
  $\sD$-2-categories and monoidal $\sD$-lax maps.
\end{defn}

\begin{prop}
  \label{prop:Grothendieck-constr-sm-functor}
  Let $(\sD,\oplus,e, \be)$ be a permutative category and let
  $(X,\nu)$ be a symmetric monoidal $\sD$-2-category.  Then the
  Grothendieck construction $\sD \wre X$ is a permutative 2-category.
\end{prop}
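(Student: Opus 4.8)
The plan is to build the permutative structure on $\sD\wre X$ entirely from the permutative structure of $\sD$ and the structure 2-functors $\nu$, so that each axiom collapses either to a property of $\sD$ or to one of the structure equations---naturality, associativity, unit, symmetry---of the monoidal functor $(X,\nu)$. Since $\sD\wre(-)$ already takes values in honest 2-categories (\cref{prop:Grothendieck-constr-defines-functor}), composition in $\sD\wre X$ is strictly associative and unital to begin with, so the only work is to produce the monoid structure and the symmetry.

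First I would define $\oplus\cn(\sD\wre X)\times(\sD\wre X)\to\sD\wre X$ on objects by $[m,x]\oplus[p,z]=[m\oplus p,\,\nu_{m,p}(x,z)]$, and on a pair of $1$-cells $[\phi,f]\cn[m,x]\to[n,y]$, $[\psi,g]\cn[p,z]\to[q,w]$ by $[\phi\oplus\psi,\,\nu_{n,q}(f,g)]$; this is legitimate since naturality of $\nu$ identifies $(\phi\oplus\psi)_*\nu_{m,p}(x,z)$ with $\nu_{n,q}(\phi_*x,\psi_*z)$, so $\nu_{n,q}(f,g)$ is a $1$-cell into $\nu_{n,q}(y,w)$; similarly on $2$-cells. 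The one genuine point is strict $2$-functoriality of $\oplus$: composing two such $1$-cells in $\sD\wre X$ twists the second coordinate by $(\phi'\oplus\psi')_*$, which naturality of $\nu$ rewrites as the twist $\phi'_*\times\psi'_*$ applied inside $\nu$, after which strict $2$-functoriality of $\nu_{n',q'}$ recombines the pieces; so $\oplus$ preserves composition on the nose, and preservation of identities is immediate. Taking $[e,\,\nu_e(\ast)]$ as unit, strict associativity and unitality of $\oplus$ on $\sD\wre X$ then follow from those of $\oplus$ on $\sD$ together with the associativity and unit equations of $(X,\nu)$, read as equalities of $2$-functors. This exhibits $\sD\wre X$ as a monoid in $(\IICat,\times)$.

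Next I would define the symmetry by $\beta_{[m,x],[p,z]}=[\beta^{\sD}_{m,p},\,\id]\cn[m\oplus p,\nu_{m,p}(x,z)]\to[p\oplus m,\nu_{p,m}(z,x)]$, where $\beta^{\sD}$ is the symmetry of $\sD$. The second coordinate may be taken to be an identity precisely because the symmetry equation for $(X,\nu)$ says $(\beta^{\sD}_{m,p})_*\circ\nu_{m,p}=\nu_{p,m}\circ\tau$, so $(\beta^{\sD}_{m,p})_*\nu_{m,p}(x,z)$ and $\nu_{p,m}(z,x)$ literally coincide; the inverse $2$-natural transformation has components $[\beta^{\sD}_{p,m},\,\id]$, using $\beta^{\sD}_{p,m}\circ\beta^{\sD}_{m,p}=\id$. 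For $2$-naturality of $\beta$ along a pair of $1$-cells $([\phi,f],[\psi,g])$, unwinding the two Grothendieck composites reduces the required square to the naturality square $\beta^{\sD}_{n,q}\circ(\phi\oplus\psi)=(\psi\oplus\phi)\circ\beta^{\sD}_{m,p}$ of $\beta^{\sD}$, together with the elementary identities $(\psi\oplus\phi)_*\id=\id$ and $(\beta^{\sD}_{n,q})_*\nu_{n,q}(f,g)=\nu_{q,n}(g,f)$ (the symmetry equation again, now on $1$-cells); the $2$-cell case is identical.

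Finally I would check the three axioms of a permutative $2$-category (\cref{defn:perm-2-cat}, with $\otimes$ replaced by $\times$). Since every $\beta$-component has the form $[\beta^{\sD}_{-,-},\,\id]$, and $\oplus$ of a pair of identity $1$-cells and $(-)_*$ of an identity are again identities, each of the three pasting equalities collapses to the corresponding identity among the $\beta^{\sD}_{-,-}$: the axiom $\beta^{\sD}_{p,m}\beta^{\sD}_{m,p}=\id$ gives $\beta\circ\beta=\id$; the axiom $\beta^{\sD}_{e,m}=\id$, together with the unit equation for $\nu$ identifying $[e,\nu_e]\oplus[m,x]$ with $[m,x]$, gives that $\beta$ is the identity whenever one argument is the unit; and $\beta^{\sD}_{m,p\oplus q}=(\id_p\oplus\beta^{\sD}_{m,q})\circ(\beta^{\sD}_{m,p}\oplus\id_q)$ gives the hexagon. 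The main obstacle, such as it is, lies in the definition of $\oplus$: one must see that the $\phi_*$-twists built into composition in $\sD\wre X$ (see \cref{rmk:Grothendieck-description}) are undone exactly by the naturality of $\nu$, so that $\oplus$ is a strict $2$-functor; granting that, everything else is a mechanical reduction to a property of $\sD$ or a structure equation of $(X,\nu)$.
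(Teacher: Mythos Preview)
Your proposal is correct and follows essentially the same approach as the paper: define $\oplus$ on objects by $[m,x]\oplus[p,z]=[m\oplus p,\nu_{m,p}(x,z)]$, on $1$-cells by $[\phi\oplus\psi,\nu_{n,q}(f,g)]$ (using naturality of $\nu$ to identify sources), take unit $[e,\nu_e(\ast)]$, and symmetry $[\beta^{\sD}_{m,p},\id]$. The paper dismisses the verifications as ``routine'' and ``straightforward,'' whereas you have actually spelled out the key points---particularly that $2$-functoriality of $\oplus$ reduces to naturality of $\nu$, and that each permutative axiom collapses to the corresponding identity for $\beta^{\sD}$ because the second components of the symmetry are identities.
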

\begin{proof}\proofof{prop:Grothendieck-constr-sm-functor}
The monoidal product on objects  of $\sD \wre X$ is defined as
\[
([m,x],[p,y])\mapsto [m\oplus p, \nu_{m,p}(x,y)]. 
\]
The product of a pair of 1-morphisms $([\phi,f],[\psi,g])$ is given by
the 1-morphism whose first coordinate is $\phi \oplus \psi$ and whose
second coordinate is the 1-morphism
\[
(\phi \oplus \psi)_* \nu_{m,p} (x,y) = 
\nu_{n,q}(\phi_* x,\psi_* y) \fto{\nu_{n,q}(f,g)} \nu_{n,q}(x',y').
\]
Pairs of 2-morphisms are treated similarly.

It is routine to check that this product is a 2-functor. Since $\sD$
is a permutative category and $(X,\nu)$ is a symmetric monoidal
functor, it is straightforward to see that this 2-functor defines a
permutative structure on $\sD \wre X$ with monoidal unit given by
$[e,\nu_e(\ast)]$ and symmetry transformation given by
\begin{eqn}
  \label{eqn:symmetry-for-Grothendieck-constr}
[(m\oplus n, \nu_{m,n}(x,y)] \fto{[\be_{m,n},\id]} 
[n\oplus m, \nu_{n,m}(y,x)].\hfill \qedhere
\end{eqn}
\end{proof}

\begin{prop}
  \label{prop:symm-D-lax-to-sm-2functor}
  Let $\sD$ be a permutative category and let $ h \cn X \to Y $ be a
  monoidal $\sD$-lax map of symmetric monoidal $\sD$-2-categories.
  Then
  \[
  \sD \wre h \cn \sD \wre X \to \sD \wre Y
  \]
  is a strict symmetric monoidal 2-functor between permutative
  2-categories. This construction gives the assignment on morphisms of
  a functor
\[\sD\wre - \cn (\sD,\oplus)\mh\IICat_l \to \PIICat.\]
\end{prop}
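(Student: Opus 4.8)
The plan is to reduce the proposition to a few bookkeeping checks by leaning on the two results immediately preceding it. By \cref{prop:Grothendieck-constr-defines-functor}, $\sD\wre-$ is already a functor $\sD\mh\IICat_l\to\IICat$, so $\sD\wre h$ is in any case a strict $2$-functor; and by \cref{prop:Grothendieck-constr-sm-functor}, its source and target are permutative $2$-categories whose units, products, and symmetries are given by the explicit formulas in the proof of that proposition. Thus everything reduces to checking that $\sD\wre h$ is a \emph{strict} functor of permutative $2$-categories in the sense of (the cartesian form of) \cref{defn:strict-functor-gray-mon}: that it strictly preserves the unit object, the monoidal product on cells of every dimension, and the symmetry. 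Once this is done the statement about morphisms is automatic---$\sD\wre-$ is a functor to $\IICat$, it lands in strict functors by the above, and composites of monoidal $\sD$-lax maps are monoidal (\cref{rmk:symm-mon-diagrams-and-mon-D-lax-maps-form-subcategory})---so $\sD\wre-$ restricts to a functor $(\sD,\oplus)\mh\IICat_l\to\PIICat$.

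For the unit, \cref{rmk:Grothendieck-description} gives $(\sD\wre h)[e,\nu_e(\ast)]=[e,\,h_e\nu_e(\ast)]$, which is $[e,\mu_e(\ast)]$ by \eqref{monoidal-lax0}. For the product, on objects $(\sD\wre h)\big([m,x]\oplus[p,y]\big)=[m\oplus p,\,h_{m\oplus p}\nu_{m,p}(x,y)]=[m\oplus p,\,\mu_{m,p}(h_m x,h_p y)]$ by \eqref{monoidal-lax1}, which is $(\sD\wre h)[m,x]\oplus(\sD\wre h)[p,y]$. On $1$-cells (and on $2$-cells, treated identically) the product $[\phi,f]\oplus[\psi,g]$ has second coordinate $\nu_{n,q}(f,g)$; applying $\sD\wre h$ and \cref{rmk:Grothendieck-description}, this becomes $h_{n\oplus q}\big(\nu_{n,q}(f,g)\big)\circ\big(h_{\phi\oplus\psi}\big)_{\nu_{m,p}(x,y)}$. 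By \eqref{monoidal-lax1} applied to the $1$-cell $(f,g)$, \eqref{monoidal-lax2} applied at the object $(x,y)$, and the $2$-functoriality of $\mu_{n,q}$ together with componentwise composition in $Y(n)\times Y(q)$, this equals $\mu_{n,q}\big(h_n(f)\circ(h_\phi)_x,\ h_q(g)\circ(h_\psi)_y\big)$, which is exactly the second coordinate of $(\sD\wre h)[\phi,f]\oplus(\sD\wre h)[\psi,g]$.

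For the symmetry, \eqref{eqn:symmetry-for-Grothendieck-constr} records the component of $\beta^{\sD\wre X}$ at $([m,x],[n,y])$ as $[\beta_{m,n},\id]$, and likewise for $\sD\wre Y$. Since $\sD\wre h$ is the identity on the first ($\sD$-)coordinate of a morphism, \cref{rmk:Grothendieck-description} carries this $1$-cell to $[\beta_{m,n},\,(h_{\beta_{m,n}})_{\nu_{m,n}(x,y)}]$, so it remains only to show that the component of the lax structure $2$-cell $h_{\beta_{m,n}}$ at the object $\nu_{m,n}(x,y)$ is an identity $1$-cell. Using \eqref{monoidal-lax1} and the symmetry axiom for the symmetric monoidal functors $X$ and $Y$, one first identifies both its source and its target with $\mu_{n,m}(h_n y,h_m x)$; that this endomorphism is the identity then follows from a diagram chase using \eqref{monoidal-lax1}, \eqref{monoidal-lax2}, the symmetry axioms for $X$ and $Y$, and the governing equations $h_{\id}=1$ and $h_{\psi\phi}=(\text{pasting})$ of the $\sD$-lax structure (applied to $\beta_{n,m}\circ\beta_{m,n}=\id$). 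I expect this last point---confirming that $h_{\beta_{m,n}}$ restricts to an identity on objects in the image of $\nu$---to be the one genuinely delicate step; the rest is a routine translation of the monoidal $\sD$-lax axioms through the formulas of \cref{rmk:Grothendieck-description}.
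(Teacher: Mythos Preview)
Your overall approach is the same as the paper's: reduce to checking that $\sD\wre h$ preserves the unit, the product, and the symmetry strictly, using \eqref{monoidal-lax0}, \eqref{monoidal-lax1}, and \eqref{monoidal-lax2} respectively for the first two, and \cref{rmk:Grothendieck-description} applied to \eqref{eqn:symmetry-for-Grothendieck-constr} for the third. The paper's own proof is essentially a one-line pointer to these same ingredients.

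Where you go further than the paper is in flagging the symmetry step as delicate, and you are right to do so. The issue is exactly what you identify: one needs $(h_{\beta_{m,n}})_{\nu_{m,n}(x,y)}=\id$, and the paper simply asserts this follows by ``applying the description.'' However, your proposed argument does not close the gap. From $\beta_{n,m}\circ\beta_{m,n}=\id_{m\oplus n}$ and the pasting axiom for lax transformations one gets only
\[
(h_{\beta_{n,m}})_{\nu_{n,m}(y,x)}\circ(\beta_{n,m})_*\big((h_{\beta_{m,n}})_{\nu_{m,n}(x,y)}\big)=\id,
\]
and the symmetric relation; together these show the component is an \emph{isomorphism}, not that it is the identity. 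Axiom \eqref{monoidal-lax2} only constrains $h_{\phi\oplus\psi}$, and $\beta_{m,n}$ is not of that form, so I do not see how the listed ingredients assemble into a proof.

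In fact, the three axioms of \cref{defn:sm-D-lax-map} do not appear to force $(h_{\beta_{m,n}})_{\nu_{m,n}(x,y)}=\id$ in general; an additional compatibility between $h$ and $\beta$ seems to be needed. In the paper's only application, namely the maps $Ah$ of \cref{prop:Ah-has-monoidal-structure}, this condition holds by construction: for the block permutation $\beta_{\vec m,\vec n}$ in $\sA$ each $\phi_{i,j}$ in \eqref{eqn:Ah-phibar-2-steps} is an identity of $\sF$, so $(Ah)_{\beta_{\vec m,\vec n}}$ is the identity $2$-natural transformation. So the downstream results are unaffected, but as a proof of the proposition \emph{as stated} both your argument and the paper's leave this point open.
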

\begin{proof}\proofof{prop:symm-D-lax-to-sm-2functor}
%
  The fact that $\sD \wre h$ preserves the monoidal structure strictly
  at the level of objects follows from the condition in
  \cref{monoidal-lax1}. Likewise, the condition in
  \cref{monoidal-lax2} shows that $\sD \wre h$ preserves the product
  on 1- and 2-cells strictly.  The fact that $\sD \wre h$ preserves
  the unit strictly follows from \cref{monoidal-lax0}. To see that
  $\sD \wre h$ strictly preserves the symmetry, apply the description
  of $\sD \wre h$ on 1-cells from \cref{rmk:Grothendieck-description} to
  the formula for the symmetry given in
  \cref{eqn:symmetry-for-Grothendieck-constr}.
\end{proof}

\subsection{From lax maps to spans}
\label{sec:E-construction}

In this section we describe a construction, inspired by that of
\cite{Man10Inverse}, which allows us to replace a $\Ga$-lax map with a
weakly-equivalent span of strict $\Ga$-maps.  This is a key ingredient
in our proof of \cref{thm:main-equiv-ho-thy}, but we also use it in
this section to prove that the homotopy categories of $\sD\mh\IICat$
and $\sD\mh\IICat_l$ are isomorphic.  Specializing to reduced diagrams
on $\sF$, we prove the following isomorphisms of categories (see
\cref{cor:hogas-iso-hogalax,cor:Hos-iso-Holax}):
\begin{align*}
  \ho\GaIICat_l \iso \ho\GaIICat,\\
  \Ho\GaIICat_l \iso \Ho\GaIICat.
\end{align*}

\begin{defn}\label{defn:E}
  Let $k\cn X \to Z$ be a $\sD$-lax map.  For each $m \in \sD$, define
  a 2-category $Ek(m)$ as the following pullback in $\IICat$.
  \[
  \begin{xy}
    (0,0)*+{Ek(m)}="0";
    (20,0)*+{Z^{\Delta[1]}(m)}="1";
    (0,-20)*+{X(m)}="2";
    (20,-20)*+{Z(m)}="3";
    {\ar^{\ol{\nu}} "0"; "1"};
    {\ar_-{\om} "0"; "2"};
    {\ar^-{e_1} "1"; "3"};
    {\ar_-{k} "2"; "3"};
  \end{xy}
  \]
  Thus $Ek(m)$ has 0-cells given by triples $(x, f, a)$ where $x$ is an object in $X(m)$, $a$ is an object in $Z(m)$, and
  \[
  a \fto{f} k(x)
  \]
  is a 1-cell in $Z(m)$.

  A 1-cell from $(x,f,a)$ to $(y,g,b)$ is given by a pair $(s,r)$ where $s\colon x \to y$ and $r\colon a \to b$ are 1-cells in $X(m)$ and $Z(m)$, respectively, such that the diagram
    \[
  \begin{xy}
    (0,0)*+{a}="0";
    (30,0)*+{b}="1";
    (0,-17)*+{k(x)}="2";
    (30,-17)*+{k(y)}="3";
    {\ar^-{r} "0"; "1"};
    {\ar_-{k(s)} "2"; "3"};
    {\ar_-{f} "0"; "2"};
    {\ar^-{g} "1"; "3"};
  \end{xy}
  \]
commutes. Similarly, a  2-cell from $(s,r)$ to $(s',r')$ is  a pair $(\beta, \alpha)$ of 2-cells $\be \cn s \Rightarrow s'$ and $\al \cn r \Rightarrow r'$ in $X(m)$ and $Z(m)$, respectively, such that
  \[
  \begin{xy}
    (0,0)*+{a}="0";
    (30,0)*+{b}="1";
    (0,-17)*+{k(x)}="2";
    (30,-17)*+{k(y)}="3";
    {\ar@/^9pt/^-{r} "0"; "1"};
    {\ar@/_9pt/_-{r'} "0"; "1"};
    {\ar@/_9pt/_-{k(s')} "2"; "3"}="s'";
    {\ar_-{f} "0"; "2"};
    {\ar^-{g} "1"; "3"};
    {\ar@{=>}^-{\al} (15,2.8)*+{}; (15,-2.8)*+{}};
            (40,-8)*+{=};
        (50,0)*+{a}="00";
    (80,0)*+{b}="11";
    (50,-17)*+{k(x)}="22";
    (80,-17)*+{k(y)}="33";
    {\ar@/^9pt/^-{r} "00"; "11"};
    {\ar@/^9pt/^-{k(s)} "22"; "33"};
    {\ar@/_9pt/_-{k(s')} "22"; "33"}="s'";
    {\ar_-{f} "00"; "22"};
    {\ar^-{g} "11"; "33"};
    {\ar@{=>}^-{k(\be)} (65,-14.2)*+{}; (65,-19.8)*+{}};
  \end{xy}
  \]
  These cells are depicted as follows
  \[
  \begin{xy}
    (0,0)*+{a}="0";
    (30,0)*+{b}="1";
    (0,-17)*+{k(x)}="2";
    (30,-17)*+{k(y)}="3";
    {\ar@/^9pt/^-{r} "0"; "1"};
    {\ar@/_9pt/_-{r'} "0"; "1"};
    {\ar@/^9pt/^-{k(s)} "2"; "3"};
    {\ar@/_9pt/_-{k(s')} "2"; "3"}="s'";
    {\ar_-{f} "0"; "2"};
    {\ar^-{g} "1"; "3"};
    {\ar@{=>}^-{\al} (15,2.8)*+{}; (15,-2.8)*+{}};
    {\ar@{=>}^-{k(\be)} (15,-14.2)*+{}; (15,-19.8)*+{}};
  \end{xy}
  \]
  Vertical and horizontal composition are given by the corresponding
  compositions in $X(m)$ and $Z(m)$. The 2-functors $\om$ and $\ol{\nu}$ are given by projection onto the first and second components, respectively.
     
  For a map $\phi\cn m \to n$ in $\sD$, there is a 2-functor
  \[
  \phi_* \cn Ek(m) \to Ek(n)
  \]
  that sends the data above to the data represented by the
  following diagram. 
  \begin{eqn}\label{eqn:Ek-phi}
  \begin{xy}
    (0,0)*+{\phi_* a}="0";
    (35,0)*+{\phi_* b}="1";
    (0,-17)*+{\phi_* k(x)}="2";
    (35,-17)*+{\phi_* k(y)}="3";
    (0,-34)*+{k\phi_* (x)}="4";
    (35,-34)*{k\phi_* (y)}="5";
    {\ar@/^9pt/^-{\phi_* r} "0"; "1"};
    {\ar@/_9pt/_-{\phi_* r'} "0"; "1"};
    {\ar@/^9pt/^-{k\phi_* (s)} "4"; "5"};
    {\ar@/_9pt/_-{k\phi_* (s')} "4"; "5"};
    {\ar_-{\phi_* (f)} "0"; "2"};
    {\ar^-{\phi_* (g)} "1"; "3"};
    {\ar_-{k_\phi} "2"; "4"};
    {\ar^-{k_\phi} "3"; "5"};
    {\ar@{=>}^-{\phi_* \al} (15,2.9)*+{}="a"; "a"+(0,-5)*+{}};
    {\ar@{=>}^-{k\phi_* (\be)} (14,-31.2)*+{}="b"; "b"+(0,-5)*+{}};
  \end{xy}
  \end{eqn}
\end{defn}

One can verify that $(\psi\phi)_* = \psi_* \phi_*$ directly from the description in
\cref{eqn:Ek-phi} using the equality of $k_{\psi\phi}$ with the
pasting of $k_\psi$ and $k_\phi$ (see \cref{defn:D-lax-map}). This proves the following proposition.
\begin{prop}
  The 2-categories $Ek(m)$ assemble to make $Ek$ a $\sD$-2-category.
\end{prop}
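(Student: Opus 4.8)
The plan is to verify that $m\mapsto Ek(m)$, $\phi\mapsto\phi_*$ defines a functor $\sD\to\IICat$. Each $Ek(m)$ is already a 2-category and each $\phi_*$ a 2-functor — the 2-category structure on $Ek(m)$ being componentwise in $X(m)$ and $Z(m)$, and $\phi_*$ acting through the 2-functors $\phi_*\cn X(m)\to X(n)$ and $\phi_*\cn Z(m)\to Z(n)$ together with the 2-naturality of $k_\phi$ — so the only remaining point is functoriality on morphisms: $(\id_m)_*=\id_{Ek(m)}$ and $(\psi\phi)_*=\psi_*\phi_*$ for composable $\phi\cn m\to n$, $\psi\cn n\to p$ in $\sD$.

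First I would dispose of the identity. By the unit axiom for the lax transformation $k$ (see \cref{defn:D-lax-map}) we have $k_{\id_m}=1_{k_m}$, so the structure $1$-cell of $(\id_m)_*(x,f,a)$ is $k_{\id_m}\circ(\id_m)_*(f)=f$; since $(\id_m)_*$ is also the identity on $X(m)$ and on $Z(m)$, it is the identity on all cells of $Ek(m)$.

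Next I would treat composition by transporting the data $(x,f,a)$ of a $0$-cell of $Ek(m)$ through $\phi_*$ and then $\psi_*$. The $X$- and $Z$-components become $\psi_*\phi_*(x)$ and $\psi_*\phi_*(a)$, which already agree with those of $(\psi\phi)_*(x,f,a)$ because $\phi\mapsto\phi_*$ is functorial on $X$ and on $Z$. The structure $1$-cell produced by $\psi_*\phi_*$ is the composite
\[
\psi_*\phi_*(a)\fto{\psi_*\phi_*(f)}\psi_*\phi_*k_m(x)\fto{\psi_*(k_\phi)}\psi_*k_n(\phi_*x)\fto{k_\psi}k_p(\psi_*\phi_*x),
\]
and the pasting (cocycle) axiom in \cref{defn:D-lax-map} says precisely that $k_\psi\circ\psi_*(k_\phi)$ is the pasting of $k_\psi$ and $k_\phi$, i.e.\ equals $k_{\psi\phi}$; hence this composite equals $k_{\psi\phi}\circ(\psi\phi)_*(f)$, the structure $1$-cell of $(\psi\phi)_*(x,f,a)$. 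The same bookkeeping on $1$- and $2$-cells — transporting through $\psi_*$ the commuting squares and pasting equalities that define the cells of $Ek$, and using $2$-functoriality of $\phi_*$ and $\psi_*$ together with the $2$-naturality of $k_\phi$ and $k_\psi$ — yields $(\psi\phi)_*=\psi_*\phi_*$ on all cells.

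I expect no genuine obstacle: the argument is entirely a matter of keeping the multi-component data straight. The one spot deserving care is the order in which the structure $1$-cells $k_\phi$ and $k_\psi$ get stacked under iterated application of $\phi_*$; one must confirm that the result is the honest pasting of $k_\phi$ with $k_\psi$ rather than a reassociated variant of it, but this is exactly what the associativity and unit axioms for the lax transformation $k$ guarantee, so nothing beyond \cref{defn:D-lax-map} is required.
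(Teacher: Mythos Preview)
Your proposal is correct and follows exactly the approach the paper takes: the paper's entire argument is the single sentence preceding the proposition, noting that $(\psi\phi)_*=\psi_*\phi_*$ follows directly from the description in \cref{eqn:Ek-phi} together with the equality of $k_{\psi\phi}$ with the pasting of $k_\psi$ and $k_\phi$. Your writeup supplies the details the paper omits (including the identity case), but the method is the same.
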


\begin{prop}\label{prop:E_square}
  The $\sD$-2-category $Ek$ fits in a commuting diagram in
  $\sD\mh\IICat_l$ shown below.  The map $\om$ and the horizontal
  composite $\nu$ are both strict $\sD$-maps.
  \[
  \begin{xy}
    (0,0)*+{Ek}="0";
    (20,0)*+{Z^{\Delta[1]}}="1";
    (40,0)*+{Z}="1'";
    (0,-20)*+{X}="2";
    (20,-20)*+{Z}="3";
    {\ar^-{\ol{\nu}} "0"; "1"};
    {\ar^-{e_0} "1"; "1'"};
    {\ar_-{\om} "0"; "2"};
    {\ar^-{e_1} "1"; "3"};
    {\ar_-{k} "2"; "3"};
    {\ar@/^2pc/^{\nu} "0"; "1'"};
  \end{xy}
  \]
\end{prop}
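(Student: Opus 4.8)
The plan is to argue levelwise, using that each $Ek(m)$ is a strict pullback in $\IICat$ and that the preceding proposition already makes $Ek$ an honest $\sD$-2-category; the only laxness in the diagram comes from $\ol{\nu}$, and it is controlled entirely by the structure 2-cells $k_\phi$ of the $\sD$-lax map $k$. So the work splits into: (i) writing down the $\sD$-lax structure of $\ol{\nu}$ and checking its axioms; (ii) checking that $\om$ and the $e_j$ are strict and that the square commutes; and (iii) checking that $\nu := e_0\circ\ol{\nu}$ is strict.

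First I would make the $\sD$-lax structure of $\ol{\nu}$ explicit. At level $m$, $\ol{\nu}_m\cn Ek(m)\to Z^{\Delta[1]}(m)$ is the pullback projection $(x,f,a)\mapsto\big(a\fto{f}k(x)\big)$. This is \emph{not} a strict $\sD$-map: for $\phi\cn m\to n$, comparing $\phi_*\circ\ol{\nu}_m$, which sends $(x,f,a)$ to $\phi_*a\fto{\phi_* f}\phi_* k(x)$, with $\ol{\nu}_n\circ\phi_*$, which by \cref{eqn:Ek-phi} sends $(x,f,a)$ to $\phi_*a\fto{k_\phi\circ\phi_* f}k\phi_*(x)$, I see that they differ only in the target object. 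I would therefore define the 2-natural transformation $\ol{\nu}_\phi$ to have, at an object $(x,f,a)$, the 1-cell of the arrow 2-category $Z^{\Delta[1]}(n)=\big(Z(n)\big)^{\Delta[1]}$ given by the pair $\big(\id_{\phi_* a},(k_\phi)_x\big)$; this is a legitimate 1-cell of $\big(Z(n)\big)^{\Delta[1]}$ because the corresponding square of $Z(n)$ commutes strictly, and it is natural in $(x,f,a)$ because $k_\phi$ is 2-natural. Then I would verify the two axioms of \cref{defn:D-lax-map}: $\ol{\nu}_{\id_m}$ is the identity since $k_{\id_m}=1$, and $\ol{\nu}_{\psi\phi}$ is the prescribed pasting of $\ol{\nu}_\psi$ and $\ol{\nu}_\phi$ because, comparing components at $(x,f,a)$ and using \cref{eqn:Ek-phi} again, both sides reduce to the single 1-cell $\big(\id,(k_{\psi\phi})_x\big)$ once one invokes the identity $(k_{\psi\phi})_x=(k_\psi)_{\phi_* x}\circ\psi_*\big((k_\phi)_x\big)$, which is precisely the composition axiom for $k$ as a $\sD$-lax map.

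Next I would treat the strict maps. The projection $\om_m\cn Ek(m)\to X(m)$, $(x,f,a)\mapsto x$, satisfies $\om_n\circ\phi_*=\phi_*\circ\om_m$ on $0$-, $1$-, and $2$-cells directly from \cref{eqn:Ek-phi}, so $\om$ is a strict $\sD$-map. The evaluation maps $e_0,e_1\cn Z^{\Delta[1]}\to Z$ of \cref{delta1} are applied levelwise and commute strictly with the levelwise action of each $\phi_*$, so they too are strict $\sD$-maps. For the square, note that $e_1\circ\ol{\nu}_m=k_m\circ\om_m$ is exactly the defining cone equation of the pullback $Ek(m)$, so the underlying 2-functors agree; and the $\sD$-lax structures agree as well, since $(e_1\ol{\nu})_\phi$ is $e_1$ applied to the $\ol{\nu}_\phi$-component $\big(\id_{\phi_* a},(k_\phi)_x\big)$, namely its target-projection $(k_\phi)_x$, while $(k\om)_\phi$ is the whiskering of $k_\phi$ along the strict map $\om$, whose component at $(x,f,a)$ is likewise $(k_\phi)_x$. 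Hence $e_1\circ\ol{\nu}=k\circ\om$ in $\sD\mh\IICat_l$.

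Finally, the horizontal composite $\nu:=e_0\circ\ol{\nu}$ is strict: since $e_0$ is strict, $\nu_\phi$ is $e_0$ applied to the $\ol{\nu}_\phi$-component $\big(\id_{\phi_* a},(k_\phi)_x\big)$, i.e.\ its source-projection $\id_{\phi_* a}$, so $\nu_\phi$ is an identity and $\nu$ lies in $\sD\mh\IICat$. The curved triangle commutes by the very definition of $\nu$, and together with the square this yields the asserted commuting diagram in $\sD\mh\IICat_l$ with $\om$ and $\nu$ strict. The one step that is more than a routine unwinding of definitions is the composition axiom for $\ol{\nu}$; I expect the matching of the pasting of $\ol{\nu}_\psi$ with $\ol{\nu}_\phi$ against $\ol{\nu}_{\psi\phi}$, via the lax functoriality of $k$ and the stacking formula \cref{eqn:Ek-phi}, to be the main piece of bookkeeping, though it presents no genuine difficulty.
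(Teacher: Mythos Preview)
Your proposal is correct and follows essentially the same approach as the paper: define $\ol{\nu}_\phi$ componentwise as $(\id_{\phi_*a},(k_\phi)_x)$, deduce the lax-transformation axioms for $\ol{\nu}$ from those of $k$, read off strictness of $\om$ from \cref{eqn:Ek-phi}, and observe that $\nu=e_0\ol{\nu}$ is strict because $e_0$ projects to the source component where $\ol{\nu}_\phi$ is the identity. Your write-up is in fact more thorough than the paper's, which leaves the commutativity of the square $e_1\ol{\nu}=k\om$ as $\sD$-lax maps implicit; your explicit check that both lax structures have component $(k_\phi)_x$ at $(x,f,a)$ is exactly what is needed.
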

\begin{proof}\proofof{prop:E_square}
For a map $\phi\cn m \to n$ in $\sD$ it is immediate that
  $\om$ strictly commutes with $\phi_*$.  The laxity of $k$ provides
  1-cells $\phi_* \ol{\nu} (x, f, a) \to \ol{\nu} \phi_* (x, f, a)$ in $Z^{\Delta[1]}(n)$ as below:
  \[
  \phi_* \ol{\nu} (x, f, a) = \phi_*(f)
  \fto{(\id_{\phi_*(a)}, k_\phi)} 
  k_{\phi} \circ \phi_* (f) = \ol{\nu} \phi_* (x, f, a).
  \]
  These are 2-natural because $k_\phi$ is 2-natural, and the
  conditions for identity 1-cells and pasting on composites are
  immediate from those of the $k_\phi$.

  The map $e_0$ is given by projecting to the source of each cell in
  $Z^{\De[1]}$, and, as the laxity of $\ol{\nu}$ is concentrated in
  the target components, the composite $\nu = e_0 \ol{\nu}$ is a
  strict $\sD$-map.
\end{proof}

\begin{prop}\label{prop:om-levelwise-adj}
  The map $\omega$ is a levelwise left 2-adjoint and hence is a levelwise
  weak equivalence.
\end{prop}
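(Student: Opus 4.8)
The plan is to prove the stronger, level-by-level statement: for each $m \in \sD$ the 2-functor $\omega_m \cn Ek(m) \to X(m)$ admits a right 2-adjoint in $\IICat_2$, after which ``weak equivalence'' is formal.  The right adjoint $\rho_m \cn X(m) \to Ek(m)$ will be the 2-functor produced by the universal property of the pullback defining $Ek(m)$ applied to the cone $(\id_{X(m)}, i \circ k)$, where $i \cn Z(m) \to Z^{\Delta[1]}(m)$ is the 2-functor of \cref{delta1}; since $e_1 i = \id$, this cone is legitimate and we get $\omega_m \rho_m = \id_{X(m)}$ on the nose.  Unwound, $\rho_m$ sends a $0$-cell $x$ to $(x, \id_{k(x)}, k(x))$, a $1$-cell $s$ to $(s, k(s))$, and a $2$-cell $\be$ to $(\be, k(\be))$, and these are well-formed cells of $Ek(m)$ precisely because $k = k_m$ is a 2-functor.

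Next I would construct the unit $\eta \cn \id_{Ek(m)} \impl \rho_m\omega_m$ with component at $(x,f,a)$ the $1$-cell $(\id_x, f)\cn (x,f,a) \to (x, \id_{k(x)}, k(x))$, which is a valid $1$-cell of $Ek(m)$ since $\id_{k(x)} \circ f = k(\id_x)\circ f$.  Verifying 2-naturality of $\eta$ is the one mechanical step, but it is light: naturality against a $1$-cell $(s,r)$ of $Ek(m)$ reduces to the identity $g\circ r = k(s)\circ f$, which is exactly the commuting square built into the definition of a $1$-cell of $Ek(m)$, and naturality against $2$-cells is the corresponding consequence of the defining $2$-cell equation in $Ek(m)$.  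Taking the counit to be the identity (legitimate since $\omega_m\rho_m = \id$), the triangle identities collapse to the transparent facts $\omega_m\eta = \id_{\omega_m}$ (because $\omega_m$ forgets the second coordinate of $(\id_x, f)$) and $\eta\rho_m = \id_{\rho_m}$ (because $\eta$ is an identity $1$-cell at each object $(x, \id_{k(x)}, k(x))$).  This establishes $\omega_m \dashv \rho_m$ in $\IICat_2$, so $\omega$ is a levelwise left 2-adjoint.

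Finally, for the ``hence'' I would invoke the standard fact that a 2-adjunction descends, on passage to nerves, to a homotopy equivalence: $N\omega_m$ and $N\rho_m$ compose to the identity in one order, and in the other they compose to a 2-functor connected to the identity by the 2-natural transformation $\eta$, which any of the (weakly equivalent) nerve functors for 2-categories converts into a simplicial homotopy.  Hence each $\omega_m$, and therefore $\omega$ levelwise, is a weak equivalence of 2-categories.  I do not expect a genuine obstacle here: conceptually $Ek(m)$ is a comma-type 2-category and $\omega_m$ is one of its canonical projections, which always carries an adjoint; the only care required is in matching the componentwise commutativity conditions packaged into the $1$- and $2$-cells of $Ek(m)$ and in keeping the vertical/horizontal composition conventions straight in the 2-naturality check.
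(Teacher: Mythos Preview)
Your proposal is correct and follows essentially the same approach as the paper: the right adjoint $\rho_m$ is exactly the paper's 2-functor $i$, the unit components $(\id_x,f)$ agree, and the triangle identities are verified in the same way (the paper in fact writes ``counit'' where it means ``unit,'' but you have the directions straight). The only differences are cosmetic---you obtain $\rho_m$ via the pullback universal property and you spell out the nerve argument for the weak equivalence, whereas the paper defines $i$ by hand and leaves the ``hence'' implicit.
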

\begin{proof}\proofof{prop:om-levelwise-adj}
Consider the 2-functor $i\cn X(m) \to Ek(m)$ given by
\begin{align*}
 x & \mapsto (x, \id_{k(x)}, k(x))\\
 s & \mapsto (s, k(s))\\
 \be & \mapsto (\be, k(\be)).
\end{align*}
It is easy to check that this assignment is indeed a 2-functor that lands in $Ek(m)$ and that $\om i$ is the identity on $X(m)$. Note that for an object $(x,f,a)$ in $Ek(m)$, the pair $(\id_x,f)$ is a 1-cell from $(x,f,a)$ to $(x,\id_{k(x)},k(x))$. We leave to the reader to check that these 1-cells give a 2-natural transformation from the identity on $Ek(m)$ to $i\om$.

The triangle identities are immediate:  First note that $\om(\id_x, f) = \id_x$
and therefore applying $\om$ to the counit is the identity.  Second,
the component of the counit on $i(x)$ is $(\id_x, \id_{k(x)})$.
\end{proof}

Recall that $\ho \sD\mh \IICat$, $\ho \sD\mh \IICat_{l}$ denote the
categories obtained by formally inverting the levelwise weak
equivalences of $\sD$-2-categories in the categories $\sD\mh \IICat$,
$\sD\mh \IICat_{l}$, respectively.  Since the laxity of a map plays no
role in determining whether it is or is not a levelwise weak
equivalence, the inclusion
\[
\sD\mh \IICat\hookrightarrow \sD\mh \IICat_{l}
\]
both preserves and reflects levelwise equivalences, and therefore
induces a functor on homotopy categories
\[
\ho \sD\mh \IICat\to \ho \sD\mh \IICat_{l}.
\]
\begin{thm}\label{thm:hos-iso-holax}
The functor $\ho \sD\mh \IICat\to \ho \sD\mh \IICat_{l}$ is an
isomorphism of categories. 
\end{thm}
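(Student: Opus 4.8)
The plan is to exhibit an inverse functor. Since the functor in the statement --- call it $\ho\iota$ --- is the identity on objects, it suffices to make it bijective on hom-sets, and the cleanest route is to build a functor $\Phi\cn \sD\mh\IICat_l \to \ho\sD\mh\IICat$ that sends the levelwise weak equivalences $\cW$ to isomorphisms, so that it descends to $\bar\Phi\cn \ho\sD\mh\IICat_l \to \ho\sD\mh\IICat$, and then to check that $\bar\Phi$ and $\ho\iota$ are mutually inverse. For a $\sD$-lax map $k\cn X\to Z$, write $\om_k\cn Ek\to X$ and $\nu_k = e_0\ol{\nu}_k\cn Ek\to Z$ for the strict $\sD$-maps of \cref{prop:E_square}, and recall that $e_1\ol{\nu}_k = k\om_k$ and that $\om_k$ is a levelwise weak equivalence (\cref{prop:om-levelwise-adj}). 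I would set $\Phi(X)=X$ and $\Phi(k) = [\nu_k][\om_k]^{-1}$. That $\Phi$ inverts $\cW$ is a two-out-of-three argument: by \cref{delta1} the maps $e_0,e_1\cn Z^{\Delta[1]}\to Z$ are levelwise weak equivalences, so from $e_1\ol{\nu}_k = k\om_k$ one gets $\ol{\nu}_k\in\cW$ and hence $\nu_k = e_0\ol{\nu}_k\in\cW$ whenever $k\in\cW$. Moreover $[e_0]=[e_1]$ in $\ho\sD\mh\IICat$ (both equal $[i]^{-1}$, where $i$ is the common section of \cref{delta1}), and since $E(\id_X) = X^{\Delta[1]}$ with $\nu_{\id_X}=e_0$ and $\om_{\id_X}=e_1$, this gives $\Phi(\id_X)=[e_0][e_1]^{-1}=[\id_X]$.

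The main obstacle is the composition axiom, $\Phi(lk)=\Phi(l)\Phi(k)$ for $X\fto{k}Y\fto{l}Z$. I would first form the pullback $P = Ek\times_Y El$ of the strict $\sD$-maps $\nu_k$ and $\om_l$, taken in $\sD\mh\IICat$. The section $i_l$ of $\om_l$ and the $2$-natural transformation $\id\impl i_l\om_l$ from the proof of \cref{prop:om-levelwise-adj} (which $\om_l$ sends to an identity) pull back along $\nu_k$ to show that the projection $\mathrm{pr}_1\cn P\to Ek$ is a levelwise weak equivalence; consequently $\Phi(l)\Phi(k) = [\nu_l\,\mathrm{pr}_2][\om_k\,\mathrm{pr}_1]^{-1}$. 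I would then write down the candidate strict $\sD$-map
\[
\rho\cn P\to E(lk),\qquad \big((x,f,b),\,(b,h,c)\big)\longmapsto\big(x,\ l(f)\circ h,\ c\big),
\]
and verify three things: that $\rho$ is indeed a \emph{strict} $\sD$-map, which is where the $2$-naturality of the laxity $2$-cells $l_\phi$ and the pasting formula $(lk)_\phi = l(k_\phi)\circ l_\phi$ for the composite $\sD$-lax map (see \cref{defn:D-lax-map}) get used; that $\om_{lk}\rho = \om_k\,\mathrm{pr}_1$ and $\nu_{lk}\rho = \nu_l\,\mathrm{pr}_2$ hold on the nose; and that $\rho$ is a levelwise weak equivalence, by exhibiting the section $\sigma\cn (x,h,c)\mapsto\big((x,\id_{k(x)},k(x)),(k(x),h,c)\big)$, with $\rho\sigma=\id$, together with a $2$-natural transformation $\id_P\impl\sigma\rho$ (which becomes a homotopy after taking nerves). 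These three facts give $\Phi(lk)=[\nu_{lk}][\om_{lk}]^{-1}=[\nu_l\,\mathrm{pr}_2][\om_k\,\mathrm{pr}_1]^{-1}=\Phi(l)\Phi(k)$. This is the calculation-heavy heart of the proof; everything around it is formal.

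Finally I would verify the two composites. For $\ho\iota\circ\bar\Phi=\id$: for any $\sD$-lax map $k$, functoriality of the localization together with $[e_0]=[e_1]$ give $[\nu_k]=[e_0\ol{\nu}_k]=[e_1\ol{\nu}_k]=[k\om_k]=[k][\om_k]$ in $\ho\sD\mh\IICat_l$, so $\ho\iota(\Phi(k))=[\nu_k][\om_k]^{-1}=[k]$. For $\bar\Phi\circ\ho\iota=\id$: as $\ho\sD\mh\IICat$ is generated by strict maps and formal inverses of weak equivalences among them, it is enough to see $\Phi(f)=[f]$ for a strict $\sD$-map $f$; but then $\ol{\nu}_f$ is itself strict, so $[\nu_f]=[e_0\ol{\nu}_f]=[e_1\ol{\nu}_f]=[f\om_f]=[f][\om_f]$ already in $\ho\sD\mh\IICat$, whence $\Phi(f)=[f]$. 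Therefore $\ho\iota$ is an isomorphism of categories; in particular this shows that $\ho\sD\mh\IICat_l$ is locally small.
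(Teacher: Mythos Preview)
Your proposal is correct but takes a genuinely different route from the paper. The paper argues directly on hom-sets: using the span $X \xleftarrow{\om} Ek \xrightarrow{\nu} Z$ of \cref{prop:E_square}, it shows that $[k] = [\nu_k][\om_k]^{-1}$ holds in $\ho\sD\mh\IICat_l$ for every $\sD$-lax $k$ (and $[k]^{-1} = [\om_k][\nu_k]^{-1}$ when $k\in\cW$), so that every zigzag representing a morphism of $\ho\sD\mh\IICat_l$ can be rewritten as a zigzag of strict maps; surjectivity of $\ho\iota$ on hom-sets follows, and injectivity is handled in one sentence by observing that this rewriting is the identity on strict zigzags. You instead construct an explicit inverse via the universal property of localization, defining $\Phi(k) = [\nu_k][\om_k]^{-1}$ in $\ho\sD\mh\IICat$ and verifying functoriality directly. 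The substantive content of your argument is the composition axiom $\Phi(lk) = \Phi(l)\Phi(k)$: your pullback $P = Ek\times_Y El$, the comparison $\rho\cn P\to E(lk)$, and the identities $\om_{lk}\rho = \om_k\,\mathrm{pr}_1$ and $\nu_{lk}\rho = \nu_l\,\mathrm{pr}_2$ are all correct, and the check that $\rho$ is a \emph{strict} $\sD$-map is exactly where the $2$-naturality of $l_\phi$ and the pasting formula for $(lk)_\phi$ are used. The paper's approach is considerably shorter, avoiding this computation entirely; your approach trades brevity for an explicit inverse functor and a fully self-contained argument, from which the local smallness of $\ho\sD\mh\IICat_l$ is immediate.
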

\begin{proof}\proofof{thm:hos-iso-holax}
Let $k\cn X \to Y$ be a $\sD$-lax map, and consider the commutative
diagram of \cref{prop:E_square}.  Then we have the following
calculation in $\ho \sD\mh \IICat_{l}$ using that $[e_{0}] = [e_{1}]$
as in \cref{cor:gtrans}.
\[
[k \om] = [e_{1} \overline{\nu}] = [e_{0} \overline{\nu}] = [\nu]
\]
Thus $[k] = [\nu][\om]^{-1}$ in $\ho \sD\mh \IICat_{l}$ since, by
\cref{prop:om-levelwise-adj}, $\om$ is a levelwise weak equivalence.  Now levelwise weak
equivalences satisfy the 2 out of 3 property, so if $k$ is a
levelwise equivalence then $\nu$ is also.  Therefore $[k]^{-1} =
[\om][\nu]^{-1}$ in $\ho \sD\mh \IICat_{l}$.  Since every morphism in
$\ho \sD\mh \IICat_{l}$ is a composite of the form
\[
[k_1]^{-1}[k_2][k_3]^{-1} \cdots [k_{2n}]
\]
where the $k_{2i+1}$'s are all levelwise equivalences, the function on hom-sets
\[
\ho \sD\mh \IICat(X,Y) \to \ho \sD\mh \IICat_{l}(X,Y)
\]
is seen to be surjective by replacing the $[k_{2i}]$ with
$[\nu_{2i}][\om_{2i}]^{-1}$ and the $[k_{2i+1}]^{-1}$ with
$[\om_{2i+1}][\nu_{2i+1}]^{-1}$.  Injectivity then follows from the
fact that performing this procedure on a morphism in the image of $\ho
\sD\mh \IICat(X,Y) \to \ho \sD\mh \IICat_{l}(X,Y)$ is clearly the
identity.  Since these categories have the same objects, the map is
therefore an isomorphism. 
\end{proof}

\begin{prop} \label{prop:E-on-composable-pairs}
  Given a composable pair of $\sD$-lax maps
  \[
  X \fto{h} Y \fto{j} Z
  \]
  there are $\sD$-lax maps
  \[
  Eh \fto{j_E} E(jh) \fto{h_E} E(j)
  \]
  induced by maps on levelwise pullbacks.  The map $j_E$,
  resp. $h_E$, is a strict $\sD$-map if $j$, resp. $h$, is a
  strict $\sD$-map.
\end{prop}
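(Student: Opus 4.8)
The plan is to construct $j_E$ and $h_E$ one level at a time from the universal property of the pullbacks defining the 2-categories $Ek(m)$, and then to observe that the laxity of $j$, resp.\ $h$, supplies exactly the structure needed to promote these levelwise $2$-functors to $\sD$-lax maps. Fix $m \in \sD$. By construction $Eh(m)$ is the pullback of $X(m) \fto{h_m} Y(m)$ along $e_1 \cn Y^{\Delta[1]}(m) \to Y(m)$, while $E(jh)(m)$ is the pullback of $X(m) \fto{(jh)_m} Z(m)$ along $e_1 \cn Z^{\Delta[1]}(m) \to Z(m)$. Since $(jh)_m = j_m \circ h_m$ and applying $j_m$ to an arrow commutes with taking its target, the identity on $X(m)$, together with $j_m$ and the $2$-functor $Y^{\Delta[1]}(m) \to Z^{\Delta[1]}(m)$ induced by $j_m$, form a morphism of cospans and hence induce a $2$-functor $j_{E,m} \cn Eh(m) \to E(jh)(m)$; concretely $j_{E,m}(x,f,a) = (x, j(f), j(a))$ on $0$-cells and $j_{E,m}(s,r) = (s, j(r))$ on $1$-cells, with the analogous formula on $2$-cells. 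Dually, $h_m$ together with the identities on $Z(m)$ and $Z^{\Delta[1]}(m)$ is a morphism from the cospan defining $E(jh)(m)$ to that defining $E(j)(m)$, and induces $h_{E,m} \cn E(jh)(m) \to E(j)(m)$ with $h_{E,m}(x,f,a) = (h(x), f, a)$. One checks immediately that these maps commute over $X$ and over $Z$ with the strict structure maps $\om$ and $\nu$ of \cref{prop:E_square}.

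Second, I would endow $\{j_{E,m}\}$ and $\{h_{E,m}\}$ with $\sD$-lax structure. The obstruction to $j_{E,m}$ commuting strictly with $\phi_*$, for $\phi \cn m \to n$, is that $\phi_*$ on $E(jh)$ is described in \cref{eqn:Ek-phi} using the composite laxity $(jh)_\phi = (j_n * h_\phi) \cdot (j_\phi * h_m)$, whereas $\phi_*$ on $Eh$ uses only $h_\phi$. Comparing $j_{E,n} \circ \phi_*$ with $\phi_* \circ j_{E,m}$ via \cref{eqn:Ek-phi}, one is led to define the $1$-cell component of $(j_E)_\phi$ at $(x,f,a)$ to be $\big( \id_{\phi_* x},\, (j_\phi)_a \big)$; this pair is a legitimate $1$-cell of $E(jh)(n)$ precisely because the commuting square it must satisfy is the naturality square of the $2$-natural transformation $j_\phi$ evaluated at the $1$-cell $f \cn a \to h_m(x)$ of $Y(m)$. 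Symmetrically, the $1$-cell component of $(h_E)_\phi$ at $(x,f,a)$ is $\big( (h_\phi)_x,\, \id_{\phi_* a} \big)$, and the required square is just the identity $((jh)_\phi)_x = j_n\big((h_\phi)_x\big) \circ (j_\phi)_{h_m(x)}$ obtained by unwinding the pasting defining $(jh)_\phi$. The $2$-cell components, and the $2$-naturality of $(j_E)_\phi$ and $(h_E)_\phi$, come directly from the corresponding properties of $j_\phi$ and $h_\phi$.

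Finally I would verify the two lax-transformation axioms. That $(j_E)_{\id_m}$ and $(h_E)_{\id_m}$ are identities is immediate from $j_{\id_m} = \id$ and $h_{\id_m} = \id$. That $(j_E)_{\psi\phi}$ is the prescribed pasting of $(j_E)_\psi$ and $(j_E)_\phi$, and likewise for $h_E$, reduces after unwinding \cref{eqn:Ek-phi} to the same axiom for $j$, resp.\ $h$, together with the equality $(\psi\phi)_* = \psi_* \phi_*$ on $Ek$ already recorded in the construction of $Ek$. The strictness claim then follows: if $j$ is strict each $j_\phi$ is an identity $2$-natural transformation, so the $1$- and $2$-cell components of every $(j_E)_\phi$ are identities and $j_E$ is a strict $\sD$-map; the argument for $h_E$ from strictness of $h$ is identical. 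The only genuine work is the bookkeeping in the second step --- keeping the two halves $j_\phi * h_m$ and $j_n * h_\phi$ of the composite laxity $(jh)_\phi$ straight and matching each against the laxity of $j_E$ and of $h_E$ --- but since every $2$-functor $h_m$, $j_m$ in sight is strict, this is a routine diagram chase with no coherence subtleties.
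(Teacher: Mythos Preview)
Your proposal is correct and is precisely the argument the paper has in mind: the proposition is stated without proof in the paper, with only the phrase ``induced by maps on levelwise pullbacks'' as a hint, and you have unpacked exactly that, constructing $j_E$ and $h_E$ via the universal property of the defining pullbacks and tracing the lax structure and strictness claim back to that of $j$ and $h$ just as one would expect.
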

 
\begin{prop} \label{prop:E-is-weird-functor}
  Given a parallel pair of $\sD$-lax maps and a $\sD$-transformation
  \[
  \begin{xy}
    (0,0)*+{X}="A";
    (30,0)*+{Y}="B";
    {\ar@/^1pc/^{h} "A"; "B"};
    {\ar@/_1pc/_{k} "A"; "B"};
    {\ar@{=>}^{\la} (15,2)="a"; "a"+(0,-4) };
  \end{xy}
  \]
  there is a strict $\sD$-map 
  \[
  E\la \cn Eh \to Ek.
  \]
  This defines a functor
  \[
  E\cn \mathbf{Lax}(\sD, \IICat_{2})(X,Y) \to \sD\mh\IICat
  \]
  from the category of $\sD$-lax maps and $\sD$-transformations
  between $X$ and $Y$ to the category of $\sD$-2-categories and strict
  $\sD$-maps.
\end{prop}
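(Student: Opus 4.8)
The plan is to build $E\la$ one level at a time by post-composing with the components of the $\sD$-transformation $\la$, then to check that the resulting family of $2$-functors is a strict $\sD$-map and that the assignment $\la \mapsto E\la$ is functorial. Write $\la_m\cn h_m \Rightarrow k_m$ for the component of $\la$ at $m \in \sD$ and $(\la_m)_x\cn h_m(x) \to k_m(x)$ for its component at an object $x \in X(m)$. I would define $E\la(m)\cn Eh(m) \to Ek(m)$ to send a $0$-cell $(x,f,a)$ of $Eh(m)$ to $\big(x,\,(\la_m)_x \circ f,\, a\big)$ and to act as the identity on the outer two coordinates of $1$- and $2$-cells, i.e.\ $(s,r) \mapsto (s,r)$ and $(\be,\al) \mapsto (\be,\al)$. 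Checking that this lands in $Ek(m)$ is routine: $(\la_m)_x \circ f$ is a $1$-cell $a \to k_m(x)$, and the squares defining the $1$- and $2$-cells of $Ek(m)$ commute because $\la_m$ is $2$-natural, for instance $(\la_m)_y \circ h_m(s) = k_m(s) \circ (\la_m)_x$ for a $1$-cell $s\cn x \to y$. Since $E\la(m)$ is the identity on precisely the coordinates in which composition is performed in $Eh(m)$ and $Ek(m)$, it is automatically a strict $2$-functor.

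The step I expect to require genuine input is showing that $E\la$ is $\sD$-natural, i.e.\ $\phi_* \circ E\la(m) = E\la(n) \circ \phi_*$ for every $\phi\cn m \to n$ in $\sD$; this is the only place the modification axiom of \cref{defn:D-trans} enters. On $1$- and $2$-cells both composites act as $\phi_*$ on the outer coordinates and as the identity on the rest, so they agree; the content is on $0$-cells. Using the description of $\phi_*$ from \cref{eqn:Ek-phi}, the composite $\phi_* \circ E\la(m)$ sends $(x,f,a)$ to the triple with outer coordinates $\phi_* x, \phi_* a$ and middle $1$-cell $(k_\phi)_x \circ \phi_*\big((\la_m)_x\big) \circ \phi_*(f)$, while $E\la(n) \circ \phi_*$ sends it to the triple with the same outer coordinates and middle $1$-cell $(\la_n)_{\phi_* x} \circ (h_\phi)_x \circ \phi_*(f)$. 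These agree precisely because the commuting square of \cref{defn:D-trans}, evaluated at $x$, asserts $(\la_n)_{\phi_* x} \circ (h_\phi)_x = (k_\phi)_x \circ \phi_*\big((\la_m)_x\big)$. Hence $E\la\cn Eh \to Ek$ is a strict $\sD$-map.

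Finally I would verify functoriality. For the identity $\sD$-transformation on $h$ one has $(\la_m)_x = \id_{h_m(x)}$ for all $m, x$, so $E(\id_h)$ fixes every cell and equals $\id_{Eh}$. For composable $\sD$-transformations $h \fto{\la} k \fto{\mu} l$ the component at $x$ of the composite is $(\mu_m)_x \circ (\la_m)_x$, so $E(\mu\la)(m)$ sends $(x,f,a)$ to $\big(x,\,(\mu_m)_x \circ (\la_m)_x \circ f,\, a\big) = E\mu(m)\big(E\la(m)(x,f,a)\big)$, with the evident matching on $1$- and $2$-cells; thus $E(\mu\la) = E\mu \circ E\la$. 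This produces the functor $E\cn \mathbf{Lax}(\sD,\IICat_2)(X,Y) \to \sD\mh\IICat$. The only nontrivial ingredient in the whole argument is the appeal to the modification axiom in establishing $\sD$-naturality; the rest is a direct unwinding of the definitions of the pullback $2$-categories $Eh$, $Ek$ and the levelwise $\sD$-action.
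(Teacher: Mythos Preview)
Your proof is correct and follows essentially the same approach as the paper's own proof: define $E\la$ by post-composing the middle coordinate of each $0$-cell with the component of $\la$, act as the identity on $1$- and $2$-cells, invoke $2$-naturality of $\la_m$ for well-definedness, and use the modification square of \cref{defn:D-trans} for strict $\sD$-naturality. You have simply spelled out in full the details the paper leaves implicit.
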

\begin{proof}\proofof{prop:E-is-weird-functor}
  The map $E\la$ is given on 0-cells of $Eh(m)$ by $E\la(x, f, a) =
  (x, \la_m(x) \circ f, a)$ and is the identity on 1- and 2-cells.
  The 2-naturality of $\la_m$ ensures that this is a well-defined map
  to $Ek(m)$, and the condition $\la_n(x) \circ h_\phi(x) = k_\phi(x)
  \circ \phi_*(\la_m(x))$ of \cref{defn:D-trans} ensures that this is
  a strict $\sD$-map.
  
  It is immediate that this construction is functorial with respect to
  $\sD$-lax maps and produces the identity map if $\la$ is the
  identity.
\end{proof}
 
For a square of $\sD$-lax maps with a $\sD$-transformation between the
two composites
\[
\begin{xy}
  (0,0)*+{X}="0";
  (20,0)*+{Z}="1";
  (0,-15)*+{Y}="2";
  (20,-15)*+{W}="3";
  {\ar^-{i} "0"; "1"};
  {\ar_-{h} "0"; "2"};
  {\ar^-{k} "1"; "3"};
  {\ar_-{j} "2"; "3"};
  {\ar@{=>}^-{\la} (7.5,-10)="a"; "a"+(5,5)}; 
\end{xy}
\]
we combine \cref{prop:E-on-composable-pairs,prop:E-is-weird-functor}
to obtain the following maps:
\begin{eqn}\label{eqn:E-maps}
\begin{xy}
  (0,0)*+{Eh}="A";
  (20,0)*+{E(jh)}="B";
  (40,0)*+{Ej}="C";
  (0,-15)*+{Ei}="D";
  (20,-15)*+{E(ki)}="E";
  (40,-15)*+{Ek}="F";
  {\ar^-{j_E} "A"; "B"};
  {\ar^-{h_E} "B"; "C"};
  {\ar^-{k_E} "D"; "E"};
  {\ar^-{i_E} "E"; "F"};
  {\ar^-{E\la} "B"; "E"};
\end{xy}
\end{eqn}
When the maps $h$ and $k$ are strict and $\la$ is the identity, we can
say more.  

\begin{defn}\label{D2Cat-lax-arrow}
Let
\[
\sD\mh\IICat^{\bullet \to \bullet}_{\mathrm{lax},\mathrm{str}}
\]
denote the arrow category whose objects are $\sD$-lax maps $X \fto{i}
Z$ and whose morphisms are strict maps making the corresponding
squares commute:
\begin{eqn}\label{eqn:lax-str-square}
\begin{xy}
  (0,0)*+{X}="0";
  (20,0)*+{Z}="1";
  (0,-15)*+{Y}="2";
  (20,-15)*+{W}="3";
  {\ar^-{i} "0"; "1"};
  {\ar_-{h} "0"; "2"};
  {\ar^-{k} "1"; "3"};
  {\ar_-{j} "2"; "3"};
\end{xy}
\end{eqn}
Composition is given by stacking and pasting vertically, along the
strict $\sD$-maps. 
\end{defn} 

\begin{defn}\label{defn:span}
Let $\cE$ be a category.
\begin{enumerate}
\item A \emph{span} in $\cE$ is a diagram of the form $X \leftarrow A \to Y$.
\item A \emph{map of spans} from $X \leftarrow A \to Y$ to $X' \leftarrow A' \to Y'$ consists of maps $f\cn X \to X', g \cn A \to A', h \cn Y \to Y'$ making the two squares commute.
\[
\begin{xy}
  0;<18mm,0mm>:<0mm,12mm>::
  (0,0)*+{X}="A";
  (1,0)*+{A}="B";
  (2,0)*+{Y}="C";
  (0,-1)*+{X'}="D";
  (1,-1)*+{A'}="E";
  (2,-1)*+{Y'}="F";
  {\ar^-{} "B"; "A"};
  {\ar^-{} "B"; "C"};
  {\ar^-{} "E"; "D"};
  {\ar^-{} "E"; "F"};
  {\ar_-{f} "A"; "D"};
  {\ar^-{g} "B"; "E"};
  {\ar^-{h} "C"; "F"};
\end{xy}
\]

\item The category $\mathpzc{Span}\,(\cE)$ has objects spans in $\cE$, and maps of spans between them.
\end{enumerate}
\end{defn}

\begin{rmk}
It is clear that $\mathpzc{Span}\,(\cE)$ is merely the functor category $[ \bullet \ot \bullet \to
  \bullet, \cE]$.
\end{rmk}

We thus have the following refinement of
\cref{eqn:E-maps}.

\begin{cor}\label{cor:E-on-arrow-category}
  The construction $E$ defines a functor
  \[
  \sD\mh\IICat^{\bullet \to \bullet}_{\mathrm{lax},\mathrm{str}} \; \longrightarrow \;
  \mathpzc{Span}\,(\sD\mh\IICat)
  \]
  given on objects by sending a lax map $i\cn X \to Z$ to
  \[
  X \fot{\om} Ei \fto{\nu} Z.
  \]
\end{cor}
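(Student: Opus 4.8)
The plan is to extract the statement from the pieces already assembled, specializing \cref{eqn:E-maps} to the case in which the two vertical maps are strict and the connecting $\sD$-transformation is an identity. First I would record that $E$ is well defined on objects: an object of $\sD\mh\IICat^{\bullet\to\bullet}_{\mathrm{lax},\mathrm{str}}$ is a $\sD$-lax map $i\cn X\to Z$, and \cref{prop:E_square} exhibits the diagram whose outer legs are $X \fot{\om} Ei \fto{\nu} Z$ with both $\om$ and $\nu$ strict $\sD$-maps; this is precisely an object of $\mathpzc{Span}\,(\sD\mh\IICat)$.

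Next I would treat morphisms. A morphism in $\sD\mh\IICat^{\bullet\to\bullet}_{\mathrm{lax},\mathrm{str}}$ from $i\cn X\to Z$ to $j\cn Y\to W$ is a square \cref{eqn:lax-str-square} with $h,k$ strict, whose commutativity says $ki=jh$ as $\sD$-lax maps. Applying \cref{eqn:E-maps} to this square with $\la$ the identity $\sD$-transformation $jh\Rightarrow ki$ (legitimate precisely because $ki=jh$), we have $E(ki)=E(jh)$ on the nose and $E\la=\mathrm{id}$ by \cref{prop:E-is-weird-functor}, so the two rows of \cref{eqn:E-maps} fuse at their middle objects into the composite
\[
Ei \fto{k_E} E(ki) = E(jh) \fto{h_E} Ej,
\]
which is a strict $\sD$-map since $k_E$ and $h_E$ are strict whenever $k$ and $h$ are (\cref{prop:E-on-composable-pairs}). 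I would then check that $(h,\, h_E k_E,\, k)$ is a map of spans from $X \fot{\om} Ei \fto{\nu} Z$ to $Y \fot{\om} Ej \fto{\nu} W$: here one uses that $\om$ is projection to the $X$-coordinate of the levelwise pullback defining $E(-)$ and that $\nu = e_0\ol\nu$ is projection to the source of the distinguished $1$-cell, together with the description of $k_E$ and $h_E$ as the maps induced on those pullbacks by $k$, $h$, and $k^{\Delta[1]}$. Chaining the two resulting naturality squares produces the two commuting squares demanded by \cref{defn:span}.

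Finally, functoriality: composition in $\sD\mh\IICat^{\bullet\to\bullet}_{\mathrm{lax},\mathrm{str}}$ is vertical pasting of squares \cref{eqn:lax-str-square}, and this is carried by $E$ to composition of the induced maps on the levelwise pullbacks, so $E$ preserves composites and identities; this follows readily from the functoriality of the $E$-construction on composable pairs and on $\sD$-transformations already recorded in \cref{prop:E-on-composable-pairs,prop:E-is-weird-functor}. The one step that genuinely requires unwinding definitions, rather than quoting an earlier result, is verifying that $k_E$ and $h_E$ assemble into a map of \emph{spans} and not merely a $\sD$-map $Ei\to Ej$, i.e., the commutativity of the two outer squares involving the $\om$- and $\nu$-legs; I expect this routine pullback chase to be the only real obstacle.
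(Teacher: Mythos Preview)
Your proposal is correct and follows essentially the same approach as the paper: define the morphism assignment as the composite $Ei \xrightarrow{k_E} E(ki)=E(jh) \xrightarrow{h_E} Ej$, observe it is strict because $h$ and $k$ are, verify compatibility with $\om$ and $\nu$, and then check functoriality directly. The paper's proof is terser but structurally identical, leaving the span-compatibility and functoriality checks as routine verifications exactly as you anticipated.
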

\begin{proof}\proofof{cor:E-on-arrow-category}
  Given a commuting square as in \cref{eqn:lax-str-square}, we have
  the composite $\sD$-map
  \[
  Ei \fto{k_E} E(ki) = E(jh) \fto{h_E} E(j).
  \]
  This is a strict $\sD$-map because $h$ and $k$ are strict.  One
  verifies immediately that this map commutes with the structure maps
  $\om$ and $\nu$.

  A lengthy but straightforward check verifies that this construction
  is functorial with respect to the morphisms of
  $\mathpzc{Span}\,(\sD\mh\IICat)$.
\end{proof}

\begin{rmk}\label{rmk:double_cat}
  The category $\sD\mh\IICat^{\bullet \to
    \bullet}_{\mathrm{lax},\mathrm{str}}$ arises naturally in the
  following way.  There is a double category with objects
  $\sD$-2-categories, vertical arrows strict $\sD$-maps, horizontal
  arrows lax-$\sD$-maps, and cells given by commutative squares.  We
  can view this double category as a category internal to categories
  by taking the category of objects to be $\sD\mh\IICat_{l}$ and then
  $\sD\mh\IICat^{\bullet \to \bullet}_{\mathrm{lax},\mathrm{str}}$
  appears as the category of arrows.
\end{rmk}

We now concentrate on the case of $\Ga$-2-categories. Recall that
$\Ga\mh\IICat$ and $\Ga\mh\IICat_l$ denote the full subcategories of
$\sF\mh\IICat$ and $\sF\mh\IICat_l$, respectively, given by reduced
diagrams.  Likewise, we let $\Ga\mh\IICat^{\bullet \to
  \bullet}_{\mathrm{lax},\mathrm{str}}$ denote the full subcategory of
$\sF\mh\IICat^{\bullet \to \bullet}_{\mathrm{lax},\mathrm{str}}$
whose objects are lax maps between $\Ga$-2-categories.

Note that if $k \colon X \to Z$ is a $\Ga$-lax map of
$\Ga$-2-categories, then the $\sF$-2-categories $Z^{\De[1]}$ and $Ek$
are reduced.  This leads to the following refinements of our results
above.

\begin{cor}\label{cor:hogas-iso-hogalax}
The functor $\ho \Ga\mh \IICat\to \ho \Ga\mh \IICat_{l}$ is an
isomorphism of categories. 
\end{cor}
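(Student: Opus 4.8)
The plan is to transfer the proof of \cref{thm:hos-iso-holax} verbatim to the reduced setting, the only new input being that every auxiliary $\sF$-2-category occurring in that argument is reduced whenever the diagrams we start with are. First I would record this closure property: if $X$ is a $\Ga$-2-category then $X^{\Delta[1]}$ is again reduced, since $(\ast)^{\Delta[1]} = \ast$ (the terminal 2-category has a unique object, 1-cell, and 2-cell); and if $k\cn X \to Z$ is a $\Ga$-lax map of $\Ga$-2-categories, then $Ek$ is reduced, because $Ek(\ulp{0})$ is a levelwise pullback of a diagram of terminal 2-categories and hence terminal. Both statements are exactly the note following \cref{defn:interval-for-diagram} and the remark immediately preceding this corollary. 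Since $Ek$, $Z^{\Delta[1]}$, and all the structure maps $\om$, $\ol{\nu}$, $\nu$, $e_0$, $e_1$ are defined levelwise, forming them for a $\Ga$-lax map produces the same data whether one works in $\sF\mh\IICat_l$ or restricts to $\Ga\mh\IICat_l$; in particular the commuting diagram of \cref{prop:E_square} lives in $\Ga\mh\IICat_l$.

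Next, given a $\Ga$-lax map $k\cn X \to Y$, I would run the calculation of \cref{thm:hos-iso-holax} inside $\ho \Ga\mh\IICat_l$: using that $[e_0] = [e_1]$ there — valid because a $\Ga$-transformation induces an equality in $\ho\Ga\mh\IICat_l$ by the note after \cref{cor:gtrans} — one gets
\[
[k \om] = [e_1 \ol{\nu}] = [e_0 \ol{\nu}] = [\nu],
\]
and since $\om$ is a levelwise weak equivalence by \cref{prop:om-levelwise-adj} (a levelwise statement, unaffected by reducedness), $[k] = [\nu][\om]^{-1}$ in $\ho\Ga\mh\IICat_l$. By the $2$-out-of-$3$ property of levelwise weak equivalences, $\nu$ is a levelwise equivalence whenever $k$ is, so also $[k]^{-1} = [\om][\nu]^{-1}$ in that case.

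Finally, the surjectivity and injectivity of
\[
\ho \Ga\mh \IICat(X,Y) \to \ho \Ga\mh \IICat_{l}(X,Y)
\]
follow word for word as in \cref{thm:hos-iso-holax}. Every morphism of $\ho\Ga\mh\IICat_l$ is a composite $[k_1]^{-1}[k_2][k_3]^{-1}\cdots[k_{2n}]$ with the odd-indexed maps levelwise equivalences; replacing each $[k_{2i}]$ by $[\nu_{2i}][\om_{2i}]^{-1}$ and each $[k_{2i+1}]^{-1}$ by $[\om_{2i+1}][\nu_{2i+1}]^{-1}$ — all of which are composites of (classes of) strict $\Ga$-maps and formal inverses of levelwise equivalences — exhibits the morphism in the image, proving surjectivity; applying the same replacement procedure to a morphism already coming from $\ho\Ga\mh\IICat$ returns it unchanged, proving injectivity. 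As the two categories have the same objects, the functor is an isomorphism.

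The main point is that there is essentially no new obstacle: the entire content is the observation that neither the path-object construction nor the $E$-construction leaves the full subcategory of reduced diagrams, so the machinery of \cref{thm:hos-iso-holax} applies on the nose. The only thing requiring (routine) care is confirming that the $E$-construction commutes with the inclusion $\Ga\mh\IICat_l \hookrightarrow \sF\mh\IICat_l$, which is immediate from its levelwise, pullback-based definition.
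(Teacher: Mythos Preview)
Your proposal is correct and follows exactly the same approach as the paper: the paper's proof simply observes that the diagram of \cref{prop:E_square} lies in $\Ga\mh\IICat_l$ when the input is reduced, and then repeats the proof of \cref{thm:hos-iso-holax}. You have spelled out in greater detail the reducedness checks (for $X^{\Delta[1]}$ and $Ek$) that the paper leaves implicit, but the argument is otherwise identical.
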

\begin{proof}\proofof{cor:hogas-iso-hogalax}
  When considering only $\Ga$-2-categories, the diagram of
  \cref{prop:E_square} is a diagram of $\Ga$-2-categories.  The result
  then follows by repeating the rest of the proof of
  \cref{thm:hos-iso-holax}.
\end{proof}

\begin{cor}\label{cor:E-on-Ga-arrow-category}
  The construction $E$ defines a functor
  \[
  \Ga\mh\IICat^{\bullet \to \bullet}_{\mathrm{lax},\mathrm{str}} \; \longrightarrow \;
  \mathpzc{Span}\,(\Ga\mh\IICat).
  \]
\end{cor}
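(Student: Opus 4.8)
The plan is to deduce this immediately from \cref{cor:E-on-arrow-category} by restricting the functor constructed there (for $\sD = \sF$) along the full-subcategory inclusions of reduced diagrams on either side. The only point requiring verification is that the $E$-construction respects the reducedness condition, which is essentially the observation recorded in the paragraph preceding the statement.

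First I would check that $Ek$ is reduced whenever $k\cn X \to Z$ is a $\Ga$-lax map of $\Ga$-2-categories. By definition $Ek(\ul{0}_+)$ is the pullback in $\IICat$ of the cospan
\[
X(\ul{0}_+) \fto{k} Z(\ul{0}_+) \fot{e_1} Z^{\Delta[1]}(\ul{0}_+).
\]
Since $X$ and $Z$ are reduced we have $X(\ul{0}_+) = Z(\ul{0}_+) = \ast$, and moreover $Z^{\Delta[1]}(\ul{0}_+) = \big(Z(\ul{0}_+)\big)^{\Delta[1]} = \ast^{\Delta[1]} = \ast$, so that $Z^{\Delta[1]}$ is itself reduced (as already noted after \cref{defn:interval-for-diagram}). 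The pullback of terminal 2-categories over a terminal 2-category is terminal, hence $Ek(\ul{0}_+) = \ast$ and $Ek$ is reduced.

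It then remains only to observe that the functor of \cref{cor:E-on-arrow-category}, applied to objects $i\cn X \to Z$ with $X$ and $Z$ reduced, produces the span $X \fot{\om} Ei \fto{\nu} Z$ of reduced diagrams, and that for a square as in \cref{eqn:lax-str-square} with all four corners reduced the resulting strict $\sF$-map $Ei \to Ej$ is automatically a map of reduced diagrams. Thus the functor of \cref{cor:E-on-arrow-category} carries $\Ga\mh\IICat^{\bullet \to \bullet}_{\mathrm{lax},\mathrm{str}}$ into $\mathpzc{Span}\,(\Ga\mh\IICat)$, which is the assertion. I expect no real obstacle here: all of the substance is in \cref{cor:E-on-arrow-category}, and the present corollary is merely the remark that every construction used there preserves reducedness.
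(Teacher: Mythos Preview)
Your proof is correct and follows exactly the paper's approach: the paper's proof is the single sentence ``This is immediate from \cref{cor:E-on-arrow-category},'' relying on the observation (stated just before the corollary) that $Z^{\Delta[1]}$ and $Ek$ are reduced whenever $X$ and $Z$ are. You have simply unpacked that observation in more detail.
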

\begin{proof}\proofof{cor:E-on-Ga-arrow-category}
  This is immediate from \cref{cor:E-on-arrow-category}.
\end{proof}

\begin{cor}\label{cor:Hos-iso-Holax}
A strict map $h \cn X \to Y$ of $\Ga$-2-categories is a stable
equivalence in the sense of \cref{defn:stable-equiv} if and only if,
when considered as a lax map, it is a stable equivalence in the sense
of \cref{defn:Galax-stable-eq}.  In particular, the inclusion $\Ga\mh
\IICat\hookrightarrow \Ga\mh \IICat_{l}$ preserves and reflects stable
equivalences, so induces a functor of stable homotopy categories
\[
\Ho \Ga\mh \IICat \to \Ho \Ga\mh \IICat_{l}.
\]
This functor is an isomorphism.
\end{cor}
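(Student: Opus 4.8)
The plan is to bootstrap everything off \cref{cor:hogas-iso-hogalax}, which already supplies an isomorphism of categories $\ho\Ga\mh\IICat \to \ho\Ga\mh\IICat_{l}$ induced by the inclusion. First I would record the elementary fact that a levelwise weak equivalence of $\Ga$-2-categories is a stable equivalence, in both the strict and the lax settings: if $[f]$ is invertible in $\ho\Ga\mh\IICat$ (resp.\ in $\ho\Ga\mh\IICat_{l}$) then the precomposition map $f^{*}$ is a bijection on $\ho\Ga\mh\IICat(-,Z)$ (resp.\ on $\ho\Ga\mh\IICat_{l}(-,Z)$) for \emph{every} $\Ga$-2-category $Z$, in particular for every very special one; hence $\cW \subseteq \cS$ on both sides.

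For the first assertion, fix a strict map $h\cn X \to Y$ and a very special $\Ga$-2-category $Z$. Functoriality of the inclusion produces a commuting square whose horizontal arrows are the two incarnations of $h^{*}$ and whose vertical arrows are the bijections $\ho\Ga\mh\IICat(-,Z) \cong \ho\Ga\mh\IICat_{l}(-,Z)$ from \cref{cor:hogas-iso-hogalax}; so one $h^{*}$ is an isomorphism exactly when the other is. Letting $Z$ range over all very special $\Ga$-2-categories shows $h$ is a stable equivalence in the sense of \cref{defn:stable-equiv} if and only if it is one in the sense of \cref{defn:Galax-stable-eq}. In particular the inclusion $\Ga\mh\IICat \hookrightarrow \Ga\mh\IICat_{l}$ preserves and reflects stable equivalences and therefore descends to a functor $\Ho\Ga\mh\IICat \to \Ho\Ga\mh\IICat_{l}$, which is the identity on objects.

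To see this functor is full and faithful I would rerun the span-replacement argument of \cref{thm:hos-iso-holax} inside the stable homotopy categories. Every morphism of $\Ho\Ga\mh\IICat_{l}(X,Y)$ can be written as $[k_{1}]^{-1}[k_{2}][k_{3}]^{-1}\cdots[k_{2n}]$ in which the $k_{2i+1}$ are stable equivalences; for each $\Ga$-lax $k_{i}\cn A \to B$, \cref{prop:E_square} and \cref{prop:om-levelwise-adj} give strict $\Ga$-maps $A \fot{\om} Ek_{i} \fto{\nu} B$ with $\om$ a levelwise---hence, by the first paragraph, stable---equivalence, and $[k_{i}] = [\nu][\om]^{-1}$ in $\Ho\Ga\mh\IICat_{l}$ because $[e_{0}] = [e_{1}]$ there (the computation in \cref{cor:gtrans} applies once one knows levelwise equivalences are stable equivalences). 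If moreover $k_{i}$ is a stable equivalence then so is $\nu$, by 2-out-of-3. Substituting these spans rewrites the given morphism as a zigzag of strict $\Ga$-maps whose backwards legs are stable equivalences, exhibiting it in the image of $\Ho\Ga\mh\IICat(X,Y) \to \Ho\Ga\mh\IICat_{l}(X,Y)$, which gives fullness. For faithfulness one argues as in \cref{thm:hos-iso-holax}: the functoriality of $E$ recorded in \cref{cor:E-on-Ga-arrow-category} makes the substitution procedure well defined on $\Ho\Ga\mh\IICat_{l}(X,Y)$, and it is a retraction of the inclusion because for a strict $k$ one already has $[k] = [\nu][\om]^{-1}$ in $\Ho\Ga\mh\IICat$.

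The only genuine obstacle is bookkeeping: each identity that \cref{thm:hos-iso-holax} establishes in the \emph{levelwise} homotopy categories must be checked to survive the further localization at stable equivalences, and this reduces entirely to the containment $\cW \subseteq \cS$ from the first paragraph. Once that is in hand, no input beyond \cref{cor:hogas-iso-hogalax} and the $E$-construction is needed.
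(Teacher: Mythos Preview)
Your proposal is correct and takes essentially the same approach as the paper: the paper's proof simply says the first statement is ``obvious by \cref{cor:hogas-iso-hogalax}'', the second is immediate from the first, and the isomorphism follows by ``the same argument as in \cref{thm:hos-iso-holax}'', which is precisely the span-replacement argument you spell out. Your treatment is more explicit—in particular you make the containment $\cW \subseteq \cS$ visible and write out the commuting-square argument for the first assertion—but the underlying strategy is identical.
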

\begin{proof}\proofof{cor:Hos-iso-Holax}
The first statement is obvious by \cref{cor:hogas-iso-hogalax}, and the
second is immediate from the first.  One then uses the same argument as in
\cref{thm:hos-iso-holax} to show that the functors on stable homotopy
categories are isomorphisms.
\end{proof}

\begin{rmk}
 In subsequent work \cite{GJO2017Extending}, the authors develop more general techniques and improve \cref{cor:hogas-iso-hogalax,cor:Hos-iso-Holax} to equivalences of homotopy theories.
\end{rmk}

\begin{cor}\label{cor:nu-stable-eq}
  Let $h\cn X \to Z$ be a $\Ga$-lax map of $\Ga$-2-categories and
  consider the construction
  \[
  X \fot{\om} Eh \fto{\nu} Z
  \]
  of \cref{prop:E_square}.  Then $h$ is a levelwise, resp. stable,
  equivalence if and only if $\nu$ is a levelwise, resp. stable,
  equivalence.
\end{cor}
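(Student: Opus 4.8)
The plan is to read everything off the commuting diagram of \cref{prop:E_square} together with \cref{prop:om-levelwise-adj}, which tells us that $\om$ is always a levelwise weak equivalence (and hence also a stable equivalence, since levelwise weak equivalences are inverted in $\ho\Ga\mh\IICat_{l}$). Thus $\om$ will serve as the bridge between $h$ and $\nu$.

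\emph{Levelwise case.} Here I would work one level at a time. Fixing $m \in \sF$, the pullback square defining $Eh(m)$ gives the \emph{strict} equality of 2-functors $e_{1}\circ\ol{\nu}_{m} = h_{m}\circ\om_{m}$, while $\nu_{m} = e_{0}\circ\ol{\nu}_{m}$ by \cref{prop:E_square}. By \cref{delta1} both $e_{0}$ and $e_{1}$ are weak equivalences of 2-categories, and $\om_{m}$ is one by \cref{prop:om-levelwise-adj}. Since weak equivalences of 2-categories are exactly the 2-functors inverted by the nerve, they inherit the 2-out-of-3 property from weak equivalences of simplicial sets, giving the chain
\[
h_{m}\ \text{a weak equivalence}
\iff h_{m}\om_{m}=e_{1}\ol{\nu}_{m}\ \text{is}
\iff \ol{\nu}_{m}\ \text{is}
\iff \nu_{m}=e_{0}\ol{\nu}_{m}\ \text{is}.
\]
As $m$ was arbitrary, $h$ is a levelwise weak equivalence if and only if $\nu$ is.

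\emph{Stable case.} Here I would pass to $\ho\Ga\mh\IICat_{l}$. Exactly as in the proof of \cref{thm:hos-iso-holax}, using \cref{cor:gtrans} to obtain $[e_{0}]=[e_{1}]$, one computes
\[
[\nu] = [e_{0}\ol{\nu}] = [e_{1}\ol{\nu}] = [h\om] = [h]\circ[\om],
\]
and $[\om]$ is invertible in $\ho\Ga\mh\IICat_{l}$ because $\om$ is a levelwise weak equivalence. Hence for every $\Ga$-2-category $W$ the precomposition map factors as $\nu^{*} = \om^{*}\circ h^{*}$ on $\ho\Ga\mh\IICat_{l}(-,W)$, and $\om^{*}$ is a bijection (precomposition with an isomorphism), so $\nu^{*}$ is a bijection if and only if $h^{*}$ is. Letting $W$ range over the very special $\Ga$-2-categories shows that $\nu$ is a stable equivalence in the sense of \cref{defn:Galax-stable-eq} if and only if $h$ is; and since $\nu$ is a strict map, \cref{cor:Hos-iso-Holax} identifies this with $\nu$ being a stable equivalence in the sense of \cref{defn:stable-equiv}.

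I do not expect a genuine obstacle: every ingredient has already been assembled in this section. The only points needing a little care are that $e_{1}\ol{\nu}=h\om$ holds on the nose (which is immediate from the pullback defining $Eh$, not merely up to a transformation) and the small amount of bookkeeping required to pass between the lax and strict notions of stable equivalence, which is precisely the content of \cref{cor:Hos-iso-Holax}.
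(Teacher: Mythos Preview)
Your proposal is correct and takes essentially the same approach as the paper: both use the equalities $h\om = e_{1}\ol{\nu}$ and $\nu = e_{0}\ol{\nu}$ together with the fact that $\om$, $e_{0}$, $e_{1}$ are levelwise weak equivalences, and both finish the stable case by invoking \cref{cor:Hos-iso-Holax}. The only cosmetic difference is that the paper phrases the stable case as a direct application of 2-out-of-3 for stable equivalences (in $\Ga\mh\IICat_{l}$), whereas you first pass to $\ho\Ga\mh\IICat_{l}$ to compute $[\nu]=[h][\om]$ and then unpack 2-out-of-3 explicitly via precomposition maps; these are the same argument.
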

\begin{proof}\proofof{cor:nu-stable-eq}
  We have 
  \[
  h \om = e_{1} \overline{\nu}
  \]
  and 
  \[
  \nu = e_0 \overline{\nu}
  \]
  by construction.  We know that $\om$, $e_{0}$, and
  $e_{1}$ are all levelwise equivalences and that both $\om$ and $\nu$
  are strict maps.

  The claim for levelwise equivalences is immediate by 2 out of 3 for
  levelwise equivalences.  Using 2
  out of 3 for stable equivalences shows that $h$ is a stable
  equivalence if and only if $\nu$ is a stable equivalence in
  $\Ga\mh\IICat_l$.  The result then follows by
  \cref{cor:Hos-iso-Holax}.
\end{proof}


\section{Permutative 2-categories from \texorpdfstring{$\Ga$}{Gamma}-2-categories}
\label{sec:permutative-2-cats-from-Gamma-2-categories}

In this section we use the Grothendieck construction of
\cref{sec:Grothendieck-constr} to build permutative 2-categories from
$\Ga$-2-categories.  We first show in \cref{sec:gamma-and-A} that the
category of $\Ga$-2-categories admits a full and faithful embedding to
a category of symmetric monoidal diagrams on a permutative category
$\sA$.  We show that this extends to a functor on $\Ga$-lax
transformations in \cref{sec:extending-A-to-Ga-lax-maps} and use this
to give a functor
\[
P \cn \GIICat_l \to \PIICat
\]
in \cref{sec:definition-of-P}.

\begin{notn}\label{notn:SOplax-SOplax-nor}
For symmetric monoidal categories $\cA$ and $\cB$, let
$\mb{SOplax}(\cA, \cB)$ denote the category of symmetric oplax
monoidal functors from $\cA$ to $\cB$ and monoidal transformations
between them.  Let $\mb{SOplax}_{\mathrm{nor}}(\cA, \cB)$ denote the
full subcategory of \emph{normal} symmetric oplax monoidal functors,
i.e., those which strictly preserve the unit.
\end{notn}

\subsection{\texorpdfstring{$\Ga$}{Gamma} and the category
  \texorpdfstring{$\sA$}{A}}
\label{sec:gamma-and-A}

There is a monad on the category of sets which adds a basepoint.
Restricting this monad to a skeleton of finite sets, $\sN$, the
category $\sF$ is isomorphic to both its category of algebras and its
Kleisli category.  Using the general theory of Kleisli categories, one
can check the following universal property of $\Ga$ (see Leinster
\cite{Lei00Homotopy} for a full proof).

\begin{thm}\label{gammaunivprop}
  Let $\cC$ be any category with finite products and terminal object
  $1$.  Then there is an isomorphism of categories
  \[
  [\sF,\cC] \iso 
  \mb{SOplax}\big( (\sN,+,\emptyset), (\cC, \times, \ast) \big).
  \]
  This restricts to an isomorphism between the full subcategories of
  reduced diagrams and normal functors:
  \[
  \Ga\mh\cC = [\sF,\cC]_{\mathrm{red}} \iso 
  \mb{SOplax}_{\mathrm{nor}}\big( (\sN,+,\emptyset), (\cC, \times, \ast) \big).
  \]
\end{thm}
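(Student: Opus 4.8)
The plan is to realize $\sF$ as a Kleisli category and transport the statement through two elementary isomorphisms. First I would observe that $\sF \iso \sN_T$, the Kleisli category of the monad $T\cn \sN \to \sN$ that freely adjoins a basepoint, $T\ul{m} = \ul{m}_+ = \ul{m}+\ul{1}$, with unit the inclusion $\ul{m}\hookrightarrow\ul{m}_+$ and multiplication the fold $(\ul{m}_+)_+ \to \ul{m}_+$: a Kleisli morphism $\ul{m}\to\ul{n}$ is a function $\ul{m}\to\ul{n}_+$, i.e.\ a based function $\ul{m}_+\to\ul{n}_+$, and Kleisli composition is composition of based maps, so $\ul{m}\mapsto\ul{m}_+$ is an isomorphism of categories $\sN_T\iso\sF$ carrying the free functor $F_T\cn\sN\to\sN_T$ to the inclusion $\ul{m}\mapsto\ul{m}_+$, $f\mapsto f_+$. (Equivalently, since each finite based set is free on its non-basepoint elements, $\sF$ is the category of $T$-algebras, but I would use only the Kleisli description.)

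Next I would record that $T$ is a symmetric lax monoidal monad for $(\sN,+,\emptyset)$, the structure maps being the unique $\emptyset \to T\emptyset = \ul{1}$ and the map $T\ul{m}+T\ul{n}\to T(\ul{m}+\ul{n})$ folding the two adjoined points together. The induced monoidal structure on $\sN_T\iso\sF$ is the wedge $\vee$ --- which is the categorical coproduct in $\sF$ --- with unit $\ul{0}_+$; since $\ul{0}_+$ is a zero object of $\sF$, the monoidal category $(\sF,\vee,\ul{0}_+)$ is semicartesian (its unit is terminal) and $F_T\cn(\sN,+,\emptyset)\to(\sF,\vee,\ul{0}_+)$ is strict symmetric monoidal.

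The isomorphism is then built in two steps. (a) Because $(\sF,\vee,\ul{0}_+)$ is semicartesian, each pair $A,B$ of objects has canonical, coherent projections $A\vee B\to A$ and $A\vee B\to B$ (collapse the complementary summand to the unit); hence for any category $\cC$ with finite products, a functor $G\cn\sF\to\cC$ admits exactly one symmetric oplax monoidal structure into $(\cC,\times,\ast)$ --- the oplax counit axiom together with naturality forces the constraint at $(A,B)$ to be $G$ of the two projections, paired, and these constraints satisfy the oplax coherence axioms. This gives an isomorphism $[\sF,\cC]\iso\mb{SOplax}\big((\sF,\vee,\ul{0}_+),(\cC,\times,\ast)\big)$. (b) Precomposition with $F_T$ is an isomorphism
\[
\mb{SOplax}\big((\sF,\vee,\ul{0}_+),(\cC,\times,\ast)\big)\iso\mb{SOplax}\big((\sN,+,\emptyset),(\cC,\times,\ast)\big),
\]
which is the universal property of the Kleisli category for the monoidal monad $T$ (cf.\ Leinster \cite{Lei00Homotopy}); concretely, the inverse extends an oplax $(H,\chi)\cn\sN\to\cC$ along $F_T$ by sending a Kleisli morphism $f\cn\ul{m}\to\ul{n}_+$ to $(\mathrm{pr}_1\circ\chi_{\ul{n},\ul{1}})\circ H(f)$. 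Composing (a) and (b) yields the first isomorphism, and the restricted statement follows by tracking the unit: under the composite, the functor $H$ corresponding to $X$ has $H(\emptyset)=X(F_T\emptyset)=X(\ul{0}_+)$ and oplax unit the unique map $H(\emptyset)\to\ast$, so $H$ is normal (i.e.\ $H(\emptyset)=\ast$) precisely when $X(\ul{0}_+)=\ast$, i.e.\ when $X$ is reduced; all the isomorphisms above visibly restrict to the relevant full subcategories.

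The step I expect to be the main obstacle is (b): that an oplax monoidal functor on $\sN$ extends \emph{uniquely} to a functor on $\sF=\sN_T$, equivalently that the $T$-algebra data the Kleisli theory would attach is forced --- and carries no information --- precisely because the target $(\cC,\times,\ast)$ is cartesian. Rather than checking the oplax coherence diagrams by hand, I would invoke the Kleisli universal property for the symmetric lax monoidal monad $T$ (this is the ``general theory of Kleisli categories'' referred to above); the one genuinely computational point is verifying that the explicit formula in (b) is functorial with respect to Kleisli composition and that its canonical oplax structure from (a) pulls back to $\chi$, where the full oplax coherence of $\chi$ and the monad structure of $T$ enter.
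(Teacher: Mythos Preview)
Your proposal is correct and follows essentially the same approach the paper indicates: the paper does not give a detailed proof but simply remarks that $\sF$ is the Kleisli category of the add-a-basepoint monad on $\sN$ and that the result then follows from the general theory of Kleisli categories, citing Leinster \cite{Lei00Homotopy} for full details. Your two-step decomposition via the semicartesian structure on $(\sF,\vee,\ul{0}_+)$ and the Kleisli universal property for the monoidal monad $T$ is a clean way to spell out exactly what that general theory provides here.
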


We can even go a bit further using the standard techniques of
2-dimensional algebra \cite{Lac02Codescent}.  Recall that for a
2-monad $T$ on a 2-category $K$, we have a variety of 2-categories of
algebras (by which we always mean strict algebras).
\begin{itemize}
\item There is the 2-category $K^{T}$ of $T$-algebras, strict
  $T$-algebra morphisms, and algebra 2-cells.  This 2-category is the
  enriched category of algebras \cite{Lac02Codescent} and is often
  denoted $\TAlgs$.
\item There is the 2-category $\TAlg$ of $T$-algebras,
  pseudo-$T$-algebra morphisms, and algebra 2-cells.
\item There is the 2-category $\TAlgl$ of $T$-algebras, lax
  $T$-algebra morphisms, and algebra 2-cells.  Reversing the direction
  of the structure cells for the morphisms will produce oplax
  $T$-algebra morphisms and the 2-category $\TAlgop$.
\end{itemize}
These 2-categories come with canonical inclusions as displayed below.
\[
\xy
(0,0)*+{\TAlgs}="0";
(30,0)*+{\TAlg}="1";
(50,10)*+{\TAlgl}="2";
(50,-10)*+{\TAlgop}="3";
{\ar "0"; "1" };
{\ar "1"; "2" };
{\ar "1"; "3" };
\endxy
\]
Under certain conditions on $K$ and $T$ (see \cite{Lac02Codescent}),
the inclusions $\TAlgs \hookrightarrow \TAlgm$ will have left
2-adjoints; here $\TAlgm$ indicates any of the 2-categories $\TAlg$,
$\TAlgl$, $\TAlgop$, although the particular hypotheses vary depending
on which variant is used.  We will generically denote this left
adjoint by $Q$.

When $K=\Cat$ and $T$ is the 2-monad for symmetric monoidal
categories, we write $\SMC$, with no subscript, for $\TAlg$ (the
pseudo-morphism variant).  We have a left 2-adjoint $Q: \SMCop
\rightarrow \SMCs$, and in particular an isomorphism of hom-categories
\[
 \SMCop(\cA,\cB) \cong \SMCs(Q\cA,\cB).
\]
Now $\SMCop(\cA,\cB)$ is exactly the same category that we denoted as
$\mb{SOplax}(\cA,\cB)$ above, so combining these results with
\cref{gammaunivprop} we obtain isomorphisms (with the monoidal
structure on $\cC$ being the cartesian one, when required)
\begin{eqn}
  \label{eqn:ff-to-qq-iso}
[\sF,\cC] \cong \mb{SOplax}(\sN, \cC) \cong \SMCs(Q(\sN),\cC).
\end{eqn}

When $K=\Cat_*$, the 2-category of pointed categories, there is another 2-monad $T$ whose 2-category of strict algebras and strict algebra maps is again $\SMCs$; this 2-monad can be seen as a quotient of the one on $\Cat$ which identifies the basepoint as the unit for the monoidal structure.  By construction, any algebra morphism for $T$ will necessarily preserve the basepoint, and hence the unit, so is a normal functor.  In particular, $\TAlgop$ is then the 2-category of symmetric monoidal categories, normal oplax symmetric monoidal functors, and monoidal transformations.  This $T$ also preserves filtered colimits,
and hence we have a left-adjoint $Q'\cn
\SMC_{nop} \to \SMC_{s}$.  Combining this with
\cref{gammaunivprop} we have
\begin{eqn}
  \label{eqn:ff-red-to-qq-nor-iso}
  \Ga\mh\cC = [\sF,\cC]_\mathrm{red} \iso
  \mb{SOplax}_\mathrm{nor}(\sN, \cC) \cong \SMC_{s}(Q'(\sN),\cC).
\end{eqn}

Finally, we note that every symmetric monoidal category is equivalent
as such to a permutative category \cite{Isb1969coherent}, i.e., a symmetric monoidal category
in which the underlying monoidal structure is strict.  This equivalence is an internal equivalence in
the 2-category of symmetric monoidal categories, symmetric monoidal
pseudofunctors, and monoidal transformations, in other words in the
2-category $\SMC$.  Thus we can find a permutative category
$\sA \simeq Q'(\sN)$, and therefore obtain an equivalence of categories
\[
\SMC(Q'(\sN),\cC) \simeq \SMC(\sA,\cC). 
\]
In particular, any symmetric monoidal functor $Q'(\sN) \to \cC$ (not
just the strict ones, as we obtained in \cref{eqn:ff-red-to-qq-nor-iso}) is
isomorphic to one arising from a symmetric monoidal functor $\sA \to
\cC$, giving full and faithful embeddings
\begin{align}
  \label{eqn:ff-embedding-Ga-2-cat-SM-A-2-cat}
  [\sF,\cC] & \hookrightarrow \SMC(\sA,\cC),\\
  \Ga\mh\cC = [\sF,\cC]_{\mathrm{red}} & \hookrightarrow \SMC(\sA,\cC).
\end{align}
We can always choose the symmetric monoidal equivalence $\sA \to Q'(\sN)$ to be normal, in which case the final embedding actually becomes a full and faithful embedding 
\[
\Ga\mh\cC \hookrightarrow \SMC_{\mathrm{nor}}(\sA,\cC).
\]

Mandell \cite{Man10Inverse} gives an explicit description of such a
permutative category $\sA$.
\begin{defn}
  \label{defn:diagram-A}
  The objects of $\sA$ are (possibly empty) tuples $\vec{m} = (m_1, \ldots, m_r)$,
  where each $m_i$ is a positive integer.  The morphism set
  \[
  \sA(\vec{m},\vec{n}) \subset \Set(\amalg_i \, \ul{m_i}, \amalg_j \, \ul{n_j})
  \]
  consists of those maps for which the inverse image of each
  $\ul{n_j}$ is either empty or contained in a single $\ul{m_i}$.  The
  monoidal product is given by concatenation, and the rest of the
  permutative structure is straightforward to spell out (see
  \cite{Man10Inverse}).
\end{defn}

We will need the following notation to describe maps in $\sA$.
\begin{notn}
  \label{notn:decomposing-phibar-partition-map}
  Suppose $\ol{\phi}\cn \vec{m} \to \vec{n}$ is a map in $\sA$. For each $i$, let
  \begin{eqn}
    \label{eqn:phibar-partition-indexing}
    \mathfrak{p}(\ol{\phi},i) = \{j \mid 
    \emptyset \not = \ol{\phi}^{\;-1}(\ul{n_j}) \subset \ul{m_i}\}
  \end{eqn}
  and let
  \begin{eqn}
    {\phi}_i = \ol{\phi}\big|_{\ul{m_i}} \cn \ul{m_i} \to \coprod_{j \in \mathfrak{p}(\ol{\phi}, i)} \ul{n_j}.
  \end{eqn}
  For each $j \in \mathfrak{p}(\ol{\phi},i)$, let
  \[
  {\phi}_{i,j} \cn \ul{m_i}_+ \to \ul{n_j}_+
  \]
 be the pointed map that restricts to $\phi_i$ on the elements in the preimage of $\ul{n_j}$ and sends everything else to the basepoint.
\end{notn}

\begin{rmk}
  \label{rmk:decomposing-phibar-partition-map-reorder}
  Note that every morphism $\ol{\phi}$ in $\sA$ can be decomposed as
  \[
  \coprod_i \ul{m_i} \to
  \coprod_i \coprod_{\mathfrak{p}(\ol{\phi},i)} \ul{n_j} \to
  \coprod_j \ul{n_j}
  \]
  where the first map is given by $\coprod_i{{\phi}_i}$ and the second
  is given by reordering and inclusion of indexing
  sets.  Each ${\phi}_i$ can be decomposed in $\sA$ as a partition of
  $\ul{m_i}$ followed by the disjoint union of maps given by
  ${\phi}_i\big|_{\ol{\phi}^{\;-1}(\ul{n_j})}$ and a reindexing:
    \[
  \ul{m_i} \hspace{1pc}\longrightarrow
  \coprod_{j\in \mathfrak{p}(\ol{\phi},i)}
  \ul{|{\phi}_i^{-1}(\ul{n_j})|} \quad \iso 
  \coprod_{j\in \mathfrak{p}(\ol{\phi},i)} 
  {\phi}_i^{-1}(\ul{n_j}) \hspace{1pc}\longrightarrow
  \coprod_{j\in \mathfrak{p}(\ol{\phi},i)} \ul{n_j}.
  \]
\end{rmk}


\begin{defn}
  \label{defn:functor-A}
  For $X \in \Ga\mh\cC$, the 
  symmetric strict monoidal functor $AX\cn\sA \to \cC$ is defined as follows.
  \begin{itemize}
  \item For $\vec{m} = (m_1 \ldots, m_r) \in \sA$,
    \[
    AX(\vec{m}) = \prod_{i}X(\ul{m_i}_+)
    \]
and $AX(\,)=X(\ul{0}_+)=\ast$,
  \item For $\ol{\phi} \in \sA(\vec{m}, \vec{n})$,
    \[
    \ol{\phi}_*: X(\vec{m}) \to X(\vec{n})
    \]
    is defined by the composite
    \[
    AX(\vec{m}) \to \prod_i\prod_{j \in \mathfrak{p}(\ol{\phi},i)} X(\ulp{n_j}) \fto{\tau} AX(\vec{n}).
    \]
    The first map is given, for each $i$, by components
    $({\phi}_{i,j})_*$.  The second map is given by permuting
    factors and inserting the unique maps $X(\ulp{0}) = * \to
    X(\ulp{n_j})$ for those $\ul{n_j}$ such that
    $\ol{\phi}^{\;-1}(\ul{n_j}) = \emptyset$.


  \end{itemize}

  Note that by construction $AX$ is a symmetric monoidal diagram with
  respect to concatenation in $\sA$ and the cartesian product in
  $\cC$.
\end{defn}

\subsection{Extending \texorpdfstring{$A$}{A} to \texorpdfstring{$\Ga$}{Gamma}-lax maps}
\label{sec:extending-A-to-Ga-lax-maps}

The assignment $X \mapsto AX$ is functorial in strict maps of
diagrams, but we will need more. Let $\oplus$ denote the monoidal product on $\sA$ given by concatenation.

\begin{prop}
  \label{prop:A-for-lax-maps}
  The functor $A\cn\GIICat \to \SMC(\sA,\IICat)$ extends to a functor
  \[
  A\cn\GIICat_l \to (\sA,\oplus) \mh\IICat_l.
  \]
\end{prop}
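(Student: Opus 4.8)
The plan is to build the $\sA$-lax structure of $Ah$ componentwise out of the $\sF$-lax structure of a $\Ga$-lax map $h\cn X \to Y$, using the explicit description of $\ol\phi_*$ in \cref{defn:functor-A} in terms of the pieces $\phi_{i,j}$ of \cref{notn:decomposing-phibar-partition-map}. On objects I would set $(Ah)_{\vec m} = \prod_i h_{\ulp{m_i}}\cn AX(\vec m) \to AY(\vec m)$; because $X$ and $Y$ are reduced this is the identity on $AX(\,) = \ast = AY(\,)$, and $h_{\ulp 0} = \id_\ast$. For $\ol\phi\cn\vec m\to\vec n$ in $\sA$, recall that $\ol\phi_*$ factors as $AX(\vec m) \to \prod_i\prod_{j\in\mathfrak{p}(\ol\phi,i)} X(\ulp{n_j}) \fto{\tau} AX(\vec n)$, the first map having components the structure maps $(\phi_{i,j})_*$, and $\tau$ permuting factors and inserting, for each $j$ with $\ol\phi^{-1}(\ul{n_j}) = \emptyset$, the structure map $X(0 \to n_j)\cn \ast = X(\ulp 0) \to X(\ulp{n_j})$ of the unique pointed map $0 \to n_j$. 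I would then define the $2$-natural transformation $(Ah)_{\ol\phi}$ componentwise over the factors $Y(\ulp{n_j})$ of $AY(\vec n)$: on a factor with $j\in\mathfrak{p}(\ol\phi,i)$ it is $h_{\phi_{i,j}}$ whiskered by the relevant projection, and on a factor with empty preimage it is $h_{0\to n_j}$ whiskered by the canonical map $AX(\vec m)\to\ast$; the permutation part of $\tau$ is strict and causes no trouble. Equivalently, $(Ah)_{\ol\phi}$ is assembled by whiskering the $\sF$-lax structure cells of $h$ at exactly the pointed maps appearing in the above factorization of $\ol\phi_*$. When $h$ is strict all of these cells are identities, so this recovers the functor $A$ of \cref{defn:functor-A} on strict maps, which is the asserted extension.

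It then remains to verify the two axioms of \cref{defn:D-lax-map} for $Ah$. The unit axiom $(Ah)_{\id_{\vec m}} = 1$ is immediate, since the decomposition of $\id_{\vec m}$ has each $\phi_{i,i} = \id_{\ulp{m_i}}$, has no empty preimages, and has $\tau = \id$, while $h_{\id} = 1$. For the composition axiom, given $\ol\phi\cn\vec m\to\vec n$ and $\ol\psi\cn\vec n\to\vec p$, I would compare, factor by factor over the $\ul{p_k}$, the structure cell $(Ah)_{\ol\psi\ol\phi}$ with the pasting of $(Ah)_{\ol\phi}$ and $(Ah)_{\ol\psi}$. Using \cref{rmk:decomposing-phibar-partition-map-reorder}, the pointed map occurring in the factorization of $(\ol\psi\ol\phi)_*$ at index $k$ is, up to the reindexing permutations absorbed into the $\tau$'s, one of $\psi_{j,k}\circ\phi_{i,j}$, $\psi_{j,k}\circ(0\to n_j)$, or $0\to p_k$, according to whether $\ol\psi^{-1}(\ul{p_k})$ lies in some $\ul{n_j}$ with nonempty or empty $\ol\phi$-preimage, or is itself empty. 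In each case the $\sF$-lax cocycle identity for $h$ — that $h_{\chi'\chi}$ is the pasting of $h_\chi$ with $h_{\chi'}$ — identifies the composite cell with the pasting of the two structure cells, and the permutation parts of the $\tau$'s match up by their (strict) naturality and coherence. This case analysis, carried out while keeping track of the reordering $2$-functors, is the only genuinely nontrivial step and is where I expect the main effort to lie.

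Finally, functoriality of $A$ on $\Ga$-lax maps follows by the same componentwise mechanism: composition of $\sA$-lax maps and of $\Ga$-lax maps are both given by pasting structure cells, $(jh)_{\vec m} = j_{\vec m}\circ h_{\vec m}$, and $(jh)_{\ol\phi}$ unwinds — using that $(jh)_\chi$ is the pasting of $h_\chi$ with $j_\chi$ for each pointed map $\chi$ — to the pasting of $(Ah)_{\ol\phi}$ with $(Aj)_{\ol\phi}$; identities evidently go to identities. This gives the desired functor $A\cn\GIICat_l\to\sA\mh\IICat_l$ extending the one on $\GIICat$.
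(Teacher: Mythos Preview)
Your approach is essentially the paper's: define $(Ah)_{\vec m} = \prod_i h_{\ulp{m_i}}$, assemble $(Ah)_{\ol\phi}$ from the cells $h_{\phi_{i,j}}$, and deduce functoriality from the fact that composition of $\Ga$-lax maps is given by pasting the structure cells. The paper packages $(Ah)_{\ol\phi}$ as a two-step pasting---a top square carrying $\prod_i h_{\phi_i}$ with $h_{\phi_i} := \prod_{j\in\mathfrak p} h_{\phi_{i,j}}$, and a bottom square involving the reordering maps $\tau$, which it declares to commute strictly---and its proof only checks $A(kh) = Ak\circ Ah$. You go further in two respects, both to your credit. First, you explicitly verify the unit and cocycle axioms for $Ah$ as an $\sA$-lax map; the paper leaves these implicit in its definition, and your factor-by-factor case analysis over the $\ul{p_k}$ is exactly what is needed. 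Second, your handling of factors with $\ol\phi^{-1}(\ul{n_j}) = \emptyset$ is more careful: the paper's claim that the $\tau$-square commutes on the nose amounts to $h_{\ulp{n_j}}\circ X(0\to n_j) = Y(0\to n_j)$, which need not hold for a general $\Ga$-lax $h$; inserting $h_{0\to n_j}$ there, as you do, is the correct formulation and is exactly what makes your cocycle case ``$\psi_{j,k}\circ(0\to n_j)$'' go through via $h_{\psi_{j,k}\,\circ\,(0\to n_j)} = h_{0\to p_k}$.
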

Before beginning the proof, we define the relevant extension of $A$.
\begin{defn}
  \label{defn:A-for-lax-maps}
  Let $X$ and $Y$ be $\Ga$-2-categories, and let $h\cn X \to Y$ be a
  $\Ga$-lax map. We define an $\sA$-lax map $Ah\cn AX \to AY$.
  Unpacking \cref{defn:D-lax-map} we see that such a map consists of,
  for each object $\vec{m}$ of $\sA$, a 2-functor
  \[ 
  AX(\vec{m}) \to AY(\vec{m})
  \]
  and for each morphism $\ol{\phi} \cn \vec{m}\to \vec{n}$ in $\sA$, a
  2-natural transformation
  \[
  h_{\ol{\phi}} \cn \ol{\phi}_* h_{\vec{m}} \Rightarrow 
  h_{\vec{n}} \ol{\phi}_*.
  \]

  The 2-functor $(Ah)_{\vec{m}}$ is given by
  \begin{eqn*}
    AX(\vec{m}) = 
    \prod_i X(\ulp{m_i}) \fto{\ \prod h_{\ulp{m_i}}\ } 
    \prod_i Y(\ulp{m_i})=AY(\vec{m}).
  \end{eqn*}
We define the 2-natural transformation $(Ah)_{\ol{\phi}}$ as
  follows. For each $j \in \mathfrak{p}(\ol{\phi},i)$, ${\phi}_{i,j}$
  is a map in $\sF$ and therefore since $h$ is $\Ga$-lax we have
  2-natural transformations
  \[
  h_{{\phi}_{i,j}} \cn 
  {\phi}_{i,j} h_{\ulp{m_i}} \Rightarrow
  h_{\ulp{n_j}} {\phi}_{i,j}.
  \]
  Taking the product over $j \in \mathfrak{p}
  = \mathfrak{p}(\ol{\phi},i)$ defines 2-natural transformations $h_{{\phi_i}}$:
  \begin{equation}
    \label{eq:defn-h-barphi}
    h_{{\phi}_i} \cn
    {\phi}_i h_{\ulp{m_i}} \Rightarrow 
    \left(\prod_{j \in \mathfrak{p}} h_{\ulp{n_j}}\right) {\phi_i}.
  \end{equation}
  Taking the product over $i$ gives the top square of the diagram below.
  \begin{eqn}
    \label{eqn:Ah-phibar-2-steps}
    \begin{xy}
      (-7,25)*+{AX(\vec{m})}="A";
      (45,25)*+{AY(\vec{m})}="A1";
      (-7,0)*+{\prod_i \prod_{j \in J} X(\ul{n_j}_+)}="B";
      (45,0)*+{\prod_i \prod_{j \in J} Y(\ul{n_j}_+)}="B1";
      (-7,-25)*+{AX(\vec{n})}="C";
      (45,-25)*+{AY(\vec{n})}="C1";
      {\ar^{h_{\vec{m}}} "A"; "A1"};
      {\ar^{\prod_i \prod_{j \in \mathfrak{p}} h_{\ulp{n_j}}} "B"; "B1"};
      {\ar^{h_{\vec{n}}} "C"; "C1"};
      {\ar_{\prod_i {\phi}_i} "A"; "B"};
      {\ar_{\prod_i {\phi}_i} "A1"; "B1"};
      {\ar_{\tau} "B"; "C"};
      {\ar_{\tau} "B1"; "C1"};
      {\ar@{=>}_{\prod_i h_{{\phi}_i}} (22,16)*+{}="tc0"; "tc0"+(-6,-5) };
      (20,-11.5)*{=};
    \end{xy}
  \end{eqn}  
  In the bottom square the maps $\tau$ are given by permuting factors
  in the cartesian product and hence the bottom square commutes.
  We define $Ah_{\ol{\phi}}$ as the pasting of these two cells.  The
  2-naturality of $Ah_{\ol{\phi}}$ is immediate by the 2-naturality of
  each $h_{{\phi}_{i,j}}$ and $\tau$.
\end{defn}

\begin{proof}[Proof of \cref{prop:A-for-lax-maps}]\proofof{prop:A-for-lax-maps}
  We verify that the extension of $A$ given in
  \cref{defn:A-for-lax-maps} defines a functor as claimed.
  This consists of verifying the following equality of pasting diagrams for
  $\Ga$-lax maps
  \[
  X \fto{h} Y \fto{k} Z.
  \]
  \begin{eqn*}
    \begin{xy}
      (-7,25)*+{AX(\vec{m})}="A";
      (45,25)*+{AY(\vec{m})}="A1";
      (97,25)*+{AZ(\vec{m})}="A2";
      (-7,0)*+{\prod_i \prod_{j \in J} X(\ul{n_j}_+)}="B";
      (45,0)*+{\prod_i \prod_{j \in J} Y(\ul{n_j}_+)}="B1";
      (97,0)*+{\prod_i \prod_{j \in J} Z(\ul{n_j}_+)}="B2";
      {\ar^{h_{\vec{m}}} "A"; "A1"};
      {\ar^{k_{\vec{m}}} "A1"; "A2"};
      {\ar_{\prod_i \prod_{j \in \mathfrak{p}} h_{\ulp{n_j}}} "B"; "B1"};
      {\ar_{\prod_i \prod_{j \in \mathfrak{p}} k_{\ulp{n_j}}} "B1"; "B2"};
      {\ar_{\prod_i {\phi}_i} "A"; "B"};
      {\ar_{\prod_i {\phi}_i} "A1"; "B1"};
      {\ar^{\prod_i {\phi}_i} "A2"; "B2"};
      {\ar@{=>}_{\prod_i h_{{\phi}_i}} (22,16)*+{}="tc0"; "tc0"+(-5,-5) };
      {\ar@{=>}_{\prod_i k_{{\phi}_i}} (74,16)*+{}="tc0"; "tc0"+(-5,-5) };
      %
      {\ar@{=} (45,-5)="a"; "a"+(0,-3)};
      (-7,-15)*+{AX(\vec{m})}="A";
      (97,-15)*+{AZ(\vec{m})}="A2";
      (-7,-40)*+{\prod_i \prod_{j \in J} X(\ul{n_j}_+)}="B";
      (97,-40)*+{\prod_i \prod_{j \in J} Z(\ul{n_j}_+)}="B2";
      {\ar^{(kh)_{\vec{m}}} "A"; "A2"};
      {\ar_{\prod_i \prod_{j \in \mathfrak{p}} (kh)_{\ulp{n_j}}} "B"; "B2"};
      {\ar_{\prod_i {\phi}_i} "A"; "B"};
      {\ar^{\prod_i {\phi}_i} "A2"; "B2"};
      {\ar@{=>}_{\prod_i (kh)_{{\phi}_i}} (50,-24)*+{}="tc0"; "tc0"+(-9,-5) };
    \end{xy}
  \end{eqn*}
  But this equality is immediate because composition is strict in the
  (1-)category $\GaIICat_l$ (see
  \cref{rmk:D2Cat-lax-is-a-category}).
\end{proof}

Recalling the notions of symmetric monoidal diagram and monoidal lax
map from \cref{sec:SM-diagrams}, the following is
immediate from the definition of $A$.
\begin{prop}
  \label{prop:Ah-has-monoidal-structure}
  Let $X$ and $Y$ be $\Ga$-2-categories and $h\colon X\to Y$ be a
  $\Ga$-lax map. 
  Then the $\sA$-lax map $Ah\cn AX \to AY$ is monoidal.
\end{prop}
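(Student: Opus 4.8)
The plan is to verify directly that $Ah$, as constructed in \cref{defn:A-for-lax-maps}, satisfies the three conditions of \cref{defn:sm-D-lax-map} for a monoidal $\sA$-lax map between the symmetric monoidal $\sA$-2-categories $AX$ and $AY$ (see \cref{defn:functor-A}). The crucial observation is that the monoidal structure of $AX$ is \emph{strict}: since $\oplus$ in $\sA$ is concatenation of tuples, we have on the nose
\[
AX(\vec m \oplus \vec p) = \prod_i X(\ulp{m_i}) \times \prod_k X(\ulp{p_k}) = AX(\vec m) \times AX(\vec p),
\]
so the structure 2-functors $\nu_{\vec m, \vec p}$ are identities, $\nu_e \cn \ast \to AX(\,) = \ast$ is the identity, and likewise for $AY$.

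Granting this, condition \cref{monoidal-lax0} is immediate, as every 2-category and 2-functor appearing in it is the terminal 2-category and its identity. Condition \cref{monoidal-lax1} follows because, under the identification above, $(Ah)_{\vec m \oplus \vec p} = \prod_i h_{\ulp{m_i}} \times \prod_k h_{\ulp{p_k}}$ is literally the product $(Ah)_{\vec m} \times (Ah)_{\vec p}$, so the square commutes strictly.

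For condition \cref{monoidal-lax2} I would exploit the fact that concatenation in $\sA$ never mixes the two halves of a coproduct. Concretely, for $\ol\phi\cn\vec m\to\vec n$ and $\ol\psi\cn\vec p\to\vec q$, the partition data of \cref{notn:decomposing-phibar-partition-map} satisfies $\mathfrak p(\ol\phi \oplus \ol\psi, i) = \mathfrak p(\ol\phi, i)$ for $i$ indexing a factor of $\vec m$ and $= \mathfrak p(\ol\psi, k)$ for $k$ indexing a factor of $\vec p$, and the component pointed maps $\phi_{i,j}$ of $\ol\phi \oplus \ol\psi$ are exactly those of $\ol\phi$ and of $\ol\psi$ taken separately. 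Hence $(\ol\phi \oplus \ol\psi)_* = \ol\phi_* \times \ol\psi_*$, the permutation 2-functors $\tau$ of \cref{defn:functor-A} decompose as $\tau_{\ol\phi} \times \tau_{\ol\psi}$ since $AX$ is a \emph{symmetric} strict monoidal functor, and the laxity 2-cell $(Ah)_{\ol\phi \oplus \ol\psi}$ of \cref{defn:A-for-lax-maps} --- a product of the cells $h_{\phi_i}$ pasted with the (commuting) $\tau$-square --- is exactly $(Ah)_{\ol\phi} \times (Ah)_{\ol\psi}$. Substituting these equalities into the two pasting diagrams of \cref{monoidal-lax2}, in which $\nu$ and $\mu$ are identities, makes the two composites literally coincide. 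The only point demanding any attention is this bookkeeping of the permutation 2-functors $\tau$: one must check that the shuffle associated to $\ol\phi \oplus \ol\psi$ splits as the product of those associated to $\ol\phi$ and $\ol\psi$, so that it commutes past $(Ah)_{\vec m} \times (Ah)_{\vec p}$ in the required way. This is clear because all the shuffles in sight are built from block permutations respecting the $\vec m\,/\,\vec p$ decomposition, so no genuine obstacle arises --- the statement is, as claimed, immediate from the definitions.
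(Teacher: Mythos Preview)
Your argument is correct and is precisely what the paper's one-line justification ``immediate from the definition of $A$'' entails: you have correctly identified that the symmetric monoidal structure on $AX$ is strict (the structure 2-functors $\nu$ are identities), so all three conditions of \cref{defn:sm-D-lax-map} reduce to the observation that $(Ah)_{\vec m \oplus \vec p}$ and $(Ah)_{\ol\phi \oplus \ol\psi}$ are literally the products of the corresponding data for $\vec m, \vec p$ and $\ol\phi, \ol\psi$ separately. The paper offers no further detail, so your unpacking is exactly the intended verification.
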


\subsection{Definition of \texorpdfstring{$P$}{P}}
\label{sec:definition-of-P}

Composing the functor $A\cn \GaIICat_l \to \sA\mh\IICat_l$ of
\cref{prop:A-for-lax-maps} with the Grothendieck construction of
\cref{sec:Grothendieck-constr}, we obtain a functor
\[
P = \sA \wre (A-) \cn \GIICat_l \to \IICat.
\]

Applying
\cref{prop:Ah-has-monoidal-structure,prop:symm-D-lax-to-sm-2functor},
we have the following refinement.
\begin{thm} 
  \label{thm:P-lax-functorial}
  The assignment on objects $PX = \sA \wre AX$ extends to a functor
  \[ 
  P\colon  \GIICat_l \to \PIICat
  \]
  from the category of strict $\Ga$-2-categories and $\Ga$-lax maps to
  the category of permutative 2-categories and strict (2-)functors.
\end{thm}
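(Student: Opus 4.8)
The plan is to realise $P$ as a composite of functors that have, in effect, already been constructed, so that functoriality requires no separate argument. The two pieces are the extended diagram functor $A\cn\GIICat_l\to\sA\mh\IICat_l$ of \cref{prop:A-for-lax-maps} and the Grothendieck construction $\sA\wre(-)$ of \cref{sec:Grothendieck-constr}. All that must be checked is that $A$ factors through the subcategory $(\sA,\oplus)\mh\IICat_l$ of \emph{symmetric monoidal} $\sA$-2-categories and \emph{monoidal} $\sA$-lax maps; once this is in place, \cref{prop:Grothendieck-constr-sm-functor,prop:symm-D-lax-to-sm-2functor} supply the rest.

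First I would record that $AX$ is a symmetric monoidal $\sA$-2-category in the sense of \cref{defn:sm-D-2-cat}: this is exactly the closing remark of \cref{defn:functor-A}, the structure 2-functors being the canonical isomorphisms $AX(\vec m)\times AX(\vec p)\cong AX(\vec m\oplus \vec p)$ that arise because the monoidal product of $\sA$ is concatenation and $AX(\vec m)=\prod_i X(\ulp{m_i})$, with unit the identity of $AX(\,)=\ast$, and the symmetry inherited from the permutative structures of $\sA$ and of $(\IICat,\times)$. Next, by \cref{prop:A-for-lax-maps} the assignment $X\mapsto AX$ extends to a functor $A\cn\GIICat_l\to\sA\mh\IICat_l$, and by \cref{prop:Ah-has-monoidal-structure} the $\sA$-lax map $Ah$ attached to a $\Ga$-lax map $h$ is monoidal. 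Since the forgetful functor $(\sA,\oplus)\mh\IICat_l\to\sA\mh\IICat_l$ of \cref{rmk:symm-mon-diagrams-and-mon-D-lax-maps-form-subcategory} is faithful and $A$ preserves composites and identities, $A$ corestricts to a functor
\[
A\cn \GIICat_l \longrightarrow (\sA,\oplus)\mh\IICat_l .
\]

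Finally I would compose with the Grothendieck construction. By \cref{prop:Grothendieck-constr-sm-functor}, $\sA\wre AX$ is a permutative 2-category for each $X$, and by \cref{prop:symm-D-lax-to-sm-2functor} the assignment $\sA\wre(-)$ is a functor $(\sA,\oplus)\mh\IICat_l\to\PIICat$ carrying monoidal $\sA$-lax maps to strict functors of permutative 2-categories. Hence
\[
\GIICat_l \fto{A} (\sA,\oplus)\mh\IICat_l \fto{\sA\wre -} \PIICat
\]
is a functor which on objects sends $X$ to $\sA\wre AX = PX$, as asserted. I do not expect a substantive obstacle here: the real work resides in \cref{prop:A-for-lax-maps,prop:Ah-has-monoidal-structure,prop:Grothendieck-constr-sm-functor,prop:symm-D-lax-to-sm-2functor}, all proved above. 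The one point that warrants genuine care is confirming \cref{prop:Ah-has-monoidal-structure} in the generality needed, namely that \emph{every} $\Ga$-lax map, not just a strict map of $\Ga$-2-categories, is sent to a map satisfying the three compatibility conditions \cref{monoidal-lax0,monoidal-lax1,monoidal-lax2} of \cref{defn:sm-D-lax-map}; this follows from the componentwise description of $Ah$ and $Ah_{\ol\phi}$ in \cref{defn:A-for-lax-maps}, since each is assembled from the $h_{\phi_{i,j}}$ together with reindexing permutations, and concatenation in $\sA$ merely juxtaposes the indexing data.
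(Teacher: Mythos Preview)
Your proposal is correct and matches the paper's own proof essentially line for line: the paper likewise defines $P$ as the composite of $A$ (corestricted via \cref{prop:Ah-has-monoidal-structure}) with the Grothendieck construction, invoking \cref{prop:Grothendieck-constr-sm-functor,prop:symm-D-lax-to-sm-2functor} for the permutative structure and strictness of the resulting functors.
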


In the remainder of this section we give an explicit description of
the objects, 1-cells, and 2-cells of $PX$, and then prove some basic homotopical results about $P$.  The objects of $PX$ are
pairs $[\vec{m},\vec{x}]$ for $\vec{m} = (m_i) \in \sA$ and
$\vec{x} = (x_i) \in AX(\vec{m})$.  The 1- and 2-cells in $PX$ are
pairs as below:
\[
\begin{xy}
  (0,0)*+{[\vec{m}, \vec{x}]}="A";
  (40,0)*+{[\vec{n}, \vec{y}]}="B";
  {\ar@/^1pc/^{[\ol{\phi}, \vec{f}]} "A"; "B"};
  {\ar@/_1pc/_{[\ol{\phi}, \vec{g}]} "A"; "B"};
  {\ar@{=>}^{[\ol{\phi}, \vec{\alpha}]} (17,3)*+{}; (17,-3)*+{} };
\end{xy}
\]
where $\ol{\phi}\cn \vec{m} \to \vec{n}$ is a map in $\sA$ and $\vec{f},
\vec{g}, \vec{\al}$ are cells in $AX(\vec{n})$:
\[
\begin{xy}
  (0,0)*+{\ol{\phi}_*\vec{x}}="A";
  (40,0)*+{\vec{y}.}="B";
  {\ar@/^1pc/^{\vec{f}} "A"; "B"};
  {\ar@/_1pc/_{\vec{g}} "A"; "B"};
  {\ar@{=>}^{\vec{\alpha}} (20,3)*+{}; (20,-3)*+{} };
\end{xy}
\]

\begin{rmk}
  \label{rmk:grothendieck-composition-some-ids}
  The definition of composition in $PX$ immediately yields the
  following special cases for $\ol{\phi}\cn \vec{m} \to \vec{n}$ in
  $\sA$ and appropriately composable 1-cells $\vec{f}$ in $AX(\vec{m})$ and
  $\vec{g}$ in $AX(\vec{n})$:
  \begin{align*}
    \bsb{\id}{\vec{g}} \bsb{\ol{\phi}}{\id} & = 
    \bsb{\ol{\phi}}{\vec{g}},\\
    \bsb{\ol{\phi}}{\vec{g}} \bsb{\id}{\vec{f}} & =
    \bsb{\ol{\phi}}{\vec{g} \circ \ol{\phi}_* \vec{f}}.
  \end{align*}
\end{rmk}

\begin{rmk}
  \label{rmk:description-Ph}
For a $\Ga$-lax map $h\cn X \to Y$, the functor $Ph\cn PX \to PY$ is
given explicitly as:\\
on 0-cells
\[
Ph\bsb{\vec{m}}{\vec{x}} = \bsb{\vec{m}}{Ah_{\vec{m}}(\vec{x})},
\]
on 1-cells
\[
Ph\bsb{\ol{\phi}}{\vec{f}} = \bsb{\ol{\phi}}{Ah_{\vec{n}}(\vec{f}) \circ Ah_{\ol{\phi}}},
\]
on 2-cells
\[
Ph\bsb{\ol{\phi}}{\vec{\al}} = \bsb{\ol{\phi}}{Ah_{\vec{n}}(\vec{\al}) * 1_{Ah_{\ol{\phi}}}}.
\]
These are given by the following 0-, 1-, and 2-cells in $AY$.
\[
\begin{xy}
  (-30,0)*+{\ol{\phi}_*Ah_{\vec{m}}(\vec{x})}="Z";
  (0,0)*+{Ah_{\vec{n}}(\ol{\phi}_* x)}="A";
  (40,0)*+{Ah_{\vec{n}}(\vec{y})}="B";
  {\ar^-{Ah_{\ol{\phi}}} "Z"; "A"};
  {\ar@/^1pc/^{Ah_{\vec{n}} (\vec{f})} "A"; "B"};
  {\ar@/_1pc/_{Ah_{\vec{n}} (\vec{g})} "A"; "B"};
  {\ar@{=>}^{Ah_{\vec{n}} (\vec{\alpha})} (17,3)*+{}; (17,-3)*+{} };
\end{xy}  
\]
\end{rmk}

The following two results make use of the equivalence
\begin{eqn}
\label{eq:Thomason-result}
  \hocolim_{\sA} N(AX) \fto{\hty}  N(\sA \wre AX)
\end{eqn}
due to Thomason for diagrams of categories and generalized to diagrams
of bicategories by Carrasco-Cegarra-Garz\'on \cite[Theorem
  7.3]{CCG10Nerves}.

\begin{lem}
  \label{lem:X1-to-PX-weak-equiv}
  Let $X$ be a special $\Ga$-2-category.  Then the inclusion
  \[
  X(\ulp{1}) \to PX
  \]
  is a weak equivalence of 2-categories.
\end{lem}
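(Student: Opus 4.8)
The plan is to use the Thomason-type equivalence \eqref{eq:Thomason-result} to replace $N(PX) = N(\sA \wre AX)$ by $\hocolim_{\sA} N(AX)$, and then to compute that homotopy colimit by transporting the $\sA$-diagram $AX$ to a far simpler diagram on $\sN$. First I would introduce the ``forget the partition'' functor $q \cn \sA \to \sN$, sending a tuple $\vec m = (m_1, \ldots, m_r)$ to the finite set $\amalg_i \ul{m_i}$, identified with $\ul{|\vec m|}$ where $|\vec m| = m_1 + \cdots + m_r$, and sending a morphism of $\sA$ to its underlying map of sets; this is a well-defined functor since composition in $\sA$ is composition of underlying set maps. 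Writing $i \cn \sN \to \sF$ for the inclusion $\ul N \mapsto \ulp N$ and $Y = X \circ i \cn \sN \to \IICat$, so that $Y(\ul N) = X(\ulp N)$, I would compare $AX$ with the pulled-back diagram $q^{*}Y \cn \sA \to \IICat$, $\vec m \mapsto X(\ulp{|\vec m|})$.

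The crucial point is that, because $X$ is special, there is a natural transformation $\rho \cn q^{*}Y \impl AX$ which is a levelwise weak equivalence. Its component at $\vec m$ is the $2$-functor $\rho_{\vec m} \cn X(\ulp{|\vec m|}) \to \prod_i X(\ulp{m_i}) = AX(\vec m)$ whose $i$th coordinate is $X(c_i)$, where $c_i \cn \ulp{|\vec m|} \to \ulp{m_i}$ collapses every element outside the $i$th block to the basepoint. Naturality is a direct unwinding of \cref{defn:functor-A} together with the decomposition of morphisms of $\sA$ recorded in \cref{rmk:decomposing-phibar-partition-map-reorder}, and each $\rho_{\vec m}$ is a weak equivalence by specialness and the $2$-out-of-$3$ property: composing $\rho_{\vec m}$ with the product of the Segal maps $X(\ulp{m_i}) \to X(\ulp1)^{m_i}$ recovers the Segal map $X(\ulp{|\vec m|}) \to X(\ulp1)^{|\vec m|}$, and finite products of weak equivalences of $2$-categories are weak equivalences. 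Since $N$ carries weak equivalences of $2$-categories to weak equivalences of simplicial sets, and homotopy colimits of simplicial sets preserve levelwise weak equivalences, $\rho$ induces a weak equivalence $\hocolim_{\sA} N(q^{*}Y) \fto{\hty} \hocolim_{\sA} N(AX)$.

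It remains to compute $\hocolim_{\sA} N(q^{*}Y)$. I would show that $q$ is homotopy final, so this is equivalent to $\hocolim_{\sN} NY$: for each $\ul N \in \sN$ the comma category $\ul N \downarrow q$ (objects: pairs $(\vec m, \ul N \to q(\vec m))$) has weakly contractible nerve because it has an \emph{initial} object, namely the single-entry tuple $(N)$ equipped with the identity $\ul N = q((N))$. Here one uses that $\sA((N), \vec m) = \Set(\ul N, \amalg_i \ul{m_i})$ with no further constraint, the source tuple having only one block, so $(N)$ admits a unique morphism to every object of $\ul N \downarrow q$. Finally $\sN$ has a terminal object $\ul 1$, whence $\hocolim_{\sN} NY \simeq Y(\ul 1) = N X(\ulp1)$. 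Concatenating the equivalences gives $N(PX) \simeq N X(\ulp1)$, and a chase through the structure maps of the homotopy colimits --- using $q((1)) = \ul 1$, that $\rho_{(1)} = \id$, and the compatibility of \eqref{eq:Thomason-result} with the inclusion of the fiber over $(1)$ --- identifies this composite equivalence with $N$ of the inclusion $X(\ulp1) \to PX$.

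The main obstacle is verifying that $\rho$ is strictly natural, not merely a levelwise equivalence up to homotopy: for each morphism $\ol{\phi}$ of $\sA$ one must match the map $AX(\ol{\phi})$ --- assembled from the pointed maps $(\phi_{i,j})_*$ and the reindexing permutation $\tau$ of \cref{defn:functor-A} --- against $X$ applied to the underlying set map of $\ol{\phi}$, up to precomposition and postcomposition with the relevant components of $\rho$; this is a bookkeeping exercise built on the block-decomposition of $\ol{\phi}$ in \cref{rmk:decomposing-phibar-partition-map-reorder}. Everything else --- homotopy finality of $q$, the terminal object of $\sN$, invariance of homotopy colimits under levelwise weak equivalences, and the final naturality chase --- is routine once $\rho$ is available.
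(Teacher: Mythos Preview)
Your proof is correct and follows essentially the same route as the paper, which records the chain
\[
X(\ulp{1}) \fto{\hty} \hocolim_{\sN} N(X) \fto{\hty} \hocolim_{\sA} N(AX) \fto{\hty} N(\sA \wre AX) = N(PX)
\]
and defers the first two equivalences to Mandell's argument in \cite[Theorem~5.3]{Man10Inverse}; you have unpacked precisely that argument via the functor $q\cn\sA\to\sN$, the comparison transformation $\rho$, and homotopy cofinality of $q$. One small correction to your cofinality step: for $N=0$ the single-entry tuple $(0)$ is not an object of $\sA$ (the entries are required to be positive integers), so the initial object of $\ul{0}\downarrow q$ is instead the empty tuple $(\,)$, with $q((\,))=\ul{0}$.
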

\begin{proof}\proofof{lem:X1-to-PX-weak-equiv}
  We have the following chain of weak equivalences
  \[
  X(\ulp{1}) \fto{\hty} \hocolim_{\sN} N(X) \fto{\hty} \hocolim_{\sA}
  N(AX) \fto{\hty}  N(\sA \wre AX) = N(PX).
  \]
  The last equivalence is \cref{eq:Thomason-result} and the others
  follow from changing indexing categories as in the argument of
  \cite[Theorem 5.3]{Man10Inverse}.
\end{proof}

\begin{prop}
  \label{prop:P-preserves-weak-equivalences}
  The functor $P\cn \GaIICat_{l} \to \PIICat$ 
  sends levelwise weak equivalences of $\Ga\mh2$-categories to weak
  equivalences of permutative 2-categories and therefore is a relative functor
  \[
  (\GaIICat_{l}, \cW) \to (\PIICat, \cW).
  \]
\end{prop}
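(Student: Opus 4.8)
The plan is to first prove that $P$ sends \emph{strict} $\Ga$-maps which are levelwise weak equivalences to weak equivalences of 2-categories, and then to reduce the case of a general $\Ga$-lax map to this using the $E$-construction of \cref{prop:E_square}.

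For the strict case, let $g\cn X \to Y$ be a strict $\Ga$-map which is a levelwise weak equivalence. I would first observe that $Ag\cn AX \to AY$ is then a levelwise weak equivalence of $\sA$-diagrams: by \cref{defn:functor-A} its component at $\vec{m} = (m_1,\dots,m_r)$ is the finite product $\prod_i g_{\ulp{m_i}}$, and a finite product of weak equivalences of 2-categories is again one (the Duskin nerve sends products of 2-categories to products of simplicial sets, and realization preserves finite products of simplicial sets up to weak equivalence; the choice of nerve is immaterial by \cite{CCG10Nerves}). Applying $N$ and using that the equivalence \cref{eq:Thomason-result} of Thomason and Carrasco-Cegarra-Garz\'on is natural with respect to maps of diagrams, one gets a commuting square whose horizontal maps are the weak equivalences of \cref{eq:Thomason-result}, whose left vertical map is induced by $N(Ag)$ on homotopy colimits, and whose right vertical map is $N(Pg)$. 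Since $N(Ag)$ is a levelwise weak equivalence of $\sA$-diagrams of simplicial sets and homotopy colimits are homotopy invariant, the left vertical map is a weak equivalence, hence so is $N(Pg)$. This is the argument already used for the final map in the proof of \cref{lem:X1-to-PX-weak-equiv}.

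For a general $\Ga$-lax map $h\cn X \to Y$ which is a levelwise weak equivalence, I would apply the $E$-construction of \cref{prop:E_square} with $Z = Y$. It produces strict $\Ga$-maps $\om\cn Eh \to X$, $e_0, e_1\cn Y^{\Delta[1]} \to Y$, $\nu\cn Eh \to Y$, and a $\Ga$-lax map $\ol{\nu}\cn Eh \to Y^{\Delta[1]}$, with $h\om = e_1\ol{\nu}$ and $\nu = e_0\ol{\nu}$ in $\GaIICat_l$. Here $\om$ is a levelwise weak equivalence by \cref{prop:om-levelwise-adj}, $e_0$ and $e_1$ are by \cref{delta1}, and $\nu$ is by \cref{cor:nu-stable-eq}. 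Since all four of these maps are strict, the case just proven shows $N(P\om)$, $N(P\nu)$, $N(Pe_0)$ and $N(Pe_1)$ are weak equivalences. Applying $N\circ P$ to the two identities gives
\[
N(Ph)\circ N(P\om) = N(Pe_1)\circ N(P\ol{\nu}),
\qquad
N(P\nu) = N(Pe_0)\circ N(P\ol{\nu}).
\]
The second identity and 2-out-of-3 force $N(P\ol{\nu})$ to be a weak equivalence; then the first identity shows $N(Ph)\circ N(P\om)$ is a weak equivalence, and as $N(P\om)$ is one, so is $N(Ph)$ by 2-out-of-3. Since $Ph$ is a strict functor of permutative 2-categories by \cref{thm:P-lax-functorial}, this is exactly the assertion that $Ph$ is a weak equivalence of permutative 2-categories, so $P$ is a relative functor $(\GaIICat_l,\cW)\to(\PIICat,\cW)$.

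The hard part is the strict case, and there the delicate points are the naturality of the comparison \cref{eq:Thomason-result} (needed so that homotopy invariance of homotopy colimits can be transported across it) and the check that the diagram values $AX(\vec{m})$, being finite products of the 2-categories $X(\ulp{m_i})$, behave well under the nerve. The reduction of the lax case to the strict case is purely formal --- it works only because $\om$, $\nu$, $e_0$ and $e_1$ are all strict, so that $P$ can be applied to the structural identities of \cref{prop:E_square} inside the 1-category $\GaIICat_l$, after which 2-out-of-3 does everything.
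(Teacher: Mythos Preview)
Your proof is correct. The paper's own proof is a single line: ``This is immediate from \cref{eq:Thomason-result}.'' The intended argument is exactly your strict-case argument --- $Ah$ is a levelwise weak equivalence of $\sA$-diagrams, and the Thomason/CCG comparison \cref{eq:Thomason-result} is natural, so homotopy invariance of $\hocolim$ finishes it --- except that the paper does not separate out the lax case. Implicitly, the paper is relying on the Carrasco--Cegarra--Garz\'on result being natural with respect to lax morphisms of diagrams (which is how those authors set things up), so that the same square argument applies verbatim to a $\Ga$-lax $h$.

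Your approach differs in that you only invoke the Thomason/CCG comparison for \emph{strict} maps and then reduce the lax case to the strict one via the $E$-construction and 2-out-of-3. This is longer but more self-contained: it avoids appealing to the precise form of the CCG naturality statement for lax maps, at the cost of routing through \cref{prop:E_square}, \cref{prop:om-levelwise-adj}, and \cref{cor:nu-stable-eq}. Both approaches are valid; yours is more cautious about what exactly is being quoted from the literature.
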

\begin{proof}\proofof{prop:P-preserves-weak-equivalences}
  This is immediate from \cref{eq:Thomason-result}.
\end{proof}


\section{\texorpdfstring{$K$}{K}-theory constructions}
\label{sec:K-theory-constructions}

In this section we give two constructions of $K$-theory for
2-categories.  The first, in \cref{sec:K-epz-fluffy}, is a
construction for permutative Gray-monoids and has a counit described
in \cref{sec:counit}. The second, in \cref{sec:K-epz-strict}, is a
construction for permutative 2-categories, taking advantage of their
additional strictness.  Using both of these, we prove in
\cref{thm:main-css-2} that the homotopy theory of permutative
Gray-monoids is equivalent to that of permutative 2-categories.  The
stricter construction is also essential for the triangle identities of
\cref{sec:K-epz-eta,sec:K-epz-K-P-eta}.

\subsection{\texorpdfstring{$K$}{K}-theory for permutative Gray-monoids}
\label{sec:K-epz-fluffy}

Recall that $\PGMnop$ denotes the category of permutative Gray-monoids
and normal, oplax functors between them. We define a functor
\[
\ko \colon \PGMnop \to \GIICat
\]
using a construction very similar to that of \cite[\S
5.2]{Oso10Spectra}.

Let $(\cC, \oplus, e, \be)$ be a permutative Gray-monoid. The
$\Gamma$-2-category $\ko{\cC}$ is given in \cref{defn:Ko,defn:Ko-phi}.
In \cref{prop:functoriality-of-K} we show that $\Ko$ is functorial
with respect to normal oplax functors of permutative Gray-monoids, and
in \cref{prop:ko-is-special} we show that $\Ko\cC$ is a special
$\Ga$-2-category.

\begin{construction}
  \label{defn:Ko}
  Let $\ulp{n} \in \sF$ be a finite pointed set.  We define a
  2-category $\ko{\cC}(\ulp{n})$ as follows.

  \begin{enumerate}
  \item Objects are given by collections of the form $\{x_s,
    c_{s,t}\}$
  where $x_s$ is an object of $\cC$ for each $s \subset {n}$ and
  $c_{s,t} : x_{s \cup t}\rightarrow x_s \opl x_t$ is a 1-cell of
  $\cC$ for each such pair $(s,t)$ with $s \cap t = \emptyset$. These
  collections are required to satisfy the following axioms:

  \begin{enumerate}
  \item \label{axiomobj1} $x_{\emptyset}=e$;
  \item $c_{\emptyset,s}=\id_{x_s}=c_{s,\emptyset}$;
  \item for every triple $(s,t,u)$ of pairwise disjoint subsets the
    diagram below commutes;
    \begin{equation}\label{astu}
      \xymatrixcolsep{5pc}\xymatrixrowsep{3.5pc}
      \def\labelstyle{\displaystyle}
      \xymatrix{
        x_{s\cup t\cup u} \ar[d]_{c_{s\cup t,u}} \ar[r]^-{c _{s,t\cup
            u}}
        & x_s\opl x_{t\cup u} \ar[d]^{\id\opl c_{t,u}}\\
        x_{s\cup t}\opl x_u \ar[r]_-{c _{s,t}\opl \id} &
        x _s \opl x_t \opl x_u}
    \end{equation}

  \item \label{axiomobj4} for every pair of disjoint subsets $s,t$, the
    diagram below commutes.
    \begin{equation}\label{ats}
      \xymatrixcolsep{4pc}
      \def\labelstyle{\displaystyle}
      \xymatrix{
        x_{s\cup t} \ar@{=}[d] \ar[r]^-{c_{s,t}}
        & x_s\opl x_t \ar[d]^{\beta_{x_s,x_t}}\\
        x_{t\cup s} \ar[r]_-{c _{t,s}}
        & x_t\opl x_s
      }
    \end{equation}
  \end{enumerate}

\item A 1-morphism from $\obj {x}{c}$ to $\obj {x'}{c'}$ is a
  collection $\obj {f}{\ga} $, where $s,t$ are as above; $f_s: x_s
  \rightarrow x' _s$ is a 1-morphism in $\cC$ and $\ga _{s,t}$ is an
  invertible 2-cell:
  \[
  \def\labelstyle{\displaystyle}
  \xy
  (0,0)*+{\xst}="00";
  (30,0)*+{\xs\opl\xt}="20";
  (0,-30)*+{\xst'}="02";
  (30,-30)*+{\xs'\opl\xt'}="22";
  (30,-15)*+{\xs'\opl\xt}="21";
  {\ar^{\cst} "00";"20"};
  {\ar_{\fst} "00";"02"};
  {\ar^{\fs\opl\id} "20";"21"};
  {\ar^{\id\opl\ft} "21";"22"};
  {\ar_{\cst'} "02";"22"};
  {\ar@{=>}_{\gast} (18,-12)*{};(12,-18)*{}};
  \endxy
  \]
  These are required to satisfy the following axioms:
  \begin{enumerate}
  \item $f_{\emptyset}=\id_e$;

  \item $\ga_{\emptyset,s}$ is the identity 2-morphism
    \[
    (f_s \opl \id_e) \circ \id_{x_s}=f_s \Longrightarrow f_s=\id_{x'_s}\circ f_s
    \]
    and similarly for $\ga_{s,\emptyset}$;

  \item for every triple of pairwise disjoint subsets $s,t,u$ we have
    the following equality of pasting diagrams:
    \begin{equation}\label{phistu}
      \begin{gathered}
      \def\labelstyle{\displaystyle}
      \xy
      (0,0)*+{\xstu}="00";
      (40,0)*+{\xs\opl\xtu}="10";
      (90,0)*+{\xs\opl\xt\opl\xu}="20";
      (0,-54)*+{\xstu'}="03";
      (40,-18)*+{\xs'\opl\xtu}="11";
      (40,-54)*+{\xs'\opl\xtu'}="13";
      (90,-18)*+{\xs'\opl\xt\opl\xu}="21";
      (90,-36)*+{\xs'\opl\xt'\opl\xu}="22";
      (90,-54)*+{\xs'\opl\xt'\opl\xu'}="23";
      {\ar^{\csctu} "00";"10"};
      {\ar^{\id \opl\ctu} "10";"20"};
      {\ar_{\id \opl\ctu} "11";"21"};
      {\ar_{\csctu'} "03";"13"};
      {\ar_{\id \opl \ctu'} "13";"23"};
      {\ar_{\fstu} "00";"03"};
      {\ar|{\fs\opl\id} "10";"11"};
      {\ar|{\id\opl\ftu} "11";"13"};
      {\ar^{\fs\opl\id} "20";"21"};
      {\ar^{\id\opl\ft\opl\id} "21";"22"};
      {\ar^{\id\opl\fu} "22";"23"};
      {\ar@{=>}_{\gasctu} (24,-24)*{};(16,-30)*{}};
      {\ar@{=>}_{\Sigma} (67,-5)*{};(59,-11)*{}};
      {\ar@{=>}_{\id\opl\gatu} (67,-32)*{};(59,-38)*{}};
      (40,-62.5)*{=};
      (0,-70)*+{\xstu}="04";
      (40,-70)*+{\xst\opl\xu}="14";
      (90,-70)*+{\xs\opl\xt\opl\xu}="24";
      (0,-124)*+{\xstu'}="07";
      (40,-106)*+{\xst'\opl\xu}="16";
      (40,-124)*+{\xst'\opl\xu'}="17";
      (90,-88)*+{\xs'\opl\xt\opl\xu}="25";
      (90,-106)*+{\xs'\opl\xt'\opl\xu}="26";
      (90,-124)*+{\xs'\opl\xt'\opl\xu'}="27";
      {\ar^{\cstcu} "04";"14"};
      {\ar^{\cst \opl\id} "14";"24"};
      {\ar^{\cst' \opl\id} "16";"26"};
      {\ar_{\cstcu'} "07";"17"};
      {\ar_{\cst' \opl \id} "17";"27"};
      {\ar_{\fstu} "04";"07"};
      {\ar|{\fst\opl\id} "14";"16"};
      {\ar|{\id\opl\fu} "16";"17"};
      {\ar^{\fs\opl\id} "24";"25"};
      {\ar^{\id\opl\ft\opl\id} "25";"26"};
      {\ar^{\id\opl\fu} "26";"27"};
      {\ar@{=>}_{\gastcu} (24,-94)*{};(16,-100)*{}};
      {\ar@{=>}_{\Sigma^{-1}} (67,-111)*{};(59,-117)*{}};
      {\ar@{=>}_{\gast\opl\id} (67,-84)*{};(59,-90)*{}};
      \endxy
    \end{gathered}
  \end{equation}

  \item for every pair of disjoint subsets $s,t$ we have the following
    equality of pasting diagrams:
    \begin{equation}\label{phits}
      \def\labelstyle{\displaystyle}
      \xy
      (0,0)*+{\xst}="00";
      (30,0)*+{\xs\opl\xt}="20";
      (75,0)*+{\xt\opl\xs}="40";
      (0,-30)*+{\xst'}="02";
      (30,-30)*+{\xs'\opl\xt'}="22";
      (30,-15)*+{\xs'\opl\xt}="21";
      (60,-15)*+{\xt\opl\xs'}="31";
      (90,-15)*+{\xt'\opl\xs}="41";
      (75,-30)*+{\xt'\opl\xs'}="42";
      {\ar^{\cst} "00";"20"};
      {\ar^{\be} "20";"40"};
      {\ar_{\fst} "00";"02"};
      {\ar^{\fs\opl\id} "20";"21"};
      {\ar^{\id\opl\ft} "21";"22"};
      {\ar_{\cst'} "02";"22"};
      {\ar_{\be} "22";"42"};
      {\ar_{\id\opl\fs} "40";"31"};
      {\ar_{\ft\opl\id} "31";"42"};
      {\ar^{\ft\opl\id} "40";"41"};
      {\ar^{\id\opl\fs} "41";"42"};
      {\ar_{\be} "21";"31"};
      {\ar@{=>}_{\gast} (18,-12)*{};(12,-18)*{}};
      {\ar@{=>}_{\Sigma} (77,-15)*{};(73,-15)*{}};
      (48,-7.5)*{=};
      (48,-22.5)*{=};
      (45,-37.5)*{=};
      (30,-45)*+{\xts}="05";
      (60,-45)*+{\xt\opl\xs}="25";
      (30,-75)*+{\xts'}="07";
      (60,-75)*+{\xt'\opl\xs'}="27";
      (60,-60)*+{\xt'\opl\xs}="26";
      {\ar^{\cts} "05";"25"};
      {\ar_{\fts} "05";"07"};
      {\ar^{\ft\opl\id} "25";"26"};
      {\ar^{\id\opl\fs} "26";"27"};
      {\ar_{\cts'} "07";"27"};
      {\ar@{=>}_{\gats} (48,-57)*{};(42,-63)*{}};
      \endxy
    \end{equation}
  \end{enumerate}

\item For 1-morphisms $\{ f_s, \ga _{s,t}\} ,\{ g_s, \de _{s,t}\} : \{
  x_s, c_{s,t}\} \rightarrow \{ x'_s, c' _{s,t}\}$, a 2-morphism
  between them is a collection $\{ \al _s\}$ of 2-morphisms in $\cC$, $\al
  _s :f_s \Rightarrow g_s$, such that for all $s,t$ as above we have
  the following equality of pasting diagrams.
  \begin{equation}\label{bst}
    \def\labelstyle{\displaystyle}
    \xy
    (0,0)*+{\xst}="00";
    (40,0)*+{\xs\opl\xt}="20";
    (0,-44)*+{\xst'}="02";
    (40,-44)*+{\xs'\opl\xt'}="22";
    (40,-22)*+{\xs'\opl\xt}="21";
    {\ar^{\cst} "00";"20"};
    {\ar@/^1.4pc/^{\fst} "00";"02"};
    {\ar@/_1.4pc/_{\gst} "00";"02"};
    {\ar^{\fs\opl\id} "20";"21"};
    {\ar^{\id\opl\ft} "21";"22"};
    {\ar_{\cst'} "02";"22"};
    {\ar@{=>}^{\gast} (23,-19)*{};(19,-25)*{}};
    {\ar@{=>}^{\alst} (2,-22)*{};(-2,-22)*{}};
    (55,-22)*{=};
    (70,0)*+{\xst}="00";
    (110,0)*+{\xs\opl\xt}="20";
    (70,-44)*+{\xst'}="02";
    (110,-44)*+{\xs'\opl\xt'}="22";
    (110,-22)*+{\xs'\opl\xt}="21";
    {\ar^{\cst} "00";"20"};
    {\ar_{\gst} "00";"02"};
    {\ar@/^1.6pc/^{\fs\opl\id} "20";"21"};
    {\ar@/^1.6pc/^{\id\opl\ft} "21";"22"};
    {\ar@/_1.6pc/_{\gs\opl\id} "20";"21"};
    {\ar@/_1.6pc/_{\id\opl\gt} "21";"22"};
    {\ar_{\cst'} "02";"22"};
    {\ar@{=>}_{\dest} (89,-19)*{};(85,-25)*{}};
    {\ar@{=>}_{\als\opl 1} (112,-11)*{};(108,-11)*{}};
    {\ar@{=>}^{1\opl \alt} (112,-31)*{};(108,-31)*{}};
    \endxy
  \end{equation}

\end{enumerate}

Vertical composition of 2-morphisms is defined componentwise.
Horizontal composition of 1-cells and  2-cells  given by:
\begin{align*}
\obj{g}{\de} \circ \obj{f}{\ga} & = \{ g_s \circ f_s, (\de \diamond \ga)_{s,t} \}\\
\{ \al '_s\} * \{ \al _s\} & = \{ \al '_s * \al _s\},
\end{align*}
where the 2-morphism $(\de \diamond \ga)_{s,t}$ is defined by the
pasting diagram below.
\[
\def\labelstyle{\displaystyle}
\xy
(0,0)*+{\xst}="00";
(60,0)*+{\xs\opl\xt}="20";
(0,-30)*+{\xst'}="02";
(40,-30)*+{\xs'\opl\xt'}="12";
(80,-30)*+{\xs''\opl\xt}="32";
(0,-60)*+{\xst''}="04";
(60,-60)*+{\xs'\opl\xt'}="24";
(60,-15)*+{\xs'\opl\xt}="21";
(60,-45)*+{\xs''\opl\xt'}="23";
{\ar^{\cst} "00";"20"};
{\ar_{\fst} "00";"02"};
{\ar_{\gst} "02";"04"};
{\ar_{\fs\opl\id} "20";"21"};
{\ar_{\id\opl\ft} "21";"12"};
{\ar_{\cst'} "02";"12"};
{\ar_{\cst''} "04";"24"};
{\ar_{\gs\opl\id} "12";"23"};
{\ar_{\id\opl\gt} "23";"24"};
{\ar^{\gs\opl\id} "21";"32"};
{\ar^{\id\opl\ft} "32";"23"};
{\ar@/^2.5pc/^{(\gs\fs)\opl\id} "20";"32"};
{\ar@/^2.5pc/^{\id\opl(\gt\ft)} "32";"24"};
{\ar@{=>}_{\gast} (23,-12)*{};(17,-18)*{}};
{\ar@{=>}^{\dest} (23,-42)*{};(17,-48)*{}};
{\ar@{=>}_{\Sigma} (62,-30)*{};(58,-30)*{}};
(71,-12)*{=};
(71,-47)*{=};
\endxy
\]
\end{construction}

Note that the axioms imply that $\Ko\cC(\ulp{0})$ has a unique object given by $e$, and only identity 1- and 2-cells. 

\begin{rmk}
  \label{rmk:objs-in-K-thy-as-functions}
  In \cref{sec:eta-and-triangle-ids} we will use the fact that the
  0-cells of $\Ko\cC(\ulp{n})$ can be thought of as functions on
  subsets and disjoint pairs of subsets of $\ul{n}$:
  \[
  \iicb{1.2}{(s \subset \ul{n})}{x_s}{\stdisj{\ul{n}}}{c_{s,t}}.
  \]
  The 1- and 2-cells can be thought of similarly.
\end{rmk}

\begin{construction}
  \label{defn:Ko-phi}
Let $\phi \colon \ulp{m} \to \ulp{n}$ be a map in $\sF$. We define a
strict 2-functor
\[
\phi_{\ast} \colon \ko{\cC}(\ulp{m}) \to \ko{\cC}(\ulp{n})
\]
as follows
\begin{align*}
  \phi_* \{x_s, c_{s,t}\}
  & =  \{x^{\phi}_{u}, c^{\phi}_ {u,v}\} = \{x_{\phi^{-1}(u)}, c_ {\phi^{-1}(u), \phi^{-1}(v)}\}\\
  \phi_* \{f_s, \ga_{s,t}\}
  & =  \{f_u^{\phi}, \ga^{\phi}_ {u, v}\} = \{f_{\phi^{-1}(u)}, \ga_ {\phi^{-1}(u), \phi^{-1}(v)}\}\\
  \phi_* \{\al_s\}
  & = \{\al^{\phi}_{u}\} = \{\al_{\phi^{-1}(u)}\},\\
\end{align*}
where $s,t$ range over disjoint subsets of $\ul{m}$ and $u,v$ range
over disjoint subsets of $\ul{n}$.  Since $\phi$ is basepoint
preserving, $\phi^{-1} (u)$ does not contain the basepoint and it is
an allowed indexing subset of $\ul{m}$. Since $u$ and $v$ are
disjoint, their preimages under $\phi$ are also disjoint. Note that if
$\psi \colon \ulp{n} \to \ulp{p}$ is another map in $\sF$ then we have
$(\psi \phi) _{\ast}= \psi _{\ast} \phi _{\ast}$, so $\ko{\cC}$ is a $\Ga$-2-category.
\end{construction}

\begin{prop}
\label{prop:functoriality-of-K}
The construction above gives the object assignment for a functor
\[
\ko \cn \PGMnop \to \GIICat.
\]
\end{prop}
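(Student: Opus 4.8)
The plan is to produce, for every normal oplax functor $F\cn\cC\to\cD$ of permutative Gray-monoids, a strict map of $\Ga$-2-categories $\ko F\cn\ko\cC\to\ko\cD$, and then to verify $\ko(\id)=\id$ and $\ko(GF)=\ko(G)\,\ko(F)$. Write $(F,\theta)$ for the oplax structure: $\theta$ is a $2$-natural transformation with components $F(x\oplus y)\to Fx\oplus Fy$, and $\theta_0=\id$, which forces $F(e_\cC)=e_\cD$.

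First I would define $\ko F(\ulp{n})\cn\ko\cC(\ulp{n})\to\ko\cD(\ulp{n})$ cell by cell. On a $0$-cell set $\ko F(\ulp{n})\{x_s,c_{s,t}\}=\{Fx_s,\ \theta_{x_s,x_t}\circ Fc_{s,t}\}$; on a $1$-cell $\{f_s,\gamma_{s,t}\}$ take the component $1$-cells $Ff_s$ and component $2$-cells gotten by pasting $F\gamma_{s,t}$ with the (strict) $2$-naturality squares of $\theta$, which convert a square whose edges are $F$ applied to sums in $\cC$ into one whose edges are sums in $\cD$; on a $2$-cell $\{\alpha_s\}$ take $\{F\alpha_s\}$. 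The key mechanism is that, since $\theta$ is $2$-natural, $\theta$ whiskered with $(\oplus_\cD(F\otimes F))(k)$ equals $(F\oplus_\cC)(k)$ whiskered with $\theta$, for every cell $k$ of $\cC\otimes\cC$. Applied to the generating $1$-cells $f\otimes\id$ and $\id\otimes g$ this rewrites $F(f\oplus\id)$ and $F(\id\oplus g)$ in terms of $Ff\oplus\id$ and $\id\oplus Fg$; applied to the generating $2$-cells $\Sigma_{f,g}$ it shows $F$ carries $\Sigma^{\cC}_{f,g}$ to $\Sigma^{\cD}_{Ff,Fg}$, since $F\otimes F$ preserves the $\Sigma$-cells and $\oplus$ is a $2$-functor --- all up to the appropriate composites of $\theta$'s.

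With these observations the remaining verifications will be routine. The object axioms of \cref{defn:Ko} for $\ko F(\ulp{n})\{x_s,c_{s,t}\}$ follow by applying the $2$-functor $F$ to the object axioms for $\{x_s,c_{s,t}\}$ and inserting $\theta$'s: the associativity-type axiom \eqref{astu} additionally uses the associativity axiom for the oplax structure $\theta$, and the braiding-type axiom \eqref{ats} uses its braiding axiom (the relevant squares of \cref{laxfundef}, read with all arrows reversed). The $1$- and $2$-cell axioms \eqref{phistu}, \eqref{phits}, \eqref{bst}, together with strict preservation of composition and identities, are checked the same way, now additionally using that the $\diamond$-operation of \cref{defn:Ko} is a pasting with $\Sigma$-cells and hence interacts with $\theta$ exactly as the rewriting above dictates. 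Naturality of $\ko F$ with respect to $\sF$ is immediate: the $2$-functor $\phi_*$ of \cref{defn:Ko-phi} merely reindexes every component along $\phi^{-1}$ while $\ko F$ acts uniformly, so $\phi_*\circ\ko F(\ulp{m})=\ko F(\ulp{n})\circ\phi_*$ on the nose, and $\ko F$ is a morphism in $\GIICat$.

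Finally, $\ko(\id_\cC)=\id_{\ko\cC}$ is immediate. For composability, given normal oplax functors $(F,\theta)\cn\cC\to\cD$ and $(G,\psi)\cn\cD\to\cA$, the composite $GF$ carries the normal oplax structure whose $2$-cell at $(x,y)$ is $GF(x\oplus y)\xrightarrow{G\theta}G(Fx\oplus Fy)\xrightarrow{\psi}GFx\oplus GFy$; comparing $\ko(GF)$ with $\ko(G)\,\ko(F)$ cell by cell, the $0$-cell parts agree because $G$ is a $2$-functor and the structure $1$-cells telescope as above, and the $1$- and $2$-cell parts agree after sliding the $\psi$-structure past $G$ applied to the $\theta$-structure, using $2$-naturality of $\psi$. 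I expect the main obstacle to be the bookkeeping in the $1$-cell axioms \eqref{phistu} and \eqref{phits}: these mix $F\gamma$, the $\Sigma$-cells, and the oplax structure $2$-cells of $\theta$, so matching the two pastings forces one to use the oplax functor axioms and the $2$-naturality of $\theta$ on the $\Sigma$-cells simultaneously --- which is precisely where passing from the permutative-category construction of \cite[\S5.2]{Oso10Spectra} to the present Gray-monoid setting adds genuine content.
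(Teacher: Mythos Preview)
Your proposal is correct and follows essentially the same approach as the paper: define $\ko F(\ulp{n})$ on cells by $\{x_s,c_{s,t}\}\mapsto\{Fx_s,\theta\circ Fc_{s,t}\}$, $\{f_s,\gamma_{s,t}\}\mapsto\{Ff_s,1_\theta*F\gamma_{s,t}\}$, $\{\alpha_s\}\mapsto\{F\alpha_s\}$, then verify the cell axioms using the oplax functor axioms and $2$-naturality of $\theta$, note naturality in $\sF$ by reindexing, and check functoriality of $\ko$. The paper singles out the associativity-type axiom \eqref{astu} as its sample verification, while you more explicitly flag the role of $2$-naturality of $\theta$ on the $\Sigma$-cells and spell out the composite oplax structure for $\ko(GF)=\ko(G)\ko(F)$; these are complementary emphases on the same argument.
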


\begin{proof}\proofof{prop:functoriality-of-K}
  Let $(F,\tha)$ be a normal oplax functor between the permutative
  Gray-monoids $\cC$ and $\cD$. We first define a 2-functor
  $\ko{F}_{\ulp{n}}\cn \ko{\cC}(\ulp{n}) \to \ko{\cD}(\ulp{n})$.  This
  is given for objects, 1-cells, and 2-cells as:
  \begin{align*}
    \obj{x}{c} &\longmapsto \{F(x_s),\tha \circ F(c_{s,t})\}\\
    \obj{f}{\ga} &\longmapsto  \{F(f_s), 1_{\tha}\ast F(\ga_{s,t})\}\\
    \{ \al _s\}  &\longmapsto  \{F(\al_s)\},
  \end{align*}
  where $1_{\tha}  \ast F(\ga_{s,t})$ is the 2-cell defined by the
  pasting diagram below.
  \[
  \def\labelstyle{\displaystyle}
  \xy
  (0,0)*+{F(\xst)}="00";
  (30,0)*+{F(\xs\opl\xt)}="20";
  (0,-50)*+{F(\xst')}="02";
  (30,-50)*+{F(\xs'\opl\xt')}="22";
  (30,-25)*+{F(\xs'\opl\xt)}="21";
  (60,0)*+{F(\xs)\opl F(\xt)}="30";
  (60,-50)*+{F(\xs')\opl F(\xt').}="32";
  (60,-25)*+{F(\xs')\opl F(\xt)}="31";
  {\ar^{F(\cst)} "00";"20"};
  {\ar_{F(\fst)} "00";"02"};
  {\ar|{F(\fs\opl\id)} "20";"21"};
  {\ar|{F(\id\opl\ft)} "21";"22"};
  {\ar^{F(\fs)\opl\id} "30";"31"};
  {\ar^{\id\opl F(\ft)} "31";"32"};
  {\ar^{\tha} "20";"30"};
  {\ar^{\tha} "21";"31"};
  {\ar_{\tha} "22";"32"};
  {\ar_{F(\cst')} "02";"22"};
  {\ar@{=>}_{F(\gast)} (18,-20)*{};(12,-30)*{}};
  (45,-12.5)*{=};
  (45,-37.5)*{=};
  \endxy
  \]
  Now one must verify that these assignments send $k$-cells to
  $k$-cells for $k=0,1,2$, and then check 2-functoriality. This is
  largely routine using the permutative Gray-monoid axioms (including
  axioms for Gray tensor product and its interaction with $\be$) and
  normal oplax functor axioms. As an example, we include the
  verification of the axiom in \cref{astu} as part of checking that
  $\{F(x_s),\tha \circ F(c_{s,t})\}$ is an object of
  $\ko{\cD}(\ulp{n})$.  The other verifications are similarly
  straightforward.  We must verify that the following diagram of
  1-morphisms in $\cD$ commutes.
  \[
  \def\labelstyle{\displaystyle}
  \xy
  (0,0)*+{F(\xstu)}="00";
  (50,0)*+{F(\xs\opl\xtu)}="10";
  (100,0)*+{F(\xs)\opl F(\xtu)}="20";
  (0,-20)*+{F(\xst\opl\xu)}="01";
  (50,-20)*+{F(\xs\opl\xt\opl\xu)}="11";
  (100,-20)*+{F(\xs)\opl F(\xt\opl\xu)}="21";
  (0,-40)*+{F(\xst)\opl F(\xu)}="02";
  (50,-40)*+{F(\xs\opl\xt)\opl F(\xu)}="12";
  (100,-40)*+{F(\xs)\opl F(\xt)\opl F(\xu)}="22";
  {\ar^{F(\csctu)} "00";"10"};
  {\ar^{\tha} "10";"20"};
  {\ar^{F(\cst \opl \id)} "01";"11"};
  {\ar^{\tha} "11";"21"};
  {\ar_{F(\cst) \opl \id} "02";"12"};
  {\ar_{\tha\opl\id} "12";"22"};
  {\ar_{F(\csctu)} "00";"01"};
  {\ar_{\tha} "01";"02"};
  {\ar^{F(\id \opl \ctu)} "10";"11"};
  {\ar^{\tha} "11";"12"};
  {\ar^{\id\opl F(\ctu)} "20";"21"};
  {\ar^{\id\opl\tha} "21";"22"};
  \endxy
  \]
  The upper left square commutes because $F$ is a 2-functor, and the
  diagram commutes before applying $F$. The upper right and lower left
  squares commute since $\tha$ is a 2-natural transformation. Finally,
  the commutativity of the lower right square is one of the axioms for
  normal oplax functors.

  Now the collection of 2-functors $\ko{F}_{\ulp{n}}$ is natural with
  respect to $\ulp{n}$, and therefore $\ko{F}$ is a strict map of
  $\Ga$-2-categories. Finally, it is easy to check that $\ko$ is
  functorial.
\end{proof}

\begin{rmk}
  \label{rmk:unique-cells-given-by-partition}
  Let $\cC$ be a permutative Gray-monoid, $\ulp{m} \in \sF$, and
  $\{x,c\} \in \Ko \cC(\ulp{m})$.  Given a subset $s \subset \ul{m}$
  and a partition $s = s_1 \cup \cdots \cup s_a$, there are, \it{a priori},
  many 1-cells
  \[
  x_s \to x_{s_1} \oplus \cdots \oplus x_{s_a}
  \]
  in $\Ko \cC (\ulp{m})$ given by composing instances of $c$ and
  $\beta$.  However conditions \eqref{astu} and \eqref{ats}, together
  with 2-naturality of $\beta$ with respect to maps in the Gray tensor
  product, ensure that these are all equal.

  Likewise, given $\{f,\ga\}\cn \{x,c\} \to \{y,d\}$, all 2-cells
  \[
  \def\labelstyle{\displaystyle}
  \begin{xy}
    (0,0)*+{x_s}="A";
    (30,0)*+{\oplus_i x_{s_i}}="B";
    (0,-20)*+{y_s}="C";
    (30,-20)*+{\oplus_i y_{s_i}}="D";
    {\ar^-{!} "A"; "B"};
    {\ar_-{!} "C"; "D"};
    {\ar_-{f_s} "A"; "C"};
    {\ar^-{\oplus_i f_{s_i}} "B"; "D"};
    {\ar@{=>} "B"+(-8,-8); "C"+(8,6)};
  \end{xy}
  \]
  given by pasting instances of $\ga$ and $\beta$ are equal.  Recall
  that $\oplus_i f_{s_i}$ is defined in
  \cref{notn:sum-of-one-cells-gray-monoid}.
\end{rmk}

\begin{defn}\label{defn:stable-equiv-pgm}
  Let $\cC, \cD$ be a pair of permutative Gray-monoids.  A strict
  functor of permutative Gray-monoids $F \cn \cC \to \cD$ is a
  \emph{stable equivalence} if $\Ko F$ is a stable equivalence of
  $\Ga$-2-categories.  We let $(\PGM, \cS)$ denote the relative
  category of permutative Gray-monoids with stable equivalences.
\end{defn}

The next proposition follows from \cite[\S 5.2]{Oso10Spectra} and has
\cref{prop:Ko-preserves-weak-equivalences} as an immediate
consequence.  In \cref{prop:Ko-preserves-weak-equivalences} we use the
fact that every strict functor is a normal oplax one and hence
implicitly restrict $\Ko$ to the subcategory $\PGM$.
\begin{prop}
  \label{prop:ko-is-special}
  The $\Ga$-2-category $\ko \cC$ is special, with $\ko\cC(\ulp{1})$
  isomorphic to $\cC$.
\end{prop}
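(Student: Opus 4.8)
The plan is to establish the isomorphism $\Ko\cC(\ulp{1}) \cong \cC$ by direct inspection and then to prove that each Segal map $\Ko\cC(\ulp{n}) \to \Ko\cC(\ulp{1})^n$ is a weak equivalence of $2$-categories by producing a pseudofunctorial section together with a lax transformation to the identity.  For the isomorphism: the set $\ul{1}$ has only the subsets $\emptyset$ and $\ul{1}$, and every disjoint pair of subsets of $\ul{1}$ contains $\emptyset$, so the object, $1$-cell and $2$-cell axioms of \cref{defn:Ko} force every datum indexed by $\emptyset$ or by such a pair to be an identity (so $x_\emptyset = e$, $f_\emptyset = \id_e$, and all $c_{s,t}$, $\ga_{s,t}$, pair-indexed $2$-cells are identities), while the data indexed by $\ul{1}$ range freely over the cells of $\cC$.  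Extracting the $\ul{1}$-indexed component is therefore an isomorphism of $2$-categories $\Ko\cC(\ulp{1}) \cong \cC$, the composition formulas of \cref{defn:Ko} restricting to the evident ones on $\ul{1}$-components.  Since $i_k^{-1}(\ul{1}) = \{k\}$, under this identification the Segal map becomes the strict $2$-functor $\Pi_n\cn \Ko\cC(\ulp{n}) \to \cC^n$ sending each cell to the tuple of its components indexed by the singletons $\{1\},\dots,\{n\}$.

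Next I would construct a section of $\Pi_n$ — necessarily only as a \emph{pseudofunctor}, since the monoidal product of a Gray-monoid is not strictly functorial on the cartesian product $\cC \times \cC$.  Define $\sigma_n\cn \cC^n \to \Ko\cC(\ulp{n})$ on objects by $\sigma_n(y_1,\dots,y_n) = \{z_s, d_{s,t}\}$ with $z_s = \bigoplus_{i\in s} y_i$ (the sum taken in increasing order of indices, $z_\emptyset = e$) and $d_{s,t}\cn z_{s\cup t} \to z_s \oplus z_t$ the canonical ``unshuffling'' $1$-cell built from components of $\be$ and the strict unit and associativity maps of the Gray-monoid; on $1$- and $2$-cells $\sigma_n$ is given by the iterated sum of \cref{notn:sum-of-one-cells-gray-monoid} with the resulting coherence cells, and its composition-comparison $2$-cells are built from the interchange cells $\Si_{f,g}$.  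By \cref{rmk:unique-cells-given-by-partition} these canonical cells are well defined and satisfy the axioms of \cref{defn:Ko}, so $\sigma_n$ is a well-defined normal pseudofunctor; since its singleton components are strictly functorial, $\Pi_n \sigma_n$ is the identity $2$-functor on $\cC^n$.  For each object $\{x_s, c_{s,t}\}$ of $\Ko\cC(\ulp{n})$ the canonical total-decomposition $1$-cells $x_s \to \bigoplus_{i\in s} x_{\{i\}}$ of \cref{rmk:unique-cells-given-by-partition} form a $1$-cell $\{x_s,c_{s,t}\} \to \sigma_n\Pi_n\{x_s,c_{s,t}\}$, and as the object varies these assemble into a (pseudonatural, in particular lax) transformation $\eta\cn \id_{\Ko\cC(\ulp{n})} \Rightarrow \sigma_n\Pi_n$.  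Since the classifying space construction is functorial with respect to pseudofunctors and carries lax transformations to homotopies \cite{CCG10Nerves}, we get $N\Pi_n \circ N\sigma_n = \id$ and $N\sigma_n \circ N\Pi_n = N(\sigma_n\Pi_n) \simeq N(\id)$, so $N\Pi_n$ is a homotopy equivalence and $\Pi_n$ is a weak equivalence.  As $n$ is arbitrary, $\Ko\cC$ is special.

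The hard part is the coherence bookkeeping packaged in \cref{rmk:unique-cells-given-by-partition}: one must check that the canonical cells defining $\sigma_n$ satisfy the pasting-diagram axioms \eqref{astu}, \eqref{ats}, \eqref{phistu}, \eqref{phits} and \eqref{bst} of \cref{defn:Ko}, that $\sigma_n$ is pseudofunctorial, and that the comparison cells defining $\eta$ obey the same axioms together with the coherence conditions for a transformation.  In every case, unwinding the definitions reduces the assertion to an equality between two pastings built from components of $\be$, the interchange cells $\Si_{f,g}$ and the strict structure of the Gray-monoid, and such equalities hold by the coherence theorem for permutative Gray-monoids — equivalently, by the uniqueness of structural isomorphisms, with the interaction of $\be$ with maps in the Gray tensor product governing the $\Si$-cells.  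This argument is the $2$-categorical counterpart of the construction and proof in \cite[\S 5.2]{Oso10Spectra}.
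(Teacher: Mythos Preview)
Your proposal is correct and follows the same approach the paper defers to: the paper itself gives no argument for this proposition beyond citing \cite[\S 5.2]{Oso10Spectra}, and what you have written is a careful unpacking of precisely that construction (section $\sigma_n$ via iterated sums and canonical shuffle maps, retraction $\Pi_n\sigma_n = \id$, and a transformation $\id \Rightarrow \sigma_n\Pi_n$ built from the total-decomposition $1$-cells, then invoke that nerves send such data to homotopy equivalences).  One small wording correction: \cite{Oso10Spectra} is already about symmetric monoidal \emph{bicategories}, so your argument is not a ``$2$-categorical counterpart'' of that one but rather the same argument, specialized to the present (slightly stricter) setting of permutative Gray-monoids.
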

\begin{prop}
  \label{prop:Ko-preserves-weak-equivalences}
  The functor $\Ko\cn \PGM \to \GaIICat$ preserves weak equivalences
  and is therefore a relative functor
  $\Ko\cn (\PGM,\cW) \to (\GaIICat, \cW)$.
\end{prop}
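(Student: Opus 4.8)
The plan is to deduce this directly from the specialness of $\Ko\cC$ recorded in \cref{prop:ko-is-special}; the content of the proposition is essentially formal once that is in hand. Recall that a weak equivalence in $\PGM$ is a strict functor $F\cn\cC\to\cD$ whose underlying 2-functor is a weak equivalence of 2-categories, while a weak equivalence in $\GaIICat$ is a map of $\Ga$-2-categories that is a levelwise weak equivalence. So it suffices to prove that for each $\ulp{n}\in\sF$ the 2-functor $(\Ko F)_{\ulp{n}}\cn\Ko\cC(\ulp{n})\to\Ko\cD(\ulp{n})$ is a weak equivalence. For $n=0$ both source and target are the terminal 2-category (only the object $e$ and identity cells), so this map is an isomorphism and there is nothing to check.

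For $n\ge 1$ I would use the Segal maps. The maps $i_k\cn\ulp{n}\to\ulp{1}$ in $\sF$ induce a 2-functor $\Ko\cC(\ulp{n})\to\Ko\cC(\ulp{1})^{n}$ which is a weak equivalence by \cref{prop:ko-is-special}, and the isomorphism $\Ko\cC(\ulp{1})\cong\cC$ of that proposition is visibly natural in strict functors (a strict functor preserves on the nose all the defining data of $\Ko$). Since $\Ko F$ is a genuine map of $\Ga$-2-categories it commutes with every structure map $\phi_*$, so we obtain a commuting square whose horizontal arrows are the Segal weak equivalences, whose left vertical arrow is $(\Ko F)_{\ulp{n}}$, and whose right vertical arrow is identified, under $\Ko(-)(\ulp{1})\cong(-)$, with the $n$-fold product $F^{\times n}\cn\cC^{n}\to\cD^{n}$.

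It then remains to observe that $F^{\times n}$ is a weak equivalence of 2-categories: a nerve functor for 2-categories may be chosen to preserve finite products (all such are naturally weakly equivalent by \cite{CCG10Nerves}), and a finite product of weak equivalences of simplicial sets is again a weak equivalence, so weak equivalences of 2-categories are closed under finite products. Applying 2-out-of-3 to the square above shows $(\Ko F)_{\ulp{n}}$ is a weak equivalence, hence $\Ko F$ is a levelwise weak equivalence, and therefore $\Ko$ is a relative functor $(\PGM,\cW)\to(\GaIICat,\cW)$. The only mildly delicate points are the naturality of the identification $\Ko\cC(\ulp{1})\cong\cC$ and of the Segal maps in $F$ (both immediate from strictness of $F$ and the fact that $\Ko F$ is a map of $\Ga$-2-categories) and the closure of 2-categorical weak equivalences under finite products; I do not anticipate a genuine obstacle, since the real work is all inside \cref{prop:ko-is-special}, which in turn follows from \cite[\S 5.2]{Oso10Spectra}.
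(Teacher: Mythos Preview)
Your argument is correct and is exactly the approach the paper intends: the paper states this proposition as an immediate consequence of \cref{prop:ko-is-special}, and the Segal-map square together with 2-out-of-3 that you spell out is precisely how one extracts it (the same pattern appears explicitly in the proof of \cref{prop:K-Ko-levelwise-weak-equiv}).
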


\begin{rmk}
 In \cite{GO12infinite}, the authors use their coherence theorem to construct a $K$-theory functor for all symmetric monoidal bicategories, by first constructing a pseudo-diagram of bicategories indexed on $\sF$, and then rectifying it using the methods of \cite{CCG11Classifying}. When the input is a permutative Gray-monoid, one can use the same technique as in \cite[\S 2]{GO12infinite} to prove that the two resulting $\Ga$-bicategories are levelwise weakly equivalent. We have chosen to use this explicit construction because of its functoriality.
\end{rmk}

\subsection{Counit for permutative Gray-monoids}
\label{sec:counit}

Let $\cC$ be a permutative Gray-monoid.  We  now use (weak) functoriality of the Grothendieck construction
\cite[\S 3.2]{CCG11Classifying} to give a symmetric monoidal
pseudofunctor
\[
\epzo\cn P\Ko\cC \to \cC.
\]
Recalling \cref{defn:symm-mon-psfun}, this requires that we construct
pseudofunctors
\[
\epzo_{\vec{m}}\cn A\Ko\cC(\vec{m}) \to \cC
\]
for each object $\vec{m}$ in $\sA$, together with pseudonatural
transformations
\[
\begin{xy}
  (0,0)*+{A\Ko\cC(\vec{m})}="A";
  (20,-10)*+{\cC}="C";
  (0,-20)*+{A\Ko\cC(\vec{n})}="B";
  {\ar^-{\epzo_{\vec{m}}} "A"; "C"};
  {\ar_-{\epzo_{\vec{n}}} "B"; "C"};
  {\ar_-{\ol{\phi}_*} "A"; "B"};
  {\ar@{=>}_-{\epzo_{\ol{\phi}}} (9,-8)*{}; (6,-12)*{}};
\end{xy}
\]
for each morphism $\ol{\phi} \cn \vec{m} \to \vec{n}$ in $\sA$. For the general
situation considered in \cite{CCG11Classifying}, there are further
modifications, but we will verify that these are in fact identities.
Applying the Grothendieck construction therefore produces $\epzo$ as a
pseudofunctor between bicategories, the source and target of which
happen to be 2-categories.  

\begin{lem}
  Let $\cC$ be a permutative Gray-monoid.  Evaluation at a subset
  $s \subset \ul{m}$ is a 2-functor
  \[
  \ev_s\cn \Ko \cC(\ulp{m}) \to \cC.
  \]
\end{lem}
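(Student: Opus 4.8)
The plan is to define $\ev_s$ by projection onto the $s$-component and then read off 2-functoriality directly from \cref{defn:Ko}. On 0-cells set $\ev_s\{x_t,c_{t,u}\} = x_s$; on a 1-cell $\{f_t,\ga_{t,u}\}$ set $\ev_s\{f_t,\ga_{t,u}\} = f_s$; and on a 2-cell $\{\al_t\}$ set $\ev_s\{\al_t\} = \al_s$. The point to keep in mind throughout is that the coherence data carried by a $1$- or $2$-cell of $\Ko\cC(\ulp{m})$ --- the $c_{t,u}$, the invertible $\ga_{t,u}$, and the $\Sigma$-twisted diamond appearing in horizontal composition --- all lives in the \emph{second} coordinate, while composition of the first coordinate is strictly componentwise. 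Thus $\ev_s$ simply forgets all of it.

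First I would check that $\ev_s$ is a functor on each hom-category $\Ko\cC(\ulp{m})(\{x,c\},\{y,d\}) \to \cC(x_s,y_s)$: vertical composition of $2$-cells in $\Ko\cC(\ulp{m})$ is defined componentwise by \cref{defn:Ko}, so $\ev_s$ preserves it, and it preserves identity $2$-cells since the $s$-component of $\id_{\{f_t,\ga_{t,u}\}}$ is $\id_{f_s}$. Next I would check preservation of the $2$-categorical structure: the identity $1$-cell on $\{x_t,c_{t,u}\}$ has first coordinate $\id_{x_t}$, whose $s$-component is $\id_{x_s}$; horizontal composition of $1$-cells is $\{g_t,\de_{t,u}\}\circ\{f_t,\ga_{t,u}\} = \{g_t\circ f_t,(\de\diamond\ga)_{t,u}\}$, whose first coordinate at $s$ is exactly $g_s\circ f_s = \ev_s(g,\de)\circ\ev_s(f,\ga)$; and horizontal composition of $2$-cells is $\{\al'_t\}*\{\al_t\} = \{\al'_t*\al_t\}$, again componentwise. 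So $\ev_s$ takes the $2$-category structure of $\Ko\cC(\ulp{m})$ to that of $\cC$ on the nose. The interchange law and the axioms of a $2$-category then need no separate check, being inherited componentwise from $\cC$.

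The only obstacle here is notational rather than mathematical: one must be disciplined about which coordinate each clause of \cref{defn:Ko} governs, so as to confirm that no instance of $c$, $\beta$, $\ga$, or $\Sigma$ ever enters the $s$-component of a composite. Once this bookkeeping is done, the lemma is immediate, and in particular no permutative Gray-monoid axiom is invoked. I would also remark that $\ev_s$ is clearly natural in $s$ under inclusions only up to the coherence cells $c$ and $\ga$, which is why the lemma is stated for a fixed $s$ and phrased as strict $2$-functoriality in that variable alone.
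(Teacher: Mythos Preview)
Your proof is correct and is exactly the intended argument: the paper states this lemma without proof, precisely because projection onto the $s$-component is componentwise compatible with all the composition formulas in \cref{defn:Ko}, as you verify. Your final remark about naturality in $s$ is extraneous to the lemma but harmless.
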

\begin{defn}
  \label{defn:epzo-m}
  Let $\cC$ be a permutative Gray-monoid, and let $\vec{m} \in \sA$.
  Then $\epzo_{\vec{m}}$ is defined to be the composite
  \[
  A\Ko \cC(\vec{m}) = \prod_i \Ko\cC(\ul{m_i}_+) \fto{\prod_i \ev_{\ul{m_i}}} \prod_i \cC \fto{\oplus} \cC.
  \]
  Since $\oplus$ is cubical and therefore only a pseudofunctor and
  each $\ev_{\ul{m_i}}$ is a 2-functor, the composite
  $\epzo_{\vec{m}}$ is a pseudofunctor.  For the empty sequence
  $(\,)$, $\epzo_{(\,)}$ is defined as the 2-functor $*\to \cC$ that
  sends the point to the unit object $e$ of the monoidal structure.
\end{defn}

\begin{notn}
  \label{notn:phibar-partition-indexing}
  For each $i$, recall the following notation of
  \cref{eqn:phibar-partition-indexing}:
  \[
  \mathfrak{p}(\ol{\phi},i) = \{j \big| \emptyset \not = \ol{\phi}^{\;-1}(\ul{n_j}) \subset \ul{m_i}\}.
  \]
  The restriction of $\ol{\phi}$ to $\ul{m_i}$ gives a partition of
  $\ul{m_i}$ into
  \[
  \coprod_{j \in \mathfrak{p}(\ol{\phi},i)}
  \ol{\phi}^{\;-1}(\ul{n_j}),
  \]
  where we order $\mathfrak{p}(\ol{\phi},i)$ as a subset of the
  indexing of the tuple $\vec{n}$.
\end{notn}

For an object $\overrightarrow{\{x,c\}} = \{\vec{x},\vec{c}\}$ of
$\Ko\cC(\vec{m})$, let
\begin{equation}
\label{eq:defn-c^i}
c^i_{\ol{\phi}}\cn x^i_{\ul{m_i}} \to
\bigoplus_{j \in \mathfrak{p}({\ol{\phi},i})} x^i_{\ol{\phi}^{\;-1}(\ul{n_j})}
\end{equation}
be the unique 1-cell determined by $c^i$ and the partition
above (see \cref{rmk:unique-cells-given-by-partition}).
Using the symmetry of $\cC$ to reorder, we
obtain each 1-cell $\epzo_{\ol{\phi}}(\{\vec{x},\vec{c}\})$ as the composite:
\begin{equation}
\label{eq:defn-epzo-barphi}
\begin{xy}
  (0,0)*+{\ds 
    \bigoplus_i x^{i}_{\ul{m_i}}}="A";
  (50,0)*+{\ds \bigoplus_i \bigoplus_{j \in \mathfrak{p}({\ol{\phi},i})} x^{i}_{\ol{\phi}^{\;-1}(\ul{n_j})}}="B";
  (100,0)*+{\ds \bigoplus_j
    \left(\epzo_{\vec{n}}(\ol{\phi}_*\{\vec{x},\vec{c}\}) \right)^j
  }="C";
  {\ar^-{\bigoplus_i c^i_{\ol{\phi}}} "A"; "B"};
  {\ar^-{\beta} "B"; "C"};
\end{xy}
\end{equation}
where
\[
\left(\epzo_{\vec{n}}(\ol{\phi}_*\{\vec{x},\vec{c}\}) \right)^j  =
x^{i_j}_{\ol{\phi}^{\;-1}(\ul{n_j})}
\]
for $\ol{\phi}^{\;-1}(\ul{n_j}) \subset \ul{m_{i_j}}$.  Note that this is
well-defined because $i_j$ is uniquely determined if
$\ol{\phi}^{\;-1}(\ul{n_j}) \not = \emptyset$ and $x^i_{\emptyset} = e$
for any $i$.

For a morphism in $A\Ko\cC(\vec{m})$
\[
\overrightarrow{\{f,\ga\}} = \{\vec{f},\vec{\ga}\}\cn \{\vec{x},\vec{c}\} \to \{\vec{y},\vec{d}\}
\]
we have a 2-cell given by the pasting
\begin{equation}
\label{eq:epzo-psnat}
\begin{gathered}
\def\labelstyle{\displaystyle}
\begin{xy}
  (0,30)*+{\epzo_{\vec{m}}(\{\vec{x},\vec{c}\})}="A1";
  (50,30)*+{\ds \bigoplus_i
    \bigoplus_{j \in \mathfrak{p}({\ol{\phi},i})}
    x^i_{\ol{\phi}^{\;-1}(\ul{n_j})}} ="A2";
  (100,30)*+{\epzo_{\vec{n}}(\ol{\phi}_*\{\vec{x},\vec{c}\})}="A3";
  (0,0)*+{\epzo_{\vec{m}}(\{\vec{y},\vec{d}\})}="B1";
  (50,0)*+{\ds \bigoplus_i
    \bigoplus_{j \in \mathfrak{p}({\ol{\phi},i})}
    y^i_{\ol{\phi}^{\;-1}(\ul{n_j})}} ="B2";
  (100,0)*+{\epzo_{\vec{n}}(\ol{\phi}_*\{\vec{y},\vec{d}\})}="B3";
  {\ar^-{\oplus_ic^i_{\ol{\phi}}} "A1";"A2" };
  {\ar^-{\beta} "A2";"A3" };
  {\ar_-{\epzo_{\vec{m}}(\{\vec{f},\vec{\ga}\})} "A1";"B1" };
  {\ar^-{\oplus_i\oplus_{j}f^{i}} "A2";"B2" };
  {\ar_-{\oplus_id^i_{\ol{\phi}}} "B1";"B2" };
  {\ar_-{\beta} "B2";"B3" };
  {\ar^-{\epzo_{\vec{n}}(\ol{\phi}_*\{\vec{f},\vec{\ga}\})} "A3";"B3" };
  {\ar@{=>} "A3"+(-21,-12);"B2"+(21,12) };
  {\ar@{=>} "A2"+(-21,-12);"B1"+(21,12) };
\end{xy}
\end{gathered}
\end{equation}
where the left-hand 2-cell is given as in
\cref{rmk:unique-cells-given-by-partition} and the right-hand 2-cell
is given by pseudonaturality of $\beta$.

\begin{lem}
  \label{lem:epzo-phibar}
  The above data constitute a pseudonatural transformation
  $\epzo_{\ol{\phi}}$.
\end{lem}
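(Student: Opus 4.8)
The plan is to verify directly the axioms defining a pseudonatural transformation $\epzo_{\ol\phi}\cn\epzo_{\vec m}\Rightarrow\epzo_{\vec n}\circ\ol\phi_*$: that the $1$- and $2$-cell components assigned in \eqref{eq:defn-epzo-barphi} and \eqref{eq:epzo-psnat} are well defined and invertible, that the $2$-cell components respect identity $1$-cells and composition of $1$-cells, and that they are natural with respect to $2$-cells. Well-definedness of the $1$-cell component is immediate: the maps $c^i_{\ol\phi}$ are the unique structural $1$-cells determined by the partition $\ul{m_i}=\coprod_{j\in\mathfrak p(\ol\phi,i)}\ol\phi^{\,-1}(\ul{n_j})$ via \cref{rmk:unique-cells-given-by-partition}, and the reindexing map labelled $\beta$ is the canonical symmetry built from the braiding of $\cC$. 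Each $2$-cell component in \eqref{eq:epzo-psnat} is a pasting of the invertible $2$-cell supplied by \cref{rmk:unique-cells-given-by-partition} and the invertible structure $2$-cell witnessing pseudonaturality of the iterated braiding, hence is itself invertible.

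First I would dispatch the two easier axioms. For the unit axiom, when $\{\vec f,\vec\ga\}$ is an identity $1$-cell the $2$-cell of \cref{rmk:unique-cells-given-by-partition} is an identity, by the normalization conditions in \cref{defn:Ko} together with the fact that a $\oplus$ of identity $1$-cells is an identity, and the iterated-braiding structure cell on an identity is again an identity; hence $(\epzo_{\ol\phi})_{\id}$ is an identity. For naturality with respect to a $2$-cell $\{\vec\al\}\cn\{\vec f,\vec\ga\}\Rightarrow\{\vec g,\vec\de\}$ of $A\Ko\cC(\vec m)$, the required equality of whiskered pastings reduces to compatibility of the $2$-cells of \cref{rmk:unique-cells-given-by-partition} with $2$-cells of $\Ko\cC$ --- encoded in axiom \eqref{bst} and the componentwise definition of $2$-cells --- together with naturality of the iterated-braiding structure cells with respect to $2$-cells of $\cC$.

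The main obstacle is the composition axiom. Given composable $1$-cells $\{\vec f,\vec\ga\}\cn\{\vec x,\vec c\}\to\{\vec y,\vec d\}$ and $\{\vec f',\vec\ga'\}\cn\{\vec y,\vec d\}\to\{\vec z,\vec e\}$ of $A\Ko\cC(\vec m)$, one must show that the pasting of the two instances of \eqref{eq:epzo-psnat} for $\{\vec f,\vec\ga\}$ and $\{\vec f',\vec\ga'\}$, together with the pseudofunctoriality constraints of $\epzo_{\vec m}$ and of $\epzo_{\vec n}\circ\ol\phi_*$, equals the single instance of \eqref{eq:epzo-psnat} for the composite. Composition in $A\Ko\cC(\vec m)$ is given levelwise by the $\diamond$-operation of \cref{defn:Ko}, whose defining pasting already records the interaction of the $\ga$'s with the $\Sigma$-cells of the cubical functor $\oplus$; likewise, since $\ol\phi_*$ is a strict $2$-functor, the pseudofunctoriality constraints of both $\epzo_{\vec m}=\oplus\circ\prod_i\ev_{\ul{m_i}}$ and $\epzo_{\vec n}\circ\ol\phi_*$ are built entirely from those $\Sigma$-cells. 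The verification thus becomes a diagram chase comparing two pastings, each assembled from instances of $\ga,\ga'$, the functoriality $\Sigma$-cells of $\oplus$, and the invertible structure cells of the iterated braiding. I would organize it by separating the ``reindexing'' cells (those built from $\beta$ and $\Sigma_{\beta,-}$) from the ``$\ga$'' cells, using that the $c^i_{\ol\phi}$ are themselves uniquely characterized by \cref{rmk:unique-cells-given-by-partition} so that each part can be matched on the nose; the two pieces then recombine by the coherence theorem for symmetric monoidal bicategories of \cite{GO12infinite} --- equivalently, by \cref{qs3} (so that $\Sigma_{\beta,\beta}$ is an identity) together with \cref{rmk:unique-cells-given-by-partition} --- which guarantees that any two structural pastings with the same boundary agree.

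Finally I would note that the same coherence principle forces the higher (modification) data that the general Grothendieck construction of \cite{CCG11Classifying} would otherwise require to be identities in the present situation; this point, however, properly belongs to the discussion immediately following the lemma, where $\epzo$ itself is assembled.
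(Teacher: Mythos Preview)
The paper states this lemma without proof, treating it as a routine verification; your proposal supplies exactly the sort of argument the authors presumably had in mind. Your outline is correct: you identify the pseudonatural transformation axioms to be checked, correctly observe that invertibility of the $2$-cell components follows from invertibility of the ingredients in \eqref{eq:epzo-psnat}, dispatch the unit and $2$-cell naturality axioms via the normalization conditions in \cref{defn:Ko} and axiom \eqref{bst}, and recognize that the composition axiom reduces to a coherence argument combining \cref{rmk:unique-cells-given-by-partition} with the cubical functor axioms for $\oplus$ and the coherence theorem of \cite{GO12infinite}. Your final paragraph about higher modification data is, as you note, not part of this lemma but of \cref{prop:epzo-pseudofunctor}.
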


\begin{rmk}
  \label{rmk:epz-phibar-is-id-for-map-of-finite-sets}
  When $\vec{m} = (m)$ and $\vec{n} = (n)$ are tuples of length one
  and $\ol{\phi}$ consists of a single map of finite sets
  \[
  \ul{m} \to \ul{n},
  \]
  then $\epzo_{\ol{\phi}}$ is the identity pseudo natural transformation.  
\end{rmk}

\begin{rmk}
  \label{rmk:epz-phibar-is-permutation-when-phibar-is}
  For general $\vec{m}$, $\vec{n}$, if $\ol{\phi}$ is a block
  permutation of the $\ul{m_i}$ but does not partition or permute the
  elements of any $\ul{m_i}$, then the formulas in
  \cref{eq:defn-epzo-barphi,defn:Ko-phi} show that $\epzo_{\ol{\phi}}$
  is given by the component of $\beta$ for the corresponding
  permutation of summands $x^i_{\ul{m_i}}$.  In particular, for the map $\id\cn \vec{m} \to \vec{m}$, the pseudonatural transformation $\epzo _{\id}$
  is the identity.
\end{rmk}

\begin{prop}
  \label{prop:epzo-pseudofunctor}
  There exists a pseudofunctor $\epzo \cn P\Ko\cC \to \cC$ defined by
  \begin{align*}
    \bsb{\vec{m}}{\{\vec{x},\vec{c}\}} & \quad \mapsto \quad
    \epzo_{\vec{m}}(\{\vec{x},\vec{c}\}),\\
    \bsb{\ol{\phi}}{\{\vec{f},\vec{\ga}\}} & \quad \mapsto \quad
    \epzo_{\vec{n}}(\{\vec{f},\vec{\ga}\}) \circ \epzo_{\ol{\phi}},\\
    \bsb{\ol{\phi}}{\vec{\al}} & \quad \mapsto \quad
    \epzo_{\vec{n}}(\vec{\al}) * 1_{\epzo_{\ol{\phi}}}.
  \end{align*}
\end{prop}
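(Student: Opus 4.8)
The plan is to invoke the (weak) functoriality of the Grothendieck construction from \cite[\S 3.2]{CCG11Classifying}. To build a pseudofunctor out of $P\Ko\cC = \sA \wre A\Ko\cC$ into a bicategory, that machinery asks for: a pseudofunctor $\epzo_{\vec m}\cn A\Ko\cC(\vec m)\to\cC$ for each object $\vec m$ of $\sA$; a pseudonatural transformation $\epzo_{\ol\phi}\cn \epzo_{\vec m} \Rightarrow \epzo_{\vec n}\,\ol\phi_*$ for each morphism $\ol\phi$ of $\sA$; an invertible modification comparing $\epzo_{\ol\psi\,\ol\phi}$ with the pasting of $\epzo_{\ol\phi}$ and the appropriately whiskered $\epzo_{\ol\psi}$ for each composable pair; an invertible modification comparing $\epzo_{\id_{\vec m}}$ with the identity pseudonatural transformation; and two coherence axioms relating all of this. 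The first two pieces of data have already been produced in \cref{defn:epzo-m} and \cref{lem:epzo-phibar}. The content of the proof is to verify that the remaining modifications can be taken to be \emph{identities}; once this is checked, the coherence axioms become vacuous and the Grothendieck construction yields a pseudofunctor $\epzo$ whose values on $0$-, $1$-, and $2$-cells are exactly the displayed formulas --- on a $1$-cell $\bsb{\ol\phi}{\{\vec f,\vec\ga\}}$ the structure $1$-cell is $\epzo_{\vec n}(\{\vec f,\vec\ga\})\circ\epzo_{\ol\phi}$ because composition in $\sA\wre A\Ko\cC$ first applies $\ol\phi_*$ and then composes, compare \cref{rmk:description-Ph}.

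The identity comparison is immediate: by \cref{rmk:epz-phibar-is-permutation-when-phibar-is}, $\epzo_{\id_{\vec m}}$ is literally the identity pseudonatural transformation, so its comparison modification is the identity. For the composition comparison, fix composable $\ol\phi\cn\vec m\to\vec n$ and $\ol\psi\cn\vec n\to\vec p$. Both $\epzo_{\ol\psi\,\ol\phi}$ and the evident composite built from $\epzo_{\ol\phi}$ and $\epzo_{\ol\psi}$ are, component by component, composites of the partition $1$-cells $c^i_{(-)}$ of \cref{eq:defn-c^i}, of block-permutation instances of $\beta$, and of the cubical-functoriality $2$-cells $\Sigma$ attached to $\oplus$. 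I would argue by coherence. Decomposing $\ol\psi\,\ol\phi$ via \cref{rmk:decomposing-phibar-partition-map-reorder}, one sees that partitioning first by $\ol\phi$ and then by $\ol\psi$ refines the partition induced by $\ol\psi\,\ol\phi$ through a block permutation; \cref{rmk:unique-cells-given-by-partition} shows the partition $1$-cells are independent of the chosen refinement, and the coherence theorem for symmetric monoidal bicategories of \cite{GO12infinite} --- applicable since, by \cref{thm:p2isoqs2}, a permutative Gray-monoid is a quasi-strict symmetric monoidal $2$-category --- guarantees that any two parallel composites of structural $1$-cells realizing the same underlying permutation agree, and that parallel structural $2$-cells between them agree. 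Hence the source and target $1$-cells of the candidate composition modification coincide, and the modification is forced to be the identity. The two coherence axioms of \cite[\S 3.2]{CCG11Classifying} then hold trivially.

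I expect the main obstacle to be the bookkeeping in the previous step: simultaneously reconciling the ``partition followed by reindexing'' normal form of $\ol\psi\,\ol\phi$ with the iterated partitions coming from $\ol\phi$ then $\ol\psi$, while tracking the $\Sigma$ constraints that intervene because $\oplus$ is merely cubical rather than strictly $2$-functorial on $\cC^{\times q}$. The way I would tame this is to treat the two pure cases first --- when $\ol\phi,\ol\psi$ are maps of finite sets, so that $\epzo_{\ol\phi},\epzo_{\ol\psi}$ are identities by \cref{rmk:epz-phibar-is-id-for-map-of-finite-sets}, and when $\ol\phi,\ol\psi$ are block permutations of the tuples, so that $\epzo_{\ol\phi},\epzo_{\ol\psi}$ are instances of $\beta$ by \cref{rmk:epz-phibar-is-permutation-when-phibar-is} --- and then assemble the general case from the factorization of an arbitrary morphism of $\sA$ into a partition, componentwise maps of pointed sets, and a reindexing, as in \cref{rmk:decomposing-phibar-partition-map-reorder}. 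At each stage the needed equalities are instances of coherence, so beyond \cref{rmk:unique-cells-given-by-partition} no genuinely new computation is required.
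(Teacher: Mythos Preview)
Your proposal is correct and follows essentially the same line as the paper: invoke the bicategorical Grothendieck construction of \cite[\S 3.2]{CCG11Classifying} and verify that both the unit and the composition modifications can be taken to be identities, the first via \cref{rmk:epz-phibar-is-permutation-when-phibar-is} and the second via \cref{rmk:unique-cells-given-by-partition}. The paper simply asserts the composition equality as a direct consequence of \cref{rmk:unique-cells-given-by-partition}, whereas you additionally invoke the coherence theorem of \cite{GO12infinite} and propose a case analysis along the factorization of \cref{rmk:decomposing-phibar-partition-map-reorder}; this extra machinery is not wrong but is heavier than needed, since the $c$-cells in play are data of the objects $\{x^i,c^i\}$ rather than structural constraints of $\cC$, and their uniqueness is already handled by \cref{rmk:unique-cells-given-by-partition} together with the $2$-naturality of $\beta$.
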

\begin{proof}\proofof{prop:epzo-pseudofunctor}
  It is easy to check that $\epzo_{\id} = \id$.  Moreover, for
  composable morphisms in $\sA$,
  \[
  \vec{m} \fto{\ol{\phi}} \vec{n} \fto{\ol{\psi}} \vec{p}
  \]
  we have the following equality by
  \cref{rmk:unique-cells-given-by-partition}.
  \[
  \begin{xy}
    (0,20)*+{A\Ko\cC(\ulp{m})}="A";
    (0,0)*+{A\Ko\cC(\ulp{n})}="B";
    (0,-20)*+{A\Ko\cC(\ulp{p})}="C";
    (30,0)*+{\cC}="D";
    {\ar_{\ol{\phi}_*} "A"; "B"};
    {\ar_{\ol{\psi}_*} "B"; "C"};
    {\ar@/^1pc/^{\epzo_{\vec{m}}} "A"; "D"};
    {\ar^{\epzo_{\vec{n}}} "B"; "D"};
    {\ar@/_1pc/_{\epzo_{\vec{p}}} "C"; "D"};
    {\ar@{=>}_{\epzo_{\ol{\phi}}} (15,10.5)*+{}="tc0"; "tc0"+(-4,-4) };
    {\ar@{=>}_{\epzo_{\ol{\psi}}} (15,-5.5)*+{}="tc1"; "tc1"+(-4,-4) };
    (37,0)*+{=};
    (50,20)*+{A\Ko\cC(\ulp{m})}="A";
    (50,-20)*+{A\Ko\cC(\ulp{p})}="C";
    (80,0)*+{\cC}="D";
    {\ar_{(\ol{\psi}\ol{\phi})_*} "A"; "C"};
    {\ar@/^1pc/^{\epzo_{\vec{m}}} "A"; "D"};
    {\ar@/_1pc/_{\epzo_{\vec{p}}} "C"; "D"};
    {\ar@{=>}_{\epzo_{\ol{\psi}\ol{\phi}}} (65,2)*+{}="tc0"; "tc0"+(-4,-4) };
  \end{xy}
  \]
  By the bicategorical Grothendieck construction of \cite[\S
  3.2]{CCG11Classifying}, $\epzo$ is a pseudofunctor. Since $\epzo_{\id}$ is the identity pseudo natural transformation and $\epzo_{m}$ preserves identity 1-cells, so does $\epzo$.
\end{proof}

\begin{prop}
  \label{thm:epzo-symm-mon}
  Let $\cC$ be a permutative Gray-monoid.  The pseudofunctor $\epzo$
  is the underlying functor of a symmetric monoidal pseudofunctor
  \[
  P\Ko \cC \to \cC.
  \]
\end{prop}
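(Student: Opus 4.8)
The plan is to equip $\epzo$ with the structure demanded by \cref{defn:symm-mon-psfun} and to reduce every one of its axioms to the coherence theorem for symmetric monoidal bicategories \cite{GO12infinite}. The unit constraint is immediate: the monoidal unit of $P\Ko\cC$ is $\bsb{(\,)}{\ast}$, which $\epzo_{(\,)}$ sends to the unit object $e$ of $\cC$, so $\epzo$ preserves units strictly and the unit equivalence can be taken to be $\id_e$.

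For the tensor-product constraint I would exploit the explicit formulas. The monoidal product on $P\Ko\cC = \sA\wre A\Ko\cC$ concatenates tuples (\cref{prop:Grothendieck-constr-sm-functor}), while $\epzo_{\vec m}(\{\vec x,\vec c\}) = \bigoplus_i x^i_{\ul{m_i}}$ by \cref{defn:epzo-m}; since a Gray-monoid is strictly associative on objects, this forces the equality
\[
\epzo\bsb{\vec m}{\{\vec x,\vec c\}}\oplus\epzo\bsb{\vec n}{\{\vec y,\vec d\}} = \epzo\big(\bsb{\vec m}{\{\vec x,\vec c\}}\oplus\bsb{\vec n}{\{\vec y,\vec d\}}\big)
\]
on $0$-cells. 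Hence the tensor constraint is a pseudonatural transformation $\chi$ whose $1$-cell components are identities; its $2$-cell components, comparing $\epzo$ applied to a product of $1$-cells with the cubical product $\oplus$ of their $\epzo$-images, are assembled from the structure $2$-cells $\Sigma$ of the Gray tensor product, the comparison $2$-cells of the cubical functor $\oplus$ on $\cC$, and the transformations $\epzo_{\ol\phi}$ of \cref{eq:defn-epzo-barphi}. I would then check that $\chi$ is a pseudonatural adjoint equivalence; as its $1$-cell components are identities this amounts to verifying the two transformation axioms, and these hold because every $2$-cell involved is structural (see the last paragraph).

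It remains to supply the three invertible modifications relating $\chi$ to the associativity and unit equivalences, together with the braiding modification $U$. In each case the source and target are parallel $1$-cells of $\cC$ built only from associativity, unit, and braiding equivalences, so by \cite{GO12infinite} there is a unique structural isomorphism between them and I take that as the modification. The only point requiring care is the identification of $U$: by \cref{rmk:epz-phibar-is-permutation-when-phibar-is} the braiding $\bsb{\be_{\vec m,\vec n}}{\id}$ of the permutative $2$-category $P\Ko\cC$ is carried by $\epzo$ to a composite of components of $\beta^\cC$ realizing the block transposition of the $x^i_{\ul{m_i}}$ past the $y^j_{\ul{n_j}}$; this $1$-cell represents the same permutation of objects as $\beta^\cC_{\epzo[\vec m,\vec x],\epzo[\vec n,\vec y]}$, and the unique structural isomorphism between them is $U$.

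Finally, each of the five axioms (two monoidal, two braided, one symmetric) is an equality of two pasting composites of $2$-cells in $\cC$ between parallel $1$-cells that represent the same permutation of objects. The genuine data appearing in $1$-cells of $\Ko\cC$ — the $2$-cells $\ga$ — occur identically on both sides of each axiom and are absorbed by $2$-naturality, as is the $\Sigma$-data of the Gray tensor; what is left over is structural, so each axiom follows from \cite{GO12infinite}. The hard part is exactly this last bookkeeping: one must unwind \cref{defn:epzo-m}, \cref{eq:defn-epzo-barphi}, \cref{eq:epzo-psnat}, and \cref{prop:epzo-pseudofunctor} to confirm that $\epzo_{\ol\phi}$, the $1$-cells $c^i_{\ol\phi}$, and the comparison $2$-cells of the cubical functor $\oplus$ are all composites of $\beta^\cC$ and the (trivial) associativity and unit constraints of $\cC$, so that the coherence theorem genuinely applies, while tracking the $\ga$-data and checking that it cancels. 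As a cross-check I would also derive the statement from a symmetric monoidal refinement of the weak functoriality of the bicategorical Grothendieck construction \cite{CCG11Classifying} applied to the symmetric monoidal diagram $A\Ko\cC$.
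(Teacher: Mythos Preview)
Your overall shape is the same as the paper's: strict unit constraint, $\chi$ an icon with identity components, then supply the modifications and verify the axioms.  The paper, however, does not rely on the coherence theorem in the way you propose.  Instead it shows directly that each modification (the two unit ones, $\omega$, and $U$) can be taken to be the \emph{identity}: for $U$ this is because $\epzo$ carries the braiding $[\ol\sigma,\id]$ of $P\Ko\cC$ to $\beta^{\cC}$ on the nose (using the third axiom of \cref{defn:pgm}, not merely coherence), and $\omega$ is the identity by the Gray tensor product axioms.  The five axioms then reduce to equalities between identity $2$-cells, once one observes (via \cref{rmk:epz-phibar-is-permutation-when-phibar-is}) that the pseudofunctoriality isomorphisms of $\epzo$ at $1$-cells of the form $[\ol\sigma,\id]$ for block permutations $\ol\sigma$ are identities.

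There is a genuine error in your final paragraph: the $1$-cells $c^i_{\ol\phi}$ are \emph{not} composites of $\beta^{\cC}$ and associativity/unit constraints.  By \cref{eq:defn-c^i} they are built from the arbitrary $1$-cells $c_{s,t}$ that constitute the data of an object $\{x,c\}\in\Ko\cC(\ulp{m})$, and these are not structural in $\cC$.  Hence the coherence theorem of \cite{GO12infinite} does not apply to pastings involving $\epzo_{\ol\phi}$ for general $\ol\phi$.  This undermines your plan for the pseudonaturality (composition) axiom of $\chi$ and for the modification axioms of $\omega$, where arbitrary $1$-cells of $P\Ko\cC$---and hence arbitrary $c$- and $\ga$-data---appear; the paper handles these by direct computation using the Gray tensor product axioms, not by coherence.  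Your coherence argument is salvageable for the five symmetric monoidal pseudofunctor axioms precisely because, as the paper notes, those axioms only involve $\epzo$ at the structural $1$-cells of $P\Ko\cC$, all of which have the form $[\ol\sigma,\id]$ for block permutations $\ol\sigma$, where no $c$-data enters and $\epzo_{\ol\sigma}$ really is a component of $\beta^{\cC}$.
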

\begin{proof}\proofof{thm:epzo-symm-mon}
  Following \cref{defn:symm-mon-psfun}, we need two transformations,
  one relating the unit in $P\ko{\cC}$ with the unit in $\cC$ and
  another one relating the monoidal product in $P\ko{\cC}$ with the
  one in $\cC$. The unit in the monoidal structure of $P\ko{\cC}$ is
  the pair $(\,(\,),\ast)$. By definition we have
  $\epzo (\,(\,),\ast) = e$ and by
  \cref{rmk:epz-phibar-is-permutation-when-phibar-is} $\epzo$ strictly
  preserves the identity 1-cell of $e$. Thus the unit is preserved
  strictly and we can take the first transformation to be the
  identity.  We now define the second transformation
  \[
  \begin{xy}
    (0,0)*+{P\Ko\cC \times P\Ko\cC}="A";
    (30,0)*+{\cC \times \cC}="B";
    (0,-20)*+{P\Ko\cC}="C";
    (30,-20)*+{\cC}="D";
    {\ar^-{\epzo \times \epzo} "A"; "B"};
    {\ar_{\Box} "A"; "C"};
    {\ar_{\epzo} "C"; "D"};
    {\ar^{\oplus} "B"; "D"};
    {\ar@{=>}_{\chi} (17,-8)*+{}="tc0"; "tc0"+(-4,-4) };
  \end{xy}
  \]
  which can be taken to have identity components on objects, as both
  composites give the same value when evaluated at a pair of objects.
  Thus $\chi$ will be a pseudonatural transformation with identity
  components, or in other words an invertible icon \cite{Lac10Icons}.
  We must now construct components for 1-cells, and check two axioms,
  one for the units and one for composition. These diagrams appear in,
  e.g., \cite{Lac10Icons}.  The component at a pair of 1-morphisms
  \begin{align*}
  [\ol{\phi},\{\vec{f},\vec{\ga}\}] \cn &
  [\vec{m},\{\vec{x},\vec{c}\}] \to [\vec{n},\{\vec{y},\vec{d}\}]\\
  [\ol{\psi},\{\vec{f}',\vec{\ga}'\}] \cn &
  [\vec{m}',\{\vec{x}',\vec{c}'\}] \to [\vec{n}',\{\vec{y}',\vec{d}'\}]
  \end{align*}
  is given by the pasting below.  Recall that
  $\epzo_{\ol{\phi}}(\{\vec{x},\vec{c}\})$ is defined in
  \cref{eq:defn-epzo-barphi} as a composite 1-cell $\beta \circ
  \oplus_i c^i_{\ol{\phi}}$.  For ease of notation we let
  \[
  c_{\ol{\phi}} = \oplus_i c^i_{\ol{\phi}}
  \]
  and let
  \[
  c_{\ol{\phi}}\epzo_{\vec{m}}(\{\vec{x},\vec{c}\}) =
  \bigoplus_i \bigoplus_{j\in \mathfrak{p}({\ol{\phi},i})}
  x^i_{\ol{\phi}^{\;-1}(\ul{n_j})}.
  \]
{
  \renewcommand{\epzo}{\tilde{\varepsilon}}
  \renewcommand{\ol}{\bar}
  \newcommand{\TMPxi}{\epzo_{\vec{m}}\{\vec{x},\vec{c}\}}
  \newcommand{\TMPxii}{c_{\ol{\phi}}\epzo_{\vec{m}}\{\vec{x},\vec{c}\}}
  \newcommand{\TMPxiii}{\epzo_{\vec{n}}\ol{\phi}_*\{\vec{x},\vec{c}\}}
  \newcommand{\TMPXi}{\epzo_{\vec{m}'}\{\vec{x'},\vec{c'}\}}
  \newcommand{\TMPXii}{c_{\ol{\psi}}\epzo_p\{\vec{x'},\vec{c'}\}}
  \newcommand{\TMPXiii}{\epzo_{\vec{n}'}\ol{\psi}_*\{\vec{x'},\vec{c'}\}}
  \newcommand{\TMPy}{\epzo_{\vec{n}}\{\vec{y},\vec{d}\}}
  \newcommand{\TMPY}{\epzo_{\vec{n}'}\{\vec{y'},\vec{d'}\}}
  \newcommand{\TMPcx}{c_{\ol{\phi}}}
  \newcommand{\TMPbx}{\beta}
  \newcommand{\TMPf}{\epzo_{\vec{n}}(f)}
  \newcommand{\TMPcX}{c_{\ol{\psi}}}
  \newcommand{\TMPbX}{\beta}
  \newcommand{\TMPF}{\epzo_{\vec{n}'}(f')}
  \[
  \renewcommand{\objectstyle}{\scriptstyle}
  \renewcommand{\labelstyle}{\textstyle}
  \begin{xy}
    (0,0)*+{\TMPxi \oplus \TMPXi}="0,0";
    "0,0"+(19,20)*+{\TMPxii \oplus \TMPXi}="1,1";
    "1,1"+(19,20)*+{\TMPxiii \oplus \TMPXi}="2,2"; 
    "2,2"+(19,20)*+{\TMPy \oplus \TMPXi}="3,3";
    "1,1"+(19,-20)*+{\TMPxii \oplus \TMPXii}="2,0"; 
    "2,2"+(19,-20)*+{\TMPxiii \oplus \TMPXii}="3,1";
    "3,3"+(19,-20)*+{\TMPy \oplus \TMPXii}="4,2"; 
    "3,1"+(19,-20)*+{\TMPxiii \oplus \TMPXiii}="4,0"; 
    "4,2"+(19,-20)*+{\TMPy \oplus \TMPXiii}="5,1";
    "5,1"+(19,-20)*+{\TMPy \oplus \TMPY}="6,0";
    {\ar^-{\TMPcx \oplus \id} "0,0"; "1,1"};
    {\ar^-{\TMPbx \oplus \id} "1,1"; "2,2"};
    {\ar^-{\TMPf \oplus \id} "2,2"; "3,3"};
    {\ar^-{\id \oplus \TMPcX} "3,3"; "4,2"};
    {\ar^-{\id \oplus \TMPbX} "4,2"; "5,1"};
    {\ar^-{\id \oplus \TMPF} "5,1"; "6,0"};
    {\ar|-{\id \oplus \TMPcX} "1,1"; "2,0"};
    {\ar|-{\id \oplus \TMPcX} "2,2"; "3,1"};
    {\ar|-{\TMPf \oplus \id} "3,1"; "4,2"};
    {\ar|-{\TMPf \oplus \id} "4,0"; "5,1"};
    {\ar|-{\TMPbx \oplus \id} "2,0"; "3,1"};
    {\ar|-{\id \oplus \TMPbX} "3,1"; "4,0"};
    {\ar@/_2pc/_-{\TMPcx \oplus \TMPcX} "0,0"; "2,0"};
    {\ar@/_2pc/_-{\TMPf \oplus \TMPF} "4,0"; "6,0"};
    {\ar@{=>}^-{\Sigma^{-1}} "3,3"+(0,-15)="a"; "a"+(0,-4)};
  \end{xy}
  \]
}
If either $f$ or $\psi$ is the identity, then the single instance of
$\Si$ above is also the identity, which immediately implies the unit
axiom for $\chi$.  The composition axiom involves a much larger
diagram that we omit, noting that it follows from the definitions of
the cells involved and the Gray tensor product axioms.

There are then three invertible modifications to construct in order to
show that $\epzo$ has a monoidal structure (see, e.g.,
\cite[App. A]{McCru00Balanced}).  Two concern the unit object, and one
easily checks that these can be chosen to be the identity, and the
third concerns using the monoidal product on a triple of objects.
This last one, usually denoted $\omega$,
can also be taken to be the identity using the Gray tensor product
axioms.  Then there are two axioms to check, but each consists of only
identity 2-cells, thus they both trivially commute.

There is one further modification, $U$, needed for a symmetric
monoidal structure on $\epzo$.  One can check that $\epzo$ sends the
1-cell $\beta$ in $P\ko{\cC}$ to the 1-cell $\beta$ in $\cC$, and then
the 2-naturality of $\beta$ (see \cref{defn:pgm}) can be used to show
that we can take $U$ to be the identity as well. 
Below we will check the final three symmetric monoidal pseudofunctor
axioms which all concern the interaction between $U$, the monoidal
structure on $\epzo$, and the modifications $R_{-|--}, R_{--|-}, v$ in
the source and target.  Diagrams for these are drawn in
\cite[App. A,B]{McCru00Balanced}.

For the first axiom, one pasting diagram consists of two instances of
$\omega$ which are both the identity, a naturality 2-cell for $\be$
which is the identity since one of the components is an identity
1-cell, one instance of $U$ which is the identity, and one final
2-cell.  This 2-cell is obtained by pasting together instances of the
pseudofunctoriality isomorphisms for $\epzo$.  Recalling the
expression for the braiding in
\cref{eqn:symmetry-for-Grothendieck-constr}, these 2-cells are all
special cases of the pseudofunctoriality isomorphisms in which the
1-cell has the form $\bsb{\ol{\sigma}}{\{\id,\id\}}$ for a block
permutation $\sigma$.  Combining
\cref{rmk:epz-phibar-is-permutation-when-phibar-is} with the formulas
for $\epzo$ in \cref{prop:epzo-pseudofunctor}, these are all identity
2-cells.  A similar argument, with the additional observation that
$\epzo$ strictly preserves identity 1-cells (see
\cref{eq:epzo-psnat}), shows that the other pasting diagram for
this axiom also consists of only identity 2-cells.  Thus the axiom
reduces to the statement that the identity 2-cell is equal to itself.

The second axiom, relating $R_{--|-}$ to $U$, is analogous.  The third
axiom relates $U$ to the syllepsis $v\cn\beta^{2} \cong 1$, and
follows by the same line of reasoning, only this time using that the
pseudofunctoriality isomorphism
\[
\epzo(\beta) \circ \epzo(\beta) \cong \epzo(\beta \circ \beta)
\]
is the identity.
\end{proof}
Combining the formulas of
\cref{prop:epzo-pseudofunctor,prop:functoriality-of-K,rmk:description-Ph},
one sees immediately that $\epzo$ is natural with respect to strict functors
of permutative Gray-monoids.
\begin{prop}
  \label{prop:epzo-natural}
  The functor $\epzo = \epzo_{\cC} \cn P \Ko \cC \to \cC$ is natural
  in $\cC$ with respect to strict functors of permutative Gray-monoids.
\end{prop}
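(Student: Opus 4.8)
The plan is to verify directly, from the explicit formulas, that for any strict functor $F\cn\cC\to\cD$ of permutative Gray-monoids the square of symmetric monoidal pseudofunctors
\[
\begin{xy}
(0,0)*+{P\Ko\cC}="0"; (30,0)*+{P\Ko\cD}="1";
(0,-15)*+{\cC}="2"; (30,-15)*+{\cD}="3";
{\ar^{P\Ko F} "0"; "1"}; {\ar^{\epzo_\cD} "1"; "3"};
{\ar_{\epzo_\cC} "0"; "2"}; {\ar_{F} "2"; "3"};
\end{xy}
\]
commutes on the nose. The reduction that makes this routine is that $F$, being strict, is in particular a normal oplax functor with identity structure $1$-cell and identity structure $2$-natural transformation, so by \cref{prop:functoriality-of-K} the map $\Ko F$ is the strict $\Ga$-map
\[
\{x_s,c_{s,t}\}\mapsto\{F(x_s),F(c_{s,t})\},\qquad
\{f_s,\ga_{s,t}\}\mapsto\{F(f_s),F(\ga_{s,t})\},\qquad
\{\al_s\}\mapsto\{F(\al_s)\},
\]
with no whiskering by a laxity cell. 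Since $\Ko F$ is strict, the $\sA$-lax map $A\Ko F$ has identity laxity cells (see \cref{defn:A-for-lax-maps}), so by \cref{rmk:description-Ph} the functor $P\Ko F$ acts on cells of $P\Ko\cC$ simply by applying $F$ componentwise to the data $\{\vec x,\vec c\}$, $\{\vec f,\vec\ga\}$, $\vec\al$, leaving $\ol\phi$ unchanged.

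Next I would compare the two composites cell by cell. On a $0$-cell $\bsb{\vec m}{\{\vec x,\vec c\}}$ we have $\epzo_\cD\bigl(P\Ko F\,\bsb{\vec m}{\{\vec x,\vec c\}}\bigr)=\bigoplus_i F\bigl(x^i_{\ul{m_i}}\bigr)$, which equals $F\bigl(\bigoplus_i x^i_{\ul{m_i}}\bigr)=F\bigl(\epzo_\cC\bsb{\vec m}{\{\vec x,\vec c\}}\bigr)$ because $F$ is a monoid homomorphism in $(\IICat,\otimes)$ and hence commutes with the cubical functor $\ol{\oplus}$ and its iterates (\cref{notn:sum-of-one-cells-gray-monoid}). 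On a $1$-cell $\bsb{\ol\phi}{\{\vec f,\vec\ga\}}$ the image under $\epzo_\cD\circ P\Ko F$ is $\epzo^\cD_{\vec n}(\{F\vec f,F\vec\ga\})\circ\epzo^\cD_{\ol\phi}$, and $\epzo^\cD_{\ol\phi}$ is, at the relevant object, the composite $\beta\circ\bigoplus_i(Fc)^i_{\ol\phi}$ of \cref{eq:defn-epzo-barphi}. Using that $F$ is a strict $2$-functor of underlying $2$-categories that strictly preserves $\oplus$ and $\beta$, one sees that $F$ carries the unique coherence $1$-cells $c^i_{\ol\phi}$ of \cref{rmk:unique-cells-given-by-partition} (which are particular composites of instances of $c$ and $\beta$) and the reordering symmetry $\beta$ to the corresponding data for $\{F\vec x,F\vec c\}$; hence $\epzo^\cD_{\ol\phi}$ evaluated at $P\Ko F\bsb{\vec m}{\{\vec x,\vec c\}}$ equals $F(\epzo^\cC_{\ol\phi})$, and likewise $\epzo^\cD_{\vec n}(\{F\vec f,F\vec\ga\})=F(\epzo^\cC_{\vec n}(\{\vec f,\vec\ga\}))$. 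As $F$ preserves composition of $1$-cells strictly, the two $1$-cells agree, and the case of $2$-cells (where $\epzo$ acts by $\epzo_{\vec n}(\vec\al)*1_{\epzo_{\ol\phi}}$) follows at once. The pseudofunctoriality constraints of $\epzo_\cC$ arise from the functoriality $2$-cells $\Sigma$ of the cubical $\oplus_\cC$ together with those produced by the bicategorical Grothendieck construction, and since $F$ is a strict $2$-functor that strictly preserves $\oplus$ and $\beta$ it matches these against the corresponding constraints of $\epzo_\cD$; so the two composites agree as pseudofunctors.

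Finally, to upgrade this to an equality of symmetric monoidal pseudofunctors I would recall from \cref{thm:epzo-symm-mon} that every piece of monoidal structure carried by $\epzo$---the unit transformation, the transformation $\chi$ with its $1$-cell components, and the invertible modifications $\omega$, $U$, and those for the unit---was chosen to be an identity, while a strict functor $F$ likewise has identity (or unique coherence) structure constraints; hence the structure data of $F\circ\epzo_\cC$ and of $\epzo_\cD\circ P\Ko F$ literally coincide. I do not expect a genuine obstacle: the content of the proposition is bookkeeping, and the only point warranting a sentence of justification is that $F$ preserves the ``unique'' cells of \cref{rmk:unique-cells-given-by-partition}---which holds because $F$ sends any presentation of such a cell as an $\oplus$-$\beta$-$c$ (resp.\ $\ga$) composite to the corresponding presentation over $\cD$, and uniqueness renders the choice of presentation immaterial.
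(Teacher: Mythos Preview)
Your proposal is correct and follows exactly the approach the paper indicates: the paper's entire argument is the sentence preceding the proposition, namely that one combines the explicit formulas of \cref{prop:epzo-pseudofunctor,prop:functoriality-of-K,rmk:description-Ph} and checks directly. You have simply written out that check in detail, using precisely those references together with the observation that for a strict $F$ the laxity data of $\Ko F$ and $A\Ko F$ are identities, so $P\Ko F$ acts componentwise by $F$.
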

\begin{rmk}

  For general pseudofunctors $(F,\theta)$, $\epzo$ is not strictly
  natural.  We do expect that $\epzo$ satisfies a weak naturality,
  induced by the pseudofunctoriality transformation $\theta$, but have
  not pursued those details.
\end{rmk}

We now describe the homotopy-theoretic properties of $\epzo$.  We use
these to show in \cref{thm:main-css-2} that the homotopy theory of
permutative Gray-monoids is equivalent to that of permutative
2-categories.

\begin{prop}
  \label{prop:epzo-weak-equiv}
  For a permutative Gray-monoid $\cC$, $\epzo\cn P\Ko\cC \to \cC$ is a
  weak equivalence of 2-categories.
\end{prop}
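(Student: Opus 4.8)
The plan is to identify $\epzo$, up to a weak equivalence we already control, with evaluation at a single subset, and then invoke the 2-out-of-3 property for weak equivalences. By \cref{prop:ko-is-special}, $\Ko \cC$ is a special $\Ga$-2-category and $\Ko \cC(\ulp{1})$ is isomorphic to $\cC$ via evaluation $\ev_{\ul{1}}$ at the subset $\ul{1} \subset \ul{1}$ (inspection of \cref{defn:Ko} with $n=1$). Consequently \cref{lem:X1-to-PX-weak-equiv} applies, and the inclusion
\[
j\cn \Ko \cC(\ulp{1}) \,\hookrightarrow\, P\Ko \cC
\]
is a weak equivalence of 2-categories; here $j$ is the inclusion of the fibre of the Grothendieck construction $P\Ko \cC = \sA \wre A\Ko \cC$ over the length-one tuple $(1) \in \sA$, so that it sends $\{x,c\}$ to $\bsb{(1)}{\{x,c\}}$, a 1-cell $\{f,\ga\}$ to $\bsb{\id_{(1)}}{\{f,\ga\}}$, and a 2-cell $\vec{\al}$ to $\bsb{\id_{(1)}}{\vec{\al}}$ (a strict 2-functor, since $(\id_{(1)})_* = \id$). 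It then suffices to prove that $\epzo\circ j$ is a weak equivalence.

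I claim $\epzo\circ j = \ev_{\ul{1}}$, which is the isomorphism of \cref{prop:ko-is-special} and hence certainly a weak equivalence. On 0-cells $\epzo\bsb{(1)}{\{x,c\}} = \epzo_{(1)}(\{x,c\})$; by \cref{defn:epzo-m} the pseudofunctor $\epzo_{(1)}$ is the single-factor composite $A\Ko \cC((1)) = \Ko \cC(\ulp{1}) \fto{\ev_{\ul{1}}} \cC \fto{\oplus} \cC$ whose second map is the identity, so $\epzo_{(1)} = \ev_{\ul{1}}$. On 1- and 2-cells the formulas of \cref{prop:epzo-pseudofunctor} give $\epzo\bsb{\id_{(1)}}{\{f,\ga\}} = \epzo_{(1)}(\{f,\ga\}) \circ \epzo_{\id_{(1)}}$ and $\epzo\bsb{\id_{(1)}}{\vec{\al}} = \epzo_{(1)}(\vec{\al}) * 1_{\epzo_{\id_{(1)}}}$; by \cref{rmk:epz-phibar-is-permutation-when-phibar-is} the pseudonatural transformation $\epzo_{\id_{(1)}}$ is the identity, so these reduce to $\ev_{\ul{1}}(\{f,\ga\})$ and $\ev_{\ul{1}}(\vec{\al})$. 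Thus $\epzo\circ j = \ev_{\ul{1}}$ as claimed.

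To conclude, recall that $\epzo$ is a normal pseudofunctor between 2-categories (it preserves identity 1-cells strictly; see \cref{prop:epzo-pseudofunctor}), so under any classifying-space functor defined on bicategories and pseudofunctors it induces a map of nerves, and all such functors are connected by natural weak equivalences by \cite{CCG10Nerves}. In the commuting triangle of simplicial sets determined by $j$, $\epzo$, and $\epzo j = \ev_{\ul{1}}$, both $Nj$ and $N(\epzo j)$ are weak equivalences, hence $N\epzo$ is a weak equivalence by 2-out-of-3; therefore $\epzo$ is a weak equivalence of 2-categories in the sense of \cref{defn:equivs-and-weak-equivs}.

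The only step requiring genuine care is the identification of the inclusion featuring in \cref{lem:X1-to-PX-weak-equiv} with the fibre inclusion $j$ over $(1)$, so that its composite with $\epzo$ is literally $\ev_{\ul{1}}$. This follows by inspecting the chain of weak equivalences in the proof of that lemma --- a change of indexing category from $\sN$ to $\sA$ followed by the Thomason--Carrasco-Cegarra-Garz\'on comparison \cref{eq:Thomason-result} --- each map of which carries the relevant fibre to the corresponding fibre; I expect this bookkeeping to be the only real friction, everything else being immediate from the constructions of $\Ko$ and $\epzo$ together with standard homotopy theory.
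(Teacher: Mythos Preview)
Your proof is correct and follows essentially the same approach as the paper: compose $\epzo$ with the inclusion $\Ko\cC(\ulp{1}) \hookrightarrow P\Ko\cC$ (a weak equivalence by \cref{lem:X1-to-PX-weak-equiv}), check that this composite is the isomorphism $\ev_{\ul{1}}\cn \Ko\cC(\ulp{1}) \iso \cC$, and conclude by 2-out-of-3. The paper's version is considerably terser---it simply asserts that the composite $\cC \iso \Ko\cC(\ulp{1}) \to P\Ko\cC \fto{\epzo} \cC$ is the identity---whereas you unpack the verification on cells and flag the pseudofunctor-versus-2-functor issue for nerves; your final paragraph of worry about identifying the inclusion with the fibre inclusion over $(1)$ is unnecessary, since the statement of \cref{lem:X1-to-PX-weak-equiv} already names exactly that map.
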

\begin{proof}\proofof{prop:epzo-weak-equiv}
  By \cref{prop:ko-is-special}, $\Ko\cC$ is a special $\Ga$-2-category
  and $\Ko\cC(\ulp{1}) \iso \cC$.  Hence by
  \cref{lem:X1-to-PX-weak-equiv} the first map in the composite below
  is a weak equivalence
  \[
  \cC \iso \Ko \cC(\ulp{1}) \to P\Ko\cC \fto{\epzo} \cC.
  \]
  Since the composite is 
  identity on $\cC$, the result follows.
\end{proof}

\begin{rmk}
\cref{prop:epzo-weak-equiv} demonstrates a genuinely new phenomenon for symmetric monoidal structures appearing at
the 2-dimensional level.  Namely, we have a strict notion of symmetric
monoidal structure which models all weak homotopy types of a more
general symmetric monoidal structure but does not model all
categorical equivalence types.

Indeed, there exist permutative Gray-monoids which are not equivalent
(in the categorical sense) to any permutative 2-category.  The
fundamental reason for this is that Gray-monoids model (unstable)
connected 3-types and, as is well-known, strict monoidal 2-categories
cannot.  The 2-cells $\Sigma$ are necessary to model the generally
nontrivial Whitehead product
\[
\pi_2 \times \pi_2 \to \pi_3.
\]
Simpson \cite[\S 2.7]{Simpson} shows, for example, that the 3-type of
$S^2$ cannot be modeled by any strict 3-groupoid and therefore
certainly not by any strict monoidal 2-groupoid.

The example of \cite[Example 2.30]{Sch2011Classification} is a permutative
Gray-monoid for which, for the same reason, cannot be equivalent to any
strict monoidal 2-category.  In particular, it cannot be equivalent to
a permutative 2-category.
\end{rmk}


We would like to use \cref{prop:epzo-weak-equiv} to show that every
permutative Gray-monoid is weakly equivalent, in the category of
permutative Gray-monoids and strict maps, to a permutative 2-category.
However $\epzo$ does not achieve this directly since it is not a
strict symmetric monoidal map.  We do achieve a zigzag of weak
equivalences in \cref{prop:epzo-strictification}, however, by applying
our variant of the strictification of \cite{Sch2011Classification} given
in \cref{thm:cohqs2cats2}.

\begin{prop}
  \label{prop:epzo-strictification}
  Applying the construction in \cref{thm:cohqs2cats2} to $\epzo$
  yields a natural zigzag of strict symmetric monoidal weak
  equivalences between $P\Ko\cC$ and $\cC$.  Hence these are naturally
  isomorphic in $\ho\PGM$.
\end{prop}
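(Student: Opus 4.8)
The plan is to feed $\epzo\cn P\Ko\cC \to \cC$ directly into the strictification machinery of \cref{cohqs2cats,thm:cohqs2cats2}, exactly as the statement suggests. First I would note that both ends of $\epzo$ are already quasi-strict: $\cC$ is a permutative Gray-monoid by hypothesis, and $P\Ko\cC$ is a permutative $2$-category by \cref{thm:P-lax-functorial}, hence in particular a permutative Gray-monoid; under the isomorphism $\PGM \cong \qsSMIICat$ of \cref{p2isoqs2} both are therefore quasi-strict symmetric monoidal $2$-categories. By \cref{thm:epzo-symm-mon}, $\epzo$ is a symmetric monoidal pseudofunctor between them, so \cref{thm:cohqs2cats2} applies and yields the zigzag of strict symmetric monoidal functors
\[
P\Ko\cC \fot{\nu} (P\Ko\cC)^{qst} \fto{\epzo^{qst}} \cC^{qst} \fto{\nu} \cC,
\]
all of whose objects are again permutative Gray-monoids via \cref{p2isoqs2}, so that the whole zigzag lives in $\PGM$.

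Second, I would check that every arrow in this zigzag is a weak equivalence of underlying $2$-categories. The two copies of $\nu$ are strict symmetric monoidal biequivalences by \cref{thm:cohqs2cats2}(1), and every biequivalence of $2$-categories is a weak equivalence. For $\epzo^{qst}$ I would use \cref{thm:cohqs2cats2}(2): the square with edges $\nu$, $\epzo$, $\epzo^{qst}$, $\nu$ commutes up to a symmetric monoidal equivalence, in particular up to a pseudonatural equivalence, so the composites $\epzo\circ\nu$ and $\nu\circ\epzo^{qst}$ become homotopic on nerves by \cite{CCG10Nerves}. Since $\epzo$ is a weak equivalence by \cref{prop:epzo-weak-equiv} and the left-hand $\nu$ is one, $\epzo\circ\nu$ is a weak equivalence, hence so is $\nu\circ\epzo^{qst}$; as the right-hand $\nu$ is also a weak equivalence, two-out-of-three gives that $\epzo^{qst}$ is a weak equivalence. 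Thus the displayed zigzag is a zigzag of strict symmetric monoidal weak equivalences between $P\Ko\cC$ and $\cC$.

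Third, for naturality in $\cC$ with respect to strict functors of permutative Gray-monoids, I would combine the naturality of $\epzo$ from \cref{prop:epzo-natural} with the functoriality of the constructions $(-)^{c}$, $(-)^{qst}$ and $\nu$ in \cref{cohqs2cats,thm:cohqs2cats2}: a strict functor $G\cn \cC \to \cD$ induces vertical maps $P\Ko G$, $(P\Ko G)^{qst}$, $G^{qst}$, $G$ forming a map of zigzags, the squares commuting because the right-hand square of \cref{cohqs2cats}(3) commutes on the nose and because $\nu$ is natural (being determined by its compatibility with the natural transformations $(-)^{c}\impl\id$ and $(-)^{c}\impl(-)^{qst}$, as in the proof of \cref{thm:cohqs2cats2}). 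A natural zigzag of weak equivalences in $(\PGM,\cW)$ then descends to a natural isomorphism in $\ho\PGM$ between the endofunctor $\cC \mapsto P\Ko\cC$ and the identity, which is the final assertion.

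A word on where the content sits: everything here is bookkeeping with \cref{thm:cohqs2cats2} once that theorem is in hand, and the only step that is not purely formal is the verification that $\epzo^{qst}$ is a weak equivalence, which rests on the (already invoked) fact that a symmetric monoidal equivalence between $2$-functors induces a homotopy on nerves, so that two-out-of-three applies. The genuinely substantive inputs — that $\epzo$ itself is a weak equivalence, and that quasi-strict symmetric monoidal $2$-categories can be strictified functorially with a section up to equivalence — have already been established in \cref{prop:epzo-weak-equiv} and \cref{thm:cohqs2cats2}.
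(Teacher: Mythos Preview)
Your proposal is correct and follows essentially the same approach as the paper: feed $\epzo$ into \cref{thm:cohqs2cats2}, obtain the zigzag $P\Ko\cC \leftarrow (P\Ko\cC)^{qst} \to \cC^{qst} \to \cC$, use two-out-of-three together with \cref{prop:epzo-weak-equiv} and the fact that the symmetric monoidal equivalence filling the square induces a homotopy on nerves to see that $\epzo^{qst}$ is a weak equivalence, and then invoke naturality of $\nu$ and $\epzo$ for the naturality statement. Your write-up is somewhat more detailed (explicitly noting why both endpoints are quasi-strict, and spelling out the naturality squares), but the argument is the same.
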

\begin{proof}\proofof{prop:epzo-strictification}
  The second part of \cref{thm:cohqs2cats2}, when applied to $\epzo$,
  yields the square
  \[
  \xy
  (0,0)*+{P\Ko\cC^{qst}}="00";
  (40,0)*+{P\Ko\cC}="10";
  (0,-12)*+{\cC^{qst}}="01";
  (40,-12)*+{\cC}="11";
  {\ar^{\nu} "00"; "10" };
  {\ar^{\epzo} "10"; "11" };
  {\ar_{\epzo^{qst}} "00"; "01" };
  {\ar_{\nu} "01"; "11" };
  (20,-6)*{\simeq}
  \endxy
  \]
  which only commutes up to a symmetric monoidal equivalence as
  indicated.  All of the objects in this square are permutative
  Gray-monoids, and the only map which is not strict is $\epzo$.  Both
  instances of $\nu$ are symmetric monoidal biequivalences, and the
  symmetric monoidal equivalence 2-cell filling the square yields a
  homotopy upon taking nerves, so the 2-out-of-3 property for weak
  equivalences shows that $\epzo^{qst}$ is a weak equivalence as
  $\epzo$ is by \cref{prop:epzo-weak-equiv}.  Thus our zigzag of
  strict symmetric monoidal weak equivalences is
  \[
  P\Ko\cC \stackrel{\nu}{\longleftarrow} P\Ko\cC^{qst} \stackrel{\epzo^{qst}}{\longrightarrow} \cC^{qst} \stackrel{\nu}{\longrightarrow} \cC.
  \]
  Given a strict map of permutative Gray-monoids $F\cn\cB\to\cC$, the
  zigzag above is natural in $F$ since both $\nu$ and $\epzo$ are.
\end{proof}

\subsection{\texorpdfstring{$K$}{K}-theory for permutative 2-categories}
\label{sec:K-epz-strict}

Recall that $\PIICatnop$ denotes the category of permutative
2-categories and normal oplax functors between them.  Using the
additional rigidity of permutative 2-categories we give a second,
somewhat simpler, $K$-theory functor
\[
\Kt \cn \PIICatnop \to \GaIICat.
\]
It is this version of $K$-theory that we will use to construct our equivalences of homotopy theories in \cref{sec:eta-and-triangle-ids}.

Throughout this section, let $(\cC,\oplus, e,\beta)$ be a permutative
2-category.
\begin{construction}
  \label{defn:Kt}
  For a finite pointed set $\ulp{n} \in \sF$, $\kt\cC(\ulp{n})$ is a
  2-category defined as follows.

  \begin{enumerate}
  \item The objects of $\kt\cC(\ulp{n})$ are
    the same as those of $\ko\cC(\ulp{n})$.

  \item A 1-morphism from $\obj{x}{c}$ to $\obj{x'}{c'}$ is a system
    of 1-morphisms $\{ f_s\}$, with $f_s\colon x_s \to x'_s$, such
    that $f_{\emptyset}=\id _e$ and for all $s$ and $t$, the diagram
    \[
    \def\labelstyle{\displaystyle}
    \xymatrix{
      x_{s\cup t} \ar[r]^-{c _{s,t}} \ar[d]_{f_{s\cup t}}
      & x_s\opl x_t \ar[d]^{f_s\opl f_t} \\
      x'_{s\cup t} \ar[r]_-{c'_{s,t}}
      & x'_s\opl x'_t.
    }
    \]
    commutes.

  \item A 2-morphism
    \[
    \def\labelstyle{\displaystyle}
    \begin{xy}
      (0,0)*+{\{x_s,c_s\}}="A";
      (40,0)*+{\{x'_s,c'_s\}}="B";
      {\ar@/^1pc/^{\{f_s\}} "A"; "B"};
      {\ar@/_1pc/_{\{g_s\}} "A"; "B"};
      {\ar@{=>}^{\{\alpha_s\}} (17,3)*+{}; (17,-3)*+{} };
    \end{xy}
    \]
    is a collection of 2-morphisms $\al_s \colon f_s \to g_s$ such that
    $\al_{\emptyset}=\id_{\id_e}$ and for all $s$ and $t$, the
    equality
    \[
    \def\labelstyle{\displaystyle}
    \xy
    (0,0)*+{\xst}="00";
    (30,0)*+{\xs\opl\xt}="20";
    (0,-24)*+{\xst'}="02";
    (30,-24)*+{\xs'\opl\xt'}="22";
    {\ar^{\cst} "00";"20"};
    {\ar@/^1.6pc/^{\fst} "00";"02"};
    {\ar@/_1.6pc/_{\gst} "00";"02"};
    {\ar^{\fs\opl\ft} "20";"22"};
    {\ar_{\cst'} "02";"22"};
    {\ar@{=>}^{\alst} (2,-12)*{};(-2,-12)*{}};
    (45,-12)*{=};
    (60,0)*+{\xst}="00";
    (90,0)*+{\xs\opl\xt}="20";
    (60,-24)*+{\xst'}="02";
    (90,-24)*+{\xs'\opl\xt'}="22";
    {\ar^{\cst} "00";"20"};
    {\ar_{\gst} "00";"02"};
    {\ar@/^1.7pc/^{\fs\opl\ft} "20";"22"};
    {\ar@/_1.7pc/_{\gs\opl\gt} "20";"22"};
    {\ar_{\cst'} "02";"22"};
    {\ar@{=>}^{\als \opl \alt} (92,-12)*{};(88,-12)*{}};
    \endxy
    \]
    holds.
  \end{enumerate}

  Composition of 1-morphisms and 2-morphisms is done componentwise. We
  emphasize that this definition is possible only because $\oplus
  \colon \cC \times \cC \to \cC$ is a 2-functor. It is easy to check that
  $\kt\cC(\ulp{n})$ is a 2-category. Given $\phi \colon \ulp{m} \to
  \ulp{n}$ in $\sF$, the 2-functor $\phi_{\ast} \colon
  \kt\cC(\ulp{m}) \to \kt\cC(\ulp{n})$ is defined in an analogous way
  to the one for $\ko$ (\cref{defn:Ko-phi}).
\end{construction}

\begin{rmk}
  This construction is a $\Cat$-enrichment of the standard
  $K$-theory for permutative categories initiated by Segal
  \cite{Seg74Categories}.  See, e.g., \cite{Man10Inverse}.
\end{rmk}

\begin{defn}\label{defn:stable-equiv-piicats}
  Let $\cC, \cD$ be a pair of permutative 2-categories.  A strict
  functor of permutative 2-categories $F \cn \cC \to \cD$ is a
  \emph{stable equivalence} if $\kt F$ is a stable equivalence of
  $\Ga$-2-categories.  We let $(\PIICat, \cS)$ denote the relative
  category of permutative 2-categories and stable equivalences.
\end{defn}

We have the analogue of \cref{prop:ko-is-special} for $\Kt$.

\begin{prop}
  \label{prop:kt-is-special}
  The $\Ga$-2-category $\kt \cC$ is special, with $\kt\cC(\ulp{1})$
  isomorphic to $\cC$.
\end{prop}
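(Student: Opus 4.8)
The plan is to mirror the argument for \cref{prop:ko-is-special}, which the excerpt attributes to \cite[\S 5.2]{Oso10Spectra}, but now taking advantage of the fact that $\oplus\cn\cC\times\cC\to\cC$ is a genuine $2$-functor rather than a cubical one. First I would identify $\Kt\cC(\ulp{1})$ with $\cC$. By \cref{defn:Kt}, an object of $\Kt\cC(\ulp{1})$ is a collection $\{x_s, c_{s,t}\}$ indexed on subsets $s\subset\ul{1}$, i.e.\ on $s\in\{\emptyset,\{1\}\}$; the axioms force $x_\emptyset=e$ and $c_{\emptyset,s}=c_{s,\emptyset}=\id$, so the only free datum is $x_{\{1\}}$, an arbitrary object of $\cC$. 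Similarly a $1$-morphism reduces to the single $1$-cell $f_{\{1\}}$ (with $f_\emptyset=\id_e$ and the $c$-compatibility square vacuous since both $c$'s are identities), and a $2$-morphism to a single $2$-cell $\al_{\{1\}}$. Thus $\ev_{\{1\}}\cn\Kt\cC(\ulp{1})\to\cC$ is an isomorphism of $2$-categories; this is the direct analogue of the last clause of \cref{prop:ko-is-special}, and the proof is essentially the same bookkeeping.

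Next I would prove specialness, i.e.\ that for each $n$ the Segal map
\[
\Kt\cC(\ulp{n}) \to \Kt\cC(\ulp{1})^{n} \iso \cC^{n}
\]
induced by the maps $i_k\cn\ulp{n}\to\ulp{1}$ of \cref{defn:special-very-special} is a weak equivalence of $2$-categories. Here $i_k$ sends $k$ to the nonbasepoint and everything else to the basepoint, so $(i_k)_*\{x,c\} = \{x_{i_k^{-1}(u)}\}$; on the nonempty subset of $\ul 1$ this reads off $x_{\{k\}}$. Hence the Segal map sends $\{x_s,c_{s,t}\}$ to the tuple $(x_{\{1\}},\dots,x_{\{n\}})$, and likewise on $1$- and $2$-cells by reading off the singleton components. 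I would exhibit a weak inverse: given $(y_1,\dots,y_n)\in\cC^n$, set $x_s = \bigoplus_{k\in s} y_k$ (with the summands in increasing order of index) and let $c_{s,t}\cn x_{s\cup t}\to x_s\oplus x_t$ be the canonical shuffle built from $\beta$, which is well-defined and coherent exactly as in \cref{rmk:unique-cells-given-by-partition}; on $1$-cells $(g_k)$ one takes $f_s=\bigoplus_{k\in s}g_k$, and the required commuting square for $\{f_s\}$ holds by $2$-naturality of $\beta$ together with the permutative-$2$-category axioms (this is where the strictness of $\oplus$ is essential — no $\Sigma$-cells appear, so the compatibility square in \cref{defn:Kt}(2) is literally a commuting square). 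The composite $\cC^n\to\Kt\cC(\ulp{n})\to\cC^n$ is the identity, and the composite in the other order admits a $2$-natural isomorphism to the identity $2$-functor of $\Kt\cC(\ulp{n})$, built componentwise from the coherence isomorphisms $x_s\iso\bigoplus_{k\in s}x_{\{k\}}$ furnished by the $c$'s. A $2$-natural isomorphism of $2$-functors is in particular a weak equivalence (it induces a homotopy on nerves), so the Segal map is a weak equivalence.

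The main obstacle is verifying that the would-be inverse construction and the comparison $2$-natural isomorphism actually assemble correctly — that the chosen shuffle $1$-cells $c_{s,t}$ satisfy axioms \eqref{astu} and \eqref{ats} of \cref{defn:Ko} (which are identical for $\Kt$), that the square in \cref{defn:Kt}(2) commutes for $\{f_s\}=\{\bigoplus_{k\in s}g_k\}$, and that the components $x_s\iso\bigoplus_{k\in s}x_{\{k\}}$ are natural in $1$-cells of $\Kt\cC(\ulp{n})$ and hence give a genuine modification-free $2$-natural transformation. All of these reduce, via \cref{rmk:unique-cells-given-by-partition} and the $2$-naturality of $\beta$ with respect to maps in $\cC\otimes\cC$ (a fortiori in $\cC\times\cC$), to the permutative-category-style coherence that underlies the classical fact that Segal $K$-theory of a permutative category is special; the $\Cat$-enrichment adds only a layer of $2$-cells to track, with no genuinely new coherence. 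Since $\oplus$ is strictly $2$-functorial here, none of the subtleties of the Gray-monoid case (the $\Sigma$-cells, the nudging of \cref{sec:2-cats-and-Gray-tensor-product}) intervene, so I would expect this to be routine, and indeed the excerpt flags it as "the analogue of \cref{prop:ko-is-special}."
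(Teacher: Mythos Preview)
The paper does not give a proof of this proposition; it simply records it as the analogue of \cref{prop:ko-is-special}, which in turn is attributed to \cite[\S5.2]{Oso10Spectra}. Your direct argument is the standard one underlying that reference and is essentially correct in outline.

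One point needs correcting. In \cref{defn:Ko} (and hence in \cref{defn:Kt}, which shares the same objects) the structure $1$-cells $c_{s,t}\cn x_{s\cup t}\to x_s\oplus x_t$ are \emph{not} assumed invertible. Consequently the components $x_s\to\bigoplus_{k\in s}x_{\{k\}}$ that you build by iterating the $c$'s are genuine $1$-cells, not isomorphisms, and the comparison you assemble is a $2$-natural transformation rather than a $2$-natural isomorphism. This does not break your argument: any $2$-natural transformation between $2$-functors induces a homotopy between the maps on nerves (this is exactly the principle invoked after \cref{defn:equivs-and-weak-equivs} and underlying \cref{cor:gtrans}), so the composite $\Kt\cC(\ulp{n})\to\cC^n\to\Kt\cC(\ulp{n})$ is still homotopic to the identity and the Segal map is a weak equivalence. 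You should rephrase the last paragraph accordingly, replacing ``$2$-natural isomorphism'' and ``coherence isomorphisms'' with ``$2$-natural transformation'' and ``canonical $1$-cells,'' and adjusting the justification for why a transformation suffices.
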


Since $\sC$ is a permutative 2-category it is also a permutative
Gray-monoid, so we also have the weaker construction $\Ko\cC$ of
\cref{sec:K-epz-fluffy}.  As the construction $\Kt\cC$ is a
specialization of the former construction, we have an inclusion
\[\label{Kt-to-Ko}
\Kt\cC \to \Ko\cC
\]
given by
\label{Kt-to-Ko-formulas}
\begin{align*}
  \{x_s,c_{s,t}\} & \mapsto \{x_{s},c_{s,t}\},\\
  \{f_s\} & \mapsto \{f_s,\id\},\\
  \{\al_s\} & \mapsto \{\al_s,\id\}.
\end{align*}

\begin{prop}
  \label{prop:K-Ko-levelwise-weak-equiv}
  For a permutative 2-category $\cC$, the inclusion
  \[
  \Kt\cC \to \Ko\cC
  \]
  is a strict map of $\Ga$-2-categories and a levelwise weak
  equivalence.  The inclusion is natural with respect to strict
  functors of permutative 2-categories, i.e., strict symmetric
  monoidal 2-functors.
\end{prop}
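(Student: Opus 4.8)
The plan is to verify each of the three claims by unwinding the definitions of $\Kt\cC$, $\Ko\cC$, and the inclusion given just above the statement. First I would check that the inclusion $\Kt\cC \to \Ko\cC$ is a well-defined strict map of $\Ga$-2-categories. On objects it is the identity, since \cref{defn:Kt} declares the objects of $\kt\cC(\ulp{n})$ to be the same as those of $\ko\cC(\ulp{n})$. On a 1-morphism $\{f_s\}$ of $\kt\cC(\ulp{n})$, the commuting square in \cref{defn:Kt} says precisely that $c'_{s,t} \circ f_{s\cup t} = (f_s \oplus f_t) \circ c_{s,t}$, so the pair $\{f_s, \id\}$ satisfies the 2-cell condition \eqref{phits} and the hexagon \eqref{phistu} of \cref{defn:Ko} with all $\ga_{s,t}$ identities (here one uses that $\Sigma$ is an identity when one of its arguments is an identity 1-cell, so the $\Sigma$'s appearing in \eqref{phistu} and \eqref{phits} are trivial); similarly a 2-morphism $\{\al_s\}$ of $\kt\cC$ maps to $\{\al_s,\id\}$, and the equality in \eqref{bst} reduces to the equality displayed in \cref{defn:Kt}. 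One also checks horizontal composition is preserved: $(\de \diamond \ga)_{s,t}$ with all $\ga,\de$ identities is again an identity (the $\Sigma$ in that pasting is trivial), matching componentwise composition in $\kt\cC$. Commutation with the maps $\phi_*$ for $\phi \in \sF$ is immediate because $\phi_*$ acts by reindexing in both constructions (\cref{defn:Ko-phi} and the analogous definition in \cref{defn:Kt}).

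Next I would prove that the inclusion is a levelwise weak equivalence, i.e., that for each $\ulp{n}$ the 2-functor $\kt\cC(\ulp{n}) \to \ko\cC(\ulp{n})$ induces a weak equivalence on nerves. The natural approach is to exhibit it as an equivalence of 2-categories in the sense of \cref{defn:equivs-and-weak-equivs}(1) — a biequivalence — since every equivalence is a weak equivalence. It is bijective (hence surjective) on objects, so it suffices to show that for objects $\{x,c\}$ and $\{x',c'\}$ the functor on hom-categories
\[
\kt\cC(\ulp{n})\big(\{x,c\},\{x',c'\}\big) \to \ko\cC(\ulp{n})\big(\{x,c\},\{x',c'\}\big)
\]
is an equivalence of categories. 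Faithfulness is clear since a 2-morphism in $\kt\cC$ is determined by its components $\{\al_s\}$, which are recorded unchanged. Fullness on 2-cells: a 2-morphism $\{\al_s\}$ in $\ko\cC$ between images of $\kt\cC$-1-morphisms automatically satisfies the $\kt\cC$ condition because setting $\ga = \de = \id$ in \eqref{bst} recovers exactly that equation. The remaining point, essential surjectivity on 1-morphisms, is where the content lies.

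The hard part will be showing that every 1-morphism $\{f_s, \ga_{s,t}\} \cn \{x,c\}\to\{x',c'\}$ in $\ko\cC(\ulp{n})$ is isomorphic, in the hom-category, to one of the form $\{g_s, \id\}$ coming from $\kt\cC$. The idea is to ``straighten'' the $\ga$'s. Concretely, one builds an isomorphism 2-cell $\{\al_s\} \cn \{f_s,\ga_{s,t}\} \Rightarrow \{g_s,\id\}$ by induction on the cardinality of $s$: set $\al_\emptyset = \id$, and for larger $s$ choose $g_s$ and $\al_s$ so that the coherence 2-cell condition \eqref{bst} forces $\ga_{s,t}$ to become the identity after modifying by $\al$. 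One must check that the inductive choices are consistent with the associativity hexagon \eqref{phistu} and the braiding compatibility \eqref{phits}; this is where the permutative 2-category axioms (in particular 2-naturality of $\beta$ and strict functoriality of $\oplus$) are used, and it parallels the standard strictification of symmetric monoidal functors between permutative categories. I expect this to be routine but notationally heavy, and I would either carry it out carefully or cite the 1-categorical analogue in \cite{Man10Inverse} as the template. Finally, naturality with respect to strict functors $F\cn\cC\to\cD$ of permutative 2-categories follows by comparing the explicit formulas: $\Ko F$ sends $\{f_s,\ga_{s,t}\}$ to $\{F f_s, 1_\tha * F\ga_{s,t}\}$ with $\tha = \id$ since $F$ is strict, hence to $\{F f_s, F\ga_{s,t}\}$, while the inclusion followed by $\Kt F$ sends $\{f_s\}$ to $\{Ff_s,\id\}$ — these agree, and similarly on objects and 2-cells, so the evident square commutes on the nose.
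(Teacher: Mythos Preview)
Your approach to the levelwise weak equivalence has a real gap. You attempt to show that the inclusion $\Kt\cC(\ulp{n})\hookrightarrow\Ko\cC(\ulp{n})$ is a biequivalence, and for this you need essential surjectivity on hom-categories. Concretely, given $\{f_s,\ga_{s,t}\}$ you must produce $\{g_s\}$ and invertible $\{\al_s\}$ satisfying the 2-cell axiom \eqref{bst} with $\de_{s,t}=\id$. In that equation $\al_{s\cup t}$ appears only whiskered by the structure 1-cell $c'_{s,t}$; since $c'_{s,t}$ is an arbitrary 1-cell (not assumed to be an equivalence, nor faithful, nor full as a functor on 2-cells under whiskering), the equation you need to solve for $\al_{s\cup t}$ need not have a solution. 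Your proposed induction on $|s|$ stalls at this point, and there is no reason to expect the inclusion to be a biequivalence in general.

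The paper's argument avoids this completely. It uses that both $\Kt\cC$ and $\Ko\cC$ are special (\cref{prop:kt-is-special,prop:ko-is-special}), so the Segal projections $\Kt\cC(\ulp{n})\to\cC^n$ and $\Ko\cC(\ulp{n})\to\cC^n$ are weak equivalences. The inclusion fits into the evident commuting square
\[
\begin{xy}
(0,0)*+{\Kt\cC(\ulp{n})}="A";
(30,0)*+{\Ko\cC(\ulp{n})}="B";
(0,-12)*+{\cC^{n}}="C";
(30,-12)*+{\cC^{n}}="D";
{\ar "A"; "B"};
{\ar "B"; "D"};
{\ar "A"; "C"};
{\ar "C"; "D"};
\end{xy}
\]
with the bottom map the identity (since at level $1$ the inclusion is the identity on $\cC$). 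Now 2-out-of-3 gives that the top map is a weak equivalence. This is strictly weaker than biequivalence and is all that is claimed. Your verification that the inclusion is a strict $\Ga$-map and your naturality argument are fine, but you should replace the biequivalence argument with this square.
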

\begin{proof}\proofof{prop:K-Ko-levelwise-weak-equiv}
  The first statement is obvious, and therefore we get a commutative
  square
  \[
  \begin{xy}
    (0,0)*+{\Kt\cC(\ulp{n})}="A";
    (30,0)*+{\ko\cC(\ulp{n})}="B";
    (0,-12)*+{\cC^{n}}="C";
    (30,-12)*+{\cC^{n}}="D";
    {\ar "A"; "B"};
    {\ar "B"; "D"};
    {\ar "A"; "C"};
    {\ar "C"; "D"};
   \end{xy}
  \]
  using the projection maps, \cref{prop:kt-is-special}, and the fact
  that the map $\Kt\cC(\ulp{1}) \to \Ko\cC(\ulp{1})$ is the identity.
  Both vertical maps and the bottom map are weak equivalences, so the
  top is as well, proving the second statement.  Naturality with
  respect to strict functors of permutative 2-categories is then just
  a straightforward check using the definitions.
\end{proof}

\begin{rmk}
  \label{rmk:K-functorial}
  If $F\cn \cC \to \cD$ is a normal oplax map of permutative
  2-categories, then the map of $\Ga$-2-categories $\Ko F$ constructed in
  \cref{prop:functoriality-of-K} in fact defines a map
  \[
  \Kt F\cn \Kt\cC \to \Kt\cD.
  \]
\end{rmk}

We have the following as an immediate consequence of
\cref{prop:K-Ko-levelwise-weak-equiv,prop:Ko-preserves-weak-equivalences},
together with the fact that stable equivalences satisfy the 2 out of 3
property.
\begin{prop}\label{prop:inc_creates_stable}
The inclusion $\PIICat \hookrightarrow \PGM$ preserves and reflects stable equivalences, so defines a relative functor $(\PIICat, \cS) \to (\PGM, \cS)$.
\end{prop}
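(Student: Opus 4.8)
The plan is to reduce everything to a two-out-of-three argument for stable equivalences of $\Ga$-2-categories, using the naturality of the inclusion $\Kt\cC\to\Ko\cC$ from \cref{prop:K-Ko-levelwise-weak-equiv}. The first small point to record is that any levelwise weak equivalence of $\Ga$-2-categories is a stable equivalence: such a map becomes invertible in $\ho\Ga\mh\IICat$, so the function $f^{*}\cn\ho\Ga\mh\IICat(Y,Z)\to\ho\Ga\mh\IICat(X,Z)$ of \cref{defn:stable-equiv} is a bijection for every $\Ga$-2-category $Z$, a fortiori for every very special one.

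Now let $F\cn\cC\to\cD$ be a strict functor of permutative 2-categories. Since a strict functor is in particular normal oplax, both $\Ko F$ and $\Kt F$ are defined (see \cref{prop:functoriality-of-K,rmk:K-functorial}); and because the inclusion $\PIICat\hookrightarrow\PGM$ is the identity on underlying data, $\Ko F$ is precisely the map of $\Ga$-2-categories that governs, via \cref{defn:stable-equiv-pgm}, whether $F$ is a stable equivalence in $\PGM$. By the naturality clause of \cref{prop:K-Ko-levelwise-weak-equiv}, the square of strict maps of $\Ga$-2-categories
\[
\begin{xy}
  (0,0)*+{\Kt\cC}="A";
  (30,0)*+{\Kt\cD}="B";
  (0,-15)*+{\Ko\cC}="C";
  (30,-15)*+{\Ko\cD}="D";
  {\ar^{\Kt F} "A"; "B"};
  {\ar "B"; "D"};
  {\ar "A"; "C"};
  {\ar_{\Ko F} "C"; "D"};
\end{xy}
\]
commutes, and by the same proposition its two vertical maps are levelwise weak equivalences, hence stable equivalences by the previous paragraph. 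Since stable equivalences satisfy two-out-of-three, $\Kt F$ is a stable equivalence of $\Ga$-2-categories if and only if $\Ko F$ is.

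Unwinding \cref{defn:stable-equiv-piicats,defn:stable-equiv-pgm}, this says exactly that $F$ is a stable equivalence in $\PIICat$ if and only if its image in $\PGM$ is a stable equivalence in $\PGM$; that is, the inclusion preserves and reflects stable equivalences, and preservation yields the relative functor $(\PIICat,\cS)\to(\PGM,\cS)$. There is no real obstacle here: the only nontrivial ingredients are the naturality in \cref{prop:K-Ko-levelwise-weak-equiv}, which has already been established, and two-out-of-three for stable equivalences, which is standard, so the argument is a one-square diagram chase.
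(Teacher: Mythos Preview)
Your proof is correct and follows essentially the same approach as the paper: use the natural levelwise weak equivalence $\Kt\cC\to\Ko\cC$ from \cref{prop:K-Ko-levelwise-weak-equiv}, note that levelwise weak equivalences are stable equivalences, and apply two-out-of-three to the resulting commutative square. The paper states this as an immediate consequence without spelling out the diagram, but the argument is the same.
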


We also have a further result about $K$.

\begin{prop}
  \label{prop:K-preserves-weak-equivalences}
  The functor $K \cn \PIICat \to \GaIICat$ sends weak equivalences of
  permutative 2-categories to levelwise weak equivalences of
  $\Ga$-2-categories, and therefore is a relative functor $(\PIICat, \cW) \to (\GaIICat, \cW)$.
\end{prop}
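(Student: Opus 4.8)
The plan is to reduce the statement to the corresponding fact about $\Ko$, which is already available, via the comparison map of \cref{prop:K-Ko-levelwise-weak-equiv}. First I would recall the setup: given a weak equivalence $F\cn \cC \to \cD$ of permutative 2-categories, $F$ is in particular a strict functor of the underlying permutative Gray-monoids, and hence a normal oplax functor, so both $\Kt F$ and $\Ko F$ are defined (\cref{rmk:K-functorial}, \cref{prop:functoriality-of-K}). The goal is to show that $\Kt F$ is a levelwise weak equivalence of $\Ga$-2-categories, i.e.\ that for each $\ulp{n} \in \sF$ the 2-functor $(\Kt F)_{\ulp{n}}\cn \Kt\cC(\ulp{n}) \to \Kt\cD(\ulp{n})$ is a weak equivalence of 2-categories.

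The key step is a naturality-square argument. By \cref{prop:K-Ko-levelwise-weak-equiv}, the inclusion $\iota\cn \Kt \to \Ko$ is a strict map of $\Ga$-2-categories and a levelwise weak equivalence, and it is natural with respect to strict functors of permutative 2-categories. Applying this naturality to $F$, for each $\ulp{n}$ we obtain a commuting square in $\IICat$
\[
\xy
(0,0)*+{\Kt\cC(\ulp{n})}="00";
(40,0)*+{\Ko\cC(\ulp{n})}="10";
(0,-15)*+{\Kt\cD(\ulp{n})}="01";
(40,-15)*+{\Ko\cD(\ulp{n})}="11";
{\ar^{\iota_{\cC}} "00"; "10"};
{\ar^{\Ko F} "10"; "11"};
{\ar_{\Kt F} "00"; "01"};
{\ar_{\iota_{\cD}} "01"; "11"};
\endxy
\]
in which the two horizontal maps are weak equivalences of 2-categories by \cref{prop:K-Ko-levelwise-weak-equiv}. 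The right-hand vertical map $\Ko F$ is a levelwise weak equivalence by \cref{prop:Ko-preserves-weak-equivalences}, since $F$ is a strict, hence normal oplax, functor of permutative Gray-monoids which is a weak equivalence on underlying 2-categories. Since weak equivalences of 2-categories are closed under the 2-out-of-3 property (their nerves being weak equivalences of simplicial sets, for which 2-out-of-3 holds), the remaining map $\Kt F$ is also a weak equivalence of 2-categories. As this holds for each $\ulp{n}$, the map $\Kt F$ is a levelwise weak equivalence of $\Ga$-2-categories, so $\Kt$ restricts to a relative functor $(\PIICat,\cW) \to (\GaIICat,\cW)$.

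Honestly, there is no serious obstacle here: the work has already been done in \cref{prop:Ko-preserves-weak-equivalences} (which in turn rests on \cite[\S 5.2]{Oso10Spectra}) and in \cref{prop:K-Ko-levelwise-weak-equiv}, and the proof is a two-line diagram chase. The only point requiring a moment's care is confirming that $F$ being a weak equivalence of permutative 2-categories in the sense of \cref{defn:weak-equiv-piicats} does feed correctly into \cref{prop:Ko-preserves-weak-equivalences}: a strict functor of permutative 2-categories whose underlying 2-functor is a weak equivalence is, under the inclusion $\PIICat \hookrightarrow \PGM$, a strict functor of permutative Gray-monoids with the same underlying 2-functor, hence a weak equivalence in the sense of \cref{defn:weak-equiv-pgm}, and so $\Ko F$ is a levelwise weak equivalence. (One could alternatively argue directly from \cref{prop:kt-is-special} and the corresponding projection-square argument as in the proof of \cref{prop:K-Ko-levelwise-weak-equiv}, bypassing $\Ko$ entirely, but the route through $\Ko$ is cleanest given what is already proved.)
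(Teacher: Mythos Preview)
Your proof is correct and matches the paper's approach: the paper states this proposition without an explicit proof, treating it as an immediate consequence of \cref{prop:K-Ko-levelwise-weak-equiv} and \cref{prop:Ko-preserves-weak-equivalences}, which is precisely the naturality-square plus 2-out-of-3 argument you give.
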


\begin{rmk}
Note that the version of the previous proposition in which $\cW$ is replaced by $\cS$ is in fact merely the definition of stable equivalences in $\PIICat$.
\end{rmk}

The additional strictness of $K\cC$ for a permutative 2-category $\cC$
provides a stricter version of the map $\epzo$ constructed above.
\begin{defn}
\label{defn:epz}
For a permutative 2-category $\cC$, let $\epz: PK\cC \to \cC$ be the
composite
\[
PK\cC \to P\Ko\cC \fto{\epzo} \cC.
\]
For $\ol{\phi}\cn \vec{m} \to \vec{n}$ in $\sA$, we let
$\epz_{\vec{m}}$ and $\epz_{\ol{\phi}}$ denote the corresponding
composites with $AK\cC \to A\Ko\cC$.
\end{defn}


Although $\epzo$ is only a weak symmetric monoidal functor of
permutative Gray-monoids, the composite $\epz$ enjoys a number of
stricter properties.
\begin{prop}
  \label{prop:epz-times-is-strict-SM}
  Let $\cC$ be a permutative 2-category and let $\ol{\phi}\cn \vec{m}
  \to \vec{n}$ in $\sA$. Then
  \begin{enumerate}
  \item $\epz_{\vec{m}}$ is a 2-functor.
  \item $\epz_{\ol{\phi}}$ is 2-natural.
  \item $\epz$ is a strict symmetric monoidal functor.
  \end{enumerate}
\end{prop}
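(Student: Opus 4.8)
The plan is to specialize the construction of $\epzo$ from \cref{sec:counit} and observe that every piece of non-strict structure appearing there comes from one of three sources—the functoriality constraints $\Sigma$ of the cubical multiplication $\oplus$ (see \cref{notn:sum-of-one-cells-gray-monoid}), the naturality $2$-cells of the braiding $\beta$, or the structure $2$-cells $\ga$ of $1$-morphisms in $\Ko\cC$—and that for a \emph{permutative} $2$-category each of these is an identity. Indeed, $\oplus\cn\cC\times\cC\to\cC$ is an honest $2$-functor, so the $\Sigma$'s are identities; $\beta$ is a $2$-natural isomorphism by \cref{defn:perm-2-cat}, so its naturality $2$-cells are identities; and the $1$-morphisms of $\Kt\cC$ have all $\ga$'s equal to identities by \cref{defn:Kt}, so after restricting along $\Kt\cC\to\Ko\cC$ these disappear as well.

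For (1), recall that $\epz_{\vec m}$ is the composite $AK\cC(\vec m)\to A\Ko\cC(\vec m)=\prod_i\Ko\cC(\ulp{m_i})\fto{\prod_i\ev_{\ul{m_i}}}\prod_i\cC\fto{\oplus}\cC$. The inclusion $\Kt\cC\to\Ko\cC$ is a strict $2$-functor (\cref{prop:K-Ko-levelwise-weak-equiv}) and each $\ev_{\ul{m_i}}$ is a $2$-functor; the only obstruction to $\epzo_{\vec m}$ being a $2$-functor was the cubical (hence merely pseudo) nature of $\oplus$, whose pseudofunctoriality constraints are built from the $\Sigma_{f,g}$. Since these vanish, $\epz_{\vec m}$ is a $2$-functor. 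For (2), the component $1$-cells of $\epz_{\ol\phi}$ are the composites $\beta\circ\bigoplus_i c^i_{\ol\phi}$ of \cref{eq:defn-epzo-barphi}, and its naturality $2$-cells are the pastings displayed in \cref{eq:epzo-psnat}, each built from the ``unique cell'' $2$-cell of \cref{rmk:unique-cells-given-by-partition} and a naturality $2$-cell of $\beta$. Restricted to $\Kt\cC$ the former is an identity—the square of \cref{rmk:unique-cells-given-by-partition} commutes strictly because the $\ga$'s are identities, the $c$'s obey the strict-commutativity condition of a $\Kt\cC$-morphism, and $\beta$ is $2$-natural—and the latter is an identity since $\beta$ is $2$-natural. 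Hence $\epz_{\ol\phi}$ is $2$-natural.

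For (3), feeding (1) and (2) into the bicategorical Grothendieck construction exactly as in \cref{prop:epzo-pseudofunctor} produces $\epz\cn PK\cC\to\cC$ with all pseudofunctoriality constraints now identities, so $\epz$ is a strict $2$-functor. It remains to see that it strictly preserves the permutative structure in the sense of \cref{defn:strict-functor-gray-mon}: it sends the unit $[(\,),\ast]$ of $PK\cC$ to $e$ on the nose; the monoidal-constraint icon $\chi$ of \cref{thm:epzo-symm-mon} has identity components on $1$-cells because the single instance of $\Sigma^{-1}$ in its defining pasting is now an identity, so the comparison square for $\oplus$ commutes strictly on all cells; and $\epz$ strictly preserves $\beta$, because the symmetry of $PK\cC=\sA\wre AK\cC$ is the block permutation $[\be_{\vec m,\vec n},\id]$ of \cref{eqn:symmetry-for-Grothendieck-constr} and $\epzo$ carries block permutations to the corresponding components of $\beta$ in $\cC$ by \cref{rmk:epz-phibar-is-permutation-when-phibar-is}.

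The main obstacle is the bookkeeping in step (2): one must verify that the ``unique cell'' $2$-cell of \cref{rmk:unique-cells-given-by-partition} really reduces to an identity for an arbitrary $\Kt\cC$-morphism and an arbitrary partition, which amounts to iterating the strict naturality square of such a morphism together with $2$-naturality of $\beta$, and then confirming that no non-identity $2$-cell survives anywhere else in the passage from the construction of $\epzo$ to that of $\epz$. Everything else is a direct specialization of \cref{sec:counit}.
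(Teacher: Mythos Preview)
Your proposal is correct and follows essentially the same approach as the paper: for (1) observe that $\oplus$ is now a genuine $2$-functor, for (2) observe that the $2$-cells in \cref{eq:epzo-psnat} become identities, and for (3) observe that the icon $\chi$ of \cref{thm:epzo-symm-mon} becomes the identity. Your write-up is considerably more detailed than the paper's three-line proof—in particular your explicit decomposition of the sources of non-strictness and the verification that the ``unique cell'' of \cref{rmk:unique-cells-given-by-partition} collapses for $\Kt\cC$-morphisms—but the underlying argument is the same.
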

\begin{proof}\proofof{prop:epz-times-is-strict-SM}
  The first claim holds because the composite in \cref{defn:epzo-m} is
  2-functorial when $\oplus$ is a 2-functor.  The second holds because
  the 2-cells in \cref{eq:epzo-psnat} are identities when $\cC$ is a
  permutative 2-category.  The third holds upon noting that $\chi$ in the
  proof of \cref{thm:epzo-symm-mon} is the identity
  2-cell when $\cC$ is a permutative 2-category.
\end{proof}

Taking a $\Cat$-enrichment of \cite[Thm 4.7]{Man10Inverse} or by
specializing \cref{prop:epzo-natural} we have the following statement
about naturality of $\epz$.
\begin{prop}
  \label{prop:epz-is-natural}
  Let $\cC$ be a permutative 2-category.  Then
  \[
  \epz\cn PK\cC \to \cC
  \]
  is natural with respect to strict maps of permutative 2-categories,
  and lax natural with respect to lax maps.
\end{prop}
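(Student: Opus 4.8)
The plan is to reduce the claim for $\epz$ directly to properties of $\epzo$ and the inclusion $PK\cC \to P\Ko\cC$, rather than re-deriving everything from scratch. For strict naturality with respect to strict maps, note that by \cref{defn:epz} the functor $\epz = \epzo_\cC \circ \iota_\cC$ where $\iota_\cC\cn PK\cC \to P\Ko\cC$ is the inclusion of \cref{prop:K-Ko-levelwise-weak-equiv}, applied with $P$. By \cref{prop:epzo-natural}, $\epzo$ is natural with respect to strict functors of permutative Gray-monoids, and a strict functor of permutative 2-categories is in particular such a functor. By the last sentence of \cref{prop:K-Ko-levelwise-weak-equiv}, $\iota$ is natural with respect to strict symmetric monoidal 2-functors of permutative 2-categories, and applying the functor $P$ (more precisely, recalling the explicit formula for $Ph$ in \cref{rmk:description-Ph} and the commuting of $P\Ko F$ with $\iota$) preserves this naturality. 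Composing the two naturality squares gives that $\epz$ is natural with respect to strict maps. This is the routine half.

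The substantive half is lax naturality with respect to lax maps $F\cn \cC \to \cD$ of permutative 2-categories (equivalently, by the theorem in \cref{sec:perm-2-cat} identifying lax $S$-algebra morphisms with lax functors, lax maps in the sense of \cref{laxfundef} with $\otimes$ replaced by $\times$). First I would record that, by \cref{rmk:K-functorial}, a normal oplax $F$ induces $\Kt F\cn \Kt\cC \to \Kt\cD$; more generally a lax $F$ gives an oplax structure on the reverse, so one must be careful about variance — the $\Ga$-lax structure enters through the construction $\Ko$ on oplax functors. The strategy is: a lax map $F$ of permutative 2-categories induces a $\Ga$-lax map $\Kt F\cn \Kt\cC \to \Kt\cD$ (using \cref{prop:functoriality-of-K} together with \cref{rmk:K-functorial}, since the construction $\Ko$ was built on $\PGMnop$), hence by \cref{thm:P-lax-functorial} a strict functor $PK F\cn PK\cC \to PK\cD$ of permutative 2-categories. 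The claim is then that the square
\[
\begin{xy}
  (0,0)*+{PK\cC}="00";
  (25,0)*+{\cC}="10";
  (0,-15)*+{PK\cD}="01";
  (25,-15)*+{\cD}="11";
  {\ar^{\epz_\cC} "00"; "10"};
  {\ar_{PK F} "00"; "01"};
  {\ar^{F} "10"; "11"};
  {\ar_{\epz_\cD} "01"; "11"};
  {\ar@{=>} (15,-5)="a"; "a"+(-5,-5)};
\end{xy}
\]
commutes up to a lax transformation whose structure 2-cells are built from the lax structure 2-cells $\theta$ of $F$.

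To produce the filling lax transformation, I would work componentwise over $\sA$ and unwind the definitions. On objects $[\vec m, \{\vec x, \vec c\}]$ of $PK\cC$, both composites send this to an iterated $\oplus$ of the objects $F(x^i_{\ul{m_i}})$ in $\cD$; the lax transformation's component 1-cell is built by iterating the structure 1-cells $\theta$ of $F$ (exactly as in the composition formula for lax functors recalled after \cref{laxfundef}), reindexed by the partition data of $\vec m$. Compatibility with the 1-cells of $PK\cC$ — which by \cref{rmk:description-Ph,eq:defn-epzo-barphi} are built from the structural $c^i_{\ol\phi}$, the braiding $\beta$, and the images under $F$ (via $\Kt F$) of 1-cells $f^i$ — follows from the three axioms of \cref{laxfundef}: the associativity hexagon for $\theta$ handles the interaction with $c_{\ol\phi}$, and the final square (compatibility of $\theta$ with the symmetries) handles the interaction with $\beta$. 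The pasting/modification axiom of a lax transformation then reduces, after using that all the $\Sigma$-cells vanish for permutative 2-categories (\cref{prop:epz-times-is-strict-SM}) and that $\chi$ is the identity, to the coherence already packaged in the lax-functor axioms for $F$.

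The main obstacle will be the bookkeeping in the last step: matching the explicitly iterated braidings and $c$-cells appearing in the formula \cref{eq:defn-epzo-barphi} for $\epz_{\ol\phi}$ against the correspondingly iterated $\theta$-cells, and checking that the naturality square of \cref{defn:D-trans} (the defining axiom for a $\sA$-transformation, and hence what one needs to exhibit a lax natural transformation of the composite functors) holds for every morphism $\ol\phi$ of $\sA$. I expect this to be a lengthy but essentially mechanical diagram chase, reducible by \cref{rmk:decomposing-phibar-partition-map-reorder} to the two generating cases (partitions within a single block, and block permutations), where for block permutations everything is controlled by 2-naturality of $\beta$ and \cref{rmk:epz-phibar-is-permutation-when-phibar-is}, and for partitions it is controlled by the associativity axiom of \cref{laxfundef}. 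No single computation should be deep; the care is in not dropping a reindexing or a sign of variance.
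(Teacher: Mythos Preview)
Your treatment of the strict-naturality half is correct and matches the paper: the paper simply says this follows by specializing \cref{prop:epzo-natural} (together with the naturality of the inclusion $\Kt\hookrightarrow\Ko$ from \cref{prop:K-Ko-levelwise-weak-equiv}), which is exactly the decomposition you wrote down.

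For the lax half, the paper gives no argument at all beyond pointing to a $\Cat$-enrichment of \cite[Thm.~4.7]{Man10Inverse}. Your proposal instead tries to build the lax naturality square explicitly, and here there is a genuine gap. You need a functor $PKF$ for $F$ a \emph{lax} map in order to even write down the square, and you appeal to \cref{prop:functoriality-of-K} and \cref{rmk:K-functorial} for this. But those results construct $\Ko F$ (and $\Kt F$) only for \emph{normal oplax} $F$: the object assignment $\{x_s,c_{s,t}\}\mapsto\{F(x_s),\,\theta\circ F(c_{s,t})\}$ requires $\theta\cn F(x_s\oplus x_t)\to F(x_s)\oplus F(x_t)$ in the oplax direction, and for a lax $F$ the structure map goes the other way and cannot be composed with $F(c_{s,t})$ to land in $F(x_s)\oplus F(x_t)$. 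Your parenthetical ``a lax $F$ gives an oplax structure on the reverse'' does not fix this (that would only hold for pseudo $F$), and the variance mismatch is not merely bookkeeping but blocks the construction as written. So before the ``lengthy but mechanical diagram chase'' can begin, you would first need an independent definition of $KF$ for lax $F$ --- which is presumably what is supplied by (the enrichment of) Mandell's Theorem~4.7, and which your proposal does not provide.
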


\begin{prop}
  \label{prop:epz-weak-equiv}
  For a permutative 2-category $\cC$, $\epz\cn PK\cC \to \cC$ is a
  weak equivalence.
\end{prop}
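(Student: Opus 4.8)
The plan is to deduce this from \cref{prop:epzo-weak-equiv} together with the factorization of $\epz$ through $\epzo$. Recall from \cref{defn:epz} that $\epz\cn PK\cC \to \cC$ is by definition the composite
\[
PK\cC \to P\Ko\cC \fto{\epzo} \cC,
\]
in which the first arrow is obtained by applying the functor $P = \sA\wre(A-)$ to the inclusion $\Kt\cC \to \Ko\cC$ of \cref{prop:K-Ko-levelwise-weak-equiv}.

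First I would invoke \cref{prop:K-Ko-levelwise-weak-equiv}, which says that this inclusion is a strict map of $\Ga$-2-categories and a levelwise weak equivalence. Then \cref{prop:P-preserves-weak-equivalences} shows that $P$ sends levelwise weak equivalences of $\Ga$-2-categories to weak equivalences of 2-categories, so $PK\cC \to P\Ko\cC$ is a weak equivalence. Next I would apply \cref{prop:epzo-weak-equiv}, which gives that $\epzo\cn P\Ko\cC \to \cC$ is a weak equivalence. Since weak equivalences of 2-categories are by definition those 2-functors inducing weak equivalences on nerves, and weak equivalences of simplicial sets satisfy 2-out-of-3, the composite $\epz$ is a weak equivalence.

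Alternatively — and this is really the same content repackaged — one can argue directly along the lines of the proof of \cref{prop:epzo-weak-equiv}: by \cref{prop:kt-is-special} the $\Ga$-2-category $\Kt\cC$ is special with $\Kt\cC(\ulp{1}) \iso \cC$, so \cref{lem:X1-to-PX-weak-equiv} shows the inclusion $\cC \iso \Kt\cC(\ulp{1}) \to PK\cC$ is a weak equivalence; and since $\Kt\cC \to \Ko\cC$ is the identity at level $\ulp{1}$ while $\epzo_{(1)}$ is the identity on $\Ko\cC(\ulp{1}) \iso \cC$ (by \cref{defn:epzo-m}, the one-fold sum being the identity), the composite $\cC \to PK\cC \fto{\epz} \cC$ is the identity on $\cC$, so 2-out-of-3 again finishes the argument. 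There is essentially no obstacle in either route; the only point deserving a sentence of care is to confirm that the first map in the factorization of $\epz$ is genuinely $P$ applied to the level-one-preserving inclusion $\Kt\cC \to \Ko\cC$, which is immediate from \cref{defn:epz}.
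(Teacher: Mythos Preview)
Your first route is exactly the paper's proof: factor $\epz$ as $PK\cC \to P\Ko\cC \fto{\epzo} \cC$, note the first map is $P$ applied to the levelwise equivalence of \cref{prop:K-Ko-levelwise-weak-equiv} (hence a weak equivalence by \cref{prop:P-preserves-weak-equivalences}), and invoke \cref{prop:epzo-weak-equiv} for the second. The alternative direct argument you sketch is also fine but not what the paper does.
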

\begin{proof}\proofof{prop:epz-weak-equiv}
  The map $\epz$ is the composite of two maps by definition.  The first is a weak equivalence since it is $P$ applied to the levelwise equivalence of \cref{prop:K-Ko-levelwise-weak-equiv}.  The second is a weak equivalence by \cref{prop:epzo-weak-equiv}.
\end{proof}

We are finally able to prove our first important equivalence of homotopy theories.

\begin{thm}\label{thm:main-css-2}
The inclusion $\PIICat \hookrightarrow \PGM$ induces an equivalence of
homotopy theories
\begin{align*}
(\PIICat, \cW) & \stackrel{\simeq}{\longrightarrow} (\PGM, \cW).
\end{align*}
\end{thm}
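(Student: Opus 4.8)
The plan is to apply the machinery of \cref{cor:rel-cats-sufficient}: to exhibit a relative functor in the opposite direction together with natural zigzags of weak equivalences connecting both round-trip composites to the respective identity functors. The candidate backward functor is $P \Ko(-)$, restricted appropriately. First I would note that by \cref{prop:Ko-preserves-weak-equivalences} the functor $\Ko\cn(\PGM,\cW)\to(\GaIICat,\cW)$ is relative, and by \cref{prop:P-preserves-weak-equivalences} so is $P\cn(\GaIICat_l,\cW)\to(\PIICat,\cW)$; composing (and using that strict $\Ga$-maps are in particular $\Ga$-lax, via the inclusion $\GaIICat\hookrightarrow\GaIICat_l$) yields a relative functor $P\Ko\cn(\PGM,\cW)\to(\PIICat,\cW)$. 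The inclusion $\iota\cn(\PIICat,\cW)\hookrightarrow(\PGM,\cW)$ is trivially relative by \cref{defn:weak-equiv-piicats} and \cref{defn:weak-equiv-pgm}.

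For the composite $\iota\circ(P\Ko)$ applied to a permutative Gray-monoid $\cC$, I would invoke \cref{prop:epzo-strictification}: this gives a natural zigzag of strict symmetric monoidal weak equivalences
\[
P\Ko\cC \xot{\ \nu\ } P\Ko\cC^{qst} \xto{\ \epzo^{qst}\ } \cC^{qst} \xto{\ \nu\ } \cC
\]
connecting $P\Ko\cC$ to $\cC$ in $\PGM$, natural in strict functors $F\cn\cB\to\cC$. Since all maps in this zigzag are strict functors of permutative Gray-monoids and all are weak equivalences, this is exactly a finite zigzag of natural transformations between relative functors $(\PGM,\cW)\to(\PGM,\cW)$ whose components lie in $\cW$, so it satisfies the hypotheses of the relevant half of \cref{cor:rel-cats-sufficient}. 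For the other composite $(P\Ko)\circ\iota$, I would restrict the same zigzag to permutative 2-categories: if $\cC$ is a permutative 2-category then $P\Ko\cC$, $P\Ko\cC^{qst}$, and $\cC^{qst}$ need not themselves be permutative 2-categories, so a little care is needed---one should check that the zigzag can be taken inside $\PGM$ and that the relevant homotopy-category identification still goes through. In fact the cleaner route is to observe that $P\Ko\cC$ and $\cC$ are connected in $\ho\PGM$ by \cref{prop:epzo-strictification}, and then use \cref{prop:inc_creates_stable}-style reasoning together with \cref{lem:levelwise-eq-css}: since $\iota$ preserves and reflects weak equivalences, it suffices to compare classification diagrams, and the natural zigzag of weak equivalences descends.

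Concretely, then, the key steps in order are: (1) assemble the relative functors $P\Ko\cn(\PGM,\cW)\to(\PIICat,\cW)$ and $\iota\cn(\PIICat,\cW)\to(\PGM,\cW)$ from the cited preservation results; (2) apply \cref{prop:epzo-strictification} to get the natural zigzag $\id_{\PGM} \rightsquigarrow P\Ko$ of strict weak equivalences; (3) restrict or transport this zigzag along $\iota$ to obtain a natural zigzag $\id_{\PIICat} \rightsquigarrow P\Ko\circ\iota$ of weak equivalences of permutative 2-categories; (4) feed both zigzags into \cref{cor:rel-cats-sufficient} to conclude that $\iota$ and $P\Ko$ induce inverse weak equivalences between $R\cN(\PIICat,\cW)$ and $R\cN(\PGM,\cW)$, hence $\iota$ is an equivalence of homotopy theories.

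The main obstacle is step (3): verifying that the strictification zigzag of \cref{prop:epzo-strictification}, which a priori lives entirely in $\PGM$, restricts sensibly when the input $\cC$ is a permutative 2-category. The subtlety is that the intermediate objects $\cC^{qst}$, $P\Ko\cC$, $P\Ko\cC^{qst}$ are permutative Gray-monoids that may fail to be permutative 2-categories (cf.\ the remarks after \cref{prop:epzo-weak-equiv} about $\Sigma$-cells and Whitehead products), so one cannot directly cite \cref{cor:rel-cats-sufficient} within $(\PIICat,\cW)$. The resolution is that we only need the zigzag within $(\PGM,\cW)$ to witness that $\iota\circ(P\Ko\circ\iota) \simeq \iota$ and $(P\Ko)\circ\iota\circ(P\Ko) \simeq P\Ko$ as relative functors; since $\iota$ reflects weak equivalences and is fully faithful on the nose, the two-sided inverse conditions of \cref{cor:rel-cats-sufficient} can be checked after applying $\iota$, and there they follow from \cref{prop:epzo-strictification} together with the already-established zigzag on the $\PGM$ side. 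Once this bookkeeping is in place the theorem follows formally.
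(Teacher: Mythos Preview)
Your handling of the $\PGM$ side (step 2) matches the paper exactly, via \cref{prop:epzo-strictification}. The gap is in step (3). As you correctly observe, the strictification zigzag of \cref{prop:epzo-strictification} passes through $\cC^{qst}$, which is a permutative Gray-monoid but need not be a permutative 2-category, so that zigzag does not live in $\PIICat$. Your proposed workaround---``check after applying $\iota$''---does not supply what \cref{cor:rel-cats-sufficient} actually demands: a finite zigzag of natural transformations between endofunctors of $\PIICat$ whose components lie in $\cW$. Full faithfulness of $\iota$ lets you lift a natural transformation $\iota H \Rightarrow \iota H'$ to one $H \Rightarrow H'$, but it does not let you lift a zigzag whose intermediate \emph{functors} fail to factor through $\iota$, and that is exactly the situation here. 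The more abstract claim that a fully faithful relative functor which is essentially surjective up to zigzags of weak equivalences is automatically an equivalence of homotopy theories may be true, but it is not what \cref{cor:rel-cats-sufficient} or \cref{lem:levelwise-eq-css} says, and would require separate justification.

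The paper resolves this with a different, shorter zigzag on the $\PIICat$ side, and this is the missing idea. For a permutative 2-category $\cC$ one has the stricter $K$-theory $K\cC$ of \cref{defn:Kt} and the natural inclusion $K\cC \hookrightarrow \Ko\cC$, a levelwise weak equivalence by \cref{prop:K-Ko-levelwise-weak-equiv}. Applying $P$ gives a strict map $PK\cC \to P\Ko\cC$ of permutative 2-categories which is a weak equivalence by \cref{prop:P-preserves-weak-equivalences}. On the other side, $\epz\cn PK\cC \to \cC$ is a \emph{strict} symmetric monoidal 2-functor by \cref{prop:epz-times-is-strict-SM} and a weak equivalence by \cref{prop:epz-weak-equiv}. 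Thus
\[
P\Ko\cC \; \longleftarrow \; PK\cC \; \xrightarrow{\ \epz\ } \; \cC
\]
is a natural zigzag of strict maps and weak equivalences entirely within $\PIICat$, and \cref{cor:rel-cats-sufficient} applies directly. The extra strictness available for permutative 2-categories, encoded in $K$ and $\epz$, is precisely what makes the $\PIICat$ side go through without leaving the category; this is why \cref{sec:K-epz-strict} precedes the theorem.
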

\begin{proof}\proofof{thm:main-css-2}
First note that the inclusion obviously preserves weak equivalences. 
In the other direction, we have the functor $P\Ko \cn \PGM \to \PIICat$.  The functor $\Ko$ preserves weak equivalences by \cref{prop:Ko-preserves-weak-equivalences}, and the functor $P$ preserves weak equivalences by \cref{prop:P-preserves-weak-equivalences}, so $P\Ko$ defines a relative functor.  One natural zigzag of weak equivalences is given in \cref{prop:epzo-strictification} and the other is
\[
P\Ko \cC \ot PK\cC \to \cC.
\]
Thus \cref{cor:rel-cats-sufficient} shows that we have an equivalence
of homotopy theories.
\end{proof}

\begin{cor}
  \label{cor:equiv-HoPGM-and-HoP2Cat}
  The functors 
  \begin{align*}
\PIICat & \hookrightarrow \PGM, \\
\PGM & \stackrel{P\Ko}{\longrightarrow} \PIICat
\end{align*}
 establish an equivalence of
  homotopy categories
  \[
  \ho\PGM \hty \ho\PIICat.
  \]
 \end{cor}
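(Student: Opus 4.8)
The plan is to extract the corollary directly from \cref{thm:main-css-2} by unwinding the zigzags used in its proof. First I would record that both functors in question are relative functors for the classes $\cW$ of weak equivalences: the inclusion $\PIICat \hookrightarrow \PGM$ preserves weak equivalences trivially, and $P\Ko \cn \PGM \to \PIICat$ does so by \cref{prop:Ko-preserves-weak-equivalences,prop:P-preserves-weak-equivalences}. Hence both descend to functors between the localized categories, giving $\ho\PIICat \to \ho\PGM$ and $\ho\PGM \to \ho\PIICat$, and it remains to see that the two composites are naturally isomorphic to the respective identity functors.

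Next I would produce, in each direction, a natural zigzag of weak equivalences witnessing this. For $\cC$ in $\PGM$, the composite $P\Ko\cC$, viewed again in $\PGM$, is joined to $\cC$ by the natural zigzag of strict symmetric monoidal weak equivalences of \cref{prop:epzo-strictification},
\[
P\Ko\cC \fot{\nu} P\Ko\cC^{qst} \fto{\epzo^{qst}} \cC^{qst} \fto{\nu} \cC,
\]
so $(\PIICat \hookrightarrow \PGM)\circ P\Ko$ becomes naturally isomorphic to the identity on $\ho\PGM$. For $\cC$ in $\PIICat$, the composite $P\Ko\cC$ is itself a permutative 2-category, and one uses instead
\[
P\Ko\cC \fot{P\iota} PK\cC \fto{\epz} \cC,
\]
where $\iota \cn \Kt\cC \hookrightarrow \Ko\cC$ is the strict $\Ga$-map of \cref{prop:K-Ko-levelwise-weak-equiv}. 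Here $P\iota$ is a strict functor of permutative 2-categories and a weak equivalence by \cref{prop:K-Ko-levelwise-weak-equiv,prop:P-preserves-weak-equivalences}, while $\epz$ is a strict symmetric monoidal functor and a weak equivalence by \cref{prop:epz-times-is-strict-SM,prop:epz-weak-equiv}; both maps are natural with respect to strict functors of permutative 2-categories by \cref{prop:K-Ko-levelwise-weak-equiv,prop:epz-is-natural}. Thus $P\Ko\circ(\PIICat \hookrightarrow \PGM)$ becomes naturally isomorphic to the identity on $\ho\PIICat$.

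Finally I would conclude that, since a natural transformation of relative functors all of whose components lie in $\cW$ becomes a natural isomorphism after localization, the two zigzags above exhibit the induced functors $\ho\PIICat \to \ho\PGM$ and $\ho\PGM \to \ho\PIICat$ as mutually inverse equivalences of categories; alternatively one may simply invoke that the equivalence of homotopy theories of \cref{thm:main-css-2} gives a DK-equivalence of hammock localizations (\cref{prop:Rezk-equiv-is-DK-equiv}) and hence an equivalence of homotopy categories, and then identify the inverse as $P\Ko$ via these zigzags. I do not expect any real obstacle here: the corollary is essentially a repackaging of the proof of \cref{thm:main-css-2}, and the only points needing a moment's care are that $P\Ko$ genuinely lands in $\PIICat$ (so that the composite with the inclusion is defined) and that the two zigzags are natural in the \emph{strict} maps of the respective categories, both of which are already supplied by the cited propositions.
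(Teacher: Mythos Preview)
Your proposal is correct and follows essentially the same approach as the paper: the corollary is stated without proof in the paper, being an immediate consequence of \cref{thm:main-css-2} and its proof, and your argument simply makes explicit the two natural zigzags already appearing there (the one from \cref{prop:epzo-strictification} on the $\PGM$ side and $P\Ko\cC \leftarrow PK\cC \to \cC$ on the $\PIICat$ side), together with the standard observation that such zigzags induce natural isomorphisms after localization.
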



\section{Equivalences of homotopy theories}
\label{sec:eta-and-triangle-ids}

This section establishes our main result, that there are equivalences
of homotopy theories between permutative Gray-monoids, permutative
2-categories, and $\Ga$-2-categories when each is equipped with the
relative category structure given by stable equivalences.  We begin by
constructing a lax unit $\eta$ to complement the strict counit $\epz$
of \cref{defn:epz}.  These provide a lax adjunction in
the following sense. In
\cref{sec:eta-is-lax-transformation,sec:eta-Ga-lax} we show that the
unit has components which are $\Ga$-lax maps
\[
X \to KPX
\]
and satisfies an oplax naturality condition with respect to $\Ga$-lax maps.

The key technical result is that $\eta$ furnishes us with a natural
zigzag of stable equivalences between the identity and $KP$.  In
\cref{sec:K-epz-eta,sec:K-epz-K-P-eta} we prove that the composites
$K\epz \circ \eta K$ and $\epz P \circ P\eta$ are identities on the
objects of $\GaIICat$ and $\PIICat$, respectively.  These triangle
identities, together with some simple naturality statements, show that
$\eta$ is a stable equivalence (\cref{thm:eta-stable-equiv}).  The
final step is then to convert this lax stable equivalence into a
zigzag (in this case, a span) of strict stable equivalences using the
methods of \cref{sec:E-construction}.

\subsection{Construction of the unit}
\label{sec:construction-of-eta}

Throughout this section let $X$ be a $\Ga$-2-category.  We define a $\Ga$-lax map
\[
\eta\cn X \to KPX.
\]

\begin{notn}
  \label{notn:pis}
  For $s \subset \ul{m}$, define
  \[
  \pi^s\cn \ul{m}_+ \to \ul{|s|}_+ , \quad s = \{i_1, \ldots, i_{|s|}\}
  \]
  \[
  \pi^s(i) =
  \begin{cases}
    0, & \text{ if } i \not\in s\\
    k, & \text{ if } i = i_k \in s.
  \end{cases}
  \]
\end{notn}
\begin{note}
  The map $\pi^s$ is the unique surjective, order-preserving map
  $\ul{m}_+ \to \ul{|s|}_+$ which maps the complement of $s$ to the
  basepoint.
\end{note}
\begin{notn}
  \label{notn:pist}
  For $s, t \subset \ul{m}$ with $s \cap t = \emptyset$, define a map
  in $\sA$
  \[
  \pi^{s,t}\cn |s \cup t| \to (|s|, |t|)
  \]
  that corresponds to the partition of the ordered set $s \cup t$ into the disjoint union of $s$ and $t$.
\end{notn}
\begin{rmk}
  \label{rmk:pist-pis-pit}
  For $x \in X(\ul{m}_+)$, we have $\pi^s_*x \in AX(|s|) = X(\underline{|s|}_+)$
  and
  \[
  \pi^{s,t}_* \pi^{s\cup t}_* x = 
  \pi^s_* x \times \pi^t_* x
  \]
  in $AX(|s|,|t|) = X(\ul{|s|}_+) \times X(\ul{|t|}_+)$.
\end{rmk}

Now for a finite set $\ulp{m}$ we define $\eta = \eta_{X,\ulp{m}}$
using the notation of \cref{rmk:objs-in-K-thy-as-functions}.
\begin{defn}
  \label{defn:eta}
  For a 0-cell $x \in X(\ulp{m})$,  
  \begin{equation}
    \label{eq:defn-eta}
    \eta (x) = 
    \iicb{1}{s}{\bsb{|s|}{\pi^s_* x}}{s,t}{\bsb{\pi^{s,t}}{\id_{\pi^s_* x \times \pi^t_* x}}}
  \end{equation}
  By \cref{rmk:pist-pis-pit}, this gives a well-defined 0-cell of
  $KPX(\ul{m}_+)$.\\
  For a 1-cell $x \fto{f} y$, 
  \[
  \eta(f) = \left\{s \mapsto \bsb{\id_{|s|}}{\pi^s_* f}\right\}.
  \]
  For a 2-cell $\al\cn f \to g$, 
  \[
  \eta(\al) = \left\{s \mapsto \bsb{\id_{|s|}}{\pi^s_* \al}\right\}.
  \]

  

  One easily verifies that $\eta(f)$ and $\eta(\al)$ satisfy the
  structure diagrams of 1- and 2-cells of $KPX(\ulp{m})$ by checking
  each $s, t \subset \ul{m}$ with $s \cap t = \emptyset$. It is also easy to check that $\eta_{X,\ul{m}_+}$ is a 2-functor.
\end{defn}

\subsection{Laxity of the unit components}
\label{sec:eta-is-lax-transformation}

For $\phi\cn \ul{m}_+ \to \ul{n}_+$ and $s \subset \ul{n}$ we have
\begin{align*}
(\eta (\phi_* x))_s & = [|s|, \pi^s_* \phi_* x] \in PX  \\
(\phi_* \eta (x))_s & = [|\phi^{-1} (s)|, \pi^{\phi^{-1}(s)}_* x] \in PX.
\end{align*}
These are not equal, so $\eta = \eta_X$ is not a strict map of
$\Ga$-2-categories.  Instead, $\eta_X$ naturally has the structure of
a $\Ga$-lax map (\cref{defn:D-lax-map}), as we now describe.

\begin{notn}
  \label{notn:phis}
  For $\phi \cn \ulp{m} \to \ulp{n}$ in $\sF$ and $s \subset \ul{n}$,
  the map $\phi^s$ is defined by reindexing $\phi\big|_{\phi^{-1}(s)}$
  so that the following diagram of pointed sets commutes.
  \[
  \begin{xy}
    (0,20)*+{\ul{m}_+}="A1";
    (20,20)*+{\ul{n}_+}="A2";
    (0,0)*+{\ul{|\phi^{-1}(s)|}_+}="B1";
    (20,0)*+{\ul{|s|}_+}="B2";
    {\ar^-{\phi} "A1"; "A2"};
    {\ar_-{\phi^s_+} "B1"; "B2"};
    {\ar_-{\pi^{\phi^{-1}(s)}} "A1"; "B1"};
    {\ar^-{\pi^s} "A2"; "B2"};
  \end{xy}
  \]
\end{notn}

\begin{defn}
  \label{defn:eta-phi}
  Let $X$ be a $\Ga$-2-category.  For $\phi\cn \ulp{m} \to \ulp{n}$ in
  $\sF$ and $x \in X(\ulp{m})$, let
  \[
  \eta_{\phi}(x) \cn \phi_* \eta (x) \to \eta (\phi_* x)
  \]
  be given by
  \[
  \eta_{\phi}(x) = \left\{s \mapsto \bsb{\phi^s}{\id_{\pi^s \phi_* x}}\right\}.
  \]
  Each $\bsb{\phi^s}{\id_{\pi^s \phi_* x}}$ is a well-defined morphism
  in $PX$ since
  \begin{eqn}
    \label{eqn:pi-phi-eq-phi-pi}
    \phi^s_* \pi^{\phi^{-1}(s)}_* x = \pi^s_* \phi_* x. 
  \end{eqn}
\end{defn}

These 1-cells are natural in $x$ and hence we have the following.
\begin{prop}
  \label{prop:eta-Ga-lax}
  Let $X$ be a $\Ga$-2-category.  The 1-cells $\eta_\phi =
  \eta_{X,\phi}$ give $\eta_X$ the structure of a $\Ga$-lax map $X \to
  KPX$.
\end{prop}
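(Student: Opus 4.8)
The plan is to check the three conditions that make $(\eta_X,\{\eta_{X,\phi}\})$ an $\sF$-lax map, as spelled out after \cref{defn:D-lax-map}: that each $\eta_{X,\phi}$ is a genuine $2$-natural transformation $\phi_*\eta_{X,\ulp{m}} \Rightarrow \eta_{X,\ulp{n}}\phi_*$, that $\eta_{X,\id_{\ulp{m}}} = 1_{\eta_{X,\ulp{m}}}$, and that $\eta_{X,\psi\phi}$ equals the pasting of $\eta_{X,\phi}$ (on top) over $\eta_{X,\psi}$. Since $K = \Kt$, a $1$-cell of $KPX(\ulp{n})$ is a family $\{g_s\}$ of $1$-cells of $PX$ indexed by the subsets $s \subset \ul{n}$, normalized by $g_\emptyset = \id_e$ and subject to one commuting square in $PX$ per disjoint pair $(s,t)$ (\cref{defn:Kt}), and $\phi_*$ on $KPX$ is reindexing along $\phi^{-1}$ as in \cref{defn:Ko-phi}. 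So every verification below will reduce, after unwinding these definitions and the composition rules of \cref{rmk:grothendieck-composition-some-ids}, to a combinatorial identity among the reindexing maps $\pi^s$, $\pi^{s,t}$, $\phi^s$ of \cref{notn:pis,notn:pist,notn:phis}.

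First I would confirm that $\eta_{X,\phi}(x) = \{s\mapsto [\phi^s,\id]\}$ really is a $1$-cell of $KPX(\ulp{n})$ from $\phi_*\eta_X(x)$ to $\eta_X(\phi_* x)$: its source and target $0$-cells have $s$-entries $[\,|\phi^{-1}(s)|,\pi^{\phi^{-1}(s)}_* x\,]$ and $[\,|s|,\pi^s_*\phi_* x\,]$, so each $[\phi^s,\id]$ is a legitimate morphism of $PX$ precisely by \cref{eqn:pi-phi-eq-phi-pi}, and the normalization at $\emptyset$ is immediate. The defining $(s,t)$-square is a square in $PX$ whose sides are $\phi_*$ of $[\pi^{\phi^{-1}(s),\phi^{-1}(t)},\id]$, the cell $[\pi^{s,t},\id]$, the cell $[\phi^{s\cup t},\id]$, and $[\phi^s,\id]\oplus[\phi^t,\id]$; because all second coordinates are identities, \cref{rmk:grothendieck-composition-some-ids} collapses the square to the statement that two maps $|\phi^{-1}(s\cup t)| \to (|s|,|t|)$ in $\sA$ agree, and both describe the restriction of $\phi$ to $\phi^{-1}(s)\amalg\phi^{-1}(t)$ composed with the evident reindexing. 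Next, $2$-naturality of $\eta_{X,\phi}$ amounts to: for each $1$-cell $f$ and each $2$-cell $\al$ of $X(\ulp{m})$ the corresponding naturality square (resp.\ equation) in $KPX(\ulp{n})$ holds on the nose; componentwise in $s$ this reduces, again by \cref{rmk:grothendieck-composition-some-ids}, to $\phi^s_*\pi^{\phi^{-1}(s)}_* f = \pi^s_*\phi_* f$ (and likewise for $\al$), which is the $1$-cell (resp.\ $2$-cell) version of \cref{eqn:pi-phi-eq-phi-pi}. Condition (ii) is trivial, since for $\phi = \id$ one has $\phi^{-1}(s)=s$ and $\phi^s = \id_{|s|}$, so $\eta_{X,\id}(x)$ is the identity of $\eta_X(x)$. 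For (iii), using that $\psi_*$ on $KPX$ is reindexing by $\psi^{-1}$, the pasting has $x$-component $\eta_{X,\psi}(\phi_* x)\circ\psi_*\eta_{X,\phi}(x)$, whose $u$-entry for $u\subset\ul p$ is $[\psi^u,\id]\circ[\phi^{\psi^{-1}(u)},\id] = [\psi^u\circ\phi^{\psi^{-1}(u)},\id]$ by \cref{rmk:grothendieck-composition-some-ids}; this is the $u$-entry $[(\psi\phi)^u,\id]$ of $\eta_{X,\psi\phi}(x)$ because the defining pointed-set diagrams of \cref{notn:phis} stack to give $(\psi\phi)^u = \psi^u\circ\phi^{\psi^{-1}(u)}$.

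The argument is all bookkeeping, with no essential difficulty; the only real care is needed in keeping orderings straight while translating between subsets of $\ul m$, $\ul n$, $\ul p$ and their skeletal representatives. I expect the one genuinely load-bearing point to be the compatibility of reindexings, namely that $\phi^{-1}$ of a partition is a partition and that the induced reindexing maps compose correctly; this is exactly what the well-definedness square of the first step and the cocycle identity of the last step both come down to, and once it is isolated everything else is a mechanical application of \cref{rmk:grothendieck-composition-some-ids}. It is worth noting, as the text preceding the proposition already hints, that the naturality in $x$ of the $1$-cells $\eta_{X,\phi}(x)$ is visibly just the naturality of the $\pi^s$-pushforwards, so that each $\eta_{X,\phi}$ is $2$-natural rather than merely pseudonatural; this strictness is precisely what is required for $\eta_X$ to qualify as a $\Ga$-lax map in the sense of \cref{defn:D-lax-map}.
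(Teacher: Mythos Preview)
Your proposal is correct and follows the same approach as the paper, which simply asserts the result after noting that ``these 1-cells are natural in $x$'' and leaves the routine verifications to the reader. Your write-up supplies precisely those details---well-definedness of $\eta_\phi(x)$ as a $1$-cell of $KPX(\ulp{n})$, $2$-naturality, the identity condition, and the cocycle identity $(\psi\phi)^u = \psi^u\circ\phi^{\psi^{-1}(u)}$---and each reduction via \cref{rmk:grothendieck-composition-some-ids} to an equality in $\sA$ is sound.
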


\subsection{Naturality of the unit}
\label{sec:eta-Ga-lax}

In this section we discuss the oplax naturality of the $\Ga$-lax maps
$\eta_X$ for varying $X$.  This is sufficient to imply that $\eta$
induces a natural transformation in the homotopy category of
$\Ga$-2-categories and $\Ga$-lax maps
(\cref{prop:eta-natural-in-hoGa2Cat-lax}) and we use this to show
$\eta$ is a stable equivalence (\cref{thm:eta-stable-equiv}).  We
observe, furthermore, that $\eta$ is strictly natural with respect to
strict $\Ga$-maps (\cref{cor:eta-nat-strict}) and this is essential to
the proof of \cref{thm:main-css-1}.

Let $h\cn X \to Y$ be a $\Gamma$-lax map.  The diagram
\[
\begin{xy}
  (0,20)*+{X}="A1";
  (20,20)*+{KPX}="A2";
  (0,0)*+{Y}="B1";
  (20,0)*+{KPY}="B2";
  {\ar^-{\eta_X} "A1"; "A2"};
  {\ar_-{\eta_Y} "B1"; "B2"};
  {\ar_-{h} "A1"; "B1"};
  {\ar^-{KPh} "A2"; "B2"};
\end{xy}
\]
does not generally commute.  We define a $\Ga$-transformation 
\[
\la \cn \eta h \rightarrow KPh\, \eta
\]
as follows.  For readability, we omit subscripts $X$ and $Y$ on
$\eta$.  For each $\ulp{m} \in \sF$ and each $x \in X(\ulp{m})$, we
combine \cref{defn:eta,rmk:description-Ph,rmk:K-functorial} to compute
\begin{eqn}\label{eqn:eta-h}
  (\eta h)_{\ulp{m}}(x) = 
  \iicb{1}{(s \subset \ul{m})}{\bsb{|s|}{\pi^s_* h(x)}}{\stdisj{\ul{m}}}{\bsb{\pi^{s,t}}{\id}}
\end{eqn}
and
\begin{eqn}\label{eqn:KPh-eta}
  (KPh\, \eta)_{\ulp{m}}(x) = 
  \iicb{1.3}{(s \subset \ul{m})}{\bsb{|s|}{h(\pi^s_* x)}}{\stdisj{\ul{m}}}{\bsb{\pi^{s,t}}{h_{\pi^{s,t}}}}.
\end{eqn}
\begin{defn}\label{defn:lambda-components}
  For each $\ulp{m} \in \sF$ and each $x \in X(\ulp{m})$, we define 
  component 1-cells 
  \[
  \la_{\ulp{m}}(x)\cn (\eta h)_{\ul{m}_+}(x) \to (KPh\, \eta)_{\ul{m}_+}(x)
  \]
  in $KPY(\ulp{m})$.  These are given by
  \[
  \la_{\ulp{m}}(x) = \left\{s \mapsto \bsb{\id_{|s|}}{h_{\pi^s}(x)}\right\}.
  \]
  Checking that this is a legitimate 1-cell reduces to verifying
  $\pi^s_* h_{\pi^{s \cup t}} = h_{\pi^s \pi^{s \cup t}} = h_{\pi^s}$.
  This follows because $h$ is a $\Ga$-lax map.
\end{defn}
In \cref{lem:lambda-2-natural} we prove that these are the components
of a 2-natural transformation
\[
\la_{\ulp{m}}\cn \eta h_{\ul{m}_+} \to KPh\, \eta_{\ul{m}_+}
\]
for each $\ulp{m} \in \sF$. In \cref{lem:lambda-cube-commutes} we
verify the single axiom for a $\Ga$-transformation (see
\cref{defn:D-trans}).  Together these prove the following.
\begin{prop}
  \label{prop:eta-lax-natural}
  For a $\Ga$-lax map $h\cn X \to Y$, $\la$ gives a
  $\Ga$-transformation
  \[
  \la \cn \eta h \rightarrow KPh\, \eta.
  \]  
\end{prop}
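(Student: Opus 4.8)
The proof of \cref{prop:eta-lax-natural} is stated as the conjunction of two lemmas, \cref{lem:lambda-2-natural} and \cref{lem:lambda-cube-commutes}, so the plan is to verify those two facts. First I would establish 2-naturality of $\la_{\ulp{m}}$ for each fixed $\ulp{m} \in \sF$. Recall from \cref{defn:eta} that the 2-functors $(\eta h)_{\ulp m}$ and $(KPh\,\eta)_{\ulp m}$ send a 1-cell $f$ (resp.\ 2-cell $\al$) to the family $\{s \mapsto \bsb{\id_{|s|}}{\pi^s_* h(f)}\}$ (resp.\ with $\pi^s_* h(\al)$), while the component $\la_{\ulp m}(x)$ is $\{s \mapsto \bsb{\id_{|s|}}{h_{\pi^s}(x)}\}$. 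Since 1-cells, 2-cells, and composition in $KPY(\ulp m)$ are all computed componentwise over $s \subset \ul m$, and in each component $s$ one is looking at the naturality square for the 2-natural transformation $h_{\pi^s}\cn \pi^s_* h_{\ulp m} \Rightarrow h_{\ulp{|s|}} \pi^s_*$ evaluated at $f$ and $\al$, the 2-naturality of $\la_{\ulp m}$ reduces to the 2-naturality of each $h_{\pi^s}$, which holds because $h$ is a $\Ga$-lax map. One also needs that $\la_{\ulp m}(x)$ is a legitimate 1-cell of $KPY(\ulp m)$, which is exactly the identity $\pi^s_* h_{\pi^{s\cup t}} = h_{\pi^s}$ already recorded in the definition.

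Next I would verify the single $\Ga$-transformation axiom of \cref{defn:D-trans}: for each $\phi\cn \ulp m \to \ulp n$ in $\sF$ and each $x \in X(\ulp m)$, the square relating $\phi_*(\la_{\ulp m})$, $(\eta h)_\phi$, $(KPh\,\eta)_\phi$, and $\la_{\ulp n}$ must commute in $KPY(\ulp n)$. Here the structure 1-cells $(\eta h)_\phi$ and $(KPh\,\eta)_\phi$ come from \cref{defn:eta-phi} (applied to the $\Ga$-lax maps $\eta h$ and $KPh\,\eta$ respectively), with $s$-components of the form $\bsb{\phi^s}{\cdots}$ built from the reindexed restrictions $\phi^s$ of \cref{notn:phis}. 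Again everything is componentwise over $s \subset \ul n$, and the commuting square in each component $s$ is an equation of composable 1-cells in $PY$ whose first coordinates are the composites $\phi^s \circ \id = \id \circ \phi^s$ in $\sA$ and whose second coordinates unpack, via \cref{rmk:grothendieck-composition-some-ids}, to an equation of 1-cells in $AY$. That equation is in turn a consequence of the compatibility of the laxity 1-cells $h_\psi$ with composition of maps in $\sF$ (the pasting condition from \cref{defn:D-lax-map}), applied to the two ways of factoring the relevant composite involving $\pi^s$, $\pi^{\phi^{-1}(s)}$, $\phi^s$, and $\phi$; concretely one uses $h_{\pi^s\phi^{\,?}} = $ pasting of $h_{\pi^s}$ and $h_{\phi^{\,?}}$ together with \cref{eqn:pi-phi-eq-phi-pi}.

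The main obstacle will be bookkeeping rather than conceptual: one must keep straight four different $\Ga$-lax maps ($\eta h$, $KPh\,\eta$, and the maps $\eta$ and $h$ they are built from), and carefully track how the laxity constraints $h_\phi$, $\eta_\phi$, $(Ah)_{\ol\phi}$, and the Grothendieck-construction composition formulas interact when restricted to a fixed subset $s$ and a fixed factorization of $\phi$. The cleanest route is to reduce each claimed equality to a statement purely about the 2-functor $h$ and its structure cells $h_\psi$ for maps $\psi$ in $\sF$, using that all of $A$, $P = \sA\wre A(-)$, and $K$ are built componentwise and functorially (\cref{prop:A-for-lax-maps}, \cref{thm:P-lax-functorial}, \cref{rmk:K-functorial}), so that no genuinely new coherence enters beyond what $\Ga$-laxity of $h$ already provides. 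I expect both verifications to be routine once the componentwise reduction is in place, so I would state the two lemmas, give the reduction explicitly, and leave the remaining diagram chases to the reader as ``straightforward'' checks, in keeping with the style of the surrounding constructions.
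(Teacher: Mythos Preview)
Your proposal is correct and follows essentially the same approach as the paper: both split into the two lemmas, reduce 2-naturality of $\la_{\ulp m}$ to 2-naturality of each $h_{\pi^s}$, and reduce the $\Ga$-transformation axiom componentwise over $s \subset \ul n$ to the pasting condition for $h$ applied to the equality $\pi^s\phi = \phi^s\pi^{\phi^{-1}(s)}$ (the paper makes this explicit as the ``heart rectangle'' and the two pastings \eqref{eqn:fire-1}, \eqref{eqn:fire-2}). One small imprecision: the structure 1-cells $(\eta h)_\phi$ and $(KPh\,\eta)_\phi$ do not come directly from \cref{defn:eta-phi} but from the general composition rule for $\Ga$-lax maps applied to $\eta_\phi$, $h_\phi$, and the (identity) laxity of the strict map $KPh$; unpacking this is exactly what produces the extra $h_\phi$ and $h_{\phi^s}$ terms in the rectangle you would need to check.
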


\begin{cor}\label{cor:eta-nat-strict}
For a strict $\Ga$-map $h\cn X \to Y$, the $\Ga$-transformation $\la$ is the identity.  Thus $\eta$ is strictly natural on strict maps, i.e., $\eta h = KPh \eta$ if $h$ is a strict $\Ga$-map.
\end{cor}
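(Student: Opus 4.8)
The plan is to prove \cref{cor:eta-nat-strict} directly from the definition of $\la$ and the observation that a strict $\Ga$-map has trivial structure 2-cells. Recall from \cref{defn:D-lax-map} that a $\Ga$-lax map $h\cn X \to Y$ consists of 2-functors $h_{\ulp{m}}$ together with 2-natural transformations $h_\phi$ for each $\phi$ in $\sF$, and that $h$ is \emph{strict} precisely when each $h_\phi$ is the identity 2-natural transformation (equivalently, $\phi_* h_{\ulp{m}} = h_{\ulp{n}} \phi_*$ on the nose for all $\phi$).

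First I would unpack the component 1-cell $\la_{\ulp{m}}(x)$ in $KPY(\ulp{m})$: by its definition it is the collection $\{ s \mapsto \bsb{\id_{|s|}}{h_{\pi^s}(x)} \}$, where $\pi^s \cn \ulp{m} \to \ulp{|s|}$ is the order-preserving surjection of \cref{notn:pis}. When $h$ is strict, $h_{\pi^s}$ is the identity 2-natural transformation, so its component at $x$ is an identity 1-cell. Hence each $\bsb{\id_{|s|}}{h_{\pi^s}(x)}$ is $\bsb{\id_{|s|}}{\id}$, which is the identity 1-cell of the object $\bsb{|s|}{\pi^s_* x}$ in $PX$ (using that $h_{\ulp{m}}$ strict forces $h(\pi^s_* x) = \pi^s_* x$ as well, via $\pi^s_* h_{\ulp{m}} = h_{\ulp{|s|}} \pi^s_*$). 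Thus $\la_{\ulp{m}}(x)$ is the identity 1-cell in $KPY(\ulp{m})$ for every $\ulp{m}$ and every $x$, i.e. $\la_{\ulp{m}}$ is the identity 2-natural transformation. Since $\la$ is by \cref{prop:eta-lax-natural} a $\Ga$-transformation $\eta h \to KPh\,\eta$, and its components are all identities, we get $\eta h = KPh\,\eta$ on the level of $\Ga$-lax maps, and in particular $\eta$ is strictly natural along strict $\Ga$-maps.

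I would also note the consistency check that makes this sensible: when $h$ is strict, formulas \eqref{eqn:eta-h} and \eqref{eqn:KPh-eta} for $(\eta h)_{\ulp{m}}(x)$ and $(KPh\,\eta)_{\ulp{m}}(x)$ literally agree, since $h_{\pi^{s,t}}$ in \eqref{eqn:KPh-eta} is an identity and $\pi^s_* h(x) = h(\pi^s_* x)$ by strictness of $h$; so the common source and target of $\la_{\ulp{m}}(x)$ coincide, as they must for $\la_{\ulp{m}}(x)$ to be an identity 1-cell. There is no real obstacle here: the only thing to be careful about is bookkeeping the two distinct roles of the word ``strict'' — strictness of the individual 2-functors $h_{\ulp{m}}$ (automatic for any $\Ga$-lax map, giving strict 2-functors between the levels) versus strictness of the lax structure in the $\sF$-direction (the content of the hypothesis, making the $h_\phi$ identities) — and to observe that it is exactly the latter that collapses $h_{\pi^s}$, hence $\la$, to the identity.
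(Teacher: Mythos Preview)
Your argument is correct and is exactly the reasoning the paper leaves implicit (the corollary is stated without proof, as immediate from the definition of $\la$). One small slip: in the parenthetical you write ``$h(\pi^s_* x) = \pi^s_* x$'' and place the object in $PX$, but you mean $h(\pi^s_* x) = \pi^s_* h(x)$ in $PY$---as you correctly write in your consistency-check paragraph---so this is just a notational lapse, not a gap.
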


\begin{rmk}\label{rmk:eta-lax-nat-maybe}
  We do expect that $\eta$ is a lax transformation when $\GaIICat_{l}$
  is regarded as a 2-category (see \cref{rmk:D2Cat-lax-is-a-category})
  and $P$, $K$ are extended to this structure.  However we have not
  needed to pursue those details.
\end{rmk}

\begin{lem}
  \label{lem:lambda-2-natural}
  Each $\la_{\ulp{m}}$ is 2-natural.
\end{lem}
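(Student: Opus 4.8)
The plan is to verify directly that the component 1-cells $\la_{\ulp{m}}(x)$ assemble into a 2-natural transformation $\eta h_{\ul{m}_+} \to KPh\,\eta_{\ul{m}_+}$. Recall that a 2-natural transformation between 2-functors out of $X(\ulp{m})$ must satisfy two conditions: that it is natural on 1-cells (a square of 1-cells in $KPY(\ulp{m})$ commutes on the nose) and that it is compatible with 2-cells (a pasting equation). Since all the 1- and 2-cells of $KPY(\ulp{m})$ are recorded componentwise over disjoint pairs $s,t \subset \ul{m}$ as in \cref{defn:Kt} and \cref{notn:iicats}'s packaging (see \cref{rmk:objs-in-K-thy-as-functions}), it suffices to check these conditions one subset $s$ at a time, using \cref{rmk:grothendieck-composition-some-ids} to compute the relevant composites in $PY$.

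First I would fix a 1-cell $f\cn x \to y$ in $X(\ulp{m})$ and write out the naturality square. Using \cref{eqn:eta-h}, \cref{eqn:KPh-eta}, and \cref{defn:eta} together with \cref{rmk:description-Ph}, the square in question has, at the subset $s$, top and bottom edges given by $\bsb{\id_{|s|}}{\pi^s_* h(f)}$ and $\bsb{\id_{|s|}}{h(\pi^s_* f)\circ h_{\pi^s}}$ (up to the identification $\pi^s_* h = h\,\pi^s$ coming from $Ah$ on objects, cf. \cref{defn:A-for-lax-maps}), and side edges given by $\la_{\ulp{m}}(x)_s = \bsb{\id_{|s|}}{h_{\pi^s}(x)}$ and $\la_{\ulp{m}}(y)_s = \bsb{\id_{|s|}}{h_{\pi^s}(y)}$. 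Composing in $PY$ via \cref{rmk:grothendieck-composition-some-ids} (here all the $\sA$-components are identities, so composition reduces to composition in $AY(|s|) = Y(\ul{|s|}_+)$), the square commutes precisely because $h_{\pi^s}$ is a 2-natural transformation $\pi^s_* h_{\ulp{m}} \Rightarrow h_{\ul{|s|}_+}\pi^s_*$, so its naturality square for the 1-cell $f$ is exactly what is required. The 2-cell compatibility is handled identically, using that the 2-cell component of $KPh\,\eta$ at $s$ is $h(\pi^s_*\al)*1_{h_{\pi^s}}$ and invoking the 2-naturality of $h_{\pi^s}$ on the 2-cell $\al$.

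The only genuinely fiddly point is bookkeeping the identifications: one must be careful that $\pi^{s}_*$ applied after $h_{\ulp{m}}$ literally equals $h_{\ul{|s|}_+}$ applied after $\pi^{s}_*$ on objects (this is built into the construction of $Ah$ in \cref{defn:A-for-lax-maps}, where $(Ah)_{\vec m}$ is a product of the $h_{\ulp{m_i}}$ and the relevant structure 2-cells are products of the $h_{\phi_{i,j}}$), and that the ordering conventions in $\pi^{s,t}$ of \cref{notn:pist} match up so that the structure 1-cells $\bsb{\pi^{s,t}}{\id}$ and $\bsb{\pi^{s,t}}{h_{\pi^{s,t}}}$ interact correctly with $\la$ — but this last interaction is precisely the content of \cref{lem:lambda-cube-commutes} and so is not needed here. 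I expect the main (if modest) obstacle to be simply organizing the componentwise check so that the appeal to 2-naturality of each $h_{\pi^s}$ is transparent; there is no deep difficulty, since $\la_{\ulp{m}}$ was defined so that its $s$-component \emph{is} a structure map of the $\Ga$-lax map $h$, and 2-naturality of a lax transformation's components is part of the definition of a $\sD$-lax map (\cref{defn:D-lax-map}).

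\begin{proof}[Proof sketch]
  Fix $\ulp{m} \in \sF$. A 2-natural transformation between 2-functors $X(\ulp{m}) \to KPY(\ulp{m})$ is determined and constrained componentwise over the disjoint pairs $s, t \subset \ul{m}$, by the description of cells in $KPY(\ulp{m})$ in \cref{defn:Kt}. For a 1-cell $f \cn x \to y$ in $X(\ulp{m})$, the naturality square for $\la_{\ulp{m}}$ at the subset $s$ is, after composing in $PY$ using \cref{rmk:grothendieck-composition-some-ids}, exactly the naturality square of the 2-natural transformation $h_{\pi^s}$ for $f$; hence it commutes. Likewise, for a 2-cell $\al \cn f \Rightarrow g$, the required pasting equality at $s$ is the 2-naturality of $h_{\pi^s}$ applied to $\al$, using the formula for the 2-cell component of $KPh$ from \cref{rmk:description-Ph} and the fact that $\eta$ and the inclusion $\Kt \hookrightarrow \Ko$ act as identities on $\sA$-components. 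The requisite compatibility of the constituent maps with the identifications $\pi^s_* h_{\ulp{m}} = h_{\ul{|s|}_+} \pi^s_*$ on objects is immediate from the definition of $Ah$ in \cref{defn:A-for-lax-maps}. This verifies 2-naturality of each $\la_{\ulp{m}}$.
\end{proof}
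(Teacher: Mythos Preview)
Your proposal is correct and takes essentially the same approach as the paper: reduce to checking 2-naturality componentwise over subsets $s \subset \ul{m}$, and observe that at each $s$ the required naturality is exactly the 2-naturality of $h_{\pi^s}$, which is part of the data of a $\Ga$-lax map. The paper's proof is a two-sentence version of your more detailed unpacking.
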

\begin{proof}\proofof{lem:lambda-2-natural}
  It suffices to check 2-naturality for each subset $s \subset
  \ul{m}$.  Given $s \subset \ul{m}$, the 1-cell $[\id_{|s|},
  h_{\pi^s}]$ is a 2-natural transformation from $[|s|, \pi^s h(-)]$ to
  $[|s|, h \pi^s(-)]$ because $h$ is $\Ga$-lax and hence $h_{\pi^s}$ is
  2-natural.
\end{proof}
\begin{lem}
  \label{lem:lambda-cube-commutes}
  For all $\phi\cn \ul{m}_+ \to \ul{n}_+$ the following cube commutes:
\[
\begin{array}{c}
\begin{xy}
  (0,0)*+{X(\ulp{m})}="Xm";
  (20,20)*+{X(\ulp{n})}="Xn";
  (20,-20)*+{Y(\ulp{m})}="Ym";
  (50,20)*+{KPX(\ulp{n})}="KPXn";
  (50,-20)*+{KPY(\ulp{m})}="KPYm";
  (70,0)*+{KPY(\ulp{n})}="KPYn";
  (30,0)*+{KPX(\ulp{m})}="KPXm";  
  {\ar^-{X(\phi)} "Xm"; "Xn"};
  {\ar^-{\eta_{\ulp{n}}} "Xn"; "KPXn"};
  {\ar^-{KPh_{\ulp{n}}} "KPXn"; "KPYn"};
  {\ar_-{h_{\ulp{m}}} "Xm"; "Ym"};
  {\ar_-{\eta_{\ulp{m}}} "Ym"; "KPYm"};
  {\ar_-{KPY(\phi)} "KPYm"; "KPYn"};
  {\ar^-{\eta_{\ulp{m}}} "Xm"; "KPXm"};
  {\ar_-{KPX(\phi)} "KPXm"; "KPXn"};
  {\ar^-{KPh_{\ulp{m}}} "KPXm"; "KPYm"};
  {\ar@{=>}_-{\eta_{\phi}} "KPXm"+(-5,8); "Xn"+(3,-7) };
  {\ar@{=>}^-{\lambda_{\ulp{m}}} "Ym"+(5,8); "KPXm"+(-2,-7) };
  {\ar@{=}^-{} "KPXn"+(0,-18.9); "KPYm"+(0,18.9) };
\end{xy}
\\
\begin{xy}
  {\ar@{=}^{} (0,2.2)*+{}; (0,-2.2)*+{};};
\end{xy}
\\
\begin{xy}
  (0,0)*+{X(\ulp{m})}="Xm";
  (20,20)*+{X(\ulp{n})}="Xn";
  (20,-20)*+{Y(\ulp{m})}="Ym";
  (50,20)*+{KPX(\ulp{n})}="KPXn";
  (50,-20)*+{KPY(\ulp{m})}="KPYm";
  (70,0)*+{KPY(\ulp{n})}="KPYn";
  (40,0)*+{Y(\ulp{n})}="Yn";  
  {\ar^-{X(\phi)} "Xm"; "Xn"};
  {\ar^-{\eta_{\ulp{n}}} "Xn"; "KPXn"};
  {\ar^-{KPh_{\ulp{n}}} "KPXn"; "KPYn"};
  {\ar_-{h_{\ulp{m}}} "Xm"; "Ym"};
  {\ar_-{\eta_{\ulp{m}}} "Ym"; "KPYm"};
  {\ar_-{KPY(\phi)} "KPYm"; "KPYn"};
  {\ar^-{\eta_{\ulp{n}}} "Yn"; "KPYn"};
  {\ar^-{h_{\ulp{n}}} "Xn"; "Yn"};
  {\ar^-{Y(\phi)} "Ym"; "Yn"};
  {\ar@{=>}^-{\eta_{\phi}} "KPYm"+(-5,8); "Yn"+(3,-7) };
  {\ar@{=>}^-{\lambda_{\ulp{n}}} "Yn"+(5,8); "KPXn"+(-2,-7) };
  {\ar@{=>}^-{h_{\phi}} "Ym"+(0,18); "Xn"+(0,-18) };
\end{xy}
\end{array}
\]
\end{lem}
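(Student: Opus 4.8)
The plan is to identify this cube with the single axiom demanded of the $\Ga$-transformation $\la\cn \eta h \to KPh\,\eta$ in \cref{defn:D-trans}, and then to verify that axiom componentwise. First observe that since $Ph$ is a strict $2$-functor (\cref{thm:P-lax-functorial}) and $K$ carries strict---indeed normal oplax---functors of permutative $2$-categories to strict $\Ga$-maps (\cref{rmk:K-functorial}), the map $KPh$ is a strict $\Ga$-map; this is exactly what forces the face between $KPX(\ulp{m})\to KPX(\ulp{n})\to KPY(\ulp{n})$ and $KPX(\ulp{m})\to KPY(\ulp{m})\to KPY(\ulp{n})$ to commute on the nose, so that the top pasting in the displayed cube is literally $(1_{KPh_{\ulp{n}}}\ast\eta_{\phi})\circ(1_{KPY(\phi)}\ast\la_{\ulp{m}})$ while the bottom pasting is $(\la_{\ulp{n}}\ast 1_{X(\phi)})\circ(\eta h)_{\phi}$, where $(\eta h)_{\phi}$ is the pasting of the laxity cell $\eta_{\phi}$ with $\eta_{\ulp{n}}$ applied to $h_{\phi}$, according to the composition rule for lax transformations recalled in \cref{sec:D-lax-maps}. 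Each side is a $2$-natural transformation between $2$-functors, hence is determined by its $1$-cell components, so it suffices to fix an object $x\in X(\ulp{m})$ and prove an equality of $1$-cells in $KPY(\ulp{n})$. As the cells of $KPY(\ulp{n})$ are indexed over subsets of $\ul{n}$, we may further fix $s\subset\ul{n}$ and prove the equality of the $s$-components.

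Next I would unpack both $s$-components. Using \cref{defn:eta,defn:eta-phi}, the definition of $\la$, and the description of $KPh$ from \cref{rmk:K-functorial,rmk:description-Ph} --- noting that on length-one tuples the structure cells $Ah_{\ol{\phi}}$ of $A(-)$ reduce to the corresponding structure cells of the underlying $\Ga$-lax map $h$ --- together with the composition identities of \cref{rmk:grothendieck-composition-some-ids} in the Grothendieck construction $PY = \sA\wre AY$, a direct computation shows that both the top-then-right composite and the left-then-bottom composite have $s$-component equal to a $1$-cell of the form $[\phi^s,\xi_s]$ with the same $\sA$-coordinate $\phi^s$. The $\sA$-coordinates therefore agree automatically, and the claim reduces to the equality, in $Y(\ulp{|s|})$,
\[
h_{\phi^s}(\pi^{\phi^{-1}(s)}_*x)\circ\phi^s_*\big(h_{\pi^{\phi^{-1}(s)}}(x)\big)
\;=\;
h_{\pi^s}(\phi_*x)\circ\pi^s_*\big(h_{\phi}(x)\big),
\]
with the convention, as in \cref{notn:phis} and \cref{eqn:pi-phi-eq-phi-pi}, that $\phi^s$ is read via its pointed refinement whenever it indexes a structure cell of $h$ or acts on a $\Ga$-$2$-category.

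To finish, I would recognize the left-hand side as the $x$-component of $h_{\phi^s\circ\pi^{\phi^{-1}(s)}}$ and the right-hand side as the $x$-component of $h_{\pi^s\circ\phi}$, using that the structure cells of a $\Ga$-lax map are compatible with composition of maps in $\sF$ (\cref{defn:D-lax-map}). These agree because $\phi^s\circ\pi^{\phi^{-1}(s)} = \pi^s\circ\phi$ as maps $\ulp{m}\to\ulp{|s|}$, which is precisely the commuting square defining $\phi^s$ in \cref{notn:phis}. I expect the main obstacle to be not conceptual but the bookkeeping in the middle step: keeping straight the three roles played by $\phi^s$ (a pointed map of finite sets, the induced action on the $\Ga$-$2$-category $X$, and the induced length-one-to-length-one map in $\sA$), tracking domains and codomains through the Grothendieck composition, and correctly assembling $(\eta h)_{\phi}$ and $\phi_*(\la_{\ulp{m}})$ from their pieces. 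Once the reduction to the single displayed equation between structure cells of $h$ is carried out, no coherence input beyond that already used to define $\eta$, $P$, and $K$ is required.
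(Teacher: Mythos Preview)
Your proposal is correct and follows essentially the same approach as the paper: both reduce the cube to a componentwise check at each object $x$ and each subset $s\subset\ul{n}$, arriving at exactly your displayed equation in $Y(\ulp{|s|})$, and both finish by invoking the composition axiom for the structure cells of the $\Ga$-lax map $h$ together with the identity $\phi^s\circ\pi^{\phi^{-1}(s)}=\pi^s\circ\phi$ from \cref{notn:phis}.
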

\begin{proof}\proofof{lem:lambda-cube-commutes}
  Commutativity of the cube in the statement of the lemma reduces to
  checking, for each object $x \in X(\ulp{m})$, commutativity of the
  following rectangle in $KPY(\ulp{n})$.
  \[
  \begin{xy}
    (0,0)*+{\phi_* \eta h (x)}="A";
    (50,0)*+{\eta \phi_* h (x)}="B";
    (100,0)*+{\eta h (\phi_* x)}="C";
    (0,-18)*+{\phi_* KPh\, \eta (x)}="D";
    (50,-18)*+{KPh\, \phi_* \eta (x)}="E";
    (100,-18)*+{KPh\, \eta (\phi_* x)}="F";
    {\ar^-{\eta_\phi} "A"; "B"};
    {\ar^-{\eta(h_\phi)} "B"; "C"};
    {\ar_-{\phi_* \la_{\ulp{m}}} "A"; "D"};
    {\ar_-{\id} "D"; "E"};
    {\ar_-{KPh(\eta_\phi)} "E"; "F"};
    {\ar^-{\la_{\ulp{n}}} "C"; "F"};
  \end{xy}
  \]
  %
  Formulas for the objects in this diagram are given as in
  \cref{eqn:eta-h,eqn:KPh-eta}.  Formulas for the 1-cells are given
  similarly, using \cref{defn:eta-phi} in addition.
  Commutativity of the rectangle above then reduces to checking, for each
  $s \subset \ul{n}$, commutativity of the following rectangle in
  $PY$.
  %
  \begin{eqn}
  \label{eqn:heart-rectangle}
  \begin{gathered}
  \begin{xy}
    (0,0)*+{\bsb{|\phi^{-1}(s)|}{\pi^{\phi^{-1}(s)}_* hx}}="A";
    (45,0)*+{\bsb{|s|}{\pi^s_* \phi_* hx}}="B";
    (95,0)*+{\bsb{|s|}{\pi^s_* h \phi_* x}}="C";
    (0,-18)*+{\bsb{|\phi^{-1}(s)|}{h \pi^{\phi^{-1}(s)}_* x}}="D";
    (45,-18)*+{\bsb{|\phi^{-1}(s)|}{h \pi^{\phi^{-1}(s)}_* x}}="E";
    (95,-18)*+{\bsb{|s|}{h\pi^s_* \phi_* x}}="F";
    {\ar^-{\bsb{\phi^s}{\id}} "A"; "B"};
    {\ar^-{\bsb{\id}{\pi^s_* (h_\phi)}} "B"; "C"};
    {\ar_-{\bsb{\id}{h_{\pi^{\phi^{-1}(s)}}}} "A"; "D"};
    {\ar_-{\id} "D"; "E"};
    {\ar_-{\bsb{\phi^s}{h_{\phi^s}}} "E"; "F"};
    {\ar^-{\bsb{\id}{h_{\pi^s}}} "C"; "F"};
  \end{xy}
  \end{gathered}
  \end{eqn}
  
  The top right composite of \cref{eqn:heart-rectangle} is
  $\bsb{\phi^s}{h_{\pi^s} \pi^s_{\ast}(h_\phi)}$ by
  \cref{rmk:grothendieck-composition-some-ids}.  Likewise, the bottom
  left composite is $\bsb{\phi^s}{h_{\phi^s} \circ
    (\phi^s)_*(h_{\pi^{\phi^{-1}(s)}})}$.  To verify that the second
  components are equal, consider \cref{eqn:fire-1} and \cref{eqn:fire-2}
  respectively: both are special cases of the axioms for $\Ga$-lax
  maps (see \cref{defn:D-lax-map}).

  \begin{eqn}
  \label{eqn:fire-1}
  \begin{gathered}
  \begin{xy}
    (0,0)*+{X(\ulp{m})}="A";
    (30,0)*+{X(\ulp{n})}="B";
    (60,0)*+{X(\ulp{|s|})}="C";
    (0,-20)*+{Y(\ulp{m})}="D";
    (30,-20)*+{Y(\ulp{n})}="E";
    (60,-20)*+{Y(\ulp{|s|})}="F";
    {\ar^-{\phi_*} "A"; "B"};
    {\ar^-{\pi^s_*} "B"; "C"};
    {\ar_-{\phi_*} "D"; "E"};
    {\ar_-{\pi^s_*} "E"; "F"};
    {\ar_-{h_{\ulp{m}}} "A"; "D"};
    {\ar^-{h_{\ulp{n}}} "B"; "E"};
    {\ar^-{h_{\ulp{|s|}}} "C"; "F"};
    {\ar@{=>}_-{h_\phi} "D"+(12,8); "B"+(-12,-8)};
    {\ar@{=>}_-{h_{\pi^s}} "E"+(12,8); "C"+(-12,-8)};
    (75,-10)*{=};
    (90,0)*+{X(\ulp{m})}="Xm";
    (120,0)*+{X(\ulp{|s|})}="Xs";
    (90,-20)*+{Y(\ulp{m})}="Ym";
    (120,-20)*+{Y(\ulp{|s|})}="Ys";
    {\ar^-{\pi^s_* \phi_*} "Xm"; "Xs"};
    {\ar_-{\pi^s_* \phi_*} "Ym"; "Ys"};
    {\ar_-{h_{\ulp{m}}} "Xm"; "Ym"};
    {\ar^-{h_{\ulp{|s|}}} "Xs"; "Ys"};
    {\ar@{=>}_-{h_{\pi^s \phi}} "Ym"+(12,8); "Xs"+(-12,-8)};
  \end{xy}
  \end{gathered}
  \end{eqn}

  \begin{eqn}
  \label{eqn:fire-2}
  \begin{gathered}
  \begin{xy}
    (0,0)*+{X(\ulp{m})}="A";
    (30,0)*+{X(\ulp{|\phi^{-1}(s)|})}="B";
    (60,0)*+{X(\ulp{|s|})}="C";
    (0,-20)*+{Y(\ulp{m})}="D";
    (30,-20)*+{Y(\ulp{|\phi^{-1}(s)|})}="E";
    (60,-20)*+{Y(\ulp{|s|})}="F";
    {\ar^-{\pi^{\phi^{-1}(s)}_*} "A"; "B"};
    {\ar^-{\phi^s_*} "B"; "C"};
    {\ar_-{\pi^{\phi^{-1}(s)}_*} "D"; "E"};
    {\ar_-{\phi^s_*} "E"; "F"};
    {\ar_-{h_{\ulp{m}}} "A"; "D"};
    {\ar^-{h_{\ulp{|\phi^{-1}(s)|}}} "B"; "E"};
    {\ar^-{h_{\ulp{|s|}}} "C"; "F"};
    {\ar@{=>}_-{\:\: h_{\pi^{\phi^{-1}(s)}}} "D"+(12,8); "B"+(-12,-8)};
    {\ar@{=>}_-{h_{\phi^s}} "E"+(12,8); "C"+(-12,-8)};
    (75,-10)*{=};
    (90,0)*+{X(\ulp{m})}="Xm";
    (120,0)*+{X(\ulp{|s|})}="Xs";
    (90,-20)*+{Y(\ulp{m})}="Ym";
    (120,-20)*+{Y(\ulp{|s|})}="Ys";
    {\ar^-{\phi^s_* \pi^{\phi^{-1}(s)}_*} "Xm"; "Xs"};
    {\ar_-{\phi^s_* \pi^{\phi^{-1}(s)}_*} "Ym"; "Ys"};
    {\ar_-{h_{\ulp{m}}} "Xm"; "Ym"};
    {\ar^-{h_{\ulp{|s|}}} "Xs"; "Ys"};
    {\ar@{=>}_-{\;\;\; h_{\phi^s \pi^{\phi^{-1}(s)}}} "Ym"+(12,8); "Xs"+(-12,-8)};
  \end{xy}
  \end{gathered}
  \end{eqn}

  By definition of $\phi^s$ we have $\phi^s \pi^{\phi^{-1}(s)} =
  \pi^s \phi$ (see \cref{eqn:pi-phi-eq-phi-pi}). Hence the right
  hand sides of \cref{eqn:fire-1} and \cref{eqn:fire-2} are equal.

\end{proof}


\subsection{Two equivalences}

Two lengthy calculations aside, we are now ready to state and prove
our main theorems.  We show that the unit $\eta$ is a stable
equivalence, and use this to prove our two main equivalences of
homotopy theories.  We begin with the following, which is immediate
from \cref{cor:gtrans,prop:eta-lax-natural}.
\begin{prop}
  \label{prop:eta-natural-in-hoGa2Cat-lax}
  In the homotopy category $\ho\GaIICat_l$, the unit $\eta\cn
  \mathrm{Id} \Rightarrow KP$ is a natural transformation of functors.
\end{prop}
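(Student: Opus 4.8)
The plan is to derive this from the two results it cites, namely \cref{cor:gtrans} and \cref{prop:eta-lax-natural}, together with the fact that composition of $\Ga$-lax maps passes to the homotopy category $\ho\GaIICat_l$. First I would recall that by \cref{prop:eta-Ga-lax} each $\eta_X\cn X \to KPX$ is a $\Ga$-lax map, so $\eta$ defines, for each $X$, a morphism $[\eta_X]$ in $\ho\GaIICat_l$; here I am using implicitly that $K = \Kt$ restricts to a functor on normal oplax maps (\cref{rmk:K-functorial}) and that $P$ is functorial on all $\Ga$-lax maps (\cref{thm:P-lax-functorial}), so that $KP$ is an endofunctor of $\GaIICat_l$ and hence descends to an endofunctor of $\ho\GaIICat_l$. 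Likewise $\mathrm{Id}$ descends to the identity endofunctor.

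The content of naturality is that for every $\Ga$-lax map $h\cn X \to Y$, the square
\[
\begin{xy}
  (0,15)*+{X}="A1";
  (25,15)*+{KPX}="A2";
  (0,0)*+{Y}="B1";
  (25,0)*+{KPY}="B2";
  {\ar^-{[\eta_X]} "A1"; "A2"};
  {\ar_-{[\eta_Y]} "B1"; "B2"};
  {\ar_-{[h]} "A1"; "B1"};
  {\ar^-{[KPh]} "A2"; "B2"};
\end{xy}
\]
commutes in $\ho\GaIICat_l$. By \cref{prop:eta-lax-natural} there is a $\Ga$-transformation $\la\cn \eta_Y h \Rightarrow KPh\,\eta_X$ between the two composite $\Ga$-lax maps $X \to KPY$. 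Now \cref{cor:gtrans} (in the $\sD = \sF$ case, as noted in the remark following it) says precisely that a $\Ga$-transformation between two parallel $\Ga$-lax maps forces them to become equal in $\ho\GaIICat_l$; hence $[\eta_Y h] = [KPh\,\eta_X]$. Since composition in $\GaIICat_l$ is strictly functorial and is preserved by the localization functor to $\ho\GaIICat_l$, we have $[\eta_Y h] = [\eta_Y][h]$ and $[KPh\,\eta_X] = [KPh][\eta_X]$, which gives the desired commutativity. Functoriality of $\mathrm{Id}$ and $KP$ on $\ho\GaIICat_l$ was already noted, so no further checks are needed for the source and target of the transformation.

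There is no real obstacle here — the proposition is essentially bookkeeping, assembling \cref{cor:gtrans} and \cref{prop:eta-lax-natural} with the observation that everything in sight is strictly functorial in $\GaIICat_l$. The only point deserving a sentence of care is that $[KPh]$ is genuinely a morphism of $\ho\GaIICat_l$ and that $[\eta_Y h] = [\eta_Y][h]$, i.e., that passing to the homotopy category is compatible with the (strict, 1-categorical) composition of $\Ga$-lax maps; this is immediate since $\ho\GaIICat_l$ is a localization of the category $\GaIICat_l$ and localization functors preserve composition. Thus the proof will be only a few lines: cite \cref{prop:eta-lax-natural} for the existence of $\la$, invoke \cref{cor:gtrans} to collapse it to an equality in $\ho\GaIICat_l$, and expand the two composites.
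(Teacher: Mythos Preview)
Your proposal is correct and follows exactly the paper's approach: the paper's proof consists solely of the phrase ``immediate from \cref{cor:gtrans,prop:eta-lax-natural},'' and you have simply unpacked what that means. The additional remarks you make about $KP$ being an endofunctor of $\GaIICat_l$ and localization preserving composition are implicit in the paper's setup and do no harm.
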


Our next lemma is a key step in the proof that $\eta$ is a stable equivalence, and demonstrates the importance of the triangle identities.  This lemma depends on two careful computations of the
composites $(K\epz_{PX}) \circ (KP\eta_X)$ and $(K\epz_{PX}) \circ
(\eta_{KPX})$.  As the computations are completely self-contained, we
state their key application in \cref{lem:KPeta-and-etaKP} and postpone
the explicit details to \cref{prop:Kepz-eta-is-id,prop:epz-Peta-is-id}
in \cref{sec:K-epz-eta,sec:K-epz-K-P-eta}, respectively.
\begin{lem}
  \label{lem:KPeta-and-etaKP}
  Let $X$ be a $\Ga$-2-category.  In the homotopy category
  $\ho\GaIICat_l$ we have $\eta_{KPX} = KP\eta_X$.
\end{lem}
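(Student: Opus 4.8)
The plan is to exhibit both $\eta_{KPX}$ and $KP\eta_X$ as the two legs of a common "pre-composition with a counit" identity, and then invoke the triangle identities that will be proved in the next two subsections. Concretely, set $Y = PX$, a permutative 2-category, and recall from \cref{defn:epz} that we have a strict counit $\epz = \epz_{PX}\cn PKPX \to PX$. Applying $K$ gives a strict $\Ga$-map $K\epz_{PX}\cn KPKPX \to KPX$. By \cref{prop:Kepz-eta-is-id} (to be established in \cref{sec:K-epz-eta}) the composite
\[
KPX \fto{\eta_{KPX}} KPKPX \fto{K\epz_{PX}} KPX
\]
is the identity on $KPX$; and by \cref{prop:epz-Peta-is-id} (to be established in \cref{sec:K-epz-K-P-eta}) the composite $\epz_{PX} \circ P\eta_X\cn PX \to PKPX \to PX$ is the identity on $PX$, whence applying the functor $K$ gives that
\[
KPX \fto{KP\eta_X} KPKPX \fto{K\epz_{PX}} KPX
\]
is also the identity on $KPX$.

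With those two triangle identities in hand, the argument is a formal manipulation in $\ho\GaIICat_l$. First I would observe that $\eta$, viewed as a transformation $\mathrm{Id} \Rightarrow KP$, is lax-natural with respect to $\Ga$-lax maps by \cref{prop:eta-lax-natural}, so by \cref{prop:eta-natural-in-hoGa2Cat-lax} it becomes an honest natural transformation in $\ho\GaIICat_l$. Apply this naturality to the $\Ga$-lax map $\eta_X\cn X \to KPX$ itself: the naturality square
\[
\begin{xy}
  (0,15)*+{X}="A1";
  (24,15)*+{KPX}="A2";
  (0,0)*+{KPX}="B1";
  (24,0)*+{KPKPX}="B2";
  {\ar^-{\eta_X} "A1"; "A2"};
  {\ar_-{\eta_{KPX}} "B1"; "B2"};
  {\ar_-{\eta_X} "A1"; "B1"};
  {\ar^-{KP\eta_X} "A2"; "B2"};
\end{xy}
\]
commutes in $\ho\GaIICat_l$, i.e. $[\eta_{KPX}]\circ[\eta_X] = [KP\eta_X]\circ[\eta_X]$. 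Now compose on the left with $[K\epz_{PX}]$: using the two triangle identities above, the left side becomes $[K\epz_{PX}][\eta_{KPX}][\eta_X] = [\mathrm{id}][\eta_X] = [\eta_X]$, and the right side becomes $[K\epz_{PX}][KP\eta_X][\eta_X] = [\mathrm{id}][\eta_X] = [\eta_X]$, which is not yet enough. Instead the cleaner route is: since $[K\epz_{PX}]\circ[\eta_{KPX}] = \mathrm{id}$ and $[K\epz_{PX}]\circ[KP\eta_X] = \mathrm{id}$ are equalities of maps $KPX \to KPX$ in $\ho\GaIICat_l$, and since $\eta_X$ is a levelwise weak equivalence (by \cref{prop:ko-is-special} or directly: $\eta_X$ restricted to $X(\ulp 1)$ is the inclusion $X(\ulp1)\to PX\iso KPX(\ulp1)$, which is a weak equivalence by \cref{lem:X1-to-PX-weak-equiv}, and $KPX$ is special), we may use that $[\eta_X]$ is an isomorphism in $\ho\GaIICat_l$. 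From the naturality-square equality $[\eta_{KPX}][\eta_X] = [KP\eta_X][\eta_X]$ we cancel the isomorphism $[\eta_X]$ on the right to conclude $[\eta_{KPX}] = [KP\eta_X]$.

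I expect the main obstacle to be purely bookkeeping: one must be careful that $\eta_X$ really is a levelwise weak equivalence so that $[\eta_X]$ is invertible — this follows since $KPX$ is a special $\Ga$-2-category (it is $K$ applied to a permutative 2-category, hence special by \cref{prop:kt-is-special}/\cref{prop:ko-is-special}), so a $\Ga$-map into it is a levelwise weak equivalence as soon as it is one on $\ulp1$, and there $\eta_X$ is the weak equivalence $X(\ulp1)\to PX$ of \cref{lem:X1-to-PX-weak-equiv} composed with the isomorphism $PX \iso KPX(\ulp1)$. The only other subtlety is making sure the naturality of $\eta$ in $\ho\GaIICat_l$ is applied to $\eta_X$ as a $1$-cell of $\GaIICat_l$ and not merely as a map of underlying objects; but $\eta_X$ is by construction a $\Ga$-lax map, so this is exactly the setting of \cref{prop:eta-natural-in-hoGa2Cat-lax}. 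Everything else is a one-line diagram chase, and the genuine content — the two triangle identities — is deferred to \cref{sec:K-epz-eta,sec:K-epz-K-P-eta}.
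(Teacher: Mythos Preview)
Your overall setup is right and you have identified the two key inputs—the triangle identities $(K\epz_{PX})\circ\eta_{KPX}=\id$ and $(K\epz_{PX})\circ KP\eta_X=\id$—but the argument you actually run contains a genuine gap. You attempt to cancel $[\eta_X]$ from the naturality-square equality $[\eta_{KPX}][\eta_X]=[KP\eta_X][\eta_X]$, claiming that $\eta_X$ is a levelwise weak equivalence. This is not true for an arbitrary $\Ga$-2-category $X$: \cref{lem:X1-to-PX-weak-equiv} requires $X$ to be \emph{special}, and more fundamentally, since $KPX$ is always special while $X$ need not be, $\eta_X\cn X(\ulp{n})\to KPX(\ulp{n})$ cannot be a weak equivalence at level $\ulp{n}$ when $X(\ulp{n})\not\simeq X(\ulp{1})^n$. (If $\eta_X$ were always a levelwise equivalence, \cref{thm:eta-stable-equiv} would be trivial.)

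The paper's argument fixes this by cancelling on the other side: one has two strict equalities with common left factor $K\epz_{PX}$, and $\epz_{PX}$ is a weak equivalence by \cref{prop:epz-weak-equiv}, so $K\epz_{PX}$ is a levelwise weak equivalence by \cref{prop:K-preserves-weak-equivalences} and hence $[K\epz_{PX}]$ is invertible in $\ho\GaIICat_l$. Cancelling it from $[K\epz_{PX}][\eta_{KPX}]=\id=[K\epz_{PX}][KP\eta_X]$ gives $[\eta_{KPX}]=[KP\eta_X]$ directly. Note that the naturality square you wrote down is not needed at all; the two triangle identities plus invertibility of $[K\epz_{PX}]$ suffice.
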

\begin{proof}\proofof{lem:KPeta-and-etaKP}
  First, note that $\epz_{PX}$ is a weak equivalence by
  \cref{prop:epz-weak-equiv}.  So
  the result follows from the two equalities
  \[
  (K\epz_{PX}) \circ (KP\eta_X) = \id_{KPX} = (K\epz_{PX}) \circ (\eta_{KPX}).
  \]
  The first of these follows from \cref{prop:epz-Peta-is-id} and
  functoriality of $K$ with respect to strict symmetric monoidal
  functors (see \cref{prop:functoriality-of-K}).  The second equality
  is proved in \cref{prop:Kepz-eta-is-id}. 
\end{proof}

\begin{thm}
  \label{thm:eta-stable-equiv}
  Let $X$ be a $\Ga$-2-category.  The unit $\eta = \eta_X$ is a stable
  equivalence.
\end{thm}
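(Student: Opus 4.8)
The plan is to deduce \cref{thm:eta-stable-equiv} from the abstract characterization of stable equivalences in \cref{defn:Galax-stable-eq} together with \cref{lem:KPeta-and-etaKP}, \cref{prop:eta-natural-in-hoGa2Cat-lax}, and the fact that $\epz$ is a levelwise (hence stable) equivalence. Recall that $\eta_X$ is a stable equivalence precisely when for every very special $\Ga$-2-category $Z$ the function
\[
\eta_X^*\cn \ho\GaIICat_l(KPX, Z) \to \ho\GaIICat_l(X,Z)
\]
is a bijection. The strategy is to exhibit a two-sided homotopy inverse to $KP$ at the level of $\ho\GaIICat_l$ and check that $\eta$ plays the role of the unit of an adjoint equivalence up to homotopy.

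First I would record the relevant maps. By \cref{prop:epz-weak-equiv} the counit $\epz_{PX}\cn PKPX \to PX$ is a weak equivalence, but more usefully, since $\epz$ is natural with respect to strict maps (\cref{prop:epz-is-natural}) and $K$ preserves levelwise weak equivalences (\cref{prop:K-preserves-weak-equivalences}), the map $K\epz_X\cn KPKX \to KX$ is a levelwise weak equivalence of $\Ga$-2-categories, hence becomes an isomorphism in $\ho\GaIICat_l$. Here I would apply $K\epz$ to the $\Ga$-2-category $X$ after noting that $\Ko$, and hence $K$, is defined on permutative 2-categories and that $PX$ is such; so "$K\epz_X$" should be read as $K(\epz_{PX})\cn K(PKPX)$—wait—more carefully: for a $\Ga$-2-category $X$, $PX$ is a permutative 2-category by \cref{thm:P-lax-functorial}, so $\epz_{PX}\cn PKPX \to PX$ is a strict symmetric monoidal functor, and $K(\epz_{PX})\cn KPKPX \to KPX$ is a levelwise weak equivalence. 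Combined with \cref{prop:eta-natural-in-hoGa2Cat-lax}, which gives naturality of $[\eta]$ in $\ho\GaIICat_l$, the triangle identity of \cref{lem:KPeta-and-etaKP} states exactly that $[\eta_{KPX}] = [KP\eta_X]$ in $\ho\GaIICat_l$.

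The core argument then runs as follows. Let $Z$ be very special and consider $\eta_X^*$. For surjectivity: given $[g]\cn X \to Z$ in $\ho\GaIICat_l$, naturality of $[\eta]$ gives a commuting square $[\eta_Z]\circ[g] = [KPg]\circ[\eta_X]$. Since $\epz_Z\cn PKZ \to Z$... I need a map backwards; the cleanest route is to use that $K\epz$ is invertible in $\ho\GaIICat_l$ and set, for $[g]\cn X\to Z$, the class $[g]\circ[K\epz_{PX}]\circ(\text{something})$—but rather than reconstruct the formula by hand, I would instead argue directly that $[\eta_X]$ is an isomorphism in $\ho\GaIICat_l$, which immediately implies $\eta_X^*$ is a bijection onto $\ho\GaIICat_l(X,Z)$ for \emph{every} $Z$, in particular every very special one. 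To see $[\eta_X]$ is invertible: by \cref{lem:KPeta-and-etaKP}, $[\eta_{KPX}] = [KP\eta_X]$, and by naturality $[\eta_{KPX}]\circ[\eta_X] = [KP\eta_X]\circ[\eta_X]$; combining, $[KP\eta_X]\circ[\eta_X] = [KP\eta_X]\circ[\eta_X]$, which is vacuous—so I instead use naturality of $\eta$ applied to the map $\eta_X$ itself: the square for $h=\eta_X$ reads $[\eta_{KPX}]\circ[\eta_X] = [KP\eta_X]\circ[\eta_X]$, and \cref{lem:KPeta-and-etaKP} makes the two sides literally the composites with equal first factors, giving no contradiction but also no cancellation yet. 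The genuine input must be that $\epz$ furnishes the other triangle identity: $[\epz_{PX}]\circ[P\eta_X]$ computations from \cref{sec:K-epz-K-P-eta} combined with $[\epz]$ being a weak equivalence show $[P\eta_X]$ is a split monomorphism with retraction a weak equivalence, hence $[P\eta_X]$ is invertible in $\ho\PGM$; applying $K$ and using \cref{prop:K-preserves-weak-equivalences} shows $[KP\eta_X]$ is invertible in $\ho\GaIICat_l$. Then \cref{lem:KPeta-and-etaKP} gives $[\eta_{KPX}]$ invertible, and naturality of $\eta$ applied to $\eta_X$ yields $[\eta_{KPX}]\circ[\eta_X] = [KP\eta_X]\circ[\eta_X]$; cancelling the invertible factor $[KP\eta_X]$ on the right of the left side against... hmm. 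The cleanest honest conclusion is the standard "two-out-of-six / retract" argument: $\eta$ is a natural transformation $\mathrm{Id}\Rightarrow KP$ in $\ho\GaIICat_l$ with $KP\eta_X$ invertible for all $X$ and $\eta_{KPX}=KP\eta_X$; such a natural transformation is automatically a natural isomorphism (since $[\eta_X]$ is then a retract of the isomorphism $[\eta_{KPX}]$), hence each $[\eta_X]$ is an isomorphism, hence $\eta_X$ is a stable equivalence.

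The main obstacle I anticipate is organizing the homotopy-categorical bookkeeping so that the two triangle identities (proved in \cref{sec:K-epz-eta,sec:K-epz-K-P-eta} as \cref{prop:Kepz-eta-is-id,prop:epz-Peta-is-id}) combine correctly with the weak-equivalence status of $\epz$ to force $[KP\eta_X]$—and thence $[\eta_X]$—to be invertible. One must be careful that the triangle identities hold on the nose only on objects, so the deduction happens in the localized category $\ho\GaIICat_l$ where \cref{cor:gtrans} and \cref{prop:eta-natural-in-hoGa2Cat-lax} license replacing $\Ga$-transformations by equalities. Once $[\eta_X]$ is an isomorphism in $\ho\GaIICat_l$, the bijectivity of $\eta_X^*$ on $\ho\GaIICat_l(-,Z)$ for all $Z$, and in particular for very special $Z$, is formal, so $\eta_X$ is a stable equivalence by \cref{defn:Galax-stable-eq}.
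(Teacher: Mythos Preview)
Your proposal has a genuine gap at the final step. You correctly deduce from \cref{prop:epz-Peta-is-id} and \cref{prop:epz-weak-equiv} that $P\eta_X$ is a weak equivalence, hence $[KP\eta_X]$ is invertible in $\ho\GaIICat_l$, and then from \cref{lem:KPeta-and-etaKP} that $[\eta_{KPX}]$ is invertible. But the concluding claim that ``$[\eta_X]$ is a retract of $[\eta_{KPX}]$'' is unsupported: a retract diagram would require a map $KPX \to X$ in $\ho\GaIICat_l$ splitting $\eta_X$, and you have no such map. The abstract statement you are invoking---that a natural transformation $\eta\cn\mathrm{Id}\Rightarrow F$ with $F\eta$ invertible must be componentwise invertible---is simply false. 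For a minimal counterexample, take the category $\{A\xrightarrow{f} B\}$ with $F$ the constant functor at $B$: then $F\eta = \eta_F = \id_B$ is invertible, yet $\eta_A = f$ is not.

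More fundamentally, you are trying to prove something stronger than the theorem asserts: that $[\eta_X]$ is an isomorphism in $\ho\GaIICat_l$, i.e.\ that $\eta_X$ is a \emph{levelwise} weak equivalence. There is no reason to expect this for arbitrary $X$. The paper's proof instead uses the very-special hypothesis on $Z$ in an essential way: when $Z$ is special, comparison with the Segal maps and \cref{lem:X1-to-PX-weak-equiv} shows that $\eta_Z$ is a levelwise weak equivalence, and this furnishes the inverse $[\eta_Z]^{-1}\circ[KPh]$ to $\eta_X^*$ on $\ho\GaIICat_l(-,Z)$. Naturality of $\eta$ and \cref{lem:KPeta-and-etaKP} then verify the two required identities. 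Your argument never exploits that $Z$ is very special, which is why it cannot close.
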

\begin{proof}\proofof{thm:eta-stable-equiv}
  We verify directly that for any very special $\Ga$-2-category, $Z$,
  the induced map
  \[
  \ho\GIICat_l(KPX, Z) \fto{\eta^*} \ho\GIICat_l(X,Z)
  \]
  is a bijection of sets.  To do this, we explicitly construct an
  inverse as follows. First, given and object $\ul{m}_+$ in $\sF$,
  Consider the diagram of 2-categories
    \[
  \begin{xy}
    (0,0)*+{Z(\ul{m}_+)}="A";
    (30,0)*+{KPZ(\ul{m}_+)}="B";
    (0,-20)*+{Z(\ul{1}_+)^m}="C";
    (30,-20)*+{KPZ(\ul{1}_+)^m},="D";
    {\ar^-{\eta_{Z,m}} "A"; "B"};
    {\ar_-{\eta_{Z,1}^m} "C"; "D"};
    {\ar "A"; "C"};
    {\ar "B"; "D"};
  \end{xy}
  \] 
  where the vertical maps are the Segal maps. This diagram commutes up
  to a 2-natural transformation. Since $Z$ and $KPZ$ are special, the
  two vertical maps are weak equivalences. Note that $\eta_{Z,1}$ is
  precisely the map of \cref{lem:X1-to-PX-weak-equiv}, thus it is a
  weak equivalence. It follows then that $\eta_Z$ is a levelwise weak
  equivalence.  For $h\cn X \to Z$, let $F(h)$ be given by the zigzag
  below:
  \[
  KPX \fto{KPh} KPZ \fot{\eta_Z} Z.
  \]
  To show that $F$ defines an inverse bijection of sets, we must
  verify $\eta^*(F(h)) = h$ for $h \cn X \to Z$ and $F(\eta^*(k)) = k$
  for $k \cn KPX \to Z$.  These follow by verifying that the
  diagrams below commute in $\ho \GIICat_l$.
  \[
  \begin{xy}
    (0,0)*+{X}="X";
    (25,0)*+{KPX}="KPX";
    (0,-20)*+{Z}="Z";
    (25,-20)*+{KPZ}="KPZ";
    {\ar^-{\eta_X} "X"; "KPX"};
    {\ar_-{\eta_Z} "Z"; "KPZ"};
    {\ar_-{h} "X"; "Z"};
    {\ar^-{KPh} "KPX"; "KPZ"};
    (55,0)*+{KPX}="KPXb";
    (80,0)*+{Z}="Zb";
    (55,-20)*+{KPKPX}="KP2Xb";
    (80,-20)*+{KPZ}="KPZb";
    {\ar^-{k} "KPXb"; "Zb"};
    {\ar_-{KP(k)} "KP2Xb"; "KPZb"};
    {\ar_-{KP(\eta_X)} "KPXb"; "KP2Xb"};
    {\ar^-{\eta_Z} "Zb"; "KPZb"};
  \end{xy}
  \] 
  The first commutes by \cref{prop:eta-natural-in-hoGa2Cat-lax}, and
  the second by
  \cref{lem:KPeta-and-etaKP,prop:eta-natural-in-hoGa2Cat-lax}.
\end{proof}
We can now use that $\eta$ is a stable equivalence to verify that $P$ preserves stable equivalences as well as weak equivalences.

\begin{prop}\label{prop:P-pres-steq}
The functor $P\cn\GaIICat_{l} \to \PIICat$ preserves and reflects stable equivalences, and so is a relative functor $(\GaIICat_{l}, \cS) \to (\PIICat, \cS)$.
\end{prop}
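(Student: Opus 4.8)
The plan is to transfer bijectivity across the unit $\eta$, using that $\eta$ is already known to be a stable equivalence. Recall first what must be checked: for a $\Ga$-lax map $h\cn X\to Y$, the functor $Ph$ is a \emph{strict} functor of permutative 2-categories by \cref{thm:P-lax-functorial}, so by \cref{defn:stable-equiv-piicats} it is a stable equivalence exactly when $KPh$ is a stable equivalence of $\Ga$-2-categories; and by \cref{cor:Hos-iso-Holax} the latter is equivalent to $KPh$, regarded as a $\Ga$-lax map, inducing a bijection
\[
(KPh)^{*}\cn\ho\GaIICat_{l}(KPY,Z)\longrightarrow\ho\GaIICat_{l}(KPX,Z)
\]
for every very special $\Ga$-2-category $Z$. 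Meanwhile $h$ is a stable equivalence iff $h^{*}\cn\ho\GaIICat_{l}(Y,Z)\to\ho\GaIICat_{l}(X,Z)$ is a bijection for every such $Z$. It therefore suffices to show, with $Z$ fixed, that $h^{*}$ is a bijection if and only if $(KPh)^{*}$ is.

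I would fix $Z$ and use three inputs. By \cref{thm:eta-stable-equiv}, $\eta_{X}$ and $\eta_{Y}$ are stable equivalences, so $\eta_{X}^{*}$ and $\eta_{Y}^{*}$ are bijections. By \cref{prop:eta-natural-in-hoGa2Cat-lax}, $\eta$ is a natural transformation $\mathrm{Id}\Rightarrow KP$ of endofunctors of $\ho\GaIICat_{l}$, so the square with horizontal maps $\eta_{X},\eta_{Y}$ and vertical maps $h$, $KPh$ commutes in $\ho\GaIICat_{l}$; applying the contravariant functor $\ho\GaIICat_{l}(-,Z)$ to that commuting square yields the equality of functions
\[
h^{*}\circ\eta_{Y}^{*}=\eta_{X}^{*}\circ(KPh)^{*}\cn\ho\GaIICat_{l}(KPY,Z)\longrightarrow\ho\GaIICat_{l}(X,Z).
\]
Since $\eta_{X}^{*}$ and $\eta_{Y}^{*}$ are bijections, this one equation forces $h^{*}$ to be a bijection precisely when $(KPh)^{*}$ is. Letting $Z$ range over all very special $\Ga$-2-categories gives both preservation and reflection of stable equivalences, and in particular that $P$ restricts to a relative functor $(\GaIICat_{l},\cS)\to(\PIICat,\cS)$.

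I do not anticipate a genuine obstacle: the content is entirely carried by \cref{thm:eta-stable-equiv} and the naturality of $\eta$, both already established. The only points needing care are bookkeeping---confirming that $KPh$ is a strict $\Ga$-map so that \cref{defn:stable-equiv-piicats} and \cref{cor:Hos-iso-Holax} apply, and keeping track of variance so that $(\eta_{Y}\circ h)^{*}=h^{*}\circ\eta_{Y}^{*}$ and likewise for $KPh\circ\eta_{X}$---together with the conceptual observation that, since being a stable equivalence of a $\Ga$-lax map is a statement purely about induced maps on $\ho\GaIICat_{l}$-hom-sets, it is harmless that the naturality square for $\eta$ commutes only after passing to $\ho\GaIICat_{l}$.
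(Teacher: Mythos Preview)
Your proof is correct and follows essentially the same approach as the paper: both use the naturality square $[\eta_Y][h]=[KPh][\eta_X]$ in $\ho\GaIICat_l$ together with \cref{thm:eta-stable-equiv} to conclude that $h$ is a stable equivalence iff $KPh$ is, and then invoke the definition of stable equivalence in $\PIICat$. The only cosmetic difference is that the paper passes to $\Ho\GaIICat_l$ and applies the 2-out-of-3 property directly, whereas you unpack this into the hom-set bijections of \cref{defn:Galax-stable-eq}; these are equivalent formulations of the same argument.
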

\begin{proof}\proofof{prop:P-pres-steq}
Let $h\cn X \to Y$ be a $\Ga$-lax map between $\Ga$-2-categories.  Then by \cref{prop:eta-natural-in-hoGa2Cat-lax}, we have
\[
[\eta][h] = [KPh][\eta]
\]
in $\ho \GaIICat_{l}$, so this equation holds in $\Ho \GaIICat_{l}$ as
well.  By \cref{thm:eta-stable-equiv}, we know that $\eta$ is a stable
equivalence, and since these satisfy the 2-out-of-3 property, $h$ is a
stable equivalence if and only if $KPh$ is.  But by definition (see
\cref{defn:stable-equiv-pgm}) $KPh$ is a stable equivalence if and
only if $Ph$ is, and therefore $h$ is a stable equivalence if and only
if $Ph$ is.
\end{proof}

\begin{thm}\label{thm:equiv-HoPGM-and-HoP2Cat-part2}
The inclusion $\PIICat \hookrightarrow \PGM$ induces an equivalence of
homotopy theories
\begin{align*}
(\PIICat, \cS) & \stackrel{\simeq}{\longrightarrow} (\PGM, \cS).
\end{align*}
\end{thm}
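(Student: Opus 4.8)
The plan is to apply \cref{cor:rel-cats-sufficient} (or its generalization allowing finite zigzags) to the inclusion $\PIICat \hookrightarrow \PGM$ equipped with stable equivalences, using $P\Ko$ as the candidate inverse relative functor. First I would verify that both functors are relative functors for the $\cS$-structures: the inclusion preserves stable equivalences by \cref{prop:inc_creates_stable}, and $\Ko$ preserves stable equivalences essentially by definition (\cref{defn:stable-equiv-pgm}), while $P$ preserves stable equivalences by \cref{prop:P-pres-steq}, so the composite $P\Ko\cn(\PGM,\cS)\to(\PIICat,\cS)$ is a relative functor. Note that $\Ko$ lands in $\GaIICat$, which sits inside $\GaIICat_l$, so \cref{prop:P-pres-steq} applies.

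Next I would exhibit the two natural zigzags of stable equivalences required by \cref{cor:rel-cats-sufficient}. For the composite $\PGM \xrightarrow{P\Ko} \PIICat \hookrightarrow \PGM$, the very same zigzag used in \cref{thm:main-css-2} works: \cref{prop:epzo-strictification} gives a natural zigzag of strict symmetric monoidal \emph{weak} equivalences between $P\Ko\cC$ and $\cC$, and every weak equivalence of permutative Gray-monoids is a stable equivalence (applying $\Ko$, a levelwise weak equivalence of $\Ga$-2-categories is in particular a stable equivalence). Thus the components of this zigzag lie in $\cS$ for $\PGM$. For the other composite $\PIICat \hookrightarrow \PGM \xrightarrow{P\Ko} \PIICat$, I would again reuse the argument of \cref{thm:main-css-2}: on a permutative 2-category $\cC$ we have the natural maps $P\Ko\cC \xleftarrow{} P\Kt\cC \xrightarrow{\epz} \cC$, where the left-pointing map is $P$ applied to the levelwise weak equivalence $\Kt\cC \to \Ko\cC$ of \cref{prop:K-Ko-levelwise-weak-equiv} (hence a weak, thus stable, equivalence of permutative 2-categories by \cref{prop:P-preserves-weak-equivalences}), and $\epz$ is a weak equivalence by \cref{prop:epz-weak-equiv} and natural on strict maps by \cref{prop:epz-is-natural}. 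Weak equivalences of permutative 2-categories are stable equivalences since $\Kt$ sends them to levelwise weak equivalences (\cref{prop:K-preserves-weak-equivalences}), which are stable equivalences. So all components of this zigzag lie in $\cS$ for $\PIICat$.

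With both natural zigzags in hand, \cref{cor:rel-cats-sufficient} (using the finite-zigzag version) immediately yields that the inclusion induces a weak equivalence between $R\cN(\PIICat,\cS)$ and $R\cN(\PGM,\cS)$ in the complete Segal space model structure, which is precisely an equivalence of homotopy theories. The main point to be careful about is checking that the zigzags from \cref{prop:epzo-strictification} and \cref{thm:main-css-2} really are natural with respect to \emph{strict} functors of permutative Gray-monoids (respectively permutative 2-categories)—this is where \cref{prop:epzo-natural}, \cref{prop:epz-is-natural}, and the naturality claims inside \cref{prop:epzo-strictification} and \cref{prop:K-Ko-levelwise-weak-equiv} are needed—and that weak equivalences are genuinely a subclass of stable equivalences in both relative categories, which is the only new ingredient beyond \cref{thm:main-css-2}. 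There is no serious obstacle here: the theorem is essentially \cref{thm:main-css-2} with $\cW$ upgraded to $\cS$, and the only work is confirming that the same natural zigzags still have components in the larger class $\cS$.
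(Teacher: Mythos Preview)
Your proposal is correct and follows essentially the same approach as the paper: the paper's proof simply says that the inclusion preserves stable equivalences by \cref{prop:inc_creates_stable} and that the rest of the proof is identical to that of \cref{thm:main-css-2}, using \cref{prop:P-pres-steq} to see that $P\Ko$ is a relative functor for $\cS$. Your more detailed unpacking of the zigzags and the verification that weak equivalences lie in $\cS$ is exactly what the paper intends by ``the rest of the proof is now the same.''
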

\begin{proof}\proofof{thm:equiv-HoPGM-and-HoP2Cat-part2}
The inclusion $\PIICat \hookrightarrow \PGM$ preserves stable equivalences by \cref{prop:inc_creates_stable}.  The rest of the proof is now the same as that of \cref{thm:main-css-2}, using \cref{prop:P-pres-steq}.
\end{proof}

We now turn to our final equivalence of homotopy theories.

\begin{lem}\label{lem:eta-zig}
The $\Ga$-lax maps $\eta_{X} \cn X \to KPX$ induce a natural zigzag of stable equivalences from $X$ to $KPX$ in the category $\GaIICat$.
\end{lem}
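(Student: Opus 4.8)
The plan is to convert the $\Ga$-lax map $\eta_X$ into a span of strict $\Ga$-maps using the $E$-construction of \cref{sec:E-construction}, and then verify that all the maps involved are stable equivalences. Concretely, for each $\Ga$-2-category $X$ we apply \cref{prop:E_square} to the $\Ga$-lax map $\eta_X \cn X \to KPX$ to obtain the diagram of $\Ga$-2-categories
\[
X \fot{\om} E\eta_X \fto{\nu} KPX
\]
in which both $\om$ and $\nu$ are strict $\Ga$-maps. By \cref{prop:om-levelwise-adj}, $\om$ is a levelwise weak equivalence and hence a stable equivalence. By \cref{thm:eta-stable-equiv}, $\eta_X$ is a stable equivalence, so \cref{cor:nu-stable-eq} gives that $\nu$ is a stable equivalence as well. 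This produces the desired span of stable equivalences from $X$ to $KPX$.

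Next I would address naturality. The key input is \cref{cor:eta-nat-strict}, which says that $\eta$ is strictly natural with respect to strict $\Ga$-maps: for a strict $\Ga$-map $h \cn X \to Y$ we have $\eta_Y h = KPh\, \eta_X$, i.e., the relevant square of $\Ga$-lax maps commutes on the nose (with $\la$ the identity $\Ga$-transformation). This places us exactly in the situation handled by \cref{cor:E-on-Ga-arrow-category}, or more directly by \cref{prop:E-on-composable-pairs} and \cref{prop:E-is-weird-functor} specialized to the case of a commuting square with strict vertical maps: applying $E$ to the square with $i = \eta_X$, $k = \eta_Y$, and the two strict maps $h$ and $KPh$ produces a strict $\Ga$-map $E\eta_X \to E\eta_Y$ making both squares
\[
\begin{xy}
  0;<16mm,0mm>:<0mm,11mm>::
  (0,0)*+{X}="A";
  (1,0)*+{E\eta_X}="B";
  (2,0)*+{KPX}="C";
  (0,-1)*+{Y}="D";
  (1,-1)*+{E\eta_Y}="E";
  (2,-1)*+{KPY}="F";
  {\ar_-{\om} "B"; "A"};
  {\ar^-{\nu} "B"; "C"};
  {\ar_-{\om} "E"; "D"};
  {\ar^-{\nu} "E"; "F"};
  {\ar_-{h} "A"; "D"};
  {\ar "B"; "E"};
  {\ar^-{KPh} "C"; "F"};
\end{xy}
\]
commute. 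Functoriality of this assignment in $h$ is exactly the content of \cref{cor:E-on-Ga-arrow-category} (using that $E\eta$ lands in $\Ga$-2-categories by the remarks preceding \cref{cor:hogas-iso-hogalax}, since $\eta_X$ is a $\Ga$-lax map of $\Ga$-2-categories), so $X \mapsto (X \ot E\eta_X \to KPX)$ defines a functor $\GaIICat \to \mathpzc{Span}(\GaIICat)$, which is the asserted natural zigzag.

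The only real work is bookkeeping: one must check that the hypotheses of \cref{cor:E-on-Ga-arrow-category} are met, namely that for a strict $\Ga$-map $h$ the square \eqref{eqn:lax-str-square} with $i = \eta_X$ commutes with strict maps $h$, $KPh$ along the sides — which is \cref{cor:eta-nat-strict} — and that $KP$ restricts to an endofunctor of $\GaIICat$ (not merely $\GaIICat_l$) on strict maps, which follows since $P$ lands in $\PIICat$ with strict functors by \cref{thm:P-lax-functorial} and $K = \Kt$ is functorial on strict (symmetric monoidal) functors by \cref{rmk:K-functorial} together with \cref{prop:functoriality-of-K}. I expect the main obstacle to be purely expository: keeping straight the distinction between the lax map $\eta_X$ (which is genuinely lax as a map of $\Ga$-2-categories, so the span construction is essential and one cannot simply use $\eta$ directly) and its strict naturality in $X$ (which is what makes the span natural). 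No new computation is needed; everything is assembled from \cref{prop:E_square}, \cref{prop:om-levelwise-adj}, \cref{cor:nu-stable-eq}, \cref{thm:eta-stable-equiv}, \cref{cor:eta-nat-strict}, and \cref{cor:E-on-Ga-arrow-category}.
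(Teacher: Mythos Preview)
Your proposal is correct and follows essentially the same approach as the paper: apply the $E$-construction to $\eta_X$ to obtain the span $X \fot{\om} E\eta_X \fto{\nu} KPX$, invoke \cref{prop:om-levelwise-adj}, \cref{thm:eta-stable-equiv}, and \cref{cor:nu-stable-eq} for the stable equivalences, and use \cref{cor:eta-nat-strict} together with \cref{cor:E-on-Ga-arrow-category} for naturality. The paper additionally notes that commutativity of the left square forces $X \mapsto E\eta_X$ to preserve stable equivalences (making it a relative functor), an observation used in the subsequent theorem but not strictly required for the lemma as stated.
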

\begin{proof}\proofof{lem:eta-zig}
Recall the lax arrow category $\GaIICat^{\bullet \to
  \bullet}_{\mathrm{lax},\mathrm{str}}$ of \cref{D2Cat-lax-arrow}
whose objects are lax maps but with morphisms given by strict maps.
Since $\eta$ is natural with respect to strict maps by
\cref{cor:eta-nat-strict}, it defines a functor
\[
\eta \cn \GaIICat \to \GaIICat^{\bullet \to \bullet}_{\mathrm{lax},\mathrm{str}}.
\]
Composing this with the functor $E$ of
\cref{cor:E-on-Ga-arrow-category} yields a functor
\[
E\eta_{(-)} \cn \GaIICat \to \mathpzc{Span}\,(\GaIICat).
\]
This means that given a strict $\Ga$-map $h\cn X \to Y$, one has the
following commuting diagram in $\GaIICat$.
\begin{eqn}\label{eqn:span}
\begin{xy}
  (0,0)*+{X}="0";
  (20,0)*+{E\eta_X}="1";
  (40,0)*+{KPX}="2";
  (0,-15)*+{Y}="3";
  (20,-15)*+{E\eta_Y}="4";
  (40,-15)*+{KPY}="5";
  {\ar_-{\omega} "1"; "0"};
  {\ar^-{\nu} "1"; "2"};
  {\ar^-{\omega} "4"; "3"};
  {\ar_-{\nu} "4"; "5"};
  {\ar_-{h} "0"; "3"};
  {\ar_-{E\eta_h} "1"; "4"};
  {\ar^-{KPh} "2"; "5"};
\end{xy}
\end{eqn}
Therefore the functor $E\eta$ provides, for each $\Ga$-2-category $X$,
a zigzag of strict $\Ga$-maps which is natural with respect to strict
$\Ga$-maps.

The maps $\om$ are levelwise weak equivalences by
\cref{prop:om-levelwise-adj} and hence stable equivalences.  Further, commutativity of the left-hand square shows that $X \mapsto E\eta_{X}$ preserves stable equivalences, and so is a relative functor.
\Cref{cor:nu-stable-eq} implies that the maps $\nu$ are stable
equivalences because, by \cref{thm:eta-stable-equiv}, the components
of $\eta$ are stable equivalences.
\end{proof}

\begin{thm}\label{thm:main-css-1}
The functors 
\begin{align*}
K\cn\PIICat\to\GaIICat, \\
P\cn\GaIICat\to\PIICat
\end{align*}
induce an equivalence of homotopy theories between $(\PIICat, \cS)$ and $(\GaIICat, \cS)$.
\end{thm}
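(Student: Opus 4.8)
The plan is to apply \cref{cor:rel-cats-sufficient} to the relative functors $P\cn(\GaIICat,\cS)\to(\PIICat,\cS)$ and $K\cn(\PIICat,\cS)\to(\GaIICat,\cS)$. We already know these are relative functors: $P$ preserves stable equivalences by \cref{prop:P-pres-steq}, and $K$ does so by the definition of stable equivalences in $\PIICat$ (\cref{defn:stable-equiv-piicats}). So to invoke \cref{cor:rel-cats-sufficient} it suffices to exhibit, in each direction, a finite zigzag of natural transformations between the relevant composite and the identity, whose components are stable equivalences.

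For the composite $PK\cn\PIICat\to\PIICat$, the counit $\epz_{\cC}\cn PK\cC\to\cC$ of \cref{defn:epz} is strictly natural with respect to strict functors of permutative 2-categories by \cref{prop:epz-is-natural}, and is a weak equivalence (hence a stable equivalence, since every weak equivalence of $\Ga$-2-categories is a stable equivalence, and likewise every weak equivalence of permutative 2-categories has image under $K$ a levelwise weak equivalence by \cref{prop:K-preserves-weak-equivalences}) by \cref{prop:epz-weak-equiv}. Thus $\epz$ is a single natural transformation $PK\Rightarrow\mathrm{Id}_{\PIICat}$ with stable-equivalence components, which handles one of the two conditions of \cref{cor:rel-cats-sufficient}.

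For the composite $KP\cn\GaIICat\to\GaIICat$, we use the zigzag produced in \cref{lem:eta-zig}: the functor $E\eta_{(-)}\cn\GaIICat\to\mathpzc{Span}\,(\GaIICat)$ gives, for each $\Ga$-2-category $X$, a span of strict $\Ga$-maps
\[
X\fot{\om}E\eta_X\fto{\nu}KPX
\]
natural with respect to strict $\Ga$-maps, in which $\om$ is a levelwise (hence stable) equivalence and $\nu$ is a stable equivalence. This is precisely a finite zigzag of natural transformations between $KP$ and the identity on $\GaIICat$ with all components stable equivalences, which handles the second condition. Applying the last sentence of \cref{cor:rel-cats-sufficient} then shows that $P$ and $K$ induce mutually inverse weak equivalences between $R\cN(\GaIICat,\cS)$ and $R\cN(\PIICat,\cS)$ in the complete Segal space model structure, i.e., an equivalence of homotopy theories.

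The main obstacle is packaging the natural zigzag correctly: \cref{cor:rel-cats-sufficient} requires genuine natural transformations (or zigzags thereof) between \emph{relative functors}, so one must be careful that $X\mapsto E\eta_X$ is itself a relative functor $(\GaIICat,\cS)\to(\GaIICat,\cS)$ — this is exactly the content of the naturality statement and the observation in the proof of \cref{lem:eta-zig} that commutativity of the left-hand square of the displayed diagram forces $E\eta$ to preserve stable equivalences — and that $\om,\nu$ are natural transformations of such relative functors. Once that bookkeeping is in place, the argument is a direct citation of \cref{cor:rel-cats-sufficient}; no further computation with complete Segal spaces is needed, since that machinery was black-boxed in \cref{sec:css-basics}.
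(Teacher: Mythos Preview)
Your proposal is correct and matches the paper's proof essentially line for line: the paper also invokes \cref{cor:rel-cats-sufficient}, uses \cref{lem:eta-zig} for the $KP$ side, and uses $\epz$ via \cref{prop:epz-is-natural} and \cref{prop:epz-weak-equiv} for the $PK$ side. Your additional remarks about $P$, $K$, and $E\eta_{(-)}$ being relative functors are exactly the bookkeeping the paper leaves implicit.
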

\begin{proof}\proofof{thm:main-css-1}
By \cref{cor:rel-cats-sufficient}, we need to construct a natural
zigzag of stable equivalences between the composites $KP, PK$ and the
respective identity functors.  The first of those is given in
\cref{lem:eta-zig}, while the second is given by the transformation
$\epz$ using \cref{prop:epz-is-natural} and
\cref{prop:epz-weak-equiv}.
\end{proof}

\subsection{The composite \texorpdfstring{$K \epz _{\cC} \circ
    \eta_{K\cC}$}{Kε η}}
\label{sec:K-epz-eta}

In this section we prove the following:
\begin{prop}
  \label{prop:Kepz-eta-is-id}
  Let $\cC$ be a permutative 2-category.  Then the composite $\Ga$-lax
  map of $\Ga$-2-categories is the identity on $K\cC$:
  \[
  K\cC \fto{\eta_{K\cC}} KPK\cC \fto{K\epz _{\cC}} K\cC.
  \] 
\end{prop}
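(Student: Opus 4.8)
The plan is to prove the statement by unwinding the explicit formulas for $\eta$, $\epz$, $K$, and $P$ on $0$-, $1$-, and $2$-cells, and checking that the composite is literally the identity $\Ga$-functor $K\cC \to K\cC$, not merely a weak equivalence. Since $\eta_{K\cC}$ is a $\Ga$-lax map (\cref{prop:eta-Ga-lax}) and $K\epz_\cC = \Kt\epz_\cC$ is a strict $\Ga$-map (the image under $\Kt$ of the strict symmetric monoidal $2$-functor $\epz_\cC\cn PK\cC \to \cC$, using \cref{prop:epz-times-is-strict-SM} and \cref{rmk:K-functorial}), the composite is a $\Ga$-lax map; I must check both that each underlying $2$-functor $(K\epz_\cC)_{\ulp{m}} \circ \eta_{K\cC,\ulp{m}}$ is the identity $2$-functor on $K\cC(\ulp{m})$, and that each laxity constraint $(K\epz_\cC)_{\ulp{m}}{}_\ast(\eta_{K\cC,\phi})$ is the identity $2$-cell. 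The latter will follow from the former together with \cref{rmk:epz-phibar-is-id-for-map-of-finite-sets}, which says $\epz_{\ol\phi}$ is the identity pseudonatural transformation when $\ol\phi$ is a single map of finite sets, so that the image under $\Kt\epz_\cC$ of the structure $1$-cells $\eta_{K\cC,\phi} = \{s \mapsto \bsb{\phi^s}{\id}\}$ collapses to identities.

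The computation at the level of $0$-cells is the heart of the argument. Fix $\ulp{m}$ and an object $\{x_s, c_{s,t}\} \in K\cC(\ulp{m})$. By \cref{defn:eta} (applied to $X = K\cC$), $\eta_{K\cC,\ulp{m}}$ sends this to the object of $KPK\cC(\ulp{m})$ whose value at $s \subset \ul{m}$ is $\bsb{|s|}{\pi^s_\ast\{x,c\}}$, where $\pi^s\cn \ulp{m} \to \ulp{|s|}$ is the projection of \cref{notn:pis}; by \cref{defn:Ko-phi} this $\pi^s_\ast\{x,c\}$ is the object of $K\cC(\ulp{|s|}) = A K\cC(|s|)$ whose $u$-component (for $u \subset \ul{|s|}$) is $x_{(\pi^s)^{-1}(u)}$, i.e.\ $x$ restricted to the subset of $s$ indexed by $u$. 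Applying $\epz_{\vec m} = \epz_{(|s|)}$ from \cref{defn:epz,defn:epzo-m}: since $\vec m = (|s|)$ is a length-one tuple, $\epz_{(|s|)}$ is just $\ev_{\ul{|s|}}\cn K\cC(\ulp{|s|}) \to \cC$, which evaluates at the full subset $\ul{|s|}$, returning $x_{(\pi^s)^{-1}(\ul{|s|})} = x_s$. So the object of $PK\cC$ assigned at index $s$ is $\bsb{|s|}{\pi^s_\ast\{x,c\}}$, and $\epz$ sends it to $x_s$. The structure $1$-cells $c_{s,t}$ are recovered from the $1$-cells $\bsb{\pi^{s,t}}{\id}$ of $\eta(\{x,c\})$ by applying $\epz$: here $\epz_{\pi^{s,t}}$ is the reordering/partition component, which by \cref{eq:defn-epzo-barphi} composes the unique cell determined by $c$ and the partition $s \cup t = s \amalg t$ with a $\beta$ reordering; but \cref{rmk:epz-phibar-is-permutation-when-phibar-is} and \cref{rmk:unique-cells-given-by-partition} identify this composite precisely as $c_{s,t}$ (up to the trivial reordering, which is an identity here since the partition is already ordered compatibly). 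Thus $(K\epz_\cC \circ \eta_{K\cC})_{\ulp{m}}\{x,c\} = \{x_s, c_{s,t}\} = \{x,c\}$.

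The checks on $1$- and $2$-cells are analogous and shorter: $\eta$ sends a $1$-cell $\{f_s\}\cn\{x,c\}\to\{y,d\}$ in $K\cC(\ulp{m})$ to $\{s \mapsto \bsb{\id_{|s|}}{\pi^s_\ast\{f\}}\}$, and $\epz$ of this, using \cref{rmk:grothendieck-composition-some-ids} and the fact that $\epz_{\id} = \id$ together with $\ev_{\ul{|s|}}$ evaluating $\pi^s_\ast\{f\}$ at $\ul{|s|}$, returns $f_s$; similarly for $2$-cells $\{\al_s\}$. The main obstacle I anticipate is bookkeeping: keeping straight the nested indexing $\pi^{(\pi^s)^{-1}(u)}$ versus $\pi^{s}$ and verifying that the various ``unique cell'' and reordering clauses of \cref{rmk:unique-cells-given-by-partition,rmk:epz-phibar-is-permutation-when-phibar-is} genuinely compose to the structure cells $c_{s,t}$ with no residual $\beta$'s or $\Sigma$'s surviving. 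This is where permutativity of $\cC$ (as a permutative $2$-category, so $\oplus$ is a strict $2$-functor and the relevant $\Sigma$-cells of \cref{eq:epzo-psnat} are identities, per \cref{prop:epz-times-is-strict-SM}) is essential, and I would organize the proof so that each potential obstruction-cell is explicitly named and shown to be an identity by citing the appropriate earlier remark.
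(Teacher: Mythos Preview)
Your proposal is correct and follows essentially the same approach as the paper: an explicit unwinding of $\eta$ and $\epz$ on $0$-, $1$-, and $2$-cells of $K\cC(\ulp{m})$, together with a separate check that $K\epz$ applied to the laxity $1$-cells $\eta_\phi = \{s \mapsto \bsb{\phi^s}{\id}\}$ yields identities via \cref{rmk:epz-phibar-is-id-for-map-of-finite-sets}. One small point: when recovering $c_{s,t}$ from $\epz_{\pi^{s,t}}$, the relevant input is just the formula \eqref{eq:defn-epzo-barphi} applied to a length-one source tuple (the $\beta$-reordering is trivial because $\mathfrak{p}(\pi^{s,t},1) = \{1,2\}$ is already in order), so \cref{rmk:epz-phibar-is-permutation-when-phibar-is} is not the right citation there, and the laxity constraints are identity $1$-cells (components of a $2$-natural transformation) rather than $2$-cells.
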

\begin{proof}\proofof{prop:Kepz-eta-is-id}
Throughout this section we work at a fixed permutative 2-category
$\cC$ and will often simply write $\eta = \eta_{K\cC}$ for simplicity.  We first describe
\[
\eta_{K\cC} \cn K \cC \to KPK \cC.
\]

For $\ul{m}_+ \in \sF$ we use \cref{defn:eta} to describe $\eta =
\eta_{\ul{m}_+}$ on the following 0-, 1-, and 2-cells in $K\cC(\ul{m}_+)$:
\[
\begin{xy}
  (0,0)*+{\{x,c\}}="A";
  (24,0)*+{\{y,d\}}="B";
  {\ar@/^1pc/^{\{f\}} "A"; "B"};
  {\ar@/_1pc/_{\{g\}} "A"; "B"};
  {\ar@{=>}^{\{\al\}} (10,3)*+{}; (10,-3)*+{} };
\end{xy}
\]
On 0-cells:
\[
\eta\big(\{x,c\}\big) = \iicb{1.4}{
  (s \subset \ul{m})
}{
  \bsb{|s|}{\pi^s_*\{x,c\}}
}{
  \stdisj{\ul{m}}
}{
  \bsb{\pi^{s,t}}{\id_{\pi^s_*\{x,c\} \times \pi^t_*\{x,c\}}}
}
\]
where, unraveling the definition of $\pi^s$, we have
\[
\pi^s_*\{x,c\} = 
\iicb{1.4}{
  (r \subset \ul{|s|})
}{
  x_{(\pi^s)^{-1}(r)}
}{
  \left(
    \genfrac{}{}{0pt}{}{r,u \subset \ul{|s|}}{r \cap u = \emptyset} 
  \right)
}{
  c_{(\pi^s)^{-1}(r),(\pi^s)^{-1}(u)}
}.
\]
Note that we make use of \cref{rmk:pist-pis-pit} to see that 
$\eta\big(\{x,c\}\big)_{s,t}$ is a well-defined map in $PK\cC$:
\[
\bsb{\pi^{s,t}}{\id_{\pi^s_*\{x,c\} \times \pi^t_*\{x,c\}}}\cn
\bsb{|s \cup t|}{\pi^{s \cup t}_*\{x,c\}}
\to
\bsb{(|s|,|t|)}{\pi^s_*\{x,c\} \times \pi^t_*\{x,c\}}.
\]
On 1-cells:
\[
\eta\big(\{f\}\big) = \left\{
  (s \subset \ul{m}) \quad \mapsto \quad
  \bsb{\id_{|s|}}{\pi^s_*\{f\}}
\right\}.
\]
On 2-cells:
\[
\eta\big(\{\alpha\}\big) = \left\{
  (s \subset \ul{m}) \quad \mapsto \quad
  \bsb{\id_{|s|}}{\pi^s_*\{\alpha\}}
\right\}.
\]

Now we describe the composite $K\epz _{\cC} \circ \eta_{K\cC}$ using the
description of $\eta_{K\cC}$ above, the definition of $\epz$
(described explicitly for $\epzo$ in \cref{prop:epzo-pseudofunctor}) and the
functoriality of $K$ described in \cref{prop:functoriality-of-K}.

On 0-cells:
\[
K\epz_\cC \circ \eta_{K\cC}\big(\{x,c\}\big) = \iicb{1}{
  (s \subset \ul{m})
}{
  \epz_{|s|} \bsb{|s|}{\pi^s_*\{x,c\}} =
  x_{(\pi^s)^{-1}(\ul{|s|})} = x_s
}{
  \stdisj{\ul{m}}
}{
  \epz_{(|s|,|t|)}(\id_{\pi^s_*\{x,c\} \times \pi^t_*\{x,c\}}) \circ \epz_{\pi^{s,t}}
}.
\]
Now $\epz_{\vec{m}}(\id_{\vec{x}}) = \id_{\epz_{\vec{m}}(\vec{x})}$ 
and, unraveling the definition of $\epz_{\ol{\phi}}$ for $\ol{\phi} =
\pi^{s,t}$, one finds
\[
\epz_{\pi^{s,t}} = c_{s,t}\cn x_{s \cup t} \to x_s \oplus x_t.
\]
On 1-cells:
\[
K\epz_\cC \circ \eta_{K\cC} \big(\{f\}\big) = \left\{s \mapsto 
\epz_{|s|}[\id_{|s|}, \pi^s\{f\}]\right\} = \left\{f\right\}.
\]
Similarly, on 2-cells:
\[
K\epz_\cC \circ \eta_{K\cC} \big(\{\alpha\}\big) = \left\{\alpha\right\}.
\]
Therefore $(K\epz_\cC \circ \eta_{K\cC})_{\ulp{m}}$ is the
identity on 0-, 1-, and 2-cells of $K\cC(\ulp{m})$.  

Now $\epz$ is a strict map of 2-categories by
\cref{prop:epz-times-is-strict-SM} and therefore $K\epz$ is a strict map
of $\Ga$-2-categories.  However, $\eta$ is only a $\Ga$-lax map and so
for a map of finite sets $\phi\cn \ul{m}_+ \to \ul{n}_+$, we have the
following diagram.
\[
\begin{xy}
  (0,20)*+{K \cC(\ul{m}_+)}="A1";
  (30,20)*+{KPK\cC(\ul{m}_+)}="A2";
  (60,20)*+{K \cC(\ul{m}_+)}="A3";
  (0,0)*+{K \cC(\ul{n}_+)}="B1";
  (30,0)*+{KPK\cC(\ul{n}_+)}="B2";
  (60,0)*+{K \cC(\ul{n}_+)}="B3";
  {\ar^-{\eta} "A1";"A2" };
  {\ar^-{K\epz} "A2";"A3" };
  {\ar_-{\phi} "A1";"B1" };
  {\ar_-{\phi} "A2";"B2" };
  {\ar_-{\eta} "B1";"B2" };
  {\ar_-{K\epz} "B2";"B3" };
  {\ar_-{\phi} "A3";"B3" };
  (45,10)*+{=};
  {\ar@{=>}^{\eta_{\phi}} "A2"+(-13,-8)="a"; "a"+(-5,-5)};
\end{xy}
\]
To verify that $K\epz \circ \eta$ is the identity as a lax transformation,
we need to verify that this 2-cell composite -- formed by whiskering
$\eta_\phi$ with $K\epz$ -- is the identity.  To do this, we must
consider
\[
K\epz(\eta_{\phi}) \cn 
K\epz\big(\phi_* \eta\big(\{x,c\}\big)\big) \to 
K\epz\big(\eta \big(\phi_* \{x,c\}\big)\big).
\]

First, we describe the source and target of $\eta_\phi$ more explicitly:
\begin{align*}
\phi_* \eta\big(\{x,c\}\big) & = \iicb{2}{
  (s \subset \ul{n})
}{
  \lrsb{|\phi^{-1}(s)|}{\pi^{\phi^{-1}(s)}_*\{x,c\}}
}{
  \stdisj{\ul{n}}
}{
  \lrsb{
    \pi^{\phi^{-1}(s),\phi^{-1}(t)}
  }{
    \id_{\pi^{\phi^{-1}(s)}_*\{x,c\} \times \pi^{\phi^{-1}(t)}_*\{x,c\}}}
}\\
\intertext{and}
\eta\big(\phi_*\{x,c\}\big) & = \iicb{2}{
  (s \subset \ul{n}) 
}{
  \lrsb{|s|}{\pi^s_*\phi_*\{x,c\}}
}{
  \stdisj{\ul{n}}
}{
  \lrsb{
    \pi^{s,t}
  }{
    \id_{\pi^s_*\phi_*\{x,c\} \times \pi^t_*\phi_*\{x,c\}}
  }
}.
\end{align*}
Note that 
\[
\phi_*\{x,c\} = \iicb{1}{s}{x_{\phi^{-1}(s)}}{
  s,t
}{
  c_{\phi^{-1}(s) \, , \, \phi^{-1}(t)}
}.
\]
Using the notation of \cref{sec:eta-is-lax-transformation}, the map
$\eta_\phi$ is given by
\[
\left\{ 
(s \subset \ul{n}) \mapsto 
\bsb{\phi^s}{\id_{\pi^s_{\ast}\phi_*\{x,c\}}} 
\right\}.
\]

Hence we have
\[
K\epz(\eta_{\phi}) = 
\left\{
(s \subset \ul{n}) \mapsto \epz\big(
\bsb{\phi^s}{\id_{\pi^s_*\phi_*\{x,c\}}}
\big) = 
\epz(\id_{\pi^s_* \phi_*\{x,c\}}) \circ \epz_{\phi^s}
\right\}.
\]
Unraveling the definitions, one finds
\[
\epz(\id_{\pi^s \phi\{x,c\}}) = \id_{x_{\phi^{-1}(s)}}
\]
and by \cref{rmk:epz-phibar-is-id-for-map-of-finite-sets}
\[
\epz_{\phi^s} = 
\id_{x_{\phi^{-1}(s)}}.
\]
Therefore $K\epz(\eta_{\phi}) = \{s \mapsto \id_{x_{\phi^{-1}(s)}}\} =
\id_{\{x,c\}}$ and this
completes the proof.  \end{proof}

\subsection{The composite \texorpdfstring{$\epz_{P X} \circ P\eta_X$}{ε Pη}}
\label{sec:K-epz-K-P-eta}

In this section we prove the following:
\begin{prop}
  \label{prop:epz-Peta-is-id}
  Let $X$ be a $\Ga$-2-category.  Then the composite 
  \[
  PX \fto{P \eta_X} PKPX \fto{\epz_{P X}} PX
  \]
  of strict symmetric monoidal functors is the identity on $PX$.
\end{prop}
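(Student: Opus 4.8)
The plan is to prove $\epz_{PX} \circ P\eta_X = \id_{PX}$ by a direct computation on the 0-, 1-, and 2-cells of $PX$, using the explicit formulas assembled in the excerpt. Recall that an object of $PX$ is a pair $[\vec{m}, \vec{x}]$ with $\vec{m} = (m_1, \ldots, m_r) \in \sA$ and $\vec{x} = (x_i) \in \prod_i X(\ulp{m_i})$, and that $1$- and $2$-cells have the form $[\ol{\phi}, \vec{f}]$, $[\ol{\phi}, \vec{\al}]$ for $\ol{\phi} \in \sA(\vec{m},\vec{n})$. The first step is to spell out $P\eta_X$ on such cells using \cref{rmk:description-Ph}: this requires the extension $A\eta_X$ of $A$ to $\Ga$-lax maps from \cref{defn:A-for-lax-maps}, which is built componentwise out of the $2$-functors $\eta_{X,\ulp{m_i}}$ of \cref{defn:eta} and the laxity $2$-natural transformations $\eta_{X,\phi_{i,j}}$ of \cref{defn:eta-phi}. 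So $P\eta_X[\vec{m},\vec{x}]$ will be $[\vec{m}, (A\eta_X)_{\vec{m}}(\vec{x})]$ where the $i$th component of $(A\eta_X)_{\vec{m}}(\vec{x})$ is the object $\eta_{X,\ulp{m_i}}(x_i)$ of $KPX(\ulp{m_i})$, i.e.\ the collection $\{[\,|s|, \pi^s_* x_i\,],\ [\pi^{s,t}, \id]\}$ indexed on subsets $s \subset \ul{m_i}$.

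Next I would compute $\epz_{PX}$ on these images, using \cref{defn:epz}, which factors $\epz$ through the inclusion $PKPX \to P\Ko PX$ followed by $\epzo_{PX}$, and then the explicit formulas for $\epzo$ in \cref{prop:epzo-pseudofunctor} together with \cref{defn:epzo-m,eq:defn-epzo-barphi}. On objects, $\epzo_{\vec{m}}$ is the composite $\prod_i \ev_{\ul{m_i}}$ followed by $\oplus$; applied to the $i$th component $\eta_{X,\ulp{m_i}}(x_i)$ this evaluates at the full subset $s = \ul{m_i}$, giving $(\pi^{\ul{m_i}})_* x_i = x_i$ since $\pi^{\ul{m_i}}$ is the identity on $\ul{m_i}_+$ (by the note following \cref{notn:pis}). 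Summing over $i$ recovers $\vec{x}$, so $\epz_{PX} \circ P\eta_X$ is the identity on objects. The analogous computation on $1$-cells is the crux: here $P\eta_X[\ol{\phi},\vec{f}]$ picks up the laxity $A\eta_X{}_{\ol{\phi}}$, which is assembled from the $\eta_{X,\phi_{i,j}} = [\phi^s, \id]$ of \cref{defn:eta-phi}, and $\epz_{PX}$ of this $1$-cell is $\epzo_{\vec{n}}(\cdots) \circ \epzo_{\ol{\phi}}$. The key input is that $\epzo_{\ol{\phi}}$, evaluated on objects of the form $\eta_{X,-}(x_i)$, is built from the unique comparison $1$-cells $c^i_{\ol{\phi}}$ (see \cref{eq:defn-c^i}, \cref{rmk:unique-cells-given-by-partition}); but in $KPX$ these comparison cells are the $[\pi^{s,t},\id]$, which compose under partitioning to $[\pi^{s_1,\ldots,s_a},\id]$, and evaluation $\ev$ then collapses these to identities. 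Combined with \cref{rmk:epz-phibar-is-id-for-map-of-finite-sets}, which says $\epzo$ on a single map of finite sets is the identity pseudonatural transformation, and the fact that reindexing/block-permutation parts of $\ol{\phi}$ contribute only identity components after evaluation at full subsets, one gets $\epz_{PX}\circ P\eta_X[\ol{\phi},\vec{f}] = [\ol{\phi}, \vec{f}]$. The $2$-cell case is identical in structure, with $\vec{\al}$ in place of $\vec{f}$, using that all the ambient $2$-cells produced by $\epzo$ (instances of $\Sigma$ and pseudonaturality of $\beta$) degenerate to identities on these particular inputs.

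The main obstacle I anticipate is the bookkeeping in the $1$-cell computation: one has to track how a general $\ol{\phi}\in\sA(\vec{m},\vec{n})$ decomposes (via \cref{rmk:decomposing-phibar-partition-map-reorder}) into partitions, componentwise maps of finite sets, and block reorderings, and verify that under the combined operation ``apply $\eta$, then $A\eta_{\ol{\phi}}$, then $\epzo_{\ol{\phi}}$, then $\epzo_{\vec{n}}(-)$, then project'' each piece is handled correctly --- the partition pieces by the collapsing of the $[\pi^{s,t},\id]$ comparison cells under evaluation, the finite-set-map pieces by \cref{rmk:epz-phibar-is-id-for-map-of-finite-sets}, and the reorderings by \cref{rmk:epz-phibar-is-permutation-when-phibar-is} together with the corresponding reindexing built into $\eta_{X,\ol{\phi}}$ via the maps $\phi^s$. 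None of this is conceptually deep, but getting the indices to line up --- in particular checking the compatibility $\phi^s \pi^{\phi^{-1}(s)} = \pi^s\phi$ of \cref{eqn:pi-phi-eq-phi-pi} is exactly what makes the $\epzo_{\phi^s}$ factors disappear --- is where the real work lies. I would organize the writeup by first stating the object-level identity, then the $1$-cell identity with the index-chasing isolated into a short lemma-style paragraph, and finally remark that the $2$-cell case and the verification that this holds as an equality of (strict symmetric monoidal) functors, not merely on underlying cells, are immediate since by \cref{prop:epz-times-is-strict-SM} both $\epz_{PX}$ and $P\eta_X$ (the latter by \cref{thm:P-lax-functorial}) are honest functors.
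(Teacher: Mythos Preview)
Your proposal is correct and follows essentially the same approach as the paper: direct verification on 0-, 1-, and 2-cells, with the 1-cell case being the crux and requiring the index-chase through the decomposition of $\ol{\phi}$ from \cref{rmk:decomposing-phibar-partition-map-reorder}. One organizational simplification the paper exploits that you might find useful is to first split a general 1-cell as $[\ol{\phi},\vec{f}] = [\id,\vec{f}]\circ[\ol{\phi},\id]$ via \cref{rmk:grothendieck-composition-some-ids} and then appeal to 2-functoriality of both $P\eta_X$ and $\epz_{PX}$ to treat the two factors separately; this isolates the nontrivial computation cleanly in the $[\ol{\phi},\id]$ case and makes the $[\id,\vec{f}]$ case immediate.
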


\begin{proof}\proofof{prop:epz-Peta-is-id}
First, recall that $\epz_{PX}$ is a strict symmetric monoidal
functor by \cref{prop:epz-times-is-strict-SM} and $P \eta_X = \sA
\int A\eta_X$ is a strict symmetric monoidal functor by
\cref{prop:Ah-has-monoidal-structure,prop:symm-D-lax-to-sm-2functor}.

To show that $\epz_{PX} \circ P\eta_X = \id_{PX}$ as maps of permutative 2-categories,
we check this equality on 0-, 1-, and 2-cells.
We use the definition of $\eta$ in \cref{defn:eta}, the description of
$P$ in \cref{sec:definition-of-P}, and the definition of $\epz$ (see
\cref{defn:epz,prop:epzo-pseudofunctor}).  On 0-cells, the definitions of
$\epz$ and $A\eta$ immediately give:
\begin{align*}
\epz_{PX} \circ P\eta_X \left( \bsb{\vec{m}}{\vec{x}} \right) 
& = \epz_{PX}\left( \bsb{\vec{m}}{A\eta_{\vec{m}}(\vec{x})} \right) \\
& = \epz_{\vec{m}}(A\eta_{\vec{m}}(\vec{x})) \\
& = \bsb{\vec{m}}{\vec{x}}.\\
\end{align*}
On 1-cells:
\begin{align*}
\epz_{PX} \circ P\eta_X \left( \bsb{\ol{\phi}}{\vec{f}} \right) 
& = \epz_{PX}\left( \bsb{\ol{\phi}}{A\eta_{\vec{n}}(\vec{f}) \circ A\eta_{\ol{\phi}}} \right)\\
& = \epz_{\vec{n}}(A\eta_{\vec{n}}(\vec{f}) \circ A\eta_{\ol{\phi}}) \circ \epz_{\ol{\phi}}.\\ 
\end{align*}
Because, by \cref{rmk:grothendieck-composition-some-ids}, each 1-cell
is given as a composite
\[
\bsb{\ol{\phi}}{\vec{f}} = \bsb{\id}{\vec{f}} \bsb{\ol{\phi}}{\id},
\]
and because both $P\eta_X$ and $\epz_{PX}$ are 2-functors (see
\cref{prop:Ah-has-monoidal-structure,prop:symm-D-lax-to-sm-2functor,prop:epz-times-is-strict-SM}),
it suffices to consider 1-cells of the form $\bsb{\id}{\vec{f}}$ or
$\bsb{\ol{\phi}}{\id}$.  In the first case, it is immediate that
$\epz_{PX} \circ P\eta_X \left( \bsb{\id}{\vec{f}} \right) = \bsb{\id}{\vec{f}}$
because $A\eta_{\id} = \id$ and $\epz_{\id} = \id$.  In the second
case, we have
\begin{align*}
\epz_{PX} \circ P\eta_X \left( \bsb{\ol{\phi}}{\id} \right) 
& = \epz_{\vec{n}}(A\eta_{\ol{\phi}}) \circ \epz_{\ol{\phi}}.\\ 
\end{align*}

This composite is, by definition, the component at an object $\vec{x} \in X(\vec{m})$ of the transformation formed by the pasting below.  
As in \cref{rmk:description-Ph,prop:epzo-pseudofunctor}, the object $\vec{x}$ has been suppressed from the notation.
\begin{eqn}
\label{eqn:epz-Aeta-phibar}
\begin{xy}
  (-23,30)*+{AX(\vec{m})}="A";
  (30,30)*+{AKPX(\vec{m})}="A1";
  (-23,0)*+{\prod_i \prod_{j \in \mathfrak{p}} X(\ul{n_j}_+)}="B";
  (30,0)*+{\prod_i \prod_{j \in \mathfrak{p}} KPX(\ul{n_j}_+)}="B1";
  (-23,-30)*+{AX(\vec{n})}="C";
  (30,-30)*+{AY(\vec{n})}="C1";
  (75,0)*+{PX}="D";
  {\ar^-{A\eta_{\vec{m}}} "A"; "A1"};
  {\ar^-{\prod_i \prod_{j \in \mathfrak{p}} \eta_{\ulp{n_j}}} "B"; "B1"};
  {\ar_-{A\eta_{\vec{n}}} "C"; "C1"};
  {\ar_-{\prod_i {\phi}_i} "A"; "B"};
  {\ar_-{\prod_i {\phi}_i} "A1"; "B1"};
  {\ar_-{\tau} "B"; "C"};
  {\ar_-{\tau} "B1"; "C1"};
  {\ar@/^1.7pc/^-{\epz_{\vec{m}}} "A1"; "D"};
  {\ar^-{\oplus_i \epz_{({n_j})_{j \in \mathfrak{p}}}} "B1"; "D"};
  {\ar@/^-1.7pc/_-{\epz_{\vec{n}}} "C1"; "D"};
  {\ar@{=>}_{\prod_i A\eta_{{\phi}_i}} (5,19)*+{}="tc0"; "tc0"+(-5,-5) };
  {\ar@{=}_{} (5,-11)*+{}="tc1"; "tc1"+(-4,-4) };
  {\ar@{=>}_{\oplus_i \epz_{{\phi}_i}} (48,16)*+{}="tc2"; "tc2"+(-5,-5) };
  {\ar@{=>}_{\beta} (48,-11)*+{}="tc2"; "tc2"+(-5,-5) };
\end{xy}
\end{eqn}
The lower 2-cell $\beta$ is given, as in the right-hand side of
\cref{eq:defn-epzo-barphi}, by the 1-cell $\beta$ for the relevant
permutation of terms
\[
\oplus_i \epz_{(n_j)_{j \in \mathfrak{p}}}\big(
  ({\phi}_{i})_*\{x^i, c^i\}\big) \fto{\ \beta\ } \epz_{\vec{n}}\big(\ol{\phi}_*\{\vec{x},\vec{c}\}\big).
\]
This is precisely the permutation of components in
\cref{rmk:decomposing-phibar-partition-map-reorder} which makes the
following diagram of finite sets commute.
\[
\begin{xy}
  (0,0)*+{\coprod_i \ul{m_i}}="m";
  (60,0)*+{\coprod_j \ul{n_j}}="n";
  (30,-12)*+{\coprod_i \coprod_{j \in \mathfrak{p}} \ul{n_j}}="o";
  {\ar^-{\ol{\phi}} "m"; "n"};
  {\ar_-{\coprod_i {\phi}_i} "m"; "o"};
  {\ar "o"; "n"};
\end{xy}
\]
Therefore, restricting to the top half of \cref{eqn:epz-Aeta-phibar},
it suffices to show that for each $i$ we have
\begin{eqn}
\label{eqn:epz-eta-at-phibar-i}
\epz_{(n_j)_{j \in \mathfrak{p}}}\big(\eta_{{\phi}_i}\big) 
\circ \epz_{{\phi}_i} 
= [{\phi}_i, \id].
\end{eqn}
This left-hand composite is, for each $x^i \in X(\ul{m_i}_+)$, given
as follows:
\begin{eqn}
\label{eqn:epz-eta-at-phibar-i-long}
\ev_{\ul{m_i}} \eta_{\ulp{m_i}} (x^i) \fto{\epz_{{\phi}_i}}
\left(
  \prod_{j \in \mathfrak{p}} \ev_{\ul{n_j}} 
\right) {\phi}_{i*} \eta_{\ulp{m_i}} (x^i) 
\fto{
  \ \left(\prod_{j \in \mathfrak{p}} \ev_{\ul{n_j}}\right)(A\eta_{{\phi}_i}) \ 
}
\left( 
  \prod_{j \in \mathfrak{p}} \ev_{\ul{n_j}} \eta_{\ulp{n_j}} 
\right) {\phi}_{i*} (x^i).
\end{eqn}

To understand this composite, we make critical use of
\cref{notn:phibar-partition-indexing,notn:decomposing-phibar-partition-map,notn:phis}.
From the definition of $\eta$ in \cref{eq:defn-eta}, we have
\begin{eqn*}
\eta_{\ulp{m_i}}(x^i) = 
\iicb{1.4}%
  {(s \subset \ul{m_i})}{\bsb{|s|}{\pi^s_* x^i}}%
  {\stdisj{\ul{m_i}}}{\bsb{\pi^{s,t}}{\id_{\pi^s_* x^i \times \pi^t_* x^i}}}.
\end{eqn*}
By definition (see \cref{eq:defn-epzo-barphi}), $\epz_{{\phi}_i}$
is the map 
\[
\bsb{|m_i|}{x^i} \to 
\lrsb{\left(|{\phi}_i^{-1}(n_j)|\right)_{j \in \mathfrak{p}}}{\prod_{j \in \mathfrak{p}}\pi^{\left({\phi}_i^{-1}(\ul{n_j})\right)}_* x^i}
\]
induced by the partition of $\ul{m_i}$ into $\coprod_{j \in \mathfrak{p}}
\ul{|{\phi}_i^{-1}(\ul{n_j})|}$ and the identity on
$\prod_{j \in \mathfrak{p}}\pi^{{\phi}_i^{-1}(\ul{n_j})}_* x^i$.
Next, from the definition of $A\eta_{{\phi}_i}$
in \cref{eq:defn-h-barphi} we have
\[
A\eta_{{\phi}_i} = \prod_{j \in \mathfrak{p}} \left\{
(s \subset \ul{n_j}) \quad \mapsto \quad
\bsb{({\phi}_{i,j})^{s}}{\id}
\right\}
\]
and hence
\begin{eqn}
\label{eqn:ev-Aeta}
\left(\prod_{j \in \mathfrak{p}}\ev_{\ul{n_j}}\right)
\big(A\eta_{{\phi}_i}\big) 
= \prod_{j \in \mathfrak{p}} \bsb{({\phi}_{i,j})^{\ul{n_j}}}{\id}.
\end{eqn}
Each $({\phi}_{i,j})^{\ul{n_j}}$ is given by the restriction of
${\phi}_{i,j}$ to $({\phi}_{i,j})^{-1}(\ul{n_j}) \subset \ulp{m_i}$
and a reindexing
$\ul{|({\phi}_{i,j})^{-1}(\ul{n_j})|} \iso
({\phi}_{i,j})^{-1}(\ul{n_j})$.
Hence the product in \cref{eqn:ev-Aeta} is given in the first component
by the disjoint union of the maps
\[
{\phi}_{i,j}\big|_{({\phi}_{i,j})^{-1}(\ul{n_j})} =
{\phi}_i\big|_{({\phi}_{i,j})^{-1}(\ul{n_j})} =
{\phi}_i\big|_{({\phi}_{i})^{-1}(\ul{n_j})}
\]
appropriately reindexed to have source
$\ul{|({\phi}_{i})^{-1}(\ul{n_j})|}$.

The composite \eqref{eqn:epz-eta-at-phibar-i-long} of maps in $PX$ is
given by composing the first components in $\sA$, since both are
identities in their second component.  This is
\[
\ul{m_i} \hspace{1pc}\longrightarrow
\coprod_{j\in \mathfrak{p}(\ol{\phi},i)}
\ul{|{\phi}_i^{-1}(\ul{n_j})|} \quad \iso 
\coprod_{j\in \mathfrak{p}(\ol{\phi},i)} 
{\phi}_i^{-1}(\ul{n_j}) \hspace{1pc}\longrightarrow
\coprod_{j\in \mathfrak{p}(\ol{\phi},i)} \ul{n_j}.
\]
But this is precisely the decomposition of ${\phi}_i$
described in \cref{rmk:decomposing-phibar-partition-map-reorder} and
is equal to ${\phi}_i$.
Therefore the composite \eqref{eqn:epz-eta-at-phibar-i} is equal to
$[{\phi}_i, \id]$ as claimed.

Lastly, on 2-cells we have
\begin{align*}
\epz_{PX} \circ P\eta_X \left( \bsb{\ol{\phi}}{\vec{\al}} \right) 
& = \epz_{PX} \left( \bsb{\ol{\phi}}{A\eta_{\vec{n}}(\vec{\al}) * 1_{A\eta_{\ol{\phi}}}} \right) \\
& = \epz_{\vec{n}}(A\eta_{\vec{n}}(\vec{\al}) * 1_{A\eta_{\ol{\phi}}})
* 1_{\epz_{\ol{\phi}}}\\
& = \bsb{\ol{\phi}}{\vec{\al}}
\end{align*}
where the last equality follows by again considering the cases
$\vec{\al} = \id$ and $\ol{\phi} = \id$ separately. This completes the
proof.
\end{proof}


\clearpage
\appendix
\section{Diagram of key structures}\label{appendix}

In this section we give an overview of the categories, notions of weak
equivalences, functors, and transformations which combine to prove the
main results.  We first give a diagram showing categories and
functors in \cref{fig:main-diagram}.  We recall the definitions of
these structures, and then the transformations which connect them.

\begin{figure}[h]
\[
\begin{xy}
  (0,0)*+{\GaIICat_{l}}="Ga2Cl";
  (-36,0)*+{\GaIICat}="Ga2C";
  (-54,30)*+{\Ga\mh\IICat^{\bullet \to \bullet}_{\mathrm{lax},\mathrm{str}}}="Arr";
  (-72,0)*+{\mathpzc{Span}\,(\Ga\mh\IICat)}="Span";
  (36,0)*+{(\sA,\oplus)\mh\IICat_{l}}="A2Cl";
  (36,20)*+{\PIICat}="P2C";
  (36,40)*+{\PGM}="PGM";
  (36,47)*+{\qsSMIICat}="qsSM2";
  (36,65)*+{\SMBicat}="SMB";
  (36,43.5)*+{\rotatebox{270}{$\iso$}}="iso";
  {\ar_-{\eta} "Ga2C"; "Arr"};
  {\ar_-{E} "Arr"; "Span"};
  {\ar@/^2pc/^-{pr_{\ell}} "Span"; "Ga2C"}; 
  {\ar^-{pr_m} "Span"; "Ga2C"}; 
  {\ar@/_2pc/_-{pr_r} "Span"; "Ga2C"}; 
  {\ar_-{j} "Ga2C"; "Ga2Cl"};
  {\ar_-{A} "Ga2Cl"; "A2Cl"};
  {\ar_-{\sA \wre (-)} "A2Cl"; "P2C"};
  {\ar_-{i} "P2C"; "PGM"};
  {\ar@/_1pc/_-{(-)^{qst}} "SMB"; "qsSM2"}; 
  {\ar@/_1pc/_-{u} "qsSM2"; "SMB"}; 
  {\ar@/_4pc/_-{\Ko} "PGM"; "Ga2C"};
  {\ar@/_2pc/_-{\Kt} "P2C"; "Ga2C"};
\end{xy}
\]
\caption{This diagram depicts categories and functors.  Note that it
  is not entirely commutative.  Transformations connecting these
  functors are described in \cref{sec:transformations}.}\label{fig:main-diagram}
\end{figure}
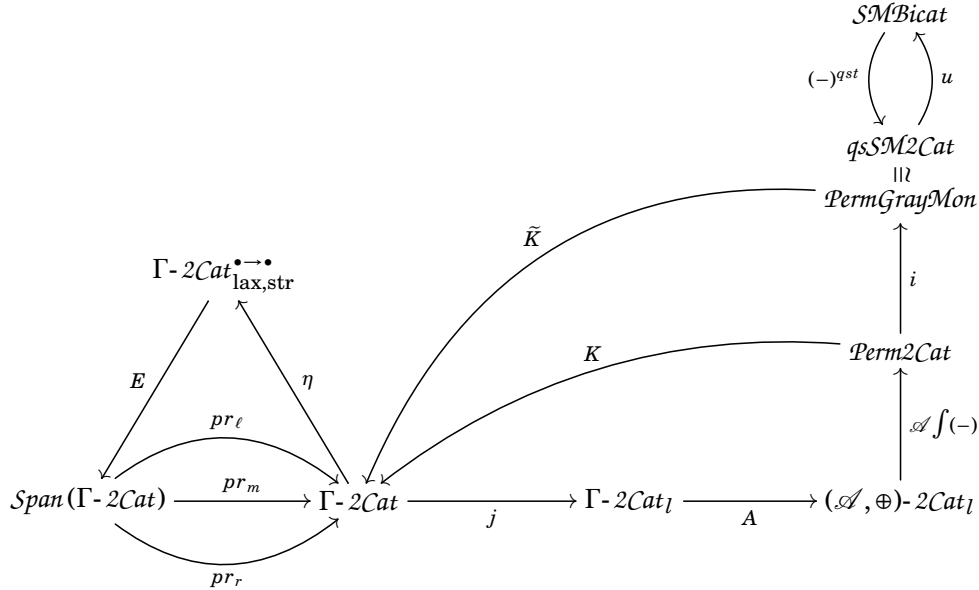

\subsection{Categories}\label{sec:categories}
\ 

\noindent $\GaIICat_{l}$\\
\indent $\Ga$-2-categories with lax maps.   \cref{defn:D2Cat-lax-category}
\ \\

\noindent $\GaIICat$\\
\indent $\Ga$-2-categories with strict maps.  \cref{defn:gammaobjs}
\ \\

\noindent $\Ga\mh\IICat^{\bullet \to \bullet}_{\mathrm{lax},\mathrm{str}}$\\
\indent Arrow category whose objects are lax maps of
$\Ga$-2-categories but whose morphisms are pairs of strict maps which
give a strictly commuting square. \cref{D2Cat-lax-arrow}
\ \\

\noindent $\mathpzc{Span}\,(\Ga\mh\IICat)$\\
\indent Spans of $\Ga$-2-categories and strict maps. \cref{defn:span} 
\ \\

\noindent $(\sA,\oplus)\mh\IICat_{l}$\\
\indent Symmetric monoidal diagrams on $\sA$ taking values in
$\IICat$, and monoidal  $\sA$-lax maps.  \cref{defn:cat-of-sm-diagrams}
\ \\


\noindent $\PIICat$\\
\indent Permutative 2-categories with strict functors.  \cref{defn:perm-2-cat,defn:PIICat}
\ \\

\noindent $\PGM$\\
\indent Permutative Gray-monoids with strict functors.
\cref{defn:pgm,prop:defn-PGM}\\ 
\indent Variants given in \cref{prop:PGM-variants}.
\ \\

\noindent $\qsSMIICat$\\
\indent Quasi-strict symmetric monoidal bicategories. \cref{notn:smb-qst}\\
\indent Isomorphic to $\PGM$ by \cref{p2isoqs2}.
\ \\

\noindent $\SMBicat$\\
\indent Symmetric monoidal bicategories with strict functors. \cref{notn:smb-qst}
\ \\

\subsection{Homotopy theories}\label{sec:homotopy-theories}

\begin{itemize}
\item Weak equivalences of 2-categories are denoted $\cW$ and are
  created by the nerve functor to simplicial sets. 
\item The prefix $\ho$ (lowercase) denotes a homotopy category obtained
  by inverting weak equivalences $\cW$.
\item Stable equivalences of 2-categories are defined only for
  symmetric monoidal 2-categories and are created by the functors
  $\Ko$ and $\Kt$.  In cases where both versions of $K$-theory are
  defined, these result in the same class of stable equivalences.
\item The prefix $\Ho$ (capitalized) denotes a homotopy category
  obtained by inverting stable equivalences.
\end{itemize}

\subsection{Functors}\label{sec:functors}
\ \\

\noindent $i$, $j$, $u$\\
\indent Inclusions of subcategories (this notation is not used in the
text, only in this diagram)
\ \\

\noindent $pr_{\ell}$, $pr_m$, $pr_r$\\
\indent Three forgetful functors from spans to the left, respectively
middle, respectively right, objects of a span (this notation is not
used in the text, only in this diagram)
\ \\

\noindent $(-)^{qst}$\\
\indent Defined in \cref{cohqs2cats}
\ \\

\noindent $P = \sA \wre (A(-))\cn \GaIICat \to (\sA,\oplus)\mh\IICat_l$\\
\indent Defined in \cref{sec:permutative-2-cats-from-Gamma-2-categories}, see
summary there
\ \\

\noindent $\Ko\cn \PIICat \to \GaIICat$\\
$\Kt\cn \PGM \to \GaIICat$\\
\indent Defined in \cref{sec:K-epz-fluffy,sec:K-epz-strict}, respectively
\ \\

\noindent $E\cn \Ga\mh\IICat^{\bullet \to \bullet}_{\mathrm{lax},\mathrm{str}} 
\to \mathpzc{Span}\,(\Ga\mh\IICat)$\\
\indent Given by \cref{cor:E-on-Ga-arrow-category}
\ \\

\noindent $\eta \cn \GaIICat \to \mathpzc{Span}\,(\Ga\mh\IICat)$.\\
\indent Defined in the proof of \cref{lem:eta-zig}
\ \\

\subsection{Transformations}\label{sec:transformations}
\ \\

\noindent $\varphi \cn  \Kt \Rightarrow \Ko i$ 
\begin{itemize} 
\item induced by inclusion (\cpageref{Kt-to-Ko,Kt-to-Ko-formulas});
\item componentwise strict map of $\Ga$-2-categories; 
\item natural with respect to strict symmetric monoidal 2-functors;
\item componentwise weak equivalence
\end{itemize}
\ \\

\noindent $\nu \cn  (u(-))^{qst} \Rightarrow \Id$ \\
\indent Quasi-strictification of a quasi-strict symmetric monoidal
  bicategory is biequivalent to the identity. \cref{thm:cohqs2cats2}
\ \\

\noindent $\epzo \cn  P\Ko \Rightarrow \Id$
\begin{itemize}
\item defined in \cref{prop:epzo-pseudofunctor};
\item componentwise symmetric monoidal pseudofunctor
  (\cref{thm:epzo-symm-mon});
\item strictly natural with respect to strict functors
  (\cref{prop:epzo-natural});
\item componentwise weak equivalence (\cref{prop:epzo-weak-equiv})
\end{itemize}

Note the components of $\epzo$ are generally not 2-functors, and
therefore generally not maps in $\PGM_v$ for any morphism variant
$v$.  This means that $\epzo$ is a coherent collection of
pseudofunctors, but not actually a natural transformation.
\ \\

\noindent $\epz \cn  P\Kt \Rightarrow \Id$ 
\begin{itemize}
\item $\epz = \epzo \circ P\varphi$ (\cref{defn:epz});
\item strict symmetric monoidal 2-functor
  (\cref{prop:epz-times-is-strict-SM});
\item natural with respect to strictly-monoidal 2-functors and lax-natural with respect
  to lax-monoidal 2-functors (\cref{prop:epz-is-natural}); 
\item componentwise weak equivalence (\cref{prop:epz-weak-equiv})
\end{itemize}
\ \\

\noindent $\eta \cn  \Id \Rightarrow \Kt P$
\begin{itemize}
\item defined in \cref{defn:eta};
\item componentwise $\Ga$-lax map
  (\cref{sec:eta-is-lax-transformation});
\item strictly natural with respect to strict $\Ga$-maps
  (\cref{cor:eta-nat-strict,rmk:eta-lax-nat-maybe});
\item componentwise stable equivalence (\cref{thm:eta-stable-equiv})
\end{itemize}
\ \\

\noindent $(\om,\nu)\cn \Id \Leftarrow E\eta_{(-)} \Rightarrow \Kt P$\\
Zigzag defined in \cref{lem:eta-zig} using the span construction of
\cref{defn:E}.  \Cref{lem:eta-zig} proves the following about this zigzag:
\begin{itemize}
\item componentwise strict $\Ga$-maps;
\item strictly natural with respect to strict $\Ga$-maps;
\item componentwise stable equivalences
\end{itemize}

\clearpage





\bibliographystyle{amsalpha2}
\bibliography{Refs}%

\providecommand{\bysame}{\leavevmode\hbox to3em{\hrulefill}\thinspace}
\providecommand{\MR}{\relax\ifhmode\unskip\space\fi MR }
\providecommand{\MRhref}[2]{%
  \href{http://www.ams.org/mathscinet-getitem?mr=#1}{#2}
}
\providecommand{\doi}[1]{%
  doi:\href{https://dx.doi.org/#1}{#1}}
\providecommand{\arxiv}[1]{%
  arXiv:\href{https://arxiv.org/abs/#1}{#1}}
\providecommand{\href}[2]{#2}
\begin{thebibliography}{WHPT07}

\bibitem[AM14]{AM13Towards}
D.~Ara and G.~Maltsiniotis, \emph{Vers une structure de catégorie de modèles
  à la {T}homason sur la catégorie des $n$-catégories strictes}, Advances in
  Mathematics \textbf{259} (2014), 557--654. \doi{10.1016/j.aim.2014.03.013}

\bibitem[BDRR11]{BDRR11Stable}
N.~A. Baas, B.~I. Dundas, B.~Richter, and J.~Rognes, \emph{Stable bundles over
  rig categories}, J. Topol. \textbf{4} (2011), no.~3, 623--640.
  \doi{10.1112/jtopol/jtr016}

\bibitem[BDR04]{BDR04Two}
N.~A. Baas, B.~I. Dundas, and J.~Rognes, \emph{Two-vector bundles and forms of
  elliptic cohomology}, Topology, geometry and quantum field theory, London
  Math. Soc. Lecture Note Ser., vol. 308, Cambridge Univ. Press, Cambridge,
  2004, pp.~18--45. \doi{10.1017/CBO9780511526398.005}

\bibitem[Bar14]{Bar14Quasistrict}
B.~Bartlett, \emph{Quasistrict symmetric monoidal 2-categories via wire
  diagrams}, 2014. \arxiv{1409.2148v1}

\bibitem[BK12]{BK12Characterization}
C.~Barwick and D.~M. Kan, \emph{A characterization of simplicial localization
  functors and a discussion of {DK} equivalences}, Indag. Math. (N.S.)
  \textbf{23} (2012), no.~1-2, 69--79. \doi{10.1016/j.indag.2011.10.001}

\bibitem[BKP89]{BKP1989Two}
R.~{Blackwell}, G.~{Kelly}, and A.~{Power}, \emph{Two-dimensional monad
  theory}, {J. Pure Appl. Algebra} \textbf{59} (1989), no.~1, 1--41.
  \doi{10.1016/0022-4049(89)90160-6}

\bibitem[BF78]{BF1978}
A.~K. Bousfield and E.~M. Friedlander, \emph{Homotopy theory of {$\Gamma
  $}-spaces, spectra, and bisimplicial sets}, Geometric applications of
  homotopy theory {II} ({P}roc. {C}onf., {E}vanston, {I}ll., 1977), Lecture
  Notes in Math., vol. 658, Springer, Berlin, 1978, pp.~80--130.

\bibitem[CCG11]{CCG11Classifying}
P.~Carrasco, A.~M. Cegarra, and A.~R. Garz{\'o}n, \emph{Classifying spaces for
  braided monoidal categories and lax diagrams of bicategories}, Adv. Math.
  \textbf{226} (2011), no.~1, 419--483. \doi{10.1016/j.aim.2010.06.027}

\bibitem[CCG10]{CCG10Nerves}
P.~Carrasco, A.~M. Cegarra, and A.~R. Garz{\'o}n, \emph{Nerves and classifying
  spaces for bicategories}, Algebr. Geom. Topol. \textbf{10} (2010), no.~1,
  219--274. \doi{10.2140/agt.2010.10.219}

\bibitem[CHR15]{CHR2015Bicategorical}
A.~M. Cegarra, B.~A. Heredia, and J.~Remedios, \emph{Bicategorical homotopy
  pullbacks}, Theory Appl. Categ. \textbf{30} (2015), no.~6, 147--205.

\bibitem[CG14a]{CG2014Iterated}
E.~Cheng and N.~Gurski, \emph{Iterated icons}, Theory and Applications of
  Categories \textbf{29} (2014), no.~32, 929--977.

\bibitem[CG14b]{CG14Operads}
A.~Corner and N.~Gurski, \emph{Operads with general groups of equivariance, and
  some 2-categorical aspects of operads in {Cat}}, 2014. \arxiv{1312.5910v2}

\bibitem[Cra98]{Cra98Generalized}
S.~E. Crans, \emph{Generalized centers of braided and sylleptic monoidal
  {$2$}-categories}, Adv. Math. \textbf{136} (1998), no.~2, 183--223.
  \doi{10.1006/aima.1998.1720}

\bibitem[DK80]{DK80Calculating}
W.~G. Dwyer and D.~M. Kan, \emph{Calculating simplicial localizations}, J. Pure
  Appl. Algebra \textbf{18} (1980), no.~1, 17--35.
  \doi{10.1016/0022-4049(80)90113-9}

\bibitem[GPS95]{GPS95Coherence}
R.~Gordon, A.~J. Power, and R.~Street, \emph{Coherence for tricategories}, vol.
  117, Mem. Amer. Math. Soc., no. 558, Amer. Math. Soc., 1995.
  \doi{10.1090/memo/0558}

\bibitem[Gra74]{Gra74Formal}
J.~W. Gray, \emph{Formal category theory: adjointness for {$2$}-categories},
  Lecture Notes in Mathematics, Vol. 391, Springer-Verlag, Berlin-New York,
  1974.

\bibitem[Gur12]{Gur2012Biequivalences}
N.~Gurski, \emph{Biequivalences in tricategories}, Theory Appl. Categ.
  \textbf{26} (2012), No. 14, 349--384.

\bibitem[Gur13]{Gurski13Coherence}
\bysame, \emph{Coherence in three-dimensional category theory}, Cambridge
  Tracts in Mathematics, vol. 201, Cambridge University Press, Cambridge, 2013.
  \doi{10.1017/CBO9781139542333}

\bibitem[GJO17]{GJO2017Extending}
N.~Gurski, N.~Johnson, and A.~M. Osorno, \emph{Extending homotopy theories
  across adjunctions}, {H}omology, {H}omotopy and {A}pplications \textbf{19}
  (2017), no.~2, 89--110. \doi{10.4310/HHA.2017.v19.n2.a6}

\bibitem[GO13]{GO12infinite}
N.~Gurski and A.~M. Osorno, \emph{Infinite loop spaces, and coherence for
  symmetric monoidal bicategories}, Adv. Math. \textbf{246} (2013), 1--32.
  \doi{10.1016/j.aim.2013.06.028}

\bibitem[Hir03]{Hir03Model}
P.~S. Hirschhorn, \emph{Model categories and their localizations}, Mathematical
  Surveys and Monographs, vol.~99, American Mathematical Society, Providence,
  RI, 2003.

\bibitem[Isb69]{Isb1969coherent}
J.~R. Isbell, \emph{On coherent algebras and strict algebras}, J. Algebra
  \textbf{13} (1969), 299--307. \doi{10.1016/0021-8693(69)90076-3}

\bibitem[JO12]{JO12Modeling}
N.~Johnson and A.~M. Osorno, \emph{Modeling stable one-types}, Theory Appl.
  Categ. \textbf{26} (2012), No. 20, 520--537,
  \url{http://www.tac.mta.ca/tac/volumes/26/20/26-20abs.html}.

\bibitem[KS74]{KS74Review}
G.~M. Kelly and R.~Street, \emph{Review of the elements of {$2$}-categories},
  Category {S}eminar ({P}roc. {S}em., {S}ydney, 1972/1973), Lecture Notes in
  Mathematics, vol. 420, Springer, Berlin, 1974, pp.~75--103.

\bibitem[Lac02a]{Lac02Codescent}
S.~Lack, \emph{Codescent objects and coherence}, J. Pure Appl. Algebra
  \textbf{175} (2002), no.~1-3, 223--241, Special volume celebrating the 70th
  birthday of Professor Max Kelly. \doi{10.1016/S0022-4049(02)00136-6}

\bibitem[Lac02b]{Lac02Quillen2}
\bysame, \emph{A {Q}uillen model structure for 2-categories}, $K$-Theory
  \textbf{26} (2002), no.~2, 171--205. \doi{10.1023/A:1020305604826}

\bibitem[Lac04]{Lac04Quillenb}
\bysame, \emph{A {Q}uillen model structure for bicategories}, $K$-Theory
  \textbf{33} (2004), no.~3, 185--197. \doi{10.1007/s10977-004-6757-9}

\bibitem[Lac10]{Lac10Icons}
\bysame, \emph{Icons}, Appl. Categ. Structures \textbf{18} (2010), no.~3,
  289--307. \doi{10.1007/s10485-008-9136-5}

\bibitem[Lei00]{Lei00Homotopy}
T.~Leinster, \emph{Homotopy algebras for operads}, 2000. \arxiv{math/0002180v1}

\bibitem[Man10]{Man10Inverse}
M.~A. Mandell, \emph{An inverse {$K$}-theory functor}, Doc. Math. \textbf{15}
  (2010), 765--791.

\bibitem[May72]{may72geo}
J.~P. May, \emph{The geometry of iterated loop spaces}, Springer-Verlag,
  Berlin, 1972, Lectures Notes in Mathematics, Vol. 271.

\bibitem[McC00]{McCru00Balanced}
P.~McCrudden, \emph{Balanced coalgebroids}, Theory and Applications of
  Categories \textbf{7} (2000), no.~6, 71--147.

\bibitem[Oso12]{Oso10Spectra}
A.~Osorno, \emph{Spectra associated to symmetric monoidal bicategories},
  Algebr. Geom. Topol. \textbf{12} (2012), no.~1, 307--342.
  \doi{10.2140/agt.2012.12.307}

\bibitem[Rez01]{Rez01Model}
C.~Rezk, \emph{A model for the homotopy theory of homotopy theory}, Trans.
  Amer. Math. Soc. \textbf{353} (2001), no.~3, 973--1007 (electronic).
  \doi{10.1090/S0002-9947-00-02653-2}

\bibitem[SP11]{Sch2011Classification}
C.~Schommer-Pries, \emph{The classification of two-dimensional extended
  topological field theories}, 2011. \arxiv{1112.1000v2}

\bibitem[Seg74]{Seg74Categories}
G.~Segal, \emph{Categories and cohomology theories}, Topology \textbf{13}
  (1974), 293--312. \doi{10.1016/0040-9383(74)90022-6}

\bibitem[Sim12]{Simpson}
C.~Simpson, \emph{Homotopy theory of higher categories}, New Mathematical
  Monographs, vol.~19, Cambridge University Press, Cambridge, 2012.

\bibitem[Str72]{Str72Two}
R.~Street, \emph{Two constructions on lax functors}, Cahiers Topologie G\'eom.
  Diff\'erentielle \textbf{13} (1972), 217--264.

\bibitem[Tho95]{Tho95Symmetric}
R.~W. Thomason, \emph{Symmetric monoidal categories model all connective
  spectra}, Theory Appl. Categ. \textbf{1} (1995), No.\ 5, 78--118
  (electronic).

\bibitem[To{\"e}05]{Toen05Vers}
B.~To{\"e}n, \emph{Vers une axiomatisation de la th\'eorie des cat\'egories
  sup\'erieures}, $K$-Theory \textbf{34} (2005), no.~3, 233--263.
  \doi{10.1007/s10977-005-4556-6}

\bibitem[WHPT07]{WHPT07Model}
K.~Worytkiewicz, K.~Hess, P.~E. Parent, and A.~Tonks, \emph{A model structure
  \`a la {T}homason on {\bf 2-{c}at}}, J. Pure Appl. Algebra \textbf{208}
  (2007), no.~1, 205--236. \doi{10.1016/j.jpaa.2005.12.010}

\end{thebibliography}

\end{document}